\documentclass[a4paper]{article}

\usepackage{graphicx}
\usepackage{amssymb}
\usepackage{amsmath}
\usepackage{amsthm}
\usepackage{amsfonts}
\usepackage{mathtools}
\usepackage{hyperref}
\usepackage[shortlabels]{enumitem}
\usepackage{tikz}
\pgfdeclarelayer{background}
\pgfsetlayers{background,main}
\usetikzlibrary{shapes.geometric}
\usetikzlibrary{shapes.symbols}
\usetikzlibrary{positioning}
\usetikzlibrary{patterns.meta}
\usepackage[hypcap=false]{caption}
\usepackage{etoolbox}
\usepackage{environ}
\usepackage{MnSymbol}
\usepackage{bbm}
\usepackage{dsfont}
\usepackage{comment}
\usepackage[authoryear, round]{natbib}
\usepackage{float}
\usepackage{authblk}

\theoremstyle{plain}
\newtheorem{theorem}{Theorem}[section]
\newtheorem{lemma}[theorem]{Lemma}
\newtheorem{corollary}[theorem]{Corollary}
\newtheorem{proposition}[theorem]{Proposition}

\theoremstyle{definition}
\newtheorem{definition}[theorem]{Definition}
\newtheorem*{*definition}{Definition}

\newtheorem{remark}[theorem]{Remark}
\newtheorem{assumption}[theorem]{Assumption}

\newtheorem{example}[theorem]{Example}

\numberwithin{figure}{section}

\newcommand{\mc}[1]{\mathcal{#1}}
\newcommand{\mbb}[1]{\mathbb{#1}}

\newcommand{\proj}[1]{\textrm{proj}_{#1}}

\newcommand{\Id}[1]{\textrm{Id}_{#1}}
\newcommand{\pushforward}[2]{#1{}_{\#}#2} 

\newcommand{\Cb}[1]{C_{b}\left(#1\right)}
\newcommand{\eps}{\varepsilon}
\newcommand{\brackets}[1]{\left(#1\right)}

\newcommand{\curlybrackets}[1]{\left\{#1\right\}}
\newcommand{\probabilities}[1]{\mc{P}\brackets{#1}}
\newcommand{\pprobabilities}[2]{\mc{P}_{#1}\brackets{#2}}

\newcommand{\couplings}[2]{\Pi\brackets{#1,#2}}

\newcommand{\abs}[1]{\left|#1\right|}
\newcommand{\Pp}[2]{\mc{P}_{#1}\brackets{#2}}

\newcommand{\expectation}[2]{\mathbb{E}_{#1}\left[#2\right]}
\newcommand{\norm}[1]{\left\lVert#1\right\rVert}

\newcommand{\sqnorm}[1]{\norm{#1}_{2}}

\newcommand{\supp}{\mathrm{supp} \,}

\newcommand{\law}[1]{\mc{L}\brackets{#1}}
\newcommand{\condlaw}[2]{\mc{L}\brackets{#1 \, \middle| \, #2}}

\newcommand{\setdef}[2]{\curlybrackets{#1 \, \middle| \, #2}}

\newcommand{\hist}[1]{\mathrm{an}_{#1}}
\newcommand{\parents}[1]{\mathrm{pa}_{#1}}
\newcommand{\desc}[1]{\mathrm{dsc}_{#1}}
\newcommand{\prox}{\mathrm{prox}}
\newcommand{\dpp}{\mathrm{DPP}}
\newcommand{\depth}{\mathrm{depth}}
\newcommand{\kancestors}[2]{\mathrm{pa}^{#1}_{#2}}
\newcommand{\Gcouplings}[2]{\Pi_{G}^{c}\brackets{#1,#2}}
\newcommand{\specificGcouplings}[3]{\Pi_{#1}^{c}\brackets{#2,#3}}
\newcommand{\Gpluscouplings}[2]{\Pi_{G^+}^{c}\brackets{#1,#2}}
\newcommand{\Ganticouplings}[2]{\Pi_{G}^{ac}\brackets{#1,#2}}
\newcommand{\Gbicouplings}[2]{\Pi_{G}^{bc}\brackets{#1,#2}}
\newcommand{\bicouplings}[2]{\Pi^{bc}\brackets{#1,#2}}
\newcommand{\specificGbicouplings}[3]{\Pi_{#1}^{bc}\brackets{#2,#3}}
\newcommand{\indep}{\mathrel{\text{\scalebox{1.07}{$\perp\mkern-10mu\perp$}}}}
\newcommand{\condindep}[3]{#1 \underset{#2}{\indep} #3}
\newcommand{\Gprobabilities}[1]{\mc{P}^{G}\brackets{#1}}
\newcommand{\causalG}{G^c}
\newcommand{\cGprobabilities}[1]{\mc{P}^{\causalG}\brackets{#1}}
\newcommand{\GOT}[2]{\mathbf{P}_G(#1,#2)}
\newcommand{\specificGprobabilities}[2]{\mc{P}^{#1}\brackets{#2}}
\newcommand{\specificGpprobabilities}[3]{\mc{P}^{#1}_{#2}\brackets{#3}}
\newcommand{\Gpprobabilities}[2]{\mc{P}_{#1}^{G}\brackets{#2}}
\newcommand{\xparents}[1]{x_{\mathrm{pa}_{#1}}}
\newcommand{\yparents}[1]{y_{\mathrm{pa}_{#1}}}

\newcommand{\glue}{\mathbin{\mathop{\circ}\limits_{\raisebox{1ex}{$\scriptstyle \mathfrak{g}$}}}}
\newcommand{\otimesdot}{\dot{\otimes}}
\newcommand{\bigglue}[2]{#1 \, \otimesdot \, #2}
\newcommand{\Garrow}{G_{2}}
\newcommand{\Gsplit}{G_{\mathrm{split}}}
\newcommand{\Gmerge}{G_{\mathrm{merge}}}
\newcommand{\Gmarkov}{G_{\mathrm{chain}}}
\newcommand{\Glinear}[1]{G_{#1}}

\newcommand{\condexpectation}[2]{\mathbb{E}\left[#1 \mvert #2\right]}

\newcommand{\isGadapted}{\triangleleft_{G}}

\newcommand{\specificGmonge}[3]{\mc{T}_{#1}\brackets{#2,#3}}
\newcommand{\Gmonge}[2]{\mc{T}_{G}\brackets{#1,#2}}
\newcommand{\monge}[2]{\mc{T}\brackets{#1,#2}}

\newcommand{\newGbicouplings}[2]{\widehat{\Pi}_{G}^{bc}\brackets{#1,#2}}
\newcommand{\tildeGbicouplings}[2]{\widetilde{\Pi}_{G}^{bc}\brackets{#1,#2}}
\newcommand{\newspecificGbicouplings}[3]{\widehat{\Pi}_{#1}^{bc}\brackets{#2,#3}}
\newcommand{\tildespecificGbicouplings}[3]{\widetilde{\Pi}_{#1}^{bc}\brackets{#2,#3}}

\newcommand{\closedconvex}[1]{\overline{\operatorname{conv}}\brackets{#1}}
\newcommand{\closure}[1]{\overline{#1}}
\newcommand{\Gbar}{\closure{G}}

\newcommand{\singlebar}[1]{\underline{#1}}
\newcommand{\doublebar}[1]{\underline{\underline{#1}}}

\newcommand{\setcomplement}[1]{#1^{\mathsf{c}}}

\newcommand{\tildeyparents}[1]{\tilde{y}_{\mathrm{pa}_{#1}}}

\newcommand{\metricplaceholder}[2]{\sum_{i=1}^{n-1} \singlebar{d}_i(#1_i, #2_i)}

\newcommand{\doublehat}[1]{\Hat{\Hat{{#1}}}}
\newcommand{\specificGparents}[2]{\mathrm{pa}^{#1}_{#2}}

\title{Graph Causal Optimal Transport \\ and Wasserstein Distances} 

\author{Jan Ob{\l}{\'o}j\thanks{\href{mailto:jan.obloj@maths.ox.ac.uk}{jan.obloj@maths.ox.ac.uk}} } 
\author{Vlad Tuchilu\textcommabelow{s}\thanks{\href{mailto:vlad.tuchilus@maths.ox.ac.uk}{vlad.tuchilus@maths.ox.ac.uk}}}

\affil{Mathematical Institute, University of Oxford}

\begin{document}

\maketitle

\begin{abstract}
    We study the graph causal optimal transport problem, a generalisation of the classical optimal transport problem in which the allowed couplings satisfy causal restrictions prescribed by a directed graph. We characterise fully the directed acyclic graphs for which the associated graph causal Wasserstein discrepancy is a metric and show that the induced topology agrees with other natural adapted topologies. We characterise the gluing properties of graph causal couplings, prove denseness of Monge couplings, and obtain a dynamic programming principle which allows us to deduce when the graph causal Wasserstein and the adapted Wasserstein distances are equal. Our results link fundamental properties of graph causal optimal transport to structural properties of its underlying graph.     
    Complementing \cite{Eckstein2023CausalGraphs}, who first introduced such distances and established Lipschitz continuity for the average treatment effect in structural causal models, we obtain Lipschitz continuity of the value function in stochastic team problems.  
\end{abstract}

\section{Introduction}

This paper offers a contribution at the crossroads of optimal transport, probability and statistics. 
We define and study optimal transport (OT) problems where the marginals, and their couplings, are constrained to be compatible with causality relations encoded via a directed graph $G$.   
The associated graph causal couplings and Wasserstein distances were first introduced by \cite{Eckstein2023CausalGraphs}, who were inspired by the notion of causality \citep{Pearl2009Causality} in statistics and established Lipschitz continuity of average treatment effects with respect to these distances. Such causal restrictions on marginals we consider arise in graphical causal models, which are important objects across many fields of applications, including medicine, law, economics and beyond. However, apart from important special subclasses of problems, as discussed below, general theoretical properties of graph causal OT, including when and if the associated Wasserstein discrepancies are in fact distances, remain open. Our work addresses this key gap. We establish fundamental properties of graph causal OT problems and the induced Wasserstein distances. 

Optimal transport methods offer a natural way to measure distances between probability measures, one which generates the topology of weak convergence whilst being compatible with analytic and geometric analysis. OT theory offers rich and powerful insights into structural properties of spaces of probability measures, their geodesics and gradient flows, as well as how properties of their functionals reflect the geometry of the underlying state space, see \cite{Villani2003TopicsTransportation,ambrosio2005gradient,Villani2009OptimalNew} for  excellent exposition of these seminal contributions. However, classical OT approaches are not well suited to distances between laws of stochastic processes, where the information structure, often encoded via a filtration, plays an important role. Over the years, many adapted topologies have been introduced to address the inadequacy of the topology of weak convergence when dealing with processes. The tools of optimal transport were also brought into the fray leading to the definition of causal OT and so-called \emph{adapted Wasserstein distances}, see \cite{Pflug2009NestedDistributions,Lassalle2013Causal,Backhoff2016CausalDiscrete}. Importantly, \cite{BackhoffVeraguas2020AllEqual} proved that essentially all of the proposed adapted topologies were equal, and adapted OT has become an active field of research, with some results mirroring classical OT and others showcasing genuinely novel challenges and types of results. Our work is inspired by this recent stream of results, but offers a structurally much richer framework and, we believe, opens up new directions for future studies and applications. Indeed, 
from the point of view of the theory we advance here, classical OT corresponds to the special case of a fully connected graph and adapted OT corresponds to the linear graph in which each node has incoming edges from all of its ancestors. Naturally, there are many other graphs and varying the causality graph gives rise to a wide range of intermediate problems, allowing us to 'interpolate' between classical and causal OT, and to generalise beyond them.

Broadly speaking, we show that the key properties of graph causal OT reflect the structural properties of directed acyclic graphs (DAGs) defined in relation to absence or presence of three basic local structures: 
\begin{itemize}
    \item a \emph{chain} of three or more nodes in a linear sequence;
    \item a \emph{split}, also known as an \emph{unshielded fork};
    \item and a \emph{merge}, also known as a \emph{v-structure}, an \emph{immorality} or an \emph{unshielded collider};
\end{itemize}
see Definition \ref{def:DAGthree_structures}, and Figure \ref{fig:basic_structures} for a graphical representation.
\begin{figure}[h]
    \centering
    \begin{tikzpicture}[
    >=stealth,
    every node/.style={circle, fill=black, inner sep=2.5pt},
    blacknode/.style={circle, fill=black, inner sep=2.5pt},
    blackedge/.style={->, draw=black, thick},
    contextedge/.style={->, draw=gray!55, thick, densely dotted},
    nodelabel/.style={font=\small, fill=none, inner sep=1pt},
    paneltitle/.style={font=\large, fill=none, inner sep=0pt}
]

\begin{scope}[local bounding box=chainbox]
    \node[blacknode] (c1) at (0,0) {};
    \node[blacknode] (c2) at (1.35,0) {};
    \node[blacknode] (c3) at (2.70,0) {};
    \coordinate (cin) at (-1.20,0);
    \coordinate (cout) at (3.90,0);

\draw[contextedge] (cin) -- (c1);
\draw[contextedge] (c3) -- (cout);

    \draw[blackedge] (c1) -- (c2);
    \draw[blackedge] (c2) -- (c3);

    \node[nodelabel] at (0,0.32) {$i$};
    \node[nodelabel] at (1.35,0.32) {$j$};
    \node[nodelabel] at (2.70,0.32) {$k$};
    \node[paneltitle] at (1.35,-0.72) {Chain};
\end{scope}

\begin{scope}[xshift=5.85cm, local bounding box=splitbox]
    \coordinate (sin) at (-1.20,0);
    \node[blacknode] (s1) at (0,0) {};
    \node[blacknode] (s2) at (1.45,0.62) {};
    \node[blacknode] (s3) at (1.45,-0.62) {};
    \coordinate (sout2) at (2.65,0.62);
    \coordinate (sout3) at (2.65,-0.62);

    \draw[contextedge] (sin) -- (s1);
    \draw[blackedge] (s1) -- (s2);
    \draw[blackedge] (s1) -- (s3);
    \draw[contextedge] (s2) -- (sout2);
    \draw[contextedge] (s3) -- (sout3);

    \node[nodelabel] at (0,0.32) {$i$};
    \node[nodelabel] at (1.45,0.94) {$j$};
    \node[nodelabel] at (1.45,-0.94) {$k$};
    \node[paneltitle] at (0.72,-1.45) {Split};
\end{scope}

\begin{scope}[xshift=10.45cm, local bounding box=mergebox]
    \coordinate (min2) at (-1.20,0.62);
    \coordinate (min3) at (-1.20,-0.62);
    \node[blacknode] (m2) at (0,0.62) {};
    \node[blacknode] (m3) at (0,-0.62) {};
    \node[blacknode] (m1) at (1.45,0) {};
    \coordinate (mout) at (2.65,0);

    \draw[contextedge] (min2) -- (m2);
    \draw[contextedge] (min3) -- (m3);
    \draw[blackedge] (m2) -- (m1);
    \draw[blackedge] (m3) -- (m1);
    \draw[contextedge] (m1) -- (mout);

    \node[nodelabel] at (0,0.94) {$j$};
    \node[nodelabel] at (0,-0.94) {$k$};
    \node[nodelabel] at (1.45,0.32) {$i$};
    \node[paneltitle] at (0.72,-1.45) {Merge};
\end{scope}

\end{tikzpicture}
    \caption{Three important local structures arising in DAGs.}
    \label{fig:basic_structures}
\end{figure}
We show that, in general, the graph causal Wasserstein discrepancy is a distance for a DAG $G$ essentially if and only if $G$ does not have a chain. And for $G$ with a chain, the natural way to define the associated distance is to consider the distance generated by $\Gbar$, the transitive closure of $G$, i.e., the graph $G$ with added edges $i\to k$ for any chain $i\to j \to k$ in $G$. 
In addition, for a DAG $G$ with no chains, its set of bicausal couplings is weakly closed, as well as closed under gluing, if and only if $G$ has no splits. In this case, the sets of bicausal couplings and lifted bicausal couplings, as introduced below, are also equal. 
Finally, in a natural setting of an additive cost across the nodes, graph causal Wasserstein distances are equal to the adapted Wasserstein distance, essentially if and only if $G$ is a perfect DAG, i.e., a DAG with no merges. 

\textbf{Graph causal Wasserstein distances.}
\cite{Eckstein2023CausalGraphs} introduced 
graph causal Wasserstein discrepancies $W_{G,p}$
and used these objects but did not prove if, or when, these were distances. Indeed, they provided a numerical example to show that the triangle inequality may fail even for a simple graph made of a single chain of three nodes. In Theorem \ref{main_result_metric}, we establish that the graph causal OT problem associated with a directed acyclic graph $G$ defines a genuine distance $W_{G,p}$ on the subspace of $G$-compatible measures if and only if $G$ is transitively closed, i.e., all of a node's ancestors are its parents. 
We further show that $W_{G,p}$ are both natural for applications, e.g., stochastic team games are Lipschitz continuous, see Theorem \ref{thm:lipschitz_optimisation},  and that they generate the natural topology: the subspace topology of the adapted weak topology, see Theorem \ref{theorem:sametopology}. Finally, our subsequent results, discussed below, suggest a modified definition of the $G$-causal Wasserstein metric, $\widetilde{W}_{G,p}$, which is always a distance. We show that in fact 
$\widetilde{W}_{G,p}=W_{\Gbar,p}$, where $\Gbar$ is the transitive closure of $G$. In particular, $\widetilde{W}_{G,p}=W_{G,p}$ when $G$ is transitively closed, and $\widetilde{W}_{G,p}$ is, in some sense, the minimal object that cures the deficiency of $W_{G,p}$. 

\textbf{Graph causal OT problem and bicausal couplings.}
Our next set of results focuses on the graph causal OT problem, as well as the spaces of $G$-compatible measures $\Gprobabilities{\mc{X}}$, see Definition \ref{G-compatible}, and $G$-(bi)causal couplings. The sets $\Gprobabilities{\mc{X}}$ arise naturally in probabilistic graphical models, see \cite{LauritzenBook}, where $G$-compatibility is known as the local Markov property, and different characterisations of them are well known, see Proposition \ref{prop:Gcomp_characterisation}. We note that throughout, we work on general Polish product spaces and impose neither a joint-density nor a strict-positivity assumptions often encountered in graphical-model results. Interestingly, even some basic properties of $\Gprobabilities{\mc{X}}$ were open, such as when the set is weakly closed, and are among structural results we contribute to the field, see Proposition \ref{prop:P_G_closed}. Our first main result in this context concerns the key property of admissible (i.e., $G$-causal) couplings: in Theorem \ref{glue_characterisation} we show that the gluing of two couplings is still admissible if and only if $G$ is transitively closed and has no splits. 
This is a novel research direction that arises in graph causal OT. It is linked to the study of conditional independence structures via semigraphoid properties \citep{Studeny2005CIStructures}. In contrast, variants of the gluing lemma have typically been available in the other OT settings, see 
\cite[Lemma 7.6]{Villani2003TopicsTransportation} for classical OT, \cite[Lemma 3.1]{Eckstein2022ComputationalTransport} for causal OT, \cite[Lemma A.5]{Acciaio2024Multicausal} for multicausal OT, and imply quick and direct proofs of the triangle inequality for their associated Wasserstein distances. To our knowledge, this is the first natural OT setting in which the relevant gluing property can fail while the associated discrepancy still satisfies the triangle inequality.

Our second main result here, Theorem \ref{main_dense}, shows that, under a suitable continuity assumption (i.e., absence of atoms) on the measures, plans generated by $G$-biadapted Monge maps, our equivalent of Monge maps in OT  \citep{Monge1781MemoireRemblais}, are dense in $G$-bicausal couplings. This mirrors the classical OT results of \cite{Pratelli2007MongeKantorovich} and we were guided by the recent analogue for the adapted OT in \cite{Beiglbock2022Biadapted}. 
 Their approach involves viewing bicausal couplings as projections of deterministic couplings induced by biadapted Monge maps on an enlarged or lifted space, where the new randomness in non-deterministic couplings is made explicit. We follow the same approach and define our set of couplings  as the projections of couplings induced by $G$-biadapted Monge maps on a \emph{lifted} space.  Our key results, Theorems \ref{thm:lifted} and \ref{thm:lifted_nosplits}, characterise precisely the relations between these sets of different couplings and, in particular, show that $G$-causal OT problems defined using regular or lifted couplings are equivalent. This is a stepping stone for our proof of the metric property in Theorem \ref{main_result_metric}, where we can exploit the fact that compositions of $G$-biadapted Monge maps are $G$-biadapted when $G$ is transitively closed. These results also suggest the alternative definition of the $G$-causal Wasserstein metric, $\widetilde{W}_{G,p}$ mentioned above, which uses lifted bicausal couplings.

\textbf{Dynamic programming principle and relation to $AW_p$.}
Our third set of main results focuses on characterising directed acyclic graphs $G$ for which $W_{G,p}=AW_p$, the graph causal and adapted Wasserstein distances coincide. As we always have $W_{G,p}\geq AW_p$, this reduces to understanding when adding the extra causal relations to transform graph $G$ into a linear one adds no advantage for marginals in $\Gprobabilities{\mc{X}}$. We show that the relevant notion is one already known in the probabilistic graphical models literature and require $G$ to be a \emph{perfect} DAG, i.e., that it has no merges. For such graphs, and for costs which are $G$-separable, see \eqref{eq:G-sep cost}, the $G$-causal OT problem can be computed via an iterative procedure, i.e., it satisfies a dynamic programming principle, see Proposition \ref{dpp_proof}. This, in turn, allows one to build an isometric embedding into a certain (classical) Wasserstein space of nested probability measures. The equality $W_{G,p}=AW_p$, and hence also $W_{G,p}=\widetilde{W}_{G,p}=AW_p$,
then follows from analogous embeddings for the latter in \cite{veraguas2020fundamental,Bartl2021WassersteinSpace}, see Theorems \ref{AWisWG} and \ref{aw_equality}.
\smallskip

\textbf{Related literature and outlook.} 
Graphical causal models (GCMs) are a large and active field of study. In many fields of applications, from medicine \cite{greenland1999causal,hernan2000marginal}, through law \cite{chockler2004responsibility,halpern2005causes,halpern2015cause}, to many applications of causal inference in psychology, sociology or political science, see \cite{hernan2020whatif}, structural causal models define how joint distributions arise and these specific representations map onto a qualitative representation as a GCM. We refer to \cite{Pearl2009Causality} and the references therein for a broad overview, to \cite{morgan2007counterfactuals} for a point of view linking with classical data analysis and statistics tools, and to \cite{peters2017elements} for links with machine learning and data-driven causal discovery. All of these provide fundamental motivation to study the spaces of $G$-compatible measures, $\Gprobabilities{\mc{X}}$, for different directed acyclic graphs $G$, and to understand and quantify continuity of natural operations and optimisation problems over such spaces.

Such questions in the context of laws of stochastic processes and their optimisation problems inspired \cite{Pflug2012DistanceMultistage} and the literature on adapted Wasserstein distances. A stream of works established these as the right tools to capture continuity of  multistage stochastic optimisation \citep{PflugPichler2014Multistage}, optimal stopping \citep{BackhoffVeraguas2020AllEqual}, and utility maximisation \citep{Backhoff-Veraguas2019AdaptedFinance}. 
Similarly, the \emph{do operator} and continuity of the average treatment effect, were behind the original work of \cite{Eckstein2023CausalGraphs}, who introduced $G$-causal Wasserstein distances. And whilst we show in this paper that for this particular problem $AW_1$ is in fact sufficient, the original intuition is vindicated by our results in Section \ref{section:applications}, and $W_{G,1}$ turn out to be the right tool to study continuity of optimisation problems over $\Gprobabilities{\mc{X}}$. More broadly, we believe the results discussed so far offer synergies with existing approaches and open up natural directions for future research. We sketch now some examples.

Distributionally robust optimisation \citet{Rahimian2019DROReview} with Wasserstein distances offers a powerful and tractable tool to robustify optimisation or learning tasks, e.g., \cite{mohajerin2018data,Blanchet2019Duality,Gao22a,BLT+22Unified}, and quantify sensitivity to model uncertainty \cite{Bartl2021SensitivityAnalysis,nendel22parametric}. Such applications have recently also been extended to the context of adapted Wasserstein distances, see \cite{bartlSensitivityMultiperiodOptimization2022,fuhrmann2023wasserstein,jiang2024sensitivity}. Similar questions for structural causal models, and causal discovery, are open. With the right distances now to hand, we have a clearer path to approach such problems, even if they can still prove to be very challenging. 

Numerical approaches for OT problems, see \cite{Peyre2018ComputationalTransport}, often go via entropic regularisation. Similar ideas have been fruitful also in other contexts, such as the recent work on the Gromov-Wasserstein OT setup in \cite{Zhang2022GWDuality}, or for adapted OT in \cite{Eckstein2022ComputationalTransport}. Apart from numerical algorithms, these papers often study the relevant entropic OT problem, an interesting mathematical question in its own right, see \cite{leonard_survey}. Defining and studying entropic version of the graph causal OT problem appears as a natural open problem to us. Likewise, 
motivated by the contributions on Wasserstein barycentres going back to \cite{agueh2011barycenters}, averaging over SCMs, i.e., computing graph causal barycentres of a family of elements in $\Gprobabilities{\mc{X}}$, is a very natural problem with many potential applications. We plan to study these questions in subsequent works and note that the first step in this direction was already taken in \cite[Section 4.3]{Eckstein2023CausalGraphs} who considered graph causal interpolation between two measures in $\Gprobabilities{\mc{X}}$.  

Throughout this paper, we use `causal' to refer to dependence constraints encoded via a directed graph $G$ which is assumed fixed. However, several of our results involved comparing graph causal OT, or $W_{G,p}$, for different graphs, such as when considering the transitive closure of $G$, or when showing $W_{G,p}=AW_{p}$. This relates more broadly to the theory of causal abstraction \citep{rubenstein2017causal} which considers relations between two SCMs representing the same system at different levels of granularity. These questions are important, in particular, across sub-fields of AI and relate to surrogate models, aggregation of information and transfer learning. \cite{pmlr-v236-felekis24a} used a multi-marginal OT approach to learn causal abstraction maps from data. We believe that our results could offer novel tools towards such applications.
\smallskip

\textbf{Organisation.} 
The rest of the paper is organised as follows. We begin by establishing notation. In Section \ref{section_main_results} we present our main results on graph causal Wasserstein distances. We next consider the graph causal OT problem more generally in Section \ref{section_structure}. This includes different characterisations of $G$-compatibility and $G$-bicausality, analysis of the gluing property for couplings, denseness of Monge couplings, the key relaxation of lifted graph bicausality, as well as a short discussion of duality. Section \ref{section_dpp} is devoted to establishing the dynamic programming principle and characterising when graph causal Wasserstein and adapted Wasserstein distances are equal. Section \ref{proof_section} then contains proofs. Our decision to assemble the proofs separately from the earlier presentation of the main results is motivated by the logic of arguments. Sections \ref{section_main_results}--\ref{section_dpp} present our main results in what we feel is a natural logical flow to discover them, but it is not one that can be adopted for the proofs where, for example, lifted bicausal couplings and Theorem \ref{thm:lifted} are key in showing the distance property in Theorem \ref{main_result_metric}. Importantly, we start the proofs with Section \ref{section_counterexamples}, where we present several important examples and counterexamples. Further examples are provided throughout the proofs -- most of our results are characterisations and for each of them we build explicit examples to show that the statements fail when one of the assumptions is dropped. Appendix \ref{appendixA} contains some technical, measure-theoretic results,  Appendix \ref{section_closedness} some technical results needed in the proof of Proposition \ref{prop:P_G_closed} on weak closure properties of graph compatible measures, and finally Appendix \ref{section_nonHP} further technical lemmata related to the generalised graph causal Wasserstein distances and the proof of Proposition \ref{prop:nonHP_metric}.

\section{Graph Compatible Measures and Couplings}

\subsection{Notation}

For random variables $A, B, C$ defined on the same probability space, $\condindep{A}{B}{C}$ means that $A$ and $C$ are conditionally independent given $B$. 
We will study probability measures which satisfy conditional independence relations induced by directed graphs of the form $G = (V, E)$, consisting of a finite set of vertices $V$ and a set of directed edges $E \subseteq \curlybrackets{(i, j) \in V \times V \mid i \neq j}$. Vertex labels play no role and, unless stated otherwise, we will  assume $V = \curlybrackets{1, 2, \ldots, n}$ for some $n \geq 1$.
For $i \in V$ we denote the set of \emph{parents} of node $i$ by $\parents{i} \coloneqq \curlybrackets{j \in V \mid (j,i) \in E}.$
We define inductively on $k \geq 1$ the set of \emph{$k$-ancestors} of node $i$ by $\kancestors{1}{i} \coloneqq \parents{i}$ and for all $k \geq 1$, $\kancestors{k+1}{i} \coloneqq \bigcup_{j \in \kancestors{k}{i}} \parents{j}.$ Finally, we define the set of \emph{ancestors} of node $i$ by
$\hist{i} \coloneqq \bigcup_{k \geq 1} \kancestors{k}{i}$ and the set of \emph{descendants} of $i$ by $\desc{i}\coloneqq \{j\in V: i\in \hist{j}\}$. For $A \subseteq V$, we write $\parents{A} \coloneq \bigcup_{i \in A} \parents{i}$. Graph inclusion $G_1\subseteq G_2$ simply means that $V_1\subseteq V_2$ and $E_1\subseteq E_2$, where $G_i=(V_i,E_i)$ $i=1,2$. In contrast, we say that $G_1$ is a \emph{proper subgraph} of $G_2$ when $V_1\subseteq V_2$ and $E_1 = E_2 \cap V_1\times V_1$.

A directed graph $G$ is called \emph{cyclic} if there exists $i \in V$ such that $i \in \hist{i}$. If no such vertex exists, $G$ is a \emph{directed acyclic graph (DAG)}. We say that a DAG is \emph{sorted}, or \emph{topologically ordered}, if $i < j$ whenever $i \in \parents{j}$.  
We say a directed graph $G' = (V', E')$ is a relabelling of $G = (V, E)$ if there exists a bijection $\lambda: V \to V'$ such that $(i, j) \in E$ if and only if $(\lambda(i), \lambda(j)) \in E'$. Relabelling is an equivalence relation on directed graphs and we often choose a suitable relabelling $G'$ instead of $G$. In particular, a DAG can always be relabelled so that it is sorted and hence, without loss of generality, we will mostly work with sorted DAGs. 

\begin{definition}
\label{def:DAGthree_structures}
    Let $G = (V, E)$ be a directed graph and $i, j, k \in V$.
    \begin{itemize}
        \item If $j \in \parents{k}$ and $i \in \parents{j}$ but $i \notin \parents{k}$ we call the triplet $(i,j,k)$ a \emph{chain}. If no such triplet exists, we say $G$ is \emph{transitively closed}. 
        \item If $i \in \parents{j} \cap \parents{k}$ but $j \notin \parents{k}$ and $k \notin \parents{j}$, we call the triplet $(i,j,k)$ a \emph{split}. If no such triplet exists, we say $G$ contains \emph{no splits}.
        \item If $j \in \parents{i}$ and $k\in \parents{i}$ but $j \notin \parents{k}$ and $k \notin \parents{j}$, we call the triplet $(i,j,k)$ a \emph{merge}. If no such triplet exists, we say $G$ is \emph{perfect}.
    \end{itemize}
\end{definition}
\begin{remark}\label{rk:three_structures}
In probabilistic terms, a chain represents the information flow of a Markov process. Note that $G$ is \emph{transitively closed} if whenever $i,j,k \in V$ are such that $i \in \parents{j}$ and $j \in \parents{k}$ then $i \in \parents{k}$. Given a DAG, we can consider its transitive closure, see Definition \ref{def:transitive_closure}. \\
A split is also known as a \emph{unshielded fork} and a merge as a \emph{v-structure}, an \emph{immorality} or an \emph{unshielded collider}, see Sections \ref{section_dpp} and \ref{proof_dpp} for related discussion and equivalent characterisations. 
\end{remark}

Given a directed graph $G = (V, E)$, we will associate to each $v \in V$ Polish spaces $\mc{X}_v$, $\mc{Y}_v$, $\mc{Z}_v$. We write $\mc{X} \coloneqq \prod_{v \in V}\mc{X}_v$, and for $A \subseteq V$, $\mc{X}_A \coloneqq \prod_{v \in A}\mc{X}_v$, with analogous notions for $\mc{Y}, \mc{Z}$. 
Throughout, when $G$ is given we tacitly assume that $\mc{X,Y,Z}$ are given as well. For $A \subseteq V$ and random variables $X$ with state space $\mc{X}$ we write $X_A$ for their projection on $\mc{X}_A$ and for $1 \leq a \leq b \leq n$, we use the shorthand $a : b$ to mean the set $[a,b] \cap \mbb{N}$. As noted, we mostly take $V = \curlybrackets{1, 2, \ldots, n}= 1 : n$ and then $\mc{X} = \mc{X}_1 \times \ldots \times \mc{X}_n$.

For a Polish space $\Omega$, $d_{\Omega}$ denotes its metric, and $\probabilities{\Omega}$ denotes the set of Borel probability measures on $\Omega$. For $p \geq 1$, $\Pp{p}{\Omega}$ is the subset of $\mu \in \probabilities{\Omega}$ such that $\int d_{\Omega}(x,x_0)^p \, d\mu(x) < \infty$ for some $x_0 \in \Omega$. 
For two Polish spaces $\Omega_1, \Omega_2$, we denote by $\proj{\Omega_1} : \Omega_1 \times \Omega_2 \to \Omega_1$ the projection map $(x, y) \mapsto x$. For $\mu \in \probabilities{\Omega_1}$ and a measurable map $T: \Omega_1 \to \Omega_2$, $\pushforward{T}{\mu} \in \probabilities{\Omega_2}$ denotes the pushforward of $\mu$ through $T$. For $\mu_i \in \probabilities{\Omega_i}$, $i=1,2$, we will write 
$\couplings{\mu_1}{\mu_2}= \setdef{\pi \in \probabilities{\Omega_1 \times \Omega_2}}{\pushforward{\proj{\Omega_i}}{\pi}=\mu_i, i=1,2}$ 
for the set of couplings of $\mu_1$ and $\mu_2$. 

\subsection{Bayesian Networks and Their Couplings}

Consider the joint distribution $\mu$ of an $n$-tuple of random variables $X=(X_1,\ldots, X_n)$ which respects the causality relations encoded by a DAG $G$: each $X_i$ is conditionally independent of its non-descendants given its parents. Such objects naturally arise in the causal inference literature as the measures induced by \emph{structural causal models} (SCM) \citep{PearlReview}, and are also called \emph{Bayesian networks}, although that name is more often associated with the couple $(\mu,G)$. More generally, such objects are also known as \emph{probabilistic graphical models}, and different definitions are used in the literature, often assuming joint density exists, see Section \ref{section_structure} for further discussion and equivalence results, and \cite{LauritzenBook} for a broader perspective. Our focus is on $\mu,G$ given, as opposed to designed, and we simply speak of their \emph{compatibility}.

\begin{definition}[G-compatible measures]
\label{G-compatible}
    Let $G$ be a DAG. A measure $\mu \in \probabilities{\mc{X}}$ is called \emph{$G$-compatible} if when $X \sim \mu$, the following conditional independence relations hold:
        $$\condindep{X_i}{X_{\parents{i}}}{X_{V\setminus (\desc{i}\cup \{i\})}},\qquad i\in V.$$
    We denote by $\Gprobabilities{\mc{X}}$ the set of all such $G$-compatible measures, and for $p \geq 1$, $\Gpprobabilities{p}{\mc{X}} \coloneq \Gprobabilities{\mc{X}} \cap \pprobabilities{p}{\mc{X}}$. 
\end{definition}
The above compatibility notion is also known as the \emph{local Markov property}. It is further equivalent to $G$-separation implying conditional independence, also known as the \emph{global Markov property}. Both definitions are invariant under relabelling of vertices. When we take a topologically ordered labelling, i.e., when $G$ is a sorted DAG, they are further equivalent to  
$$
\condindep{X_i}{X_{\parents{i}}}{X_{1:i-1}},\qquad 2\leq i \leq n,
$$
see \cite[Theorem~4.32]{LauritzenBook} and Proposition \ref{prop:Gcomp_characterisation} below which summarises these relations. The notion of $G$-compatibility and its equivalent reformulations are well studied in the causal statistics literature and remain an active field of research, e.g., with extensions to models with cycles and latent variables as in \cite{Bongers2021Cycles}. Nevertheless, to the best of our knowledge, some basic properties of $\Gprobabilities{\mc{X}}$ remain open and our work provides some novel contributions. In particular, we provide a full characterisation of DAGs for which $\Gprobabilities{\mc{X}}$ is weakly closed, see Proposition \ref{prop:P_G_closed}, complementing the partial results in \cite[Proposition 3.3]{Eckstein2023CausalGraphs}.

To define a transport distance between two $G$-compatible measures, we need to consider their couplings and understand what are the causal restrictions they need to satisfy. The central notion of \emph{graph (bi)causal} couplings was first introduced by \cite{Eckstein2023CausalGraphs} and we undertake their comprehensive study, necessary to characterise and understand their induced Wasserstein distances. We start with a new definition here, aligned in spirit with ideas from Bayesian networks and causal statistics.
\begin{definition}[Graph (bi)causal couplings for DAGs]
\label{G_causal_definition}
    Suppose $G$ is a DAG, $\mu \in \Gprobabilities{\mc{X}}$ and $\nu \in \Gprobabilities{\mc{Y}}$, the set of their \emph{$G$-causal} couplings is given as
$$
\Gcouplings{\mu}{\nu} := \couplings{\mu}{\nu}\cap \cGprobabilities{\mc{X}\times\mc{Y}},
$$
where $\causalG = (V^c, E^c)$ is the graph made of $G$ and its copy $G'$ with additional edges linking $i\in V$ (associated to $\mc{X}_i$) and its parents to its copy $i'$ in $V'$ (associated to $\mc{Y}_i$): 
    $$
    V^c = \{i,i': i\in V\},\quad E^c = \{(i,j), (i,j'), (i', j'): (i,j)\in E\} \cup \{(i,i'): i\in V\}.
    $$
Moreover, we define the set of \emph{$G$-anticausal} couplings $\Ganticouplings{\mu}{\nu}$ to be the set of measures in $\couplings{\mu}{\nu}$ whose pushforward under the map $(x,y) \mapsto (y,x)$ is in $\Gcouplings{\nu}{\mu}$, and the set of \emph{$G$-bicausal couplings} $\Gbicouplings{\mu}{\nu} \coloneqq \Gcouplings{\mu}{\nu} \cap \Ganticouplings{\mu}{\nu}$.
\end{definition}

\begin{figure}[H]
    \centering
    \definecolor{cbblue}{RGB}{0,114,178}      
\definecolor{cborange}{RGB}{230,159,0}      
\definecolor{cbvermillion}{RGB}{213,94,0} 

\begin{tikzpicture}[
    >=stealth,
    every node/.style={circle, fill=black, inner sep=2.5pt},
    blacknode/.style={circle, fill=black, inner sep=2.5pt},
    bluenode/.style={circle, fill=cbblue, inner sep=2.5pt},
    orangenode/.style={circle, fill=cborange, inner sep=2.5pt},
    rednode/.style={circle, fill=cbvermillion, inner sep=2.5pt},
    grayedge/.style={->, draw=gray!70, thick},
    blackedge/.style={->, draw=black, thick},
    orangeedge/.style={->, draw=cborange, very thick, densely dashed},
    graphlabel/.style={font=\large, fill=none, inner sep=0pt},
    dotedge/.style={-, draw=gray!30, thick, dotted}
]

\begin{scope}[local bounding box=Gbox]

    \node[blacknode] (1)  at (0, 1.732) [label=above:$1$] {};
    \node[blacknode] (2)  at (-1,0) [label=below:$2$] {};
    \node[blacknode] (3)  at ( 1,0) [label=below:$3$] {};
    
    \draw[blackedge] (1)   -- (2);
    \draw[blackedge] (1)   -- (3);

    \node[graphlabel] at (-1,1.5) {$G$};
\end{scope}

\begin{scope}[xshift=6.5cm, local bounding box=Pbox]
    \node[blacknode] (1)  at (0, 1.732) [label=above:$1$] {};
    \node[blacknode] (2)  at (-1.732, 1.732) [label=above:$2$] {};
    \node[blacknode] (3)  at ( 1.732, 1.732) [label=above:$3$] {};
    \node[blacknode] (1a)  at (0,0) [label=below:$1'$] {};
    \node[blacknode] (2a)  at (-1.732,0) [label=below:$2'$] {};
    \node[blacknode] (3a)  at ( 1.732,0) [label=below:$3'$] {};

    \draw[blackedge] (1)   -- (2);
    \draw[blackedge] (1)   -- (3);
    \draw[blackedge] (1a)   -- (2a);
    \draw[blackedge] (1a)   -- (3a);
    \draw[blackedge] (1)   -- (1a);
    \draw[blackedge] (2)   -- (2a);
    \draw[blackedge] (3)   -- (3a);
    \draw[blackedge] (1)   -- (2a);
    \draw[blackedge] (1)   -- (3a);

    \node[graphlabel] at (-2.5, 0.8516) {$G^c$};
    \node[graphlabel] at (2.5,0) {$G'$};
    \node[graphlabel] at (2.5,1.732) {$G$};

    \begin{pgfonlayer}{background}
        \draw[
        draw=gray!20,
        line width=1pt,
        fill=gray!20,
        fill opacity=0.20,
        pattern={
                Lines[
                    angle=-45,
                    distance=10pt,
                    line width=5pt
                ]
            },
            pattern color=cborange!25.062017,
            rounded corners=2pt
        ] (-2.2,-0.7) rectangle (3,0.7);

        \draw[
        draw=gray!20,
        line width=1pt,
        fill=gray!20,
        fill opacity=0.20,
        pattern={
                Lines[
                    angle=45,
                    distance=10pt,
                    line width=5pt
                ]
            },
            pattern color=cbblue!25.062017,
            rounded corners=2pt
        ] (-2.2,-0.7 + 1.732) rectangle (3,0.7 + 1.732);
    \end{pgfonlayer}
    
\end{scope}
\end{tikzpicture}
    \caption{An example of the construction of $G^c$ for the DAG $G$ with vertex set $\curlybrackets{1,2,3}$ and edge set $\curlybrackets{(1,2), (1,3)}$, shown on the left. $G^c$ is shown on the right, and the two copies $G$ and $G'$ of the original DAG in $G^c$ are highlighted. The spaces $\mc{X}_1, \mc{X}_2, \mc{X}_3$ are associated to the nodes $1$, $2$, $3$, while the spaces $\mc{Y}_1, \mc{Y}_2, \mc{Y}_3$ are associated to the nodes $1'$, $2'$, $3'$.}
    \label{fig:larger_graph}
\end{figure}

We show that $\Gbicouplings{\mu}{\nu}$ is non-empty and discuss a number of equivalent characterisations of graph bicausal couplings in Section \ref{section_structure}, as well as its extensions to directed (potentially cyclical) graphs, see Definition \ref{G_causal_definition_2}. 
In particular, in Proposition \ref{equivalent_characterisations}, we show that when $G$ is a sorted DAG then $\pi \in \couplings{\mu}{\nu}$ is \emph{$G$-causal} if and only if when $(X,Y) \sim \pi$, then the following conditional independence relations hold:
        $$\condindep{Y_i}{X_i, X_{\parents{i}}, Y_{\parents{i}}}{(X, Y_{1:i-1})},\qquad 1\leq i\leq n.$$
We typically work with this characterisation and can do so without any loss of generality since, given a DAG, we may always choose a relabelling of its vertices such that it becomes a sorted DAG with vertex set of the form $\curlybrackets{1,2, \ldots, n}$ and, as noted above, $G$\emph{-compatibility}, and hence also \emph{$G$-(bi)causality}, is invariant under relabelling.

\section{Graph Causal Wasserstein Distances}
\label{section_main_results}

\subsection{Metric Property}

Graph bicausal couplings naturally induce the following variant of the Wasserstein distance:

\begin{definition}[Graph causal Wasserstein distances]\label{def:G-Wass}
    Let $G$ be a directed graph and $p \geq 1$. The $p$\textsuperscript{th} order \emph{$G$-causal Wasserstein distance} between $\mu, \nu \in \Gpprobabilities{p}{\mc{X}}$ is given by
    \begin{equation}\label{eq:W_Gdef}
        W_{G,p}(\mu, \nu)^p \coloneq \inf_{\pi \in \Gbicouplings{\mu}{\nu}} \int d_{\mc{X}}(x,y)^p \, d\pi(x,y).
    \end{equation}
\end{definition}

This object was originally proposed in \cite{Eckstein2023CausalGraphs}. It is immediate that $W_{G,p}$ satisfies two of the three properties of a metric: symmetry ($W_{G,p}(\mu, \nu) = W_{G,p}(\nu, \mu)$) and positivity ($W_{G,p}(\mu, \nu) \geq 0$, with equality if and only if $\mu = \nu$). 
The former follows from the purposefully symmetrical definition of $\Gbicouplings{\mu}{\nu}$, and the second from $d_{\mc{X}}$ being a distance and the fact that the identity coupling is always graph bicausal. In contrast, the third property, the triangle inequality, was left open by \cite{Eckstein2023CausalGraphs} who  only provide a numerical counterexample showing that the triangle inequality does not generally hold for the single chain graph $\Gmarkov$, with vertex set $\curlybrackets{1,2,3}$ and edge set $\curlybrackets{(1,2), (2,3)}$. 

Our first main result proves that the triangle inequality does hold for a large class of DAGs and hence $W_{G,p}$ is a metric. 

\begin{theorem}
    \label{main_result_metric}
    Let $G$ be a transitively closed DAG. Then $W_{G,p}$ is a metric on $\Gpprobabilities{p}{\mc{X}}$. Conversely, if $G$ is a DAG which is not transitively closed, then there is a choice of $(\mc{X},d_{\mc{X}})$ for which $W_{G,p}$ does not satisfy the triangle inequality on $\Gpprobabilities{p}{\mc{X}}$.
\end{theorem}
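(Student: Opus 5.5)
For the direct implication, the plan is to follow the route the introduction announces: reduce to Monge-type couplings and compose them. Symmetry and positivity are already noted after Definition \ref{def:G-Wass}, so only the triangle inequality needs proof. Fix $\mu_1,\mu_2,\mu_3\in\Gpprobabilities{p}{\mc{X}}$ and $\eps>0$. Choose near-optimal $\pi_{12}\in\Gbicouplings{\mu_1}{\mu_2}$ and $\pi_{23}\in\Gbicouplings{\mu_2}{\mu_3}$.

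Gluing cannot be used directly, because by Theorem \ref{glue_characterisation} it fails once $G$ has splits. Instead, I would use the lifted representation of Theorem \ref{thm:lifted}. Each $G$-bicausal coupling is the projection of a plan induced by a $G$-biadapted Monge map on an enlarged space $\mc{X}\times\mc{Z}$, where each node carries extra independent randomness (e.g.\ uniform on $[0,1]$ at each node). Write $\pi_{12}$ as the projection of $(\mathrm{Id},T)_{\#}\hat\mu_1$, where $\hat\mu_1$ is a lift of $\mu_1$ and $T$ is $G$-biadapted with $T_{\#}\hat\mu_1$ a lift $\hat\mu_2$ of $\mu_2$. Similarly, write $\pi_{23}$ via a map $S$ pushing a lift $\hat\mu_2'$ of $\mu_2$ to a lift of $\mu_3$.

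The main obstacle is that the two lifts of $\mu_2$ need not agree. I would handle this by lifting with enough independent randomness (e.g.\ a product of independent uniforms at each node) so that both can be realised on a common lift. Concretely, I would relabel the auxiliary coordinate through a $G$-biadapted measurable bijection, built node by node in topological order by conditional quantile transforms. If exact bijections are not available, I would fall back on Theorem \ref{main_dense}: approximate by $G$-biadapted Monge maps on atomless lifts, and use continuity of the cost integral under the relevant convergence.

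Given a common lift, the composition $S\circ T$ is $G$-biadapted. This is exactly where transitive closedness enters: the $i$-th component of $S\circ T$ depends on $T_j$ for $j\in\parents{i}\cup\{i\}$, and these in turn depend on the parents of $j$. Since $\hist{i}=\parents{i}$, everything stays within $\parents{i}\cup\{i\}$. Projecting the plan $(\mathrm{Id},S\circ T)_{\#}\hat\mu_1$ gives a $G$-bicausal coupling of $\mu_1$ and $\mu_3$, again by Theorem \ref{thm:lifted}. Minkowski's inequality in $L^p(\hat\mu_1)$ then yields $W_{G,p}(\mu_1,\mu_3)\le W_{G,p}(\mu_1,\mu_2)+W_{G,p}(\mu_2,\mu_3)+2\eps$.

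For the converse, take a chain $(i,j,k)$ in $G$. The plan is to put trivial one-point spaces on every node outside $\{i,j,k\}$, which reduces the problem to the subgraph on $\{i,j,k\}$ with edges $i\to j\to k$, plus possibly a split edge $i\to j$, $j\to k$ only. I would then build an explicit counterexample with two-point spaces $\mc{X}_i,\mc{X}_j,\mc{X}_k=\{0,1\}$ for $\Gmarkov$. Take $\mu_1$ where $X_k$ correlates with $X_j$; $\mu_3$ its perturbation; and an intermediate $\mu_2$ with $X_j$ almost constant, which is compatible trivially. With $\mu_2$ as a pivot, both legs become cheap. The direct bicausal coupling of $\mu_1$ and $\mu_3$, however, cannot use $X_i$ to steer $Y_k$ because $i\notin\parents{k}$, so it pays a fixed cost. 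I would compute the three values by small linear programs, with the conditional-independence constraints made explicit, and choose the metric so that the inequality fails. I expect that tuning this example, especially verifying the lower bound for $W_{G,p}(\mu_1,\mu_3)$, is the main computational step of the converse.
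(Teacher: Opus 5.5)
Your positive direction follows the paper's route: Corollary \ref{coro:GOT} plus the lifted gluing Lemma \ref{bicausal_gluing_lemma}, i.e.\ lift, compose $G$-biadapted maps using transitive closure, and apply Minkowski. However, one step is false as stated, and your ``main obstacle'' does not arise.

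Projecting $(\mathrm{Id},S\circ T)_{\#}\hat\mu_1$ does \emph{not} in general give a $G$-bicausal coupling of $\mu_1,\mu_3$. It gives an element of $\newGbicouplings{\mu_1}{\mu_3}$. By Theorem \ref{thm:lifted_nosplits}, this set can be strictly larger than $\Gbicouplings{\mu_1}{\mu_3}$ whenever $G$ has a split. Those are exactly the transitively closed graphs where gluing fails and your argument is needed. For instance, take one $\Gsplit$-bicausal leg that stores a fair coin in the pivot's root coordinate, and a second that flips both leaves according to that coin. Their composite reproduces the non-bicausal coupling of the example in Section \ref{proof:lifted_nosplits}.

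The bound $W_{G,p}(\mu_1,\mu_3)^p\le\int d_{\mc{X}}(x,z)^p\,d\pi_{13}$ must come from the other half of Theorem \ref{thm:lifted}. There, $\pi_{13}=\int\pi\,d\rho(\pi)$ with $\rho$ concentrated on $\Gbicouplings{\mu_1}{\mu_3}$, so its cost dominates the infimum. As for mismatched lifts, Definition \ref{new_definiton} uses the canonical lift $\hat\mu=\mu\otimes\lambda^n$. Hence $T_{\#}\hat\mu_1=\hat\mu_2$ is exactly the measure $S$ acts on, and no quantile relabelling or approximation via Theorem \ref{main_dense} is needed.

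The real gap is the converse. Your reduction (one-point spaces off the chain) matches the paper's. Note that the induced subgraph on a chain $(i,j,k)$ is exactly $\Gmarkov$: acyclicity and $i\notin\parents{k}$ rule out any further edge. But you never construct a counterexample, and the obstruction you invoke, $i\notin\parents{k}$, is not enough on its own. A direct bicausal coupling may let its transport at node $k$ depend on $(X_j,Z_j)$, so the obstruction has to come from the endpoint measures.

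The construction needs two features:
\begin{enumerate}
\item Endpoints $\mu_1,\mu_3$ whose $j$-coordinate carries no information. In the paper it is constant, which makes the $i$- and $k$-coordinates independent, so every $\Gmarkov$-bicausal coupling of the endpoints factorises as $\pi_1\otimes\delta\otimes\pi_3$.
\item A pivot $\mu_2$ whose $j$-coordinate \emph{encodes} its $i$-coordinate. The first leg can then store $X_i$ in $Y_j$, and the second leg can use $Y_j$ to decide whether to flip the $k$-coordinate.
\end{enumerate}
Your description ($X_k$ correlated with $X_j$ under $\mu_1$, $\mu_3$ a perturbation of it, a pivot with $X_j$ ``almost constant'') does not secure this structure.

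Example \ref{example_counter_Markov} realises it. The composite of the two Monge maps flips $x_3$ exactly when $x_1=0$. The product structure forces every bicausal coupling of $\mu,\eta$ to put mass at most $1/2$ on the four pairs the composite uses. The discrete metric equals $\eps$ within the four equivalence classes and $1/\eps$ across them. This gives $W_{\Gmarkov,p}(\mu,\eta)\ge 1/(2\eps)>2\eps\ge W_{\Gmarkov,p}(\mu,\nu)+W_{\Gmarkov,p}(\nu,\eta)$ for $\eps<1/2$. Without some such explicit construction, the converse is not proved.
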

The first part of the above theorem is a positive statement. Together with the result presented in the next sections, these offer a cornerstone for a new field of graph causal optimal transport and, we believe, open up exciting avenues for future research, some of which we outlined already above. 

The second part of the statement shows the limitations of Definition \ref{def:G-Wass}. Example \ref{example_counter_Markov} gives an explicit counterexample where the triangle inequality fails for $\Gmarkov$,   complementing the numerical results of \cite{Eckstein2023CausalGraphs}. The converse statement then follows exploiting the fact that if $G$ is not transitively closed, then it has $\Gmarkov$ as a proper subgraph. The proof of the first part of Theorem \ref{main_result_metric} requires us to introduce $G$-bicausal couplings on a lifted space. Whilst their definition may appear technical at first, it lead to the modified Definition \ref{def:lifted_G-Wass}, of lifted $G$-causal Wasserstein distances $\widetilde{W}_{G,p}$, which are always a metric and which agree with $W_{G,p}$ for transitively closed DAGs. In fact, they correspond to $W_{\Gbar, p}$, where $\Gbar$ is the (transitive) closure of $G$, see Proposition \ref{prop:nonHP_metric} and the discussion which precedes it.

Important classes of graphs $G$ allow us to recover known transport distances. 
First, when $G$ corresponds to the linear information structure, i.e., each node has all earlier nodes as its parents, $G=G_n := ({1:n}, \setdef{(i, j)}{1\leq i<j\leq n})$, $G_n$-causality is equivalent to $\condindep{Y_i}{X_{1:i}, Y_{1:i-1}}{(X, Y_{1:i-1})}$ for all $i\in V$, which by the chain rule of conditional independence \cite[Theorem 8.12]{Kallenberg2021FoundationsThird}, is equivalent to $\condindep{Y_{1:i}}{X_{1:i}}{X}$ for all $i\in V$. Informally speaking: the past of $Y$ only depends on $X$ through its past. This is precisely the notion of a causal coupling from causal OT \citep{Lassalle2013Causal,Backhoff2016CausalDiscrete}, all marginals are $G_n$-compatible, $\specificGpprobabilities{G_n}{p}{\mc{X}}=\pprobabilities{p}{\mc{X}}$, and we recover the adapted, or bicausal, Wasserstein distance $W_{G_n,p}=AW_p$. 
Second, when $G$ is a fully connected graph, $\parents{i}=V\setminus\{i\}$ for all $i\in V$, extending current definitions to allow for cyclic graphs, see Definition \ref{G_causal_definition_2}, $W_{G,p}$ reduces to the classical Wasserstein distance.
Third, a graph $G$ with no edges results in a distance analogous to the Gromov-Wasserstein distance, see Section \ref{sec:duality}. 
We refer to \cite[Section 2.4]{Eckstein2023CausalGraphs} for a detailed discussion. In these classical examples, it is relatively easy to establish the triangle inequality, and hence the distance property, using the so-called \emph{gluing lemma}: constructing a $\pi\in \couplings{\mu}{\eta}$ from $\pi_1\in \couplings{\mu}{\nu}, \pi_2\in \couplings{\nu}{\eta}$ by disintegrating and joining along the common marginal. This natural strategy does not generalise to general DAGs: the gluing property only holds for transitively closed DAGs with no \emph{splits}, see Theorem \ref{glue_characterisation} and the discussion in Section \ref{main_gluing}. 

The proof of Theorem \ref{main_result_metric} is presented in Section \ref{proof_metric}. Since we can not simply rely on a gluing lemma, its  strategy is to consider a lifted space, which allows for additional independent randomisations at each node. We show that $G$-bicausal couplings are projections of Monge couplings (i.e., couplings supported on the graph of a function) on the lifted space. Furthermore, such projections are essentially convex combinations of elements in $\Gbicouplings{\mu}{\nu}$. Since Monge couplings are easy to glue (by composing their associated functions) these results allow us to establish the desired triangle inequality. These, and other, structural properties of $G$-bicausal couplings are discussed in Section \ref{section_structure}. 

\subsection{First Applications}
\label{section:applications}

In the context of stochastic optimisation problems, $G$-compatible measures arise as the state distributions in stochastic team problems with information sets which do not necessarily increase \citep{Yuksel2024TeamsBook}. We establish Lipschitz continuity of their value function with respect to $W_{G,1}$, which provides a foundational motivation for these distances. We are inspired by the results of \citet[Theorem~6.4, Corollary~6.7]{PflugPichler2014Multistage} on continuity of multistage stochastic optimisation problems with respect to the adapted Wasserstein distance. 

\subsubsection*{Stochastic Team Problems}

Consider a sorted and transitively closed DAG $G$ with $V = \curlybrackets{1, \ldots, n}$, and a system with stochastic state described by the random variable $X= (X_1, \ldots, X_n) \coloneqq \Id{\mc{X}}$ such that $\mu \coloneqq \law{X} \in \Gprobabilities{\mc{X}_{1:n}}$. A collection of $n$ decision makers (DMs) must choose actions $\alpha = (\alpha_1, \ldots, \alpha_n)$ in a closed cylinder set $A \coloneqq \prod_{k=1}^n A_k \subseteq \mbb{R}^n$ in such a way that the action of the $k$\textsuperscript{th} decision maker, $\alpha_k \in A_k$ only uses the information available by observing $X_k, X_{\parents{k}}$ i.e., $\alpha_k$ must be $\sigma(X_k, X_{\parents{k}})$-measurable. We write $\alpha \isGadapted X$ for this set of so-called $G$-adapted actions. Note that for $\alpha \isGadapted X$, there exist measurable functions $H_k : \mc{X}_k \times \mc{X}_{\parents{k}} \to \mbb{R}$ such that $\alpha_k = H_k(X_k, X_{\parents{k}})$.
The (shared) goal of the DMs is to minimise the expectation of a given loss function $Q : \mc{X} \times A \to [0, \infty)$. We thus want to solve
\begin{equation}\label{eq:StochTeamPb}
v(\mu) \coloneqq \inf_{\alpha \in A, \alpha \isGadapted X} \expectation{\mu}{Q(X, \alpha)},    
\end{equation}
where we have written $\mbb{E}_{\mu}$ to emphasize the fact that $\law{X}=\mu$.

\begin{remark}
    In the case of a linear information structure $G=G_n$, $\alpha \isGadapted X$ reduces to the requirement that $\alpha$ is adapted to the canonical filtration generated by $X$ i.e., $\alpha \triangleleft X$. In this case, the DMs have non-decreasing information sets and can be viewed as the series of consecutive decisions by a central actor who accumulates the information. We thus recover the classical multistage stochastic optimisation problem, and the setting of \cite{PflugPichler2014Multistage}. 
\end{remark}

\begin{remark}
    Since $G$ is transitively closed, the relation $i \in \parents{j}$ induces a partial order on $V$, which we denote by $\leq_G$. For a series of $m \geq 1$ decision makers with labels $k_1 \leq_G \ldots \leq_G k_m$, their information sets $\mc{F}_{k_i} \coloneqq \sigma(X_{k_i}, X_{\parents{k_i}})$ then form a filtration $\mc{F}_{k_1} \subseteq \ldots \subseteq \mc{F}_{k_m}$. We are thus in the setting of a quasi-classical team game \cite{Yuksel2024TeamsBook}, i.e., one with partially nested information sets.
\end{remark}

The following result, whose proof is presented in Section \ref{proof_lipschitz_optimisation}, establishes the desired Lipschitz continuity with respect to $W_{G,1}$. It is complemented by Example \ref{example:notLipAW} which shows that, in general, Lipschitz continuity does not hold in adapted Wasserstein distance. 

\begin{theorem}
\label{thm:lipschitz_optimisation}
    Suppose $A \subseteq \mbb{R}^n$ is compact and convex, that the map $\alpha \mapsto Q(x, \alpha)$ is convex for all $x \in \mc{X}$, and that $\abs{Q(x, \alpha) - Q(y, \alpha)} \leq Ld_{\mc{X}}(x,y)$ for all $\alpha \in A$ for some $L > 0$. Then 
    $$\abs{v(\mu) - v(\nu)} \leq L \cdot W_{G,1}(\mu, \nu), \, \forall \mu, \nu \in \Gpprobabilities{1}{\mc{X}}.$$
\end{theorem}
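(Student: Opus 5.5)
By symmetry of $W_{G,1}$ it suffices to show $v(\nu) \leq v(\mu) + L\, W_{G,1}(\mu,\nu)$. The idea, following \citet{PflugPichler2014Multistage}, is to transport an (almost) optimal strategy for $\mu$ to a strategy for $\nu$ along a $G$-bicausal coupling. We do this by conditional averaging and use convexity of $Q$ in $\alpha$ and of $A$ to control the cost.

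\textbf{Step 1 (setup).} Fix $\varepsilon>0$ and a coupling $\pi \in \Gbicouplings{\mu}{\nu}$ with $\int d_{\mc{X}}(x,y)\,d\pi \leq W_{G,1}(\mu,\nu)+\varepsilon$. Fix also a $G$-adapted $\alpha_k = H_k(X_k, X_{\parents{k}})$ with $\mbb{E}_\mu[Q(X,\alpha)] \leq v(\mu)+\varepsilon$. On the space $\mc{X}\times\mc{X}$ with $(X,Y)\sim\pi$, define
$$\beta_k := \mbb{E}_\pi\left[H_k(X_k,X_{\parents{k}}) \,\middle|\, Y_k, Y_{\parents{k}}\right], \qquad k=1,\dots,n.$$
Since $A=\prod_k A_k$ is compact and convex, each $A_k$ is a compact interval, so $\beta_k\in A_k$. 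Moreover, $\beta_k = \tilde H_k(Y_k,Y_{\parents{k}})$ for some measurable $\tilde H_k$, so $\beta$ is a $G$-adapted action for $\nu$.

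\textbf{Step 2 (key conditional independence).} We want $\beta = \mbb{E}_\pi[\alpha \mid Y]$ componentwise, i.e.\ that $(X_k,X_{\parents{k}})$ is conditionally independent of $Y$ given $(Y_k,Y_{\parents{k}})$. This is the step I expect to be the main obstacle. It should follow from anticausality of $\pi$: the swapped coupling is $G$-causal, so the graph $G^c$ with the roles of $X$ and $Y$ exchanged applies, and in it $X_k$ has parents $X_{\parents{k}}, Y_k, Y_{\parents{k}}$. Using the global Markov property (Proposition \ref{prop:Gcomp_characterisation}) for this swapped graph, we check that $\{k\}\cup\parents{k}$ on the $X$-side is d-separated from the remaining $Y$-nodes by $Y_{\{k\}\cup\parents{k}}$. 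Transitive closedness enters here. Every $X$-ancestor of $X_k$ lies in $\parents{k}$, and its $Y$-parents are in $Y_{\parents{k}}$, so any path to other $Y_j$ must pass through a conditioned node as a non-collider or through an unconditioned collider. A trail entering $X$-descendants of $X_k$ meets colliders that are not ancestors of the conditioning set. I would verify this carefully, equivalently via the sorted characterisation from Proposition \ref{equivalent_characterisations} combined with the chain rule for conditional independence.

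\textbf{Step 3 (estimate).} With $\mbb{E}_\pi[\alpha\mid Y]=\beta$, conditional Jensen applied to the convex map $\alpha\mapsto Q(Y,\alpha)$ gives
$$v(\nu)\leq \mbb{E}_\pi[Q(Y,\beta)] \leq \mbb{E}_\pi[Q(Y,\alpha)] \leq \mbb{E}_\pi[Q(X,\alpha)] + L\,\mbb{E}_\pi[d_{\mc{X}}(X,Y)].$$
Measurability and integrability hold because $Q\geq 0$ and $A$ is bounded; convexity of $A$ together with a measurable version of $Q$ suffices for Jensen. The right-hand side is at most $v(\mu)+\varepsilon+L(W_{G,1}(\mu,\nu)+\varepsilon)$. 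Letting $\varepsilon\to0$ and exchanging $\mu$ and $\nu$ completes the proof.
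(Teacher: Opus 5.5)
Your proof is correct and follows the paper's argument: average an $\varepsilon$-optimal $G$-adapted strategy along $\pi\in\Gbicouplings{\mu}{\nu}$, apply Jensen to the convex map $\alpha\mapsto Q(\cdot,\alpha)$, use the uniform Lipschitz bound, and symmetrise. The paper runs it in the mirror direction, which is immaterial. It takes an $\varepsilon$-optimal $H$ for $\nu$ and sets $\tilde H_k(x_k,x_{\parents{k}})=\int H_k(y_k,y_{\parents{k}})\,d\pi^{x}$, using causality rather than anticausality.

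The one real difference is how the key conditional independence is justified. The paper derives $\condindep{(Y_k,Y_{\parents{k}})}{X_k,X_{\parents{k}}}{X}$ from Lemma \ref{applications_lemma}, which goes through Theorem \ref{thm:lifted}. For transitively closed $G$, $\pi$ is the projection of a $G$-biadapted Monge coupling on the lifted space. Hence $Y_A$ is a function of $(X_A,X_{\parents{A}},U)$ with $U$ independent of $X$, and $\parents{A}\subseteq A$ for $A=\{k\}\cup\parents{k}$.

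You instead read the mirrored statement directly off the global Markov property of the swapped $\causalG$-compatible law. This is more elementary and avoids the lifting theorem altogether. It also shows exactly where transitive closure is used: $B=\{k\}\cup\parents{k}$ is an ancestral set. The paper's route, in exchange, is a two-line consequence of machinery it needs anyway, and it applies verbatim to lifted couplings in $\newGbicouplings{\mu}{\nu}$.

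Your d-separation sketch is right, and it can be written cleanly via moralisation. In the swapped $\causalG$, each $Y_j$ has only $Y$-parents, and each $X_b$ has parents $X_{\parents{b}},Y_{\parents{b}},Y_b$. So the ancestral closure of $X_B\cup Y_V$ is $X_B\cup Y_V$ itself. In its moral graph every neighbour of an $X_B$-node lies in $X_B\cup Y_B$, so $Y_B$ separates $X_B$ from $Y_{V\setminus B}$.

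One citation fix: the global Markov property is not among the listed items of Proposition \ref{prop:Gcomp_characterisation}. It is the equivalence the paper quotes from the graphical-models literature, valid without density or positivity assumptions, and you should cite it as such.
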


\subsubsection*{Average Treatment Effect}

As a second application, we recall the result from \cite{Eckstein2023CausalGraphs}, which showed that the \emph{average treatment effect} in causal statistics, see \cite{Pearl2009Causality}, is $W_{G,1}$-Lipschitz. Assume that for vertices $j < k$ in a sorted DAG $G = (1:n,E)$, $X_j \in \mc{X}_j = \curlybrackets{0,1}$ encodes whether a treatment is applied or not (or, in non-medical terms, whether an intervention is done or not), and $X_k \in \mc{X}_k$ encodes the outcome for some target variable. We assume $\mc{X}_k$ is a compact subset of $\mbb{R}$, that $d_{\mc{X}_i}(x_i, y_i) = \abs{x_i - y_i}$ for $i = j, k$, and that $d_{\mc{X}} = \sum_{i=1}^n d_{\mc{X}_i}(x_i, y_i)$. 

If the true data-generating distribution is given by some causal model $(\mu, G)$ with $\mu \in \Gprobabilities{\mc{X}}$, then the average effect of applying the treatment, expressed in Pearl's {\emph{do}}-notation \citep{Pearl2009Causality}, can be identified from $\mu$ via the back-door adjustment \cite[Theorem 3.3.2]{Pearl2009Causality} as

$$\int_{\mc{X}_k} x_k \, \mu(dx_k \,\vert\, \mathrm{do}(x_j=1)) = \int_{\mc{X}_{\parents{j}}} \int_{\mc{X}_k} x_k \, \mu(dx_k \,\vert\,x_j=1, x_{\parents{j}}) \, \mu(dx_{\parents{j}}).$$

Replacing $1$ with $0$ above yields the formula for the average effect of not applying the treatment. The \emph{average treatment effect (ATE)} is then defined as
\begin{equation}\label{eq:AvTrEff}
\psi^{\mu, G} \coloneq \abs{\int_{\mc{X}_k} x_k \, \mu(dx_k \,\vert\, \mathrm{do}(x_j=1)) - \int_{\mc{X}_k} x_k \, \mu(dx_k \,\vert\, \mathrm{do}(x_j=0))}.    
\end{equation}

Consider the situation that we were right about the causal graph $G$ but might have misestimated our model $\mu$. We restrict our class of possible models to those satisfying $\delta \leq \mu(x_j =1 \, \vert \, x_{\parents{j}}) \leq 1 - \delta$ for $\mu$-almost all $\xparents{j}$ for some fixed $\delta > 0$,  the set of which is denoted by $\specificGprobabilities{G, \delta}{\mc{X}}$. Then the following result controls the impact of our misestimation on the ATE via the $W_{G,1}$ distance:

\begin{proposition}
[\cite{Eckstein2023CausalGraphs}, Proposition 4.4]
\label{prop:continuity_ate}
    Under the assumptions above, there exists a constant $L > 0$ such that
    $$\abs{\psi^{\mu, G} - \psi^{\nu, G}} \leq L \cdot W_{G,1}(\mu, \nu) ,\, \forall \, \mu, \nu \in \specificGprobabilities{G, \delta}{\mc{X}}.$$
\end{proposition}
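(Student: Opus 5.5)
The plan is to write the ATE as an integral of a function of $(x_{\parents{j}}, x_j, x_k)$ against $\mu$, and then transfer estimates along a $G$-bicausal coupling $\pi \in \Gbicouplings{\mu}{\nu}$ that is (nearly) optimal for $W_{G,1}(\mu,\nu)$. By the back-door formula and the positivity bound $\delta \leq \mu(x_j=1\mid x_{\parents{j}}) \leq 1-\delta$, we can rewrite
$$\int x_k\,\mu(dx_k\mid \mathrm{do}(x_j=1)) = \int \frac{\mathbbm{1}_{\{x_j=1\}}\, x_k}{\mu(x_j=1\mid x_{\parents{j}})}\,d\mu(x),$$
which is inverse propensity weighting. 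The analogous identity holds for $x_j=0$. It therefore suffices to bound the difference of these integrals for $\mu$ and $\nu$ by $C\,W_{G,1}(\mu,\nu)$, and then let $L=2C$.

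Fix $\pi\in\Gbicouplings{\mu}{\nu}$ with $(X,Y)\sim\pi$, and write $p^\mu(x_{\parents{j}})=\mu(x_j=1\mid x_{\parents{j}})$ and $p^\nu$ for the analogue under $\nu$. I would split the difference into two terms.

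\emph{(a) Outcome term.} Replacing $X_k$ by $Y_k$ costs at most $\delta^{-1}\,\mathbb{E}|X_k-Y_k|$.

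\emph{(b) Treatment and propensity term.} Here I compare $\mathbbm{1}_{\{X_j=1\}}/p^\mu(X_{\parents{j}})$ with $\mathbbm{1}_{\{Y_j=1\}}/p^\nu(Y_{\parents{j}})$, weighted by the bounded $Y_k$ (recall $\mathcal{X}_k$ is compact). The indicator discrepancy is controlled because $|\mathbbm{1}_{\{X_j=1\}}-\mathbbm{1}_{\{Y_j=1\}}|=|X_j-Y_j|$ on $\{0,1\}$. The propensity discrepancy is the real issue: $p^\mu(X_{\parents{j}})$ and $p^\nu(Y_{\parents{j}})$ are not Lipschitz functions of the parents, so a direct estimate fails.

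This is where $G$-bicausality enters, and I expect it to be the main obstacle. By the characterisation of Proposition \ref{equivalent_characterisations}, causality gives $Y_j \indep X \mid (X_j, X_{\parents{j}}, Y_{\parents{j}})$, and anticausality gives the symmetric statement with the roles of $X$ and $Y$ swapped. From these I would derive that, conditionally on $(X_{\parents{j}},Y_{\parents{j}})$, the law of $X_j$ is $p^\mu(X_{\parents{j}})$ and the law of $Y_j$ is $p^\nu(Y_{\parents{j}})$. The key point is that the pair $(X_{\parents{j}},Y_{\parents{j}})$ does not alter either marginal propensity. Indeed, $X_j \indep Y_{\parents{j}} \mid X_{\parents{j}}$ follows from anticausality together with the $G$-compatibility of $\mu$, and the symmetric independence holds for $Y_j$. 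Hence
$$\mathbb{E}\big[|p^\mu(X_{\parents{j}})-p^\nu(Y_{\parents{j}})|\big] \leq \mathbb{E}\big[\,\big|\mathbb{P}(X_j=1\mid\cdot)-\mathbb{P}(Y_j=1\mid\cdot)\big|\,\big] \leq \mathbb{E}|X_j-Y_j|,$$
where $\cdot$ denotes conditioning on $(X_{\parents{j}},Y_{\parents{j}})$. The conditioning step requires care when the parents are non-atomic; I would handle it via disintegration over $(X_{\parents{j}},Y_{\parents{j}})$.

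Combining, the elementary inequality $|1/a-1/b|\leq|a-b|/\delta^2$ for $a,b\in[\delta,1-\delta]$, together with the boundedness of $\mathcal{X}_k$, gives
$$|\text{difference}| \leq C\big(\mathbb{E}|X_j-Y_j| + \mathbb{E}|X_k-Y_k|\big) \leq C\,\mathbb{E}[d_{\mathcal{X}}(X,Y)],$$
with $C$ depending only on $\delta$ and $\operatorname{diam}\mathcal{X}_k$. Taking the infimum over $\pi$ yields $W_{G,1}(\mu,\nu)$, which completes the argument.
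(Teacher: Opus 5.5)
Your proof is correct. The paper does not prove this statement itself; it is quoted from \cite{Eckstein2023CausalGraphs} without proof. So there is no in-paper argument to compare against, and yours stands as a self-contained proof.

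The inverse-propensity representation is valid under the positivity bound, and the outcome term is indeed bounded by $\delta^{-1}\mathbb{E}|X_k-Y_k|$. The key estimate $\mathbb{E}|p^\mu(X_{\parents{j}})-p^\nu(Y_{\parents{j}})|\le\mathbb{E}|X_j-Y_j|$ follows by conditional Jensen from $\mathbb{P}(X_j=1\mid X_{\parents{j}},Y_{\parents{j}})=p^\mu(X_{\parents{j}})$ and its $Y$-analogue. No disintegration care is needed here, since you only use conditional expectations given $\sigma(X_{\parents{j}},Y_{\parents{j}})$.

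One attribution is backwards. The relation $X_j\indep Y_{\parents{j}}\mid X_{\parents{j}}$ comes from \emph{causality}, not anticausality:
\begin{itemize}
\item Iterating the causal relations gives $Y_{1:j-1}\indep X\mid X_{1:j-1}$.
\item Contracting this with $X_j\indep X_{1:j-1}\mid X_{\parents{j}}$ yields the claim.
\end{itemize}
Anticausality together with compatibility of $\nu$ gives the corresponding statement for $Y_j$. Anticausality alone does not suffice. Take the graph $1\to 2$ with $X_1,X_2$ independent fair coins, $Y_1=X_2$ and $Y_2$ an independent coin. This coupling is anticausal, yet $X_2$ and $Y_1$ are not conditionally independent given $X_1$. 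Since $\pi$ is bicausal, both relations hold anyway, and both are listed explicitly in Proposition \ref{equivalent_characterisations} 2), so nothing in your argument breaks.

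Your route also buys something. The only property of $\pi$ you use is these two cross-independences at the single node $j$. So the same computation can be run with a $G'$-bicausal coupling for a sorted DAG $G'\supseteq G$, conditioning on $(X_{\specificGparents{G'}{j}},Y_{\specificGparents{G'}{j}})$. The local Markov property of $\mu,\nu$ for $G$ identifies $\mathbb{P}(X_j=1\mid X_{\specificGparents{G'}{j}})=p^\mu(X_{\parents{j}})$, because $\specificGparents{G'}{j}$ contains $\parents{j}$ and no $G$-descendants of $j$. This proves Corollary \ref{corollary:ate} directly, including the $AW_1$ bound. It bypasses the back-door identity $\psi^{\mu,G}=\psi^{\mu,G'}$ of Lemma \ref{lemma:ate}, and makes clear that only the absence of look-ahead at the treatment node matters.
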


Whilst the above result motivated \cite{Eckstein2023CausalGraphs} to introduce $W_{G,1}$, the nature of the average treatment effect is such that adding additional causal relations does not necessarily change the quantity in \eqref{eq:AvTrEff}. In consequence, and in contrast to the setting of Theorem \ref{thm:lipschitz_optimisation} above, the Lipschitz continuity also holds with respect to the adapted Wasserstein distance $AW_1$. More generally, we have the following result. 

\begin{corollary}
\label{corollary:ate}
    Let the assumptions above hold, and let $G' = (1:n, E')$ be another sorted DAG such that $E \subseteq E'$. Then, there exists a constant $L > 0$ such that
    $$\abs{\psi^{\mu, G} - \psi^{\nu, G}} \leq L \cdot W_{G',1}(\mu, \nu) ,\, \forall \, \mu, \nu \in \specificGprobabilities{G, \delta}{\mc{X}}.$$
    In particular, there exists a constant $L > 0$ such that
    $$\abs{\psi^{\mu, G} - \psi^{\nu, G}} \leq L \cdot AW_{1}(\mu, \nu) ,\, \forall \, \mu, \nu \in \specificGprobabilities{G, \delta}{\mc{X}}.$$
\end{corollary}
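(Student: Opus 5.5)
The plan is to reduce the corollary to Proposition \ref{prop:continuity_ate} through a monotonicity property of the set of bicausal couplings in the graph. The key claim is that if $E\subseteq E'$ (both sorted DAGs on $1:n$) and $\mu,\nu\in\Gprobabilities{\mc{X}}$, then
\[
\specificGbicouplings{G}{\mu}{\nu}\subseteq \specificGbicouplings{G'}{\mu}{\nu}.
\]
Once this holds, taking infima immediately gives $W_{G,1}(\mu,\nu)\le W_{G',1}(\mu,\nu)$. Chaining this with Proposition \ref{prop:continuity_ate} yields $\abs{\psi^{\mu,G}-\psi^{\nu,G}}\le L\, W_{G,1}(\mu,\nu)\le L\, W_{G',1}(\mu,\nu)$ with the same constant $L$.

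Two preliminary observations are needed.

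\emph{Compatibility passes to the larger graph.} For sorted DAGs, $G$-compatibility is equivalent to $\condindep{X_i}{X_{\parents{i}}}{X_{1:i-1}}$. Since $\specificGparents{G}{i}\subseteq \specificGparents{G'}{i}\subseteq 1:i-1$, the weak union property of conditional independence gives $\condindep{X_i}{X_{\specificGparents{G'}{i}}}{X_{1:i-1}}$. Hence $\Gprobabilities{\mc{X}}\subseteq \specificGprobabilities{G'}{\mc{X}}$, so $W_{G',1}(\mu,\nu)$ is well defined for all $\mu,\nu\in\specificGprobabilities{G,\delta}{\mc{X}}$.

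\emph{Causality passes to the larger graph.} I will use the characterisation from Proposition \ref{equivalent_characterisations}: $\pi$ is $G$-causal if and only if
\[
\condindep{Y_i}{X_i, X_{\parents{i}}, Y_{\parents{i}}}{(X, Y_{1:i-1})}\quad\text{for all } i.
\]
The conditioning set for $G'$ contains the one for $G$ and is still contained in $(X,Y_{1:i-1})$, because $\specificGparents{G'}{i}\subseteq 1:i-1$. Weak union therefore upgrades the $G$-statement to the $G'$-statement. The anticausal condition follows in the same way after swapping $x$ and $y$. Together these prove the inclusion of bicausal couplings.

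For the ``in particular'' statement, take $G'=G_n$, the complete sorted DAG. It contains $E$ because $G$ is sorted. Since $W_{G_n,1}=AW_1$, as noted after Theorem \ref{main_result_metric}, the second inequality follows.

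The main point to verify carefully is that weak union is applied correctly: the added variables must lie inside the conditioned-upon block. This holds precisely because $G'$ is sorted with the same ordering as $G$. Beyond that check, the argument is a direct comparison and I do not anticipate any substantive obstacle.
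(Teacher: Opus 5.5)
Your inclusion $\specificGbicouplings{G}{\mu}{\nu}\subseteq\specificGbicouplings{G'}{\mu}{\nu}$ is correct, and weak union works exactly as you describe. The problem is that it gives the inequality in the wrong direction. An infimum over a larger set is smaller, so you get $W_{G',1}(\mu,\nu)\le W_{G,1}(\mu,\nu)$, not $W_{G,1}\le W_{G',1}$. This is the same monotonicity behind $AW_p\le W_{G,p}$ noted in the paper.

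Your chain $\abs{\psi^{\mu,G}-\psi^{\nu,G}}\le L\,W_{G,1}(\mu,\nu)\le L\,W_{G',1}(\mu,\nu)$ therefore breaks at the second step. No constant can repair it: Example \ref{example:notequivalent} shows $W_{\Gmerge,p}$ is not dominated by any multiple of $AW_p$. The corollary asserts Lipschitz continuity with respect to a \emph{smaller} distance, so it is strictly stronger than Proposition \ref{prop:continuity_ate} and cannot come from comparing distances alone. The same objection applies to your deduction of the $AW_1$ bound via $G'=G_n$.

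The missing idea is to apply Proposition \ref{prop:continuity_ate} to the graph $G'$ itself. This needs two facts.

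First, $\specificGprobabilities{G,\delta}{\mc{X}}\subseteq\specificGprobabilities{G',\delta}{\mc{X}}$. Your compatibility argument handles $G'$-compatibility. The $\delta$-condition transfers because $\condindep{X_j}{X_{\specificGparents{G}{j}}}{X_{\specificGparents{G'}{j}}}$ under $\mu$, so $\mu(x_j=1\mid x_{\specificGparents{G'}{j}})=\mu(x_j=1\mid x_{\specificGparents{G}{j}})$.

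Second, and this is the real content, the treatment effect does not depend on which graph is used: $\psi^{\mu,G}=\psi^{\mu,G'}$. Write $A=\specificGparents{G'}{j}\setminus\specificGparents{G}{j}$; it consists of non-descendants of $j$ in $G$ because $G'$ is acyclic. Hence $\condindep{X_A}{X_{\specificGparents{G}{j}}}{X_j}$. In the back-door formula for $G'$ you may therefore replace the kernel $\mu(dx_A\mid x_{\specificGparents{G}{j}})$ by $\mu(dx_A\mid x_j=1,x_{\specificGparents{G}{j}})$. Integrating out $x_A$ then recovers the formula for $G$, and the case $x_j=0$ is identical.

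With these two facts, $\abs{\psi^{\mu,G}-\psi^{\nu,G}}=\abs{\psi^{\mu,G'}-\psi^{\nu,G'}}\le L\,W_{G',1}(\mu,\nu)$. Taking $G'=G_n$ gives the $AW_1$ bound.
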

\begin{proof}
    The result follows from Proposition \ref{prop:continuity_ate} if we show that $\specificGprobabilities{G, \delta}{\mc{X}} \subseteq \specificGprobabilities{G', \delta}{\mc{X}}$ and that $\psi^{\mu, G} = \psi^{\mu, G'}$ for all $\mu \in \specificGprobabilities{G, \delta}{\mc{X}}$. This is shown in Lemma \ref{lemma:ate} and is essentially a consequence of the back-door adjustment \cite[Theorem 3.3.2]{Pearl2009Causality}. The final conclusion follows by taking $G'$ to be the graph $G_n = ({1:n}, \setdef{(i, j)}{1\leq i<j\leq n})$, for which by definition $W_{G_n,1} = AW_1$.
\end{proof}
In a recent work, \cite{visentin2025robust} considered $W_{G,1}$ in the context of continuity of causal optimisation problems. Whilst we believe that analogous remarks as above should apply also to this work, i.e., one could use $AW_1$ instead of $W_{G,1}$, we note that their proposed graph-causal data augmentation techniques are of independent interest.

\subsection{Topological Properties}
\label{sec:topology}

We now investigate the topology induced by graph causal Wasserstein distances. Fix a transitively closed DAG $G$ and $p \geq 1$. In this section, we also fix on our product $\mc{X} = \prod_{i=1}^n \mc{X}_i$ of Polish spaces $(\mc{X}_i, d_i)$ the metric $d$ given by $d(x,y)^p = \sum_{i=1}^n d_i(x_i, y_i)^p$. We recall that the adapted weak topology on 
$\pprobabilities{p}{\mc{X}}$ is induced by the adapted Wasserstein distance $AW_p = W_{G_n,p}$. 

\begin{theorem}
    \label{theorem:sametopology}
     The topology induced by $W_{G,p}$ on $\Gpprobabilities{p}{\mc{X}}$ is the subspace topology on $\Gpprobabilities{p}{\mc{X}}$ of the adapted weak topology on $\pprobabilities{p}{\mc{X}}$. 
\end{theorem}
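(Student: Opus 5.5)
Since $G$ is transitively closed, Theorem \ref{main_result_metric} makes $W_{G,p}$ a metric on $\Gpprobabilities{p}{\mc{X}}$, so both topologies are metrisable. It therefore suffices to show that a sequence $\mu^k \to \mu$ in $\Gpprobabilities{p}{\mc{X}}$ converges in $W_{G,p}$ if and only if it converges in $AW_p$. Relabel $G$ so that it is sorted. Then $E \subseteq E(G_n)$, every $G$-bicausal coupling is $G_n$-bicausal, and $AW_p \le W_{G,p}$. Consequently $W_{G,p}$-convergence implies $AW_p$-convergence, and the whole difficulty is the converse: $AW_p(\mu^k,\mu)\to 0$ should imply $W_{G,p}(\mu^k,\mu)\to 0$ when all the measures are $G$-compatible.

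For the converse I would use the characterisation of the adapted weak topology by convergence of the ``prediction processes'' (information processes), as in \cite{BackhoffVeraguas2020AllEqual}. In the $G$ setting the analogous object is obtained by adapting the iterative nested construction behind the dynamic programming principle of Section \ref{section_dpp}. The key simplification is that, for $G$-compatible $\mu$ and sorted $G$, the kernel $\mu(dx_i \mid x_{1:i-1})$ depends only on $x_{\parents{i}}$. I would proceed in three steps.

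\textbf{Step 1 (continuity of kernels).} Use a Lusin-type approximation. By a standard characterisation of $AW_p$-convergence (the adapted analogue of Lemma-type results in \cite{BackhoffVeraguas2020AllEqual}), $\mu^k\to\mu$ in $AW_p$ allows one to approximate each kernel $x_{1:i-1}\mapsto \mu(dx_i\mid x_{1:i-1})$ by a continuous kernel on a large compact set. The approximating kernels for $\mu^k$ then converge in the sense of adapted (nested) weak convergence.

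\textbf{Step 2 (building $G$-bicausal couplings).} Take an $AW_p$-optimal, or near-optimal, $G_n$-bicausal coupling $\pi^k\in\specificGbicouplings{G_n}{\mu^k}{\mu}$ with small cost. Modify it node by node into a $G$-bicausal coupling with comparable cost. At node $i$, couple $\mu^k(dx_i\mid x_{\parents{i}})$ and $\mu(dy_i\mid y_{\parents{i}})$ using the conditional law of $\pi^k$ given $(x_{\parents{i}},y_{\parents{i}})$ only, rather than given the full past. By $G$-compatibility of both marginals, this averaged kernel still has the correct marginals. The resulting measure is $G$-bicausal by the characterisation in Proposition \ref{equivalent_characterisations}.

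\textbf{Step 3 (controlling the cost).} Bound the cost of the new coupling by the cost of $\pi^k$ plus an error. The error is controlled by how much the conditional laws given the full past $(x_{1:i-1},y_{1:i-1})$ and given only the parents differ, integrated against $\pi^k$.

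The main obstacle is Step 3. Averaging over the non-parent past can increase transport cost, since the conditioning variables of $\mu^k$ and $\mu$ are not close pointwise. To handle this, I would first prove the result in the finitely supported (or continuous-kernel) case, where $AW_p$-convergence forces the nested laws to converge and hence the conditional kernels to be uniformly close on the relevant sets. I would then pass to general measures via a density argument. The density step relies on approximating $G$-compatible measures by $G$-compatible measures with continuous kernels, and on the triangle inequality for both $AW_p$ and $W_{G,p}$.

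An alternative route worth trying first is an isometric-type embedding. Embed $\Gpprobabilities{p}{\mc{X}}$ into a space of nested measures, where the $i$-th layer records the conditional law of the future given $x_{\parents{i}}$. Then show that $W_{G,p}$-convergence and $AW_p$-convergence both reduce to weak convergence of these nested objects. Because $G$ is transitively closed, the parents form a downward-closed set in the partial order $\leq_G$. This should make the nested construction well defined and comparable with the one of \cite{Bartl2021WassersteinSpace}.
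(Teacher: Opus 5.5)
Your reduction to sequences and the easy direction $AW_p\le W_{G,p}$ are fine. The coupling of Step~2 is indeed $G$-bicausal with the right marginals, after fixing versions, by $G$-compatibility and the tower property. The gap is in Step~3: nothing controls the cost of that coupling, and the error is not where you locate it. Changing the conditioning at node $i$ changes the joint law of $(X_{\mathrm{pa}_j},Y_{\mathrm{pa}_j})$ at \emph{later} nodes $j$. The kernel you then use at $j$ was extracted from $\pi^k$, so it is only determined $\pi^k$-a.s. It is now integrated against a law that need not be absolutely continuous with respect to the one under $\pi^k$. Take $G_{\mathrm{merge}}$ (edges $1\to 3$, $2\to 3$; transitively closed) and the $AW$-coupling of Example~\ref{example:notequivalent}, where $Y_2=\varepsilon\,\mathrm{sgn}(X_1X_2)$. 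Your construction replaces the law of $(X_{1:2},Y_{1:2})$ by $\pi_{(1)}\otimes\pi_{(2)}$, and at node~$2$ this costs nothing. At node~$3$ the parents and the full past coincide, so your error term vanishes there. Yet half of the new mass lies off the support of $\pi_{(1,2)}$ and forces $X_3\neq Y_3$, so the node-$3$ cost jumps from $0$ to order one. That example has both measures moving. It nevertheless shows that any bound must use regularity of the node-$j$ kernels in the conditioning variables, and your plan does not supply this.

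Your fallbacks do not close this gap. A result for limits in a dense class, combined with the two triangle inequalities, does not transfer, because $W_{G,p}$ and $AW_p$ are not uniformly equivalent. If $W_{G,p}(\mu,\mu')<\varepsilon$, then $AW_p(\mu^k,\mu')$ only tends to something at most $\varepsilon$, not to $0$. So the dense-class statement cannot be applied at $\mu'$ without a modulus uniform in $\mu'$, and Example~\ref{example:notequivalent} excludes such a modulus. The nested embedding only describes the $AW_p$ side. The paper's DPP and embedding (Proposition~\ref{dpp_proof}, Theorem~\ref{aw_equality}) need $G$ perfect, and for merges $W_{G,p}\neq AW_p$ (Example~\ref{ex:AWDnotGW2}). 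Showing that convergence of nested objects forces $W_{G,p}$-convergence therefore again requires constructing cheap $G$-bicausal couplings, which is exactly the missing step.

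The paper supplies that step as follows, in its notation $\mu^{(m)}\to\nu$.
\begin{enumerate}
\item Reduce to the bounded metric $\underline{\underline{d}}$ and $p=1$ (Proposition~\ref{prop:bounded_metric}, Lemma~\ref{lemma:WassersteinPQIneq}), then induct on $n$ by removing a childless node $n$.
\item The induction hypothesis gives a $G_{1:n-1}$-bicausal $\tilde\pi^{(m)}$ of the first $n-1$ marginals with small cost.
\item The DPP for $AW_1$ gives a bicausal $\pi^{(m)}$ on the first $n-1$ coordinates with $\int\sum_{i<n}\underline{d}_i(x_i,y_i)+\underline{W}_1(\mu_n^{(m)x_{\mathrm{pa}_n}},\nu_n^{y_{\mathrm{pa}_n}})\,d\pi^{(m)}$ small.
\item Glue $\pi^{(m)}$ and $\tilde\pi^{(m)}$ along their common $\mu^{(m)}_{1:n-1}$-marginal. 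Apply Lusin only to the fixed limit kernel $y_{1:n-1}\mapsto\nu_n^{y_{\mathrm{pa}_n}}$. On a set of large glued mass, $y$ and $\tilde y$ both lie in the compact $K_\varepsilon$ and are close, so the node-$n$ term transfers from $\pi^{(m)}$ to $\tilde\pi^{(m)}$.
\item Complete the coupling at node $n$ with measurably selected optimal couplings of $\mu_n^{(m)x_{\mathrm{pa}_n}}$ and $\nu_n^{\tilde y_{\mathrm{pa}_n}}$, not with kernels inherited from an $AW$-coupling.
\end{enumerate}
Regularity is needed only for the fixed limit, and it is only evaluated at pairs of $\nu$-distributed points. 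This is what sidesteps both the change-of-measure problem and any density argument.
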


The proof is deferred to Section \ref{proof:sametopology}. We recall that the adapted Wasserstein topology was studied extensively, e.g., in \cite{BackhoffVeraguas2020AllEqual}, \cite{Bartl2021WassersteinSpace}, \cite{Pammer2022Adapted}. In particular, in the first of these papers, the authors obtained the seminal result that this, and many other natural adapted topologies, including \cite{Aldous1981ExtendedWeak}'s extended weak topology, or \cite{Hellwig1996InformationTopology}'s information topology, 
were all equal. The above theorem thus asserts the same principle: there is just one natural adapted topology and it is recovered by graph causal Wasserstein distances. The intuition behind this potentially surprising result is that at infinitesimal scales, couplings constructed from myopic one-step choices are nearly optimal, see Remark \ref{rk:myopiccouplings}.
Note however that while $AW_p$ and $W_{G,p}$ are equivalent on $\Gpprobabilities{p}{\mc{X}}$, they are not uniformly equivalent, see Example \ref{example:notequivalent}, and stochastic team games covered in Theorem \ref{thm:lipschitz_optimisation} are not Lipschitz continuous with respect to the adapted Wasserstein distance, see Example \ref{example:notLipAW}.

\begin{remark}\label{rk:myopiccouplings}
In order to compute $AW_1(\mu, \nu)$ for $\mu, \nu \in \probabilities{\mc{X}}$, one chooses for each step $1 \leq i \leq n$ couplings of disintegrations $\pi_i^{x_{1:i-1}, y_{1:i-1}} \in \couplings{\mu_i^{x_{1:i-1}}}{\nu_i^{y_{1:i-1}}}$ in such a way that if $\pi(dx, dy) = \bigotimes_{i=1}^n \pi_i^{x_{1:i-1}, y_{1:i-1}}(d{x_i}, d{y_i})$, then $\int \sum_{i=1}^n d_i(x_i, y_i) \,d\pi(x,y)$ is as small as possible.

A naive attempt to achieve this would be to build the coupling in a myopic fashion: choose each $\pi_i^{x_{1:i-1}, y_{1:i-1}}$ such that $\int d_i(x_i, y_i) \, d\pi_i^{x_{1:i-1}, y_{1:i-1}}(x_i, y_i) = W_1(\mu_i^{x_{1:i-1}}, \nu_i^{y_{1:i-1}})$. This choice would result in a bicausal coupling, but one that is unlikely to be optimal. Indeed, the optimal choice is to pick $\pi_i^{x_{1:i-1}, y_{1:i-1}}$ such that it minimises
$$\int d_i(x_i, y_i) + AW_1(\mu_{i+1:n}^{x_{1:i}}, \nu_{i+1:n}^{y_{1:i}}) \, d\pi_i^{x_{1:i-1}, y_{1:i-1}}(x_i, y_i),$$
thus taking into account the effect of our choice at time $i$ on the optimisation problems for times $j > i$, in a dynamic programming fashion. For myopic couplings, the errors can compound exponentially in the number of time steps. 

However, at infinitesimal scales, when $AW_1(\mu^{(m)}, \nu) \to 0$ as $m \to \infty$, the myopic couplings are not  far from optimal, as the extra dynamic programming errors converge to zero (because of the finite number of time steps). Indeed, an equivalent topology, Hellwig's (reduced) information topology \citep{Hellwig1996InformationTopology}, is the initial topology of the maps
$$\mu \in \probabilities{\mc{X}} \mapsto \mc{L}_{X \sim \mu}\brackets{X_1, \ldots, X_{i-1}, \condlaw{X_i}{X_{1}, \ldots, X_{i-1}}} \in \probabilities{\mc{X}_{1:i-1} \times \probabilities{\mc{X}_i}},$$
for $1 \leq i \leq n$. Thus,  we can focus only on the next step conditional distributions. In our case, when $\mu, \nu \in \Gprobabilities{\mc{X}}$, the set of couplings $\couplings{\mu_i^{x_{1:i-1}}}{\nu_i^{y_{1:i-1}}}$ is in fact equal to $\couplings{\mu_i^{\xparents{i}}}{\nu_i^{\yparents{i}}}$, and thus we can build myopic $G$-bicausal couplings by choosing $\pi_i^{\xparents{i}, \yparents{i}} \in \couplings{\mu_i^{\xparents{i}}}{\nu_i^{\yparents{i}}}$ that keep $\int d_i(x_i, y_i) \,d\pi_i^{\xparents{i}, \yparents{i}}(x_i, y_i)$ minimal. The proof of Theorem \ref{theorem:sametopology} shows that at small scales such myopic constructions converge to the optimum. A more general discussion of dynamic programming results for $W_{G,p}$ is presented in Section \ref{section_dpp}.
\end{remark}

\begin{example}[Not uniformly equivalent]
    \label{example:notequivalent}
    While $AW_p \leq W_{G,p}$, there does not in general exist a constant $C > 0$ such that $AW_p \geq C \cdot W_{G,p}$, as shown below.

    For $\Gmerge$, the graph with vertex set $\curlybrackets{1,2,3}$ and edge set $\curlybrackets{(1,3), (2,3)}$, let for $\eps >0$
    $$\mu^{(\eps)} \coloneq \frac{1}{4} \brackets{\delta_{(\eps, \eps, 1)} + \delta_{(\eps, -\eps, -1)} + \delta_{(-\eps, \eps, -1)} + \delta_{(-\eps, -\eps, 1)}} \in \specificGprobabilities{\Gmerge}{\mbb{R}^3},$$
    $$\nu^{(\eps)} \coloneq \frac{1}{2} \brackets{\delta_{(0, \eps, 1)} + \delta_{(0,-\eps,-1)}} \in \specificGprobabilities{\Gmerge}{\mbb{R}^3}.$$

    Then, taking the transference plan induced by the Monge map $$((-1)^j\eps, (-1)^k\eps, (-1)^{j+k}) \mapsto (0, (-1)^{j+k}\eps, (-1)^{j+k}), \,\forall \, j,k \in \curlybrackets{0,1},$$
    one sees that $AW_p(\mu^{(\eps)}, \nu^{(\eps)})^p \leq \eps^p + \frac{1}{2}(2\eps)^p \searrow 0$ as $\eps \searrow 0$. 

    However, any $\Gmerge$-bicausal coupling $\pi^{(\eps)}$ of $\mu^{(\eps)}, \nu^{(\eps)}$ will have $\pi^{(\eps)}(X_3 \neq Y_3) = \frac{1}{2}$ and thus $W_{G,p}(\mu^{(\eps)}, \nu^{(\eps)})^p \geq \frac{1}{2} 2^p$, so no such $C>0$ can exist. 
\end{example}

\begin{example}[Cauchy sequences]
     As $AW_p \leq W_{G,p}$ a Cauchy sequence in $(\Gpprobabilities{p}{\mc{X}}, W_{G,p})$ will be Cauchy for $AW_p$. The converse is false. Indeed, in the example above, it is easy to see that both $(\mu^{\brackets{\frac{1}{m}}})_{m \geq 1}$ and $(\nu^{\brackets{\frac{1}{m}}})_{m \geq 1}$ are Cauchy in both $AW_p$ and $W_{G,p}$ (by always transporting $\pm \eps_1$ to $\pm \eps_2$ coordinatewise). However, the example above shows that the sequence formed by alternating elements of $(\mu^{\brackets{\frac{1}{m}}})_{m \geq 1}$ and $(\nu^{\brackets{\frac{1}{m}}})_{m \geq 1}$ is Cauchy in $AW_p$, but not in $W_{G,p}$. 

     The question of the completion of $(\Gpprobabilities{p}{\mc{X}}, W_{G,p})$ alongside compactness properties are reserved for future study. In the case of adapted Wasserstein distances, completion and compactness were studied in \cite{Bartl2021WassersteinSpace}, \cite{Eder2019Compactness}.
\end{example}

\begin{example}[Non-separable spaces]
    Note that the assumption that our metric spaces $(\mc{X}_i, d_i)$ are separable is key for Theorem \ref{theorem:sametopology} to hold.

    Indeed consider $\Gmerge$ again, and on $\mbb{R}^3$ the metric given by $d(x,y) \coloneq \abs{x_1-y_1} + \abs{x_2-y_2} + \mathbbm{1}_{x_3 \neq y_3}$. 

    Let $\lambda$ be the uniform measure on $[0,1]$, and for $\eps > 0$ define $\mu^{(\eps)} \in \specificGprobabilities{\Gmerge}{\mbb{R}^3}$ as the pushforward of the measure $\brackets{\frac{1}{2} \delta_{\eps} + \frac{1}{2} \delta_{-\eps}} \otimes \lambda$ through the map $(x_1,x_2) \mapsto (x_1, x_2, x_1 + x_2)$. It is easy to see that $\mu^{(\eps)}$ converges in $AW_1$ as $\eps \searrow 0$ to the measure $\nu \in \specificGprobabilities{\Gmerge}{\mbb{R}^3}$ defined as the pushforward of the measure $\lambda$ through the map $y_2 \mapsto (0, y_2, y_2)$. 

    However, for any $\pi^{(\eps)} \in \Gbicouplings{\mu^{(\eps)}}{\nu}$, we have
    \begin{align*}
        \int d(x,y) \,d\pi(x,y) &= \eps + \int \abs{x_2 - y_2}\, d\pi_2^{(\eps)}(x_2, y_2) \\
        &+ \frac{1}{2} \int \mathbbm{1}_{(x_2+\eps) \neq y_2}\, d\pi_2^{(\eps)}(x_2, y_2) + \frac{1}{2} \int \mathbbm{1}_{(x_2-\eps) \neq y_2}\, d\pi_2^{(\eps)}(x_2, y_2) \geq \frac{1}{2},
    \end{align*}
    and so $W_{G,1}(\mu^{(\eps)},\nu) \not\to 0$ as $\eps \searrow 0$.
\end{example}

\begin{example}[Stochastic team games]\label{example:notLipAW}
    Continuing Example \ref{example:notequivalent}, consider the stochastic team game for three DMs with information structure given by $\Gmerge$. 

    We choose $A = [-1,1]^3$ and objective function $Q: \mbb{R}^3 \times A$, given by $Q(x, a) \coloneq \abs{x_3 - a_2}$, which satisfies the assumptions of Theorem \ref{thm:lipschitz_optimisation}. Then $$v(\mu) = \inf \setdef{\expectation{\mu}{\abs{X_3 - \alpha_2(X_2)}}}{\alpha_2 : \mbb{R} \to A \, \mathrm{measurable}}.$$

    Then it is easy to see that 
    $$v(\mu^{(\eps)}) = \inf_{a_{\pm} \in [-1,1]} {\frac{1}{4} \brackets{\abs{1-a_{+}} + \abs{1-a_{-}} + \abs{-1-a_{+}} +\abs{-1-a_{-}}}} = 1,$$
    $$v(\nu^{(\eps)}) = \inf_{a_{\pm} \in [-1,1]} {\frac{1}{2} \brackets{\abs{1-a_{+}} + \abs{-1-a_{-}}}} = 0.$$
    As $AW_1(\mu^{(\eps)}, \nu^{(\eps)}) \to 0$ as $\eps \searrow 0$, we see that $v$ is not Lipschitz continuous (or in fact uniformly continuous) with respect to $AW_1$. In contrast, Theorem \ref{thm:lipschitz_optimisation} shows $v$ is Lipschitz continuous in $W_{\Gmerge,1}$ and thus graph causal Wasserstein distances provide a better metric to use for estimating errors of model misspecification for stochastic team games with a quasi-classical information structure.
\end{example}

\section{Graph Causal Optimal Transport}
\label{section_structure}

We move now from the discussion of graph causal Wasserstein distances to the more general graph causal OT problem, i.e., the equivalent of the classical OT problem but with additional $G$-bicausality requirements on the couplings. 
Given a directed graph $G$, $\mu\in \Gprobabilities{\mc{X}}$, $\nu\in \Gprobabilities{\mc{Y}}$ and a cost function $\xi: \mc{X}\times \mc{Y}\to \mbb{R}$, we define \emph{the graph causal OT problem} as
\begin{equation}
\label{eq:GOT}
    \GOT{\mu}{\nu}:= \inf_{\pi\in \Gbicouplings{\mu}{\nu}} \int_{\mc{X}\times \mc{Y}} \xi(x,y) \, d \pi(x,y).
\end{equation}

Whilst the restriction on couplings reflects causal relations between different variables and may be natural in many applications, it is far from innocent and imposes intricate structural difficulties. The set $\Gbicouplings{\mu}{\nu}$ is, in general, neither convex nor weakly closed. In addition, as noted before and explored in detail below, we may not be able to glue couplings together and preserve $G$-bicausality. For these reasons, our first focus is on characterising fine structural properties of the sets $\Gprobabilities{\mc{X}}$ and $\Gbicouplings{\mu}{\nu}$, as well as the lifted graph bicausal couplings, which will form a key element of the proof of Theorems \ref{main_result_metric} and \ref{main_dense}, and lead to another characterisation of graph causal Wasserstein distances. 

\subsection{Equivalent Definitions of $G$-compatibility and (Bi)causality}

We start with equivalent characterisations of the notion of $G$-compatibility which we introduced in Definition \ref{G-compatible}. These are well-known in the probabilistic graphical models literature. We refer the reader to \cite[Theorem 3.2.1]{Bongers2021Cycles} for a complete statement and proof containing yet further equivalent characterisations, among which we mention the \emph{$d$-separation} criterion \citep{Pearl1988DSeparation}, and a discussion for measures admitting a density. Partial versions appear in, e.g., \cite[Theorem 4.38]{LauritzenBook} and \cite[Definition 3.1]{Eckstein2023CausalGraphs}.

\begin{proposition}\label{prop:Gcomp_characterisation}
    Let $G$ be a DAG and $\mu \in \probabilities{\mc{X}}$. The following are equivalent:
    \begin{enumerate}
        \item[1)] $\mu \in \Gprobabilities{\mc{X}}$;
        \item[2)] There exists a random variable $X \sim \mu$, independent $\mbb{R}$-valued random variables $(U_i)_{i\in V}$, and measurable functions $f_i \colon \mc{X}_{\parents{i}} \times \mbb{R} \to \mc{X}_i, i\in V$ such that $X_i = f_i(X_{\parents{i}}, U_i), \, \forall \, i \in V$.
    \end{enumerate}
    If $G$ is a sorted DAG, the above are further equivalent to the following:
    \begin{enumerate}
        \item[3)] $\mu$ admits a disintegration $\mu(dx) = \bigotimes_{i=1}^n \mu_i(dx_i \vert \xparents{i})$;
        \item[4)] If $X\sim \mu$ then 
        $$ \condindep{X_i}{X_{\parents{i}}}{X_{1:i-1}},\qquad 2\leq i \leq n.$$ 
    \end{enumerate}
\end{proposition}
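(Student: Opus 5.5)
Since all four conditions are invariant under relabelling (with 3) and 4) stated for a sorted labelling), I will first reduce to the case where $G$ is a sorted DAG on $V=1:n$. Then $\{1,\dots,i-1\}\subseteq V\setminus(\desc{i}\cup\{i\})$, and $\parents{i}\subseteq 1:i-1$. The plan is to prove the cycle $1)\Rightarrow 4)\Rightarrow 3)\Rightarrow 2)\Rightarrow 1)$.

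\emph{$1)\Rightarrow 4)$.} This follows from the weak union/decomposition properties of conditional independence. If $X_i$ is conditionally independent of $X_{V\setminus(\desc{i}\cup\{i\})}$ given $X_{\parents{i}}$, then it is conditionally independent of any sub-vector of it. In particular this applies to $X_{1:i-1}$, which contains $X_{\parents{i}}$ itself.

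\emph{$4)\Rightarrow 3)$.} Disintegrate $\mu$ successively on Polish spaces: $\mu(dx)=\bigotimes_i \mu(dx_i\mid x_{1:i-1})$. Condition 4) says that the kernel $\mu(dx_i\mid x_{1:i-1})$ can be chosen to be $\sigma(x_{\parents{i}})$-measurable, namely equal a.s.\ to $\mu_i(dx_i\mid x_{\parents{i}})$.

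\emph{$3)\Rightarrow 2)$.} Take $U_i$ i.i.d.\ uniform on $[0,1]$. We need measurable maps $f_i$ with $\pushforward{f_i(x_{\parents{i}},\cdot)}{\lambda}=\mu_i(\cdot\mid x_{\parents{i}})$. These exist by the standard randomisation lemma for kernels into Polish spaces (e.g.\ Kallenberg's transfer/kernel representation, via a Borel isomorphism of $\mc{X}_i$ with a Borel subset of $\mbb{R}$ and the quantile transform). Then define $X$ recursively in topological order, $X_i=f_i(X_{\parents{i}},U_i)$. By induction its law is the product of kernels, hence $\mu$.

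\emph{$2)\Rightarrow 1)$.} This is the step I expect to be the real content, since it must not use the ordering. Fix $i$ and let $N=V\setminus(\desc{i}\cup\{i\})$. I will show by induction along a topological order that each $X_j$ is a measurable function of $(U_k)_{k\in\{j\}\cup\hist{j}}$. Hence $X_N$ is a function of $U_A$ with $A=N\cup\hist{N}$. Since ancestors of non-descendants are non-descendants, $A\subseteq N$, so $i\notin A$. Also $\parents{i}\subseteq N$, so $X_{\parents{i}}$ is a function of $U_A$ as well.

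Now $U_i$ is independent of $U_A$, and $X_i=f_i(X_{\parents{i}},U_i)$. The standard lemma is that if $U_i\indep (Z,W)$, then $f(Z,U_i)$ is conditionally independent of $W$ given $Z$. Apply it with $Z=X_{\parents{i}}$ and $W=X_N$. Verifying this lemma, by computing $\mathbb{E}[h(f(Z,U_i))g(W)\mid Z]$ via Fubini, is routine. The main care needed is in the measurability of the compositions and the ancestor-closure argument.
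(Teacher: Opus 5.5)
Your proof is correct. Note, though, that the paper gives no proof of this proposition. It presents the statement as a summary of known results and defers to Bongers et al.\ (Theorem 3.2.1) and to Lauritzen's book. There these characterisations sit inside a longer list of equivalent Markov properties, including the global/$d$-separation property, which the graphical-models literature often states under density assumptions. Your cycle $1)\Rightarrow 4)\Rightarrow 3)\Rightarrow 2)\Rightarrow 1)$ is a self-contained replacement built only from elementary tools:
\begin{itemize}
\item decomposition of conditional independence, since in a sorted DAG every $j<i$ is a non-descendant of $i$;
\item successive disintegration on Polish spaces, modifying kernels on null sets, which leaves the iterated product unchanged;
\item the kernel randomisation lemma to produce the $f_i$, extended arbitrarily off $[0,1]$;
\item for $2)\Rightarrow 1)$, the observation that $N=V\setminus(\mathrm{dsc}_i\cup\{i\})$ is ancestrally closed. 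Hence $(X_{\mathrm{pa}_i},X_N)$ is a function of $(U_k)_{k\in N}$ and is independent of $U_i$, and the Fubini lemma for $f(Z,U)$, with $U$ independent of $(Z,W)$, then gives the local Markov property.
\end{itemize}
What your route buys is a proof that visibly holds in exactly the generality the paper insists on (arbitrary Polish spaces, no joint density or positivity), without passing through $d$-separation. The paper asserts this generality but does not verify it itself. What the cited route buys is the stronger global Markov characterisation and extensions to cyclic or latent-variable models, none of which this proposition needs.
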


\begin{remark}
    The first two characterisations make it clear that $G$-compatibility is invariant under relabelling. In particular, the latter two characterisations hold for one relabelling which turns $G$ into a sorted DAG if and only if they work for all such relabellings. 
\end{remark}

In Definition \ref{G_causal_definition} we defined graph (bi)causal couplings between measures which are compatible with a DAG, which is the natural context for the main contributions in this paper. However, building on the characterisation 2) in Proposition \ref{prop:Gcomp_characterisation}, \cite[Definition 2.1]{Eckstein2023CausalGraphs} proposed a more general definition for possibly cyclical directed graphs and possibly non-compatible measures, which we now recall. We then show, in Proposition \ref{prop:equivalent_Gcausal} below,  that Definitions \ref{G_causal_definition} and \ref{G_causal_definition_2} agree when $G$ is a DAG and $\mu \in \Gprobabilities{\mc{X}}$, $\nu \in \Gprobabilities{\mc{Y}}$, and in this context we can, and will, use both definitions interchangeably.

\begin{definition}[Graph (bi)causal couplings for directed graphs]
\label{G_causal_definition_2}
    Let $G$ be a directed graph, $\mu \in \probabilities{\mc{X}}$, $\nu \in \probabilities{\mc{Y}}$. We call a coupling $\pi \in \couplings{\mu}{\nu}$ $G$-causal if there exists a pair of random variables $(X,Y) \sim \pi$, measurable functions
    $$f_i \colon \mc{X}_i \times \mc{X}_{\parents{i}} \times \mc{Y}_{\parents{i}} \times \mbb{R} \to \mc{Y}_i, \quad i\in V,$$
    and $\mbb{R}$-valued random variables $(U_i)_{i\in V}$ such that $\sigma(X), (\sigma(U_i))_{i \in V}$ are independent and
    $$Y_i = f_i(X_i, X_{\parents{i}}, Y_{\parents{i}}, U_i), \quad \forall \, i \in V.$$
    We denote by $\Gcouplings{\mu}{\nu}$ the set of all such $G$-causal couplings. Moreover, we define the set of $G$-anticausal couplings $\Ganticouplings{\mu}{\nu}$ to be the set of measures in $\couplings{\mu}{\nu}$ whose pushforward under the map $(x,y) \mapsto (y,x)$ is in $\Gcouplings{\nu}{\mu}$, and the set of $G$-bicausal couplings $\Gbicouplings{\mu}{\nu} \coloneqq \Gcouplings{\mu}{\nu} \cap \Ganticouplings{\mu}{\nu}$.
\end{definition} 

\begin{remark}
    \label{relabelling_remark}
    One easily sees from the above definition that if one relabels the vertices of $G$, which includes keeping track of the initial associated spaces $\mc{X}_v$, $\mc{Y}_v$, this will not affect the set of induced $G$-causal couplings. Thus, the definition above is invariant under relabelling.
\end{remark}

\begin{remark}
    Definition \ref{G_causal_definition_2} and the equivalence of 1) and 2) in Proposition \ref{prop:Gcomp_characterisation} show that $\mu \in \Gprobabilities{\mc{X}}$ if and only if for some (and hence all) $y \in \mc{Y}$ the set $\Gcouplings{\delta_y}{\mu} \subseteq \curlybrackets{\delta_y \otimes \mu}$ is non-empty. Thus, $G$-compatibility is a natural condition if we want graph (bi)causal couplings to exist, and it is indeed sufficient: see the first part of Proposition \ref{equivalent_characterisations}.
\end{remark}

Similarly to graph compatibility, the notions of graph (bi)causality from Definition \ref{G_causal_definition} have characterisations in terms of conditional independence relations and disintegrations. These were shown in \cite{Eckstein2023CausalGraphs}, and we recall the ones necessary for our proofs in Propositions \ref{prop:equivalent_Gcausal} and \ref{equivalent_characterisations} below.

\begin{proposition}
    \label{prop:equivalent_Gcausal}
    Suppose $G$ is a DAG, $\mu \in \Gprobabilities{\mc{X}}$, $\nu \in \Gprobabilities{\mc{Y}}$, and $\pi \in \couplings{\mu}{\nu}$. Then the following are equivalent:

    \begin{enumerate}
        \item[1)] $\pi$ is $G$-causal in the sense of Definition \ref{G_causal_definition};
        \item[2)] $\pi$ is $G$-causal in the sense of Definition \ref{G_causal_definition_2}.
    \end{enumerate}
    If $G$ is a sorted $DAG$, the above are further equivalent to the following:
    \begin{enumerate}
        \item[3)] When $(X,Y) \sim \pi$ the following conditional independence relations hold for $1 \leq i \leq n$:
            $$\condindep{Y_i}{X_i, X_{\parents{i}}, Y_{\parents{i}}}{(X, Y_{1:i-1})}.$$
    \end{enumerate}
\end{proposition}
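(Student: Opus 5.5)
The natural route is to pass through 3), which is the common language of both definitions. For simplicity take $G$ sorted; by Remark \ref{relabelling_remark} and relabelling invariance of Definition \ref{G_causal_definition}, this is no loss for the equivalence 1)$\Leftrightarrow$2). I would prove 1)$\Leftrightarrow$3) and 2)$\Leftrightarrow$3).

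\textbf{1)$\Leftrightarrow$3).} First I sort $\causalG$ by the order $1,\dots,n,1',\dots,n'$. This is a topological order, since every edge goes from $V$ to $V$, from $V$ to $V'$, or within $V'$ along $G$. By Proposition \ref{prop:Gcomp_characterisation} 4), $\pi\in\cGprobabilities{\mc{X}\times\mc{Y}}$ if and only if two families of relations hold:
\begin{itemize}
\item[(a)] $X_i\indep_{X_{\parents{i}}} X_{1:i-1}$ for each $i$;
\item[(b)] $Y_i\indep_{X_i,X_{\parents{i}},Y_{\parents{i}}}(X,Y_{1:i-1})$ for each $i$.
\end{itemize}
The parents of $i'$ in $\causalG$ are exactly $\{i\}\cup\parents{i}\cup\parents{i}'$. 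Family (a) is precisely $\mu\in\Gprobabilities{\mc{X}}$, which holds by assumption, and family (b) is literally 3). So this equivalence is immediate.

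\textbf{2)$\Rightarrow$3).} Given the representation $Y_i=f_i(X_i,X_{\parents{i}},Y_{\parents{i}},U_i)$, I argue inductively in $i$. The vector $(X,Y_{1:i-1})$ is a measurable function of $(X,U_{1:i-1})$, and this is independent of $U_i$. Hence, conditionally on $(X,Y_{1:i-1})$, the variable $Y_i$ has the law of $f_i(X_i,X_{\parents{i}},Y_{\parents{i}},U)$ with $U\sim\law{U_i}$ independent. This law depends only on $(X_i,X_{\parents{i}},Y_{\parents{i}})$, which is the conditional independence in 3). This uses the standard fact that conditioning on a $\sigma$-algebra independent of $U_i$ lets us "freeze" the other arguments.

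\textbf{3)$\Rightarrow$2).} This is the constructive direction and the main technical step.

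1. By 3), the kernel $\pi_i(dy_i\mid x,y_{1:i-1})$ can be chosen to depend only on $(x_i,x_{\parents{i}},y_{\parents{i}})$.
2. For each $i$, I take a measurable function $f_i$ with $\pushforward{f_i(x_i,x_{\parents{i}},y_{\parents{i}},\cdot)}{\mathrm{Unif}[0,1]}$ equal to this kernel. This is the standard randomisation lemma on Polish spaces, e.g. \cite[Lemma 4.22]{Kallenberg2021FoundationsThird}.
3. On a product space, I take $X\sim\mu$ independent of i.i.d. uniforms $U_1,\dots,U_n$, and define $\tilde Y_i$ recursively through the $f_i$.
4. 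By induction on $i$, $(X,\tilde Y_{1:i})$ has the same law as $(X,Y_{1:i})$ under $\pi$, because both are built by successively applying the same kernels $\pi_i$. Hence $(X,\tilde Y)\sim\pi$, which gives 2).

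The only subtlety I expect is measurability of the kernel in the reduced variables. It is resolved by choosing a version of $\law{Y_i\mid X_i,X_{\parents{i}},Y_{\parents{i}}}$ directly and using 3) to identify it with the full conditional law.

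Since 1)$\Leftrightarrow$3) and 2)$\Leftrightarrow$3), we get 1)$\Leftrightarrow$2). This step needs sortedness only as a device, so by relabelling invariance it holds for every DAG.
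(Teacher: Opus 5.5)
Your proposal is correct and follows the same route as the paper. You reduce to a sorted $G$ by relabelling invariance, and you get 1)$\Leftrightarrow$3) by applying Proposition~\ref{prop:Gcomp_characterisation} 4) to $G^c$ under the order $1,\dots,n,1',\dots,n'$, with the $X$-relations supplied by $\mu\in\Gprobabilities{\mc{X}}$. The only difference is that the paper cites \cite[Theorem 3.4]{Eckstein2023CausalGraphs} for 2)$\Leftrightarrow$3), whereas you give the standard self-contained argument (freezing lemma for 2)$\Rightarrow$3), randomisation lemma plus recursive construction for 3)$\Rightarrow$2)), and that argument is sound.
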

\begin{proof}
    Since the first two conditions are invariant under relabelling, we may assume without loss of generality that $G$ is sorted. That 2) is equivalent to 3) was then shown in \cite[Theorem 3.4]{Eckstein2023CausalGraphs}. 

    That 1) is equivalent to 3) is a consequence of the equivalence of 1) and 4) in Proposition \ref{prop:Gcomp_characterisation}. Indeed, using the notation of Definition \ref{G_causal_definition}, we see $1 \leq \ldots \leq n \leq 1' \leq \ldots \leq n'$ is a topological ordering for $G^c$, and so $\pi \in \specificGprobabilities{G^c}{\mc{X} \times \mc{Y}}$ encodes precisely the conditional independence relations $\condindep{X_i}{X_{\parents{i}}}{X_{1:i-1}}$ for $2 \leq i \leq n$ (as the parents of $i$ in $G^c$ are the same as in $G$, namely $\parents{i}$) and $\condindep{Y_i}{X_i, X_{\parents{i}}, Y_{\parents{i}}}{(X_{1:n}, Y_{1:i-1})}$ for $1 \leq i \leq n$ (the parents of $i'$ in $G^c$ are precisely $\curlybrackets{i} \cup \parents{i} \cup \setdef{j'}{j \in \parents{i}}$, corresponding to the random variables $X_i, X_{\parents{i}}, Y_{\parents{i}}$). But, by Proposition \ref{prop:Gcomp_characterisation} again, the first set of conditional independence relations always holds as $\mu \in \Gprobabilities{\mc{X}}$.
\end{proof}

\begin{proposition}
    \label{equivalent_characterisations}
    Suppose $G$ is a sorted DAG, $\mu \in \Gprobabilities{\mc{X}}$, $\nu \in \Gprobabilities{\mc{Y}}$, and $\pi \in \couplings{\mu}{\nu}$. Then $\mu \otimes \nu \in \Gbicouplings{\mu}{\nu} \subseteq \Gcouplings{\mu}{\nu}$.
    Moreover, the following are equivalent:
    
    \begin{enumerate}
        \item[1)] $\pi \in \Gbicouplings{\mu}{\nu}$;
        \item[2)] When $(X,Y) \sim \pi$ the following conditional independence relations hold for $2 \leq i \leq n$:
        $$\condindep{(X_i, Y_i)}{X_{\parents{i}}, Y_{\parents{i}}}{(X_{1:i-1}, Y_{1:i-1})}, \quad \condindep{X_i}{X_{\parents{i}}}{Y_{\parents{i}}}, \quad \condindep{Y_i}{Y_{\parents{i}}}{X_{\parents{i}}};$$
        
        \item[3)] $\pi$ may be disintegrated as $\pi(dx, dy) = \bigotimes_{i=1}^n \pi_{i}\brackets{dx_i, dy_i \mid x_{\parents{i}}, y_{\parents{i}}},$
        with $$\pi_{i}\brackets{dx_i, dy_i \mid x_{\parents{i}}, y_{\parents{i}}} \in \couplings{\mu_{i}\brackets{dx_i \mid  x_{\parents{i}}}}{\nu_{i}\brackets{dy_i \mid  y_{\parents{i}}}}$$ $\pi$-a.s. for all $1 \leq i \leq n$.
    \end{enumerate}
\end{proposition}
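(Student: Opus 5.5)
The first claim is quick. Since $\mu\otimes\nu$ makes $X$ and $Y$ independent, and $\mu$, $\nu$ are $G$-compatible, the conditional independences in characterisation 3) of Proposition \ref{prop:equivalent_Gcausal} hold. Concretely, $Y_i$ given $(X,Y_{1:i-1})$ has the law of $Y_i$ given $Y_{1:i-1}$, which by 4) of Proposition \ref{prop:Gcomp_characterisation} depends only on $Y_{\parents{i}}$. So $\mu\otimes\nu\in\Gcouplings{\mu}{\nu}$, and by symmetry it is also anticausal. The inclusion $\Gbicouplings{\mu}{\nu}\subseteq\Gcouplings{\mu}{\nu}$ holds by definition.

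For the equivalences I plan to prove $1)\Rightarrow 2)\Rightarrow 3)\Rightarrow 1)$, working throughout with characterisation 3) of Proposition \ref{prop:equivalent_Gcausal}. Causality gives $\condindep{Y_i}{X_i,X_{\parents{i}},Y_{\parents{i}}}{(X,Y_{1:i-1})}$, and anticausality gives the symmetric relation $\condindep{X_i}{Y_i,Y_{\parents{i}},X_{\parents{i}}}{(Y,X_{1:i-1})}$.

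\textbf{$1)\Rightarrow 2)$.}
\begin{enumerate}
\item From causality, weak union and decomposition give $\condindep{Y_i}{X_{1:i},Y_{\parents{i}}}{Y_{1:i-1}}$ after discarding $X_{i+1:n}$.
\item From anticausality and decomposition, $\condindep{X_i}{Y_{1:i-1},X_{\parents{i}}}{X_{1:i-1}}$, because $Y_i$ is not among the variables being conditioned on here. Combined with $\mu\in\Gprobabilities{\mc{X}}$, i.e.\ $\condindep{X_i}{X_{\parents{i}}}{X_{1:i-1}}$, and contraction, this yields $\condindep{X_i}{X_{\parents{i}},Y_{\parents{i}}}{(X_{1:i-1},Y_{1:i-1})}$.
\item Contraction applied to the relations in steps 1 and 2 gives $\condindep{(X_i,Y_i)}{X_{\parents{i}},Y_{\parents{i}}}{(X_{1:i-1},Y_{1:i-1})}$.
\item Decomposing step 2 gives $\condindep{X_i}{X_{\parents{i}}}{Y_{\parents{i}}}$, after conditioning down from $(X_{1:i-1},Y_{1:i-1})$ to $(X_{\parents{i}},Y_{\parents{i}})$. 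This is legitimate: if $X_i$ given the larger set depends only on $(X_{\parents{i}},Y_{\parents{i}})$, and is also independent of $Y_{1:i-1}$ given $X_{1:i-1}$, then its law given $(X_{\parents{i}},Y_{\parents{i}})$ equals its law given $X_{\parents{i}}$. The relation for $Y_i$ follows by symmetry.
\end{enumerate}

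\textbf{$2)\Rightarrow 3)$.} The first relation in 2) and the chain rule give the product disintegration $\pi=\bigotimes_i\pi_i(dx_i,dy_i\mid x_{\parents{i}},y_{\parents{i}})$. The other two relations say that the first marginal of $\pi_i$ is the law of $X_i$ given $X_{\parents{i}}$, which is $\mu_i(\cdot\mid x_{\parents{i}})$ since $\pi$ has first marginal $\mu$. The second marginal is identified the same way. I would handle the $\pi$-a.s.\ qualifiers with regular conditional kernels on Polish spaces.

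\textbf{$3)\Rightarrow 1)$.} Using the product form, compute the law of $Y_i$ given $(X,Y_{1:i-1})$. Integrating out the later coordinates $(X_j,Y_j)$ with $j>i$ is where the difficulty lies, because $X_{i+1:n}$ is in the conditioning. The later kernels have $x$-marginals $\mu_j(\cdot\mid x_{\parents{j}})$ that do not depend on $y$. Hence conditioning on $X_{i+1:n}$ reweights by the $\pi_j$ only through their disintegrations $\pi_j(dy_j\mid x_j,x_{\parents{j}},y_{\parents{j}})$, and these are all integrated out over $y_{i+1:n}$ with total mass one. The result is that the conditional law of $Y_i$ is the $y$-disintegration of $\pi_i$ given $x_i$, which depends only on $(X_i,X_{\parents{i}},Y_{\parents{i}})$. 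This proves causality; anticausality follows symmetrically.

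I expect this backward integration in $3)\Rightarrow 1)$ to be the main obstacle. I would carry it out by induction from $n$ down to $i+1$, peeling off one node at a time.
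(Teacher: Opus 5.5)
Your arguments for $\mu\otimes\nu$, for $2)\Rightarrow 3)$ and for $3)\Rightarrow 1)$ are sound. The paper itself only cites \cite[Corollary 3.5, Proposition 3.6]{Eckstein2023CausalGraphs}, so a self-contained cycle would be a reasonable alternative. However, $1)\Rightarrow 2)$ has a genuine gap at your step 2.

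Decomposition and weak union never remove a variable from the conditioning set. So you cannot eliminate $Y_i$ from anticausality at node $i$, $\condindep{X_i}{Y_i,Y_{\parents{i}},X_{\parents{i}}}{(Y,X_{1:i-1})}$, merely because your target does not condition on it.

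Even granting $\condindep{X_i}{Y_{1:i-1},X_{\parents{i}}}{X_{1:i-1}}$, contraction with $\condindep{X_i}{X_{\parents{i}}}{X_{1:i-1}}$ cannot produce $\condindep{X_i}{X_{\parents{i}},Y_{\parents{i}}}{(X_{1:i-1},Y_{1:i-1})}$. Neither premise constrains how $X_i$ depends on $Y_{1:i-1}$. Take two unconnected nodes, $X_1,X_2$ independent fair bits and $Y_1=X_2$: both premises hold at $i=2$, yet $X_2$ is not independent of $(X_1,Y_1)$.

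Your step 4 then appeals to ``$X_i$ independent of $Y_{1:i-1}$ given $X_{1:i-1}$''. That is exactly the missing fact, and it is never established. It is also a causal property, not an anticausal one. For $G=(\{1,2\},\{(1,2)\})$, take $X_1,Y_1,Y_2$ independent fair bits and $X_2=Y_1$. This is an anticausal coupling of two product measures, yet it violates the second relation $\condindep{X_2}{X_1}{Y_1}$.

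Your step 1 is also too weak for step 3. Conditioning on all of $X_{1:i}$ loses the relation $\condindep{Y_i}{X_i,X_{\parents{i}},Y_{\parents{i}}}{(X_{1:i-1},Y_{1:i-1})}$ that the chain rule needs. That relation is simply a decomposition of causality at node $i$, and you should use it directly.

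The missing idea is that constraints at node $i$ alone do not suffice; you must use causality at the earlier nodes. For $j<i$ we have $\{j\}\cup\parents{j}\subseteq 1:i-1$ and $\parents{j}\subseteq 1:j-1$. Weak union and decomposition applied to causality at node $j$ then give $\condindep{Y_j}{X_{1:i-1},Y_{1:j-1}}{X_{i:n}}$. The chain rule over $j=1,\dots,i-1$ yields $\condindep{Y_{1:i-1}}{X_{1:i-1}}{X_{i:n}}$, and in particular $\condindep{X_i}{X_{1:i-1}}{Y_{1:i-1}}$. Contracting this with $G$-compatibility, $\condindep{X_i}{X_{\parents{i}}}{X_{1:i-1}}$, gives $\condindep{X_i}{X_{\parents{i}}}{(X_{1:i-1},Y_{1:i-1})}$.

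From this single relation you get the rest of 2):
\begin{itemize}
\item the second relation follows by decomposition;
\item after weak union, the first relation follows by the chain rule together with the corrected step 1;
\item the third relation follows symmetrically from anticausality and $\nu\in\Gprobabilities{\mc{Y}}$.
\end{itemize}
With this replacement your cycle of implications closes.
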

\begin{proof}
    See \cite[Corollary 3.5, Proposition 3.6]{Eckstein2023CausalGraphs}. 
\end{proof}

\subsection{Gluing Properties}
\label{main_gluing}

The notion of gluing is a classical tool in the theory of optimal transport, often used to show that the triangle inequality holds for different variants of Wasserstein distances. 

\begin{definition}[Gluing]
    Let $\mc{A}, \mc{B}, \mc{C}$ be Polish spaces and let $\pi_1 \in \probabilities{\mc{A} \times \mc{B}}$, $\pi_2 \in \probabilities{\mc{B} \times \mc{C}}$ such that $\pushforward{\proj{\mc{B}}}{\pi_1} = \pushforward{\proj{\mc{B}}}{\pi_2} \eqqcolon \nu \in \probabilities{\mc{B}}$. Let $\pi_2$ be disintegrated as $\pi_2(db, dc) = \nu(db) \otimes \kappa(dc \mid b)$ for a stochastic kernel $\kappa$.

    Then $\bigglue{\pi_1}{\pi_2}$ is defined as the measure 
    $$\bigglue{\pi_1}{\pi_2}(da, db, dc) \coloneqq \pi_1(da, db) \otimes \kappa(dc \mid b) \in \probabilities{\mc{A} \times \mc{B} \times \mc{C}},$$ 
    and is called the \emph{conditional independent product} of $\pi_1$, $\pi_2$. Its $\mc{A}\times\mc{C}$ marginal, $\pi_1 \glue \pi_2 \coloneqq \pushforward{\proj{\mc{A}\times\mc{C}}}{\bigglue{\pi_1}{\pi_2}}$ is called the \emph{gluing} of $\pi_1$, $\pi_2$.
\end{definition}

The classical gluing lemma \cite[Lemma 7.6]{Villani2003TopicsTransportation} states that if $\pi \in \couplings{\mu}{\nu}$ and $\tilde{\pi} \in \couplings{\nu}{\eta}$ for marginals $\mu, \nu, \eta$, then $\pi \glue \tilde{\pi} \in \couplings{\mu}{\eta}$, thus providing a way of combining two transport plans, one from $\mu$ to $\nu$, and one from $\nu$ to $\eta$ into a transport plan from $\mu$ to $\eta$. We are interested in whether graph causal restrictions are preserved under the gluing operation.

\begin{definition}[(Bi)causal gluing property]
    We say that the directed graph $G$ has the \emph{(bi)causal gluing property} if whenever $\mu \in \Gprobabilities{\mc{X}}$, $\nu \in \Gprobabilities{\mc{Y}}$, $\eta \in \Gprobabilities{\mc{Z}}$ for some products of Polish spaces $\mc{X}, \mc{Y}, \mc{Z}$, and $\pi \in \Gcouplings{\mu}{\nu}$, $\tilde{\pi} \in \Gcouplings{\nu}{\eta}$ (respectively $\pi \in \Gbicouplings{\mu}{\nu}$, $\tilde{\pi} \in \Gbicouplings{\nu}{\eta}$), we have that $\pi \glue \tilde{\pi} \in \Gcouplings{\mu}{\eta}$ (respectively $\pi \glue \tilde{\pi} \in \Gbicouplings{\mu}{\eta}$).
\end{definition}

To characterise fully causal gluing properties for DAGs, we need to define the notion of a \emph{split}, also known as an \emph{unshielded fork}.

\begin{definition}[Splits]
\label{split_def}
    Let $G = (V, E)$ be a directed graph. If $i, j, k \in V$ are such that $i \in \parents{j} \cap \parents{k}$ but $j \notin \parents{k}$ and $k \notin \parents{j}$, we call the triplet $(i,j,k)$ a \emph{split}. If no such triplet exists, we say $G$ contains \emph{no splits}.
\end{definition}

\begin{theorem}
\label{glue_characterisation}
    For a DAG $G$, the following are equivalent:
    \begin{enumerate}
        \item $G$ has the causal gluing property;
        \item $G$ has the bicausal gluing property;
        \item $G$ is transitively closed and has no splits.
    \end{enumerate}
\end{theorem}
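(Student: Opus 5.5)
We will prove $3\Rightarrow1$, then $1\Rightarrow2$ and $3\Rightarrow2$ together, and finally $\neg3\Rightarrow\neg1,\neg2$ by explicit counterexamples. Throughout, $G$ is a sorted DAG. Since causal and bicausal couplings can be described by Proposition~\ref{prop:equivalent_Gcausal}~3) and Proposition~\ref{equivalent_characterisations}, it suffices to check conditional independence relations for the glued triple.

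\emph{Sufficiency ($3\Rightarrow1$).} First we extract the structure of $G$. Suppose $G$ is transitively closed with no splits, and let $j<i$ with $j\notin\parents{i}$. Then $\hist{i}\cap\hist{j}=\emptyset$. Indeed, a common ancestor $l$ would be a parent of both $i$ and $j$, by transitive closure. The no-split condition would then force $j\in\parents{i}$ or $i\in\parents{j}$, and both are excluded.

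Next we set up the glued triple. Let $\pi\in\Gcouplings{\mu}{\nu}$ and $\tilde\pi\in\Gcouplings{\nu}{\eta}$, and take $(X,Y,Z)\sim\bigglue{\pi}{\tilde\pi}$. Using Definition~\ref{G_causal_definition_2} we write
\[
Y_i=f_i(X_i,X_{\parents{i}},Y_{\parents{i}},U_i),\qquad Z_i=g_i(Y_i,Y_{\parents{i}},Z_{\parents{i}},U'_i).
\]
Because $Z\indep X$ given $Y$ in the conditional independent product, we may realise this on one probability space with $X,(U_i),(U'_i)$ all independent. Unrolling the recursion, $(Y_i,Z_i)$ is a function of $X_{\{i\}\cup\hist{i}}$ and the noises indexed by $\{i\}\cup\hist{i}$. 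Here $\hist{i}=\parents{i}$ by transitive closure.

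We must show that
\[
Z_i\ \indep_{X_i,X_{\parents{i}},Z_{\parents{i}}}\ (X,Z_{1:i-1}).
\]
We would do this in two steps.
\begin{enumerate}
\item Split off the non-ancestors. Let $A=\{i\}\cup\parents{i}$ and $B=V\setminus(A\cup\desc{i})$. By the disjointness fact, for $j\in B$ the variables $Z_j$ and the noises $U_j,U'_j$ are built from ancestral sets disjoint from $A$. Together with the global Markov property of $\mu$ (Proposition~\ref{prop:Gcomp_characterisation}), this should show that $(X_A,\text{noises}_A)$ is independent of $(X_B,Z_B,\text{noises}_B)$. Making this rigorous is the main obstacle: disjoint ancestral sets must yield independence of the corresponding $X$-blocks under $\mu$, and we would try to obtain this from the representation in Proposition~\ref{prop:Gcomp_characterisation}~2). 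The descendants of $i$ coordinate only enters through $X$, which we also handle via the Markov property of $\mu$.
\item Handle the ancestral block. Within $A$, the pair $(X_A,Z_{\parents{i}})$ is totally ordered by the no-split/transitive structure, since $\parents{i}$ is a chain. There, $Z_i$ depends on $X_A$ only through $(X_i,X_{\parents{i}},Z_{\parents{i}})$ together with fresh noise, by composing the two one-step kernels. This step is an induction along the chain $\parents{i}$.
\end{enumerate}

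\emph{Bicausality.} We have $1\Rightarrow2$ and $3\Rightarrow2$ because the anticausal part of $\pi\glue\tilde\pi$ is the pushforward of $\tilde\pi^{\top}\glue\pi^{\top}$ under the swap map. This is again a causal gluing, so it is causal by $3\Rightarrow1$. Bicausal couplings are in particular causal, so the causal gluing property applied in both directions yields bicausal gluing.

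\emph{Necessity.} We would exhibit $\pi\in\Gbicouplings{\mu}{\nu}$ and $\tilde\pi\in\Gbicouplings{\nu}{\eta}$ whose gluing is not even causal. Because bicausal implies causal, such a pair refutes both 1 and 2. We reduce to the three-node graphs $\Gmarkov$ and $\Gsplit$: if $G$ has a chain or split $(i,j,k)$, we put nontrivial spaces $\{0,1\}$ only on $i,j,k$, take Dirac marginals elsewhere, and extend the couplings trivially.
\begin{itemize}
\item For $\Gmarkov$, the idea is leakage through the intermediate marginal. Let $\pi$ be a Monge coupling with $Y_1=X_1$ and $Y_2=X_2\oplus X_1$, which is bicausal since it is bijective stepwise. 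Let $\tilde\pi$ resample $Z_1$ independently and make $Z_3$ depend on $(Y_3,Y_2)$. The aim is that $Z_3$ carries information on $X_1$ not available from $(X_3,X_2,Z_2)$, which would violate $Z_3\indep X_1$ given $(X_3,X_2,Z_2)$. The bits must be tuned so that $Z_2$ does not reveal $X_1$; we would check this by direct finite computation.
\item For $\Gsplit$, the analogous construction correlates $Z_2$ and $Z_3$ through $Y_1$ in a way not mediated by $(X,Z_1)$.
\end{itemize}
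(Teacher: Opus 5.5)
Your necessity argument and your reduction from bicausal to causal gluing are the paper's (Examples \ref{must_history} and \ref{must_split}, Proposition \ref{substructures}, Remark \ref{causal_implies_bicausal_gluing}). Your sketched bit constructions go through, with $\oplus$ denoting addition mod $2$:
\begin{itemize}
\item For $\Gmarkov$, take independent fair bits $X_1,X_3$, $X_2\equiv 0$, $Y=(X_1,X_1,X_3)$ and $Z=(Z_1,0,Y_3\oplus Y_2)$ with $Z_1$ an independent bit. This is essentially Example \ref{must_history}.
\item For $\Gsplit$, take $X=(0,X_2,X_3)$ with fair bits, $Y=(B,X_2\oplus B,X_3\oplus B)$ with $B$ an independent fair bit, and $Z=(0,Y_2,Y_3)$. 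Then $Z_2\oplus X_2=B=Z_3\oplus X_3$, so the glued coupling is not causal.
\end{itemize}
For sufficiency you take a genuinely different route. The paper shows that every transitively closed, split-free DAG is built from single nodes by $(\cdot)^+$ and $\oplus$ (Lemma \ref{make_graph_two_operations}). It then proves by conditional-independence calculus that both operations preserve causal gluing (Propositions \ref{plus_good}, \ref{oplus_good}), and reuses that decomposition for Proposition \ref{prop:P_G_closed} and Theorem \ref{thm:lifted_nosplits}. Your structural-equation argument needs no induction and shows exactly where each hypothesis enters: transitive closure only as $\hist{i}=\parents{i}$, and absence of splits only as disjointness of ancestral sets of non-adjacent nodes. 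It works, but not as written.

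\textbf{Step 2 is wrong and unnecessary.} It rests on a false claim: $\parents{i}$ need not be totally ordered. For example, $\Gmerge$ is transitively closed with no splits, yet $\parents{3}=\{1,2\}$ has no internal edge; such merges arise whenever $(\cdot)^+$ is applied to a disjoint union. Nothing needs to be shown inside $A$. Labels below $i$ are never descendants, so $Z_{1:i-1}=(Z_{\parents{i}},Z_{B'})$ with $B'=B\cap\{1,\dots,i-1\}$. With $D=\desc{i}$, the target is therefore exactly $\condindep{Z_i}{X_A,Z_{\parents{i}}}{(X_B,X_D,Z_{B'})}$.

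\textbf{Step 1, which you call the main obstacle, is easy.} $A$ is closed under parents by transitive closure. So is $B$: for $j\in B$, your disjointness fact together with $i\notin\hist{j}$ keeps $\hist{j}$ out of $A$, and $j\notin\desc{i}$ keeps it out of $D$. You should state the disjointness fact for all non-adjacent pairs, since $B$ also contains non-descendants $j>i$. Hence in the representation of Proposition \ref{prop:Gcomp_characterisation} 2), $X_A$ and $X_B$ are functions of disjoint families of independent noises. This gives $(X_A,U_A,U'_A)\indep(X_B,U_B,U'_B)$, and so $\condindep{Z_i}{X_A,Z_{\parents{i}}}{(X_B,Z_{B'})}$.

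\textbf{The descendants need no Markov property of $\mu$.} The variables $Z_i$, $Z_{\parents{i}}$ and $Z_{B'}$ are functions of $(X_{A\cup B},U,U')$, and $(U,U')\indep X$. They are therefore conditionally independent of $X_D$ given $X_{A\cup B}$. The chain rule of conditional independence combines this with the previous relation and finishes the proof.
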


\begin{remark}
\label{causal_implies_bicausal_gluing}
    If $\pi'$ is the pushforward of the measure $\pi$ under the map $(x,y) \mapsto (y,x)$, $\tilde{\pi}'$ is the pushforward of the measure $\tilde{\pi}$ under the map $(y,z) \mapsto (z,y)$, then the pushforward of $\pi \glue \tilde{\pi}$ under the map $(x,z) \mapsto (z,x)$ is $ \tilde{\pi}' \glue \pi'$. Thus, one easily sees that if $G$ has the causal gluing property, it also has the bicausal gluing property.
\end{remark}

The proof of Theorem \ref{glue_characterisation} is given in Section \ref{proof_gluing}. 
An interesting and novel feature of graph causal optimal transport, as highlighted by Theorems \ref{main_result_metric} and \ref{glue_characterisation}, is that while the bicausal gluing property is sufficient for the triangle inequality to hold (by a trivial modification of the proof of \cite[Theorem 7.3]{Villani2003TopicsTransportation}), it is not necessary. Indeed, the graph $\Gsplit$, with vertex set $\curlybrackets{1,2,3}$ and edge set $\curlybrackets{(1,2), (1,3)}$, is transitively closed but contains a split. Hence, $W_{\Gsplit,p}$ is a proper distance, even if $\Gsplit$ does not have the (bicausal) gluing property. Working to develop tools that will enable us to prove this, and Theorem \ref{main_result_metric} more generally, we will now look at denseness of Monge couplings and subsequently introduce lifted graph bicausal couplings. 

The inductive construction technique used in the proof of Theorem \ref{glue_characterisation} can also be used to understand weak closure properties of $\Gprobabilities{\mc{X}}$, the set of graph compatible measures. By \cite[Proposition 3.3]{Eckstein2023CausalGraphs}, we know that $\Gprobabilities{\mc{X}}$ is closed in total variation, and that if $\mc{X}$ is a discrete Polish space, $\Gprobabilities{\mc{X}}$ is also weakly closed. However, in general $\Gprobabilities{\mc{X}}$ is not necessarily weakly closed and \cite[Proposition 3.3]{Eckstein2023CausalGraphs} provides an example for the case $G = \Gmarkov$. We are able to provide a full characterisation of the graphs $G$ for which $\Gprobabilities{\mc{X}}$ is weakly closed for a general Polish space $\mc{X}$.
\begin{proposition}\label{prop:P_G_closed}
    If $G$ is transitively closed and contains no splits, then $\Gprobabilities{\mc{X}}$ is weakly closed. Otherwise, $\Gprobabilities{\mbb{R}^n}$ is not weakly closed.
\end{proposition}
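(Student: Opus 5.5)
The proof splits into the positive statement, for transitively closed DAGs without splits, and explicit counterexamples for the other two cases.

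\textbf{Positive direction.} Assume $G$ is sorted, transitively closed and has no splits. The first step is a structural observation: in such a graph, for each $i$ the parent set $\parents{i}$ is totally ordered by $\parents{}$. Indeed, if $j,k\in\parents{i}$ were incomparable we would not yet have a split, since a split needs a common parent. So we argue instead by induction on $n$, removing a sink node $n$. We want the decomposition
$$\condindep{X_n}{X_{\parents{n}}}{X_{1:n-1}}$$
to pass to weak limits. Suppose $\mu^{(m)}\to\mu$ weakly. By induction, $\mu_{1:n-1}\in\specificGprobabilities{G'}{\mc{X}_{1:n-1}}$, where $G'$ is the induced subgraph. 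The induced subgraph is again transitively closed and split-free, so this case is covered.

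The key step is a characterisation of the remaining condition. The conditional independence is equivalent to
$$\int f(x_n)\,g(x_{1:n-1})\,d\mu=\int f(x_n)\,\mathbb{E}_\mu[g\mid x_{\parents{n}}]\,d\mu$$
for bounded continuous $f,g$. This identity is not obviously weakly closed, because conditional expectations are not weakly continuous.

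Here the absence of splits and chains is used. In a transitively closed split-free DAG, $\parents{n}$ is an \emph{ancestral} set that is "downward closed", and every node outside $\parents{n}\cup\{n\}$ has no descendant in $\parents{n}$ that is shared with $n$. More concretely, we show that $V\setminus(\parents{n}\cup\{n\})$ can be split so that $X_{\parents{n}}$ separates it from $X_n$ via a \emph{product} structure: $\mu = \mu_{\parents{n}}\otimes\kappa_1\otimes\kappa_2$ with $X_n$ and the rest conditionally independent.

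We plan to use the equivalent formulation with auxiliary bounded continuous test functions $h(x_{\parents{n}})$ and an approximation argument as in Appendix-style lemmas. We approximate the conditional kernel of $X_n$ given $X_{1:n-1}$, and use that each $\mu^{(m)}$ has this kernel depending only on $x_{\parents{n}}$ and that $\parents{n}$ is an ancestral clique. Equivalently, we identify $\mu$ as a gluing, along $X_{\parents{n}}$, of its $(\parents{n},n)$ marginal and its $1{:}n{-}1$ marginal. Weak closedness of such gluings should follow from the same inductive construction used for Theorem~\ref{glue_characterisation}, which shows that conditional independent products over a clique of parents are preserved.

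I expect this limit step to be the main obstacle. The danger is that in general a weak limit of conditionally independent products need not be one, which is exactly the chain counterexample. So the crux is to show that when the conditioning set is the full ancestral clique, the property is weakly closed, perhaps via a shared-ancestor argument that reduces it to unconditional independence of $(X_n)$ and the rest given the totally ordered ancestors.

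\textbf{Negative direction.} If $G$ is not transitively closed, it contains $\Gmarkov$ as a proper subgraph. Embed the known counterexample of \cite[Proposition 3.3]{Eckstein2023CausalGraphs} on the three chain coordinates and put Dirac masses on the remaining coordinates. The approximants $X_1=U$, $X_2=\eps_m V$-perturbed, and $X_3=$ a function of $X_2$ that encodes $X_1$ remain $G$-compatible, while the limit makes $X_3$ depend on $X_1$ with $X_2$ degenerate.

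If $G$ is transitively closed but has a split $(i,j,k)$, we do the same with a fork. Take $X_i=\eps_m Z$ with $Z$ uniform, and let $X_j=f(Z)$ and $X_k=f(Z)$ for a common high-frequency function. These satisfy $\condindep{X_j}{X_i}{X_k}$ because $X_i$ determines $Z$, and they are compatible with $G$ by setting all other coordinates deterministic. In the limit $X_i=0$, but $X_j=X_k$ is non-degenerate, which violates the required conditional independence. Checking that the extended measures are $G$-compatible uses characterisation 2) of Proposition~\ref{prop:Gcomp_characterisation}.
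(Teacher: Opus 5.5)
Your negative direction is correct and is the paper's argument. The subgraph induced on a chain (resp.\ a split) is $\Gmarkov$ (resp.\ $\Gsplit$), and the counterexamples transfer to $G$ by setting the remaining coordinates constant, which is legitimate by characterisation 2) of Proposition~\ref{prop:Gcomp_characterisation}. Two small corrections to your descriptions: in the chain example $X_2$ must be a function of $X_1$, e.g.\ $X_2=\eps_m X_1$ and $X_3=X_2/\eps_m$; in the split example any non-constant $f$ works, and no ``high frequency'' is needed.

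The positive direction has a genuine gap. You leave the limit step open, and the tools you propose for it are either false or do not apply.
\begin{itemize}
\item Parent sets in a transitively closed split-free DAG need not be totally ordered, nor cliques. $\Gmerge$, with edges $1\to 3$ and $2\to 3$, has neither chains nor splits, yet $1,2\in\parents{3}$ are non-adjacent.
\item Theorem~\ref{glue_characterisation} is about gluing couplings, not about weak limits. As you note yourself, a weak limit of conditionally independent products over $X_{\parents{n}}$ need not be one.
\end{itemize}

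The missing idea is a decomposition at the sink $n$. Let $P=\parents{n}$ and $R=V\setminus(P\cup\{n\})$. Then there is \emph{no edge} between $P$ and $R$:
\begin{itemize}
\item an edge $i\to j$ with $i\in P$, $j\in R$ would make $(i,j,n)$ a split, since $n$ has no children;
\item an edge $i\to j$ with $i\in R$, $j\in P$ would force $i\in P$ by transitive closure.
\end{itemize}
Hence $G=G_P^+\oplus G_R$. So $\mu\in\Gprobabilities{\mc{X}}$ if and only if $\mu=\mu_{P\cup\{n\}}\otimes\mu_R$, with $\mu_P$ and $\mu_R$ compatible with the induced subgraphs $G_P$ and $G_R$. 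These subgraphs are again transitively closed and split-free.

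The only condition involving $X_n$ thus becomes the \emph{unconditional} independence $(X_P,X_n)\indep X_R$. This passes to weak limits by testing against $f(x_{P\cup\{n\}})g(x_R)$. The conditions on $\mu_P$ and $\mu_R$ pass to the limit by induction. No conditional expectations or kernel approximations are needed. This is exactly the paper's route through Lemmas~\ref{make_graph_two_operations}, \ref{closedness_independent} and \ref{closedness_add}.
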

\begin{proof}
The proof is similar to the proof of Theorem \ref{glue_characterisation}. If $G$ does not have the transitive closure or no splits properties, Example \ref{closedness_counterexamples} shows $\Gprobabilities{\mbb{R}^n}$ is not closed. Otherwise, combining Lemmas \ref{closedness_independent}, \ref{closedness_add}, and Lemma \ref{make_graph_two_operations}, we see that $\Gprobabilities{\mc{X}}$ is weakly closed.  
\end{proof}

\subsection{Denseness of $G$-biadapted Monge Couplings}

Monge couplings between marginals $\mu \in \probabilities{\mc{X}}$ and $\nu \in \probabilities{\mc{Y}}$ are transport plans of the form $\pushforward{(\mathrm{Id},T)}{\mu}$ which are supported on the graph of some measurable map $T : \mc{X} \to \mc{Y}$ such that $\pushforward{T}{\mu} = \nu$. We denote by $\monge{\mu}{\nu} \subseteq \couplings{\mu}{\nu}$ the set of all such Monge couplings. 

They are named after Gaspard Monge, who originally introduced the optimal transport problem in \cite{Monge1781MemoireRemblais} as a minimisation of an expected cost over $\monge{\mu}{\nu}$. In modern OT theory, following \cite{Kantorovich1942}, the optimisation is done over couplings $\couplings{\mu}{\nu}$ which is a convex set, resulting in a well posed problem with a natural dual \citep{Rubinstein}, see also \citep{Kellerer1984duality} and the discussion therein. Nevertheless, Monge couplings remain an important object. In some cases the optimal coupling is a Monge one,  most notably for the squared distance cost on $\mbb{R}^d$, where the seminal result of \cite{Brenier1991} showed it is supported on a gradient of a convex function. More generally, provided $\mu$ contains no atoms, $\monge{\mu}{\nu}$ is weakly dense in $\couplings{\mu}{\nu}$, and thus the Monge and Kantorovich formulations of the optimal transport problem agree (see e.g. \cite{Pratelli2007MongeKantorovich})
$$\inf_{\pi \in \monge{\mu}{\nu}} \int c(x,y) \,d\pi(x,y) = \inf_{\pi \in \couplings{\mu}{\nu}} \int c(x,y) \,d\pi(x,y),$$
for any continuous and bounded $c: \mc{X} \to \mc{Y} \to \mbb{R}$. A similar denseness result holds if we restrict to Monge couplings supported on graphs of bijective maps, provided now both $\mu$ and $\nu$ contain no atoms \cite[Theorem 2.6]{Beiglbock2022Biadapted}. We generalise this denseness result to graph bicausal couplings. Our Monge maps are restricted to respect the graph structure in the sense of the following definition:

\begin{definition}[$G$-adapted maps]
    We say a function $T : \prod_{i=1}^n\mc{X}_i \to \prod_{i=1}^n\mc{Y}_i$ is \emph{$G$-adapted} if it can be written as
    $$T(x) = (T_i(x_i, x_{\parents{i}}))_{1 \leq i \leq n}$$
    for some functions $T_i : \mc{X}_i \times \mc{X}_{\parents{i}} \to \mc{Y}_i$. We call $T$ \emph{$G$-biadapted} if it is $G$-adapted, bijective, and its inverse is itself $G$-adapted.\\
    Given $\mu \in \probabilities{\mc{X}}$, $\nu \in \probabilities{\mc{Y}}$, we denote the set of all Monge couplings of $\mu$ and $\nu$ induced by $G$-biadapted maps as
    $$\Gmonge{\mu}{\nu} \coloneqq \setdef{\pushforward{(\mathrm{Id},T)}{\mu}}{T \, \, G\mathrm{-biadapted}, \pushforward{T}{\mu}=\nu}.$$
\end{definition}
Note that $\Gmonge{\mu}{\nu}\subseteq \Gbicouplings{\mu}{\nu}$ for $\mu\in \Gprobabilities{\mc{X}}$, $\nu\in \Gprobabilities{\mc{Y}}$. This follows directly by verifying the conditional independence relations using the structure of $G$-biadapted maps.
\begin{remark}
     With $G=\Glinear{n}$, the definition above recovers the notions of adapted and biadapted maps from \cite{Beiglbock2022Biadapted}. Theorem \ref{main_dense} below can then be seen as a generalisation of \cite[Theorem 3.11]{Beiglbock2022Biadapted}.
\end{remark}

We record, for future use, the following self-evident property of $G$-adapted maps:
\begin{lemma}
    \label{closed_composition}
    Suppose $G$ is a transitively closed DAG and $\mc{Y}=\mc{X}$. Then the set of $G$-adapted maps, and the set of $G$-biadapted maps, are closed under composition.
\end{lemma}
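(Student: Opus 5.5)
The plan is to verify the claim directly from the definition of $G$-adapted maps, using transitive closure to control which coordinates each component of a composition depends on. Let $S, T : \mc{X} \to \mc{X}$ be $G$-adapted, with $S(x) = (S_i(x_i, x_{\parents{i}}))_{i}$ and $T(x) = (T_i(x_i, x_{\parents{i}}))_{i}$. Then
$$(T \circ S)(x)_i = T_i\big(S_i(x_i, x_{\parents{i}}), (S_j(x_j, x_{\parents{j}}))_{j \in \parents{i}}\big).$$
The $i$-th component thus depends on $x_i$, on $x_{\parents{i}}$, and on $x_{\parents{j}}$ for $j \in \parents{i}$.

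The key step is the observation that, for $G$ transitively closed, $k \in \parents{j}$ and $j \in \parents{i}$ imply $k \in \parents{i}$. In other words, $\parents{j} \subseteq \parents{i}$ whenever $j \in \parents{i}$, since $G$ has no chains. Hence the $i$-th component of $T\circ S$ is a (measurable) function of $(x_i, x_{\parents{i}})$ only, which is exactly $G$-adaptedness. To be fully formal, I would define $R_i(x_i, x_{\parents{i}})$ by the displayed formula and note that it is a composition of measurable maps with coordinate projections from $\mc{X}_i \times \mc{X}_{\parents{i}}$.

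For the biadapted case, let $S$ and $T$ be $G$-biadapted. Then $T \circ S$ is bijective, with inverse $S^{-1} \circ T^{-1}$. Both $S^{-1}$ and $T^{-1}$ are $G$-adapted by assumption, so by the first part their composition is $G$-adapted, and therefore $T\circ S$ is $G$-biadapted.

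There is no real obstacle here. The only point requiring care is using transitive closure to absorb second-order parents, $\parents{\parents{i}} \subseteq \parents{i}$; that inclusion is precisely what fails for $\Gmarkov$.
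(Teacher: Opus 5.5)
Your proof is correct and is the direct verification the paper has in mind. The paper states this lemma as self-evident without writing out a proof, and the only substantive point is the one you isolate: transitive closure gives $\parents{j} \subseteq \parents{i}$ for $j \in \parents{i}$, so the $i$-th component of $T\circ S$ depends only on $(x_i, x_{\parents{i}})$, and the biadapted case then follows from $(T\circ S)^{-1} = S^{-1}\circ T^{-1}$.

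The same argument also applies verbatim to composable maps between different product spaces over the same vertex set, for example $\Psi \circ T \circ \Phi^{-1}$ in Step 3 of the proof of Theorem~\ref{thm:lifted}, which is how the lemma is actually used.
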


As is the case of classical optimal transport, in order to prove a version of denseness of Monge couplings, we need to assume the measures are sufficiently regular. 

\begin{assumption}
    \label{continuity_assumption}
    We say a measure $\mu \in \Gprobabilities{\mc{X}}$ satisfies the \emph{continuity assumption} if it has a disintegration
    \begin{equation}
        \label{disintegration_equation}
    \mu(dx) = \bigotimes_{i=1}^n \mu_i(dx_i \vert \xparents{i})
    \end{equation}
    for some stochastic kernels $\mu_i(dx_i \vert \xparents{i})$ which contain no atoms for all $\xparents{i} \in \mc{X}_{\parents{i}}$, under some sorted relabelling of $G$.
\end{assumption}
We can now state the key denseness result. Its proof is deferred to Section \ref{proof_dense}.
\begin{theorem}
    \label{main_dense}
    Suppose $G$ is a transitively closed DAG. Let $\mu \in \Gprobabilities{\mc{X}}$, $\nu \in \Gprobabilities{\mc{Y}}$ satisfy Assumption \ref{continuity_assumption}. Then the set $\Gmonge{\mu}{\nu}$ is weakly dense in $\Gbicouplings{\mu}{\nu}$. 
    
    Let $p\geq1$. If furthermore $\mu$, $\nu$ have finite $p$\textsuperscript{th} moments, then we have $W_p$-denseness.
\end{theorem}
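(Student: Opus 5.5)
We will prove Theorem \ref{main_dense} by induction on the number of vertices, following the strategy of \cite[Theorem 3.11]{Beiglbock2022Biadapted} for the linear graph. Fix a sorted relabelling of $G$ under which both $\mu$ and $\nu$ have atomless kernels as in \eqref{disintegration_equation}. (If the two measures need different sorted orders, we first show that atomlessness of the kernels transfers between sorted orders of a transitively closed DAG. We expect this via the characterisation 3) of Proposition \ref{prop:Gcomp_characterisation}, using that the parent set of each node is the same in every order.) Given $\pi\in\Gbicouplings{\mu}{\nu}$, write it by Proposition \ref{equivalent_characterisations} 3) as
$$\pi=\bigotimes_{i=1}^n\pi_i(dx_i,dy_i\mid x_{\parents{i}},y_{\parents{i}}),\qquad \pi_i\in\couplings{\mu_i(\cdot\mid x_{\parents i})}{\nu_i(\cdot\mid y_{\parents i})}.$$
Since $G$ is transitively closed, $\parents{i}=\hist{i}$ for every $i$. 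The goal is to approximate each kernel $\pi_i$ by a kernel concentrated on the graph of a bijection $t_i^{x_{\parents i},y_{\parents i}}:\mc{X}_i\to\mc{Y}_i$. This bijection must push $\mu_i(\cdot\mid x_{\parents i})$ to $\nu_i(\cdot\mid y_{\parents i})$ and depend measurably on $(x_i,x_{\parents i},y_{\parents i})$.

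\textbf{Step 1 (one-step measurable selection).} We will prove a parametrised version of \cite[Theorem 2.6]{Beiglbock2022Biadapted}. Let $\Theta$ be Polish, and let $\theta\mapsto(\alpha_\theta,\beta_\theta)$ be measurable families of atomless measures with $\gamma_\theta\in\couplings{\alpha_\theta}{\beta_\theta}$. Then for every $\eps>0$ there are jointly measurable bijections $t^\theta$ with measurable inverses, with $\pushforward{t^\theta}{\alpha_\theta}=\beta_\theta$, and with $\pushforward{(\mathrm{Id},t^\theta)}{\alpha_\theta}$ close to $\gamma_\theta$ for most $\theta$.

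The plan is to transport $\alpha_\theta$ and $\beta_\theta$ to Lebesgue measure on $[0,1]$ through measurable isomorphisms (quantile-type maps after a Borel isomorphism $\mc{X}_i\cong\mathbb{R}$). The coupling is then approximated by discretising into a grid of cells. Inside each cell we use explicit piecewise-linear measure-preserving bijections, permuting subintervals according to the cell masses. Every ingredient is an explicit formula in the cumulative distribution functions, so measurability in $\theta$ is automatic. Closeness is measured against a fixed countable convergence-determining family of bounded continuous functions.

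\textbf{Step 2 (assembly).} We apply Step 1 with $\theta=(x_{\parents i},y_{\parents i})$, $\alpha_\theta=\mu_i(\cdot\mid x_{\parents i})$ and $\beta_\theta=\nu_i(\cdot\mid y_{\parents i})$, where $y$ is itself produced by the previously constructed maps. Inductively we define
$$T_i(x_i,x_{\parents i})=t_i^{x_{\parents i},\,T_{\parents i}(x_{\parents i})}(x_i).$$
This is well defined because $\parents{j}\subseteq\parents{i}$ for $j\in\parents{i}$ by transitive closure, so $T_{\parents i}$ is a function of $x_{\parents i}$ alone. The inverse is defined in the same way through $s_i=(t_i)^{-1}$, and it is $G$-adapted by the same argument. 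Hence $T$ is $G$-biadapted with $\pushforward{T}{\mu}=\nu$.

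For convergence, fix a product test function $\prod_i f_i(x_i,y_i)$, with the products spanning a dense class. We then use an induction/telescoping estimate in the style of \cite{Beiglbock2022Biadapted}: replacing the kernels one node at a time in reverse topological order, each replacement costs at most $\eps$. This step requires some continuity of the later kernels in the parameters. We will handle this by first approximating $\pi$ by a bicausal coupling whose kernels are continuous in $(x_{\parents i},y_{\parents i})$, via a Lusin-type argument on a large compact set. We expect this to be the main obstacle. The approximation must keep the marginal kernels exactly equal to those of $\mu$ and $\nu$, which are only measurable. So we will try to perturb only the "copula" part of $\pi_i$, working after the quantile transformations where that part lives on $[0,1]^2$, and to apply Lusin there.

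\textbf{Step 3 ($W_p$-denseness).} Weak convergence together with fixed marginals having finite $p$\textsuperscript{th} moments gives uniform integrability of $d(x,x_0)^p+d(y,y_0)^p$. Hence weak convergence upgrades to $W_p$ convergence on $\couplings{\mu}{\nu}$.
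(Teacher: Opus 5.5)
\textbf{Gap in Step 2.} Your route differs from the paper's, and it has a genuine gap at the step you yourself flag: the convergence estimate in Step 2. Your Step 3 is fine. The assembly of $T$ from fibrewise bijections is also $G$-biadapted for the reason you give.

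In the telescoping you must compare $\int g\,d\pi_{\parents i}$ with $\int g\,d\pi^{\eps}_{\parents i}$, where $g$ is built from the later kernels of $\pi$ and $\pi^\eps=\pushforward{(\mathrm{Id},T)}{\mu}$. The $(x_{\parents i},y_{\parents i})$-marginal of $\pi^\eps$ (and of the intermediate hybrids) is concentrated on the graph of $T_{\parents i}$, so it is typically singular to $\pi_{\parents i}$. Weak closeness therefore only helps if $g$ is continuous on a set carrying large mass under \emph{every} coupling of $\mu_{\parents i}$ and $\nu_{\parents i}$. Concretely, the set must contain a product $K\times L$ with $\mu_{\parents i}(K^c)$ and $\nu_{\parents i}(L^c)$ small.

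Lusin applied to the kernel, or to its copula part, with respect to $\pi_{\parents i}$ does not give such a set. The same objection applies to ``close for most $\theta$'' in Step 1, unless ``most'' is meant in this product sense.

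The copula fix has further problems:
\begin{itemize}
\item It passes through a Borel isomorphism $\mc{X}_i\cong\mathbb{R}$, which ignores the topology of $\mc{X}_i$. Continuity of the copula in $\theta$ then says nothing about continuity of $\theta\mapsto\int f\,d\pi_i^\theta$ for continuous $f$ on $\mc{X}_i\times\mc{Y}_i$.
\item The full kernel also involves $\mu_i(\cdot\mid x_{\parents i})$ and $\nu_i(\cdot\mid y_{\parents i})$, which are only measurable.
\item Maps built from distribution and quantile functions are bijective only off null sets. $G$-biadaptedness requires genuine bijections, and repairing this measurably in $\theta$ is extra work.
\end{itemize}
As it stands, the key estimate is not established.

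\textbf{How the paper avoids this.} The paper needs no regularity of the kernels of $\pi$. By Theorem \ref{thm:lifted}, $\pi$ is the projection of $\pushforward{(\mathrm{Id},\Hat{T})}{\Hat{\mu}}$ for some $G$-biadapted $\Hat{T}:\Hat{\mc{X}}\to\Hat{\mc{Y}}$ with $\pushforward{\Hat{T}}{\Hat{\mu}}=\Hat{\nu}$.

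Proposition \ref{partitions} is where atomlessness enters. It gives $G$-biadapted maps $\phi_m:\mc{X}\to\Hat{\mc{X}}$ and $\psi_m:\mc{Y}\to\Hat{\mc{Y}}$ with $\pushforward{\phi_m}{\mu}=\Hat{\mu}$, $\pushforward{\psi_m}{\nu}=\Hat{\nu}$, and $\int d(x,\proj{\mc{X}}(\phi_m(x)))\,d\mu(x)\le 1/m$; the same bound holds for $\psi_m$. There Lusin is applied only to $x_{\parents n}\mapsto\mu_n^{x_{\parents n}}$, a single measure. The exceptional sets are controlled because both points being compared are $\mu_{\parents n}$-distributed, since $\tilde{T}_{\parents n}$ pushes $\mu_{\parents n}$ to $\Hat{\mu}_{\parents n}$.

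Then $T_m=\psi_m^{-1}\circ\Hat{T}\circ\phi_m$ is $G$-biadapted by Lemma \ref{closed_composition}. For $X\sim\mu$, the pair $(X,T_m(X))$ is coupled with $(\proj{\mc{X}}(\phi_m(X)),\proj{\mc{Y}}(\psi_m(T_m(X))))\sim\pi$ at cost at most $2/m$ in the bounded metric. This gives weak convergence directly, with no continuity requirement on the kernels of $\pi$.
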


\begin{remark}
    As shown in Proposition \ref{prop:Gcomp_characterisation}, $\mu \in \Gprobabilities{\mc{X}}$ if and only if it has a disintegration of the form (\ref{disintegration_equation}) under any sorted relabelling. The actual restriction in Assumption \ref{continuity_assumption} is therefore on the absence of atoms in the disintegration kernels which is the natural extension from the bicausal case in  \cite[Assumption 3.7]{Beiglbock2022Biadapted}.
\end{remark}

\subsection{Lifted Graph Bicausality}
\label{lifted_section}

As noted in Section \ref{main_gluing} there are graphs $G$ for which the graph causal Wasserstein distance is a true metric, while a version of the gluing lemma does not hold because the gluing of two $G$-(bi)causal couplings is not necessarily $G$-(bi)causal. To avoid this difficulty in gluing, we introduce a slightly relaxed notion of $G$-bicausality, which we call lifted $G$-bicausality, that can be easily seen to satisfy a version of the gluing lemma when $G$ is transitively closed. They are projections of $G$-biadapted Monge couplings between measures on a lifted space, allowing for independent randomisation at every node. 

To make this more precise, we first set up some notation for the lifted space. Let $\Hat{\mc{X}}_i \coloneqq \mc{X}_i \times [0,1]$, $\Hat{\mc{X}} \coloneqq \prod_{i=1}^n \Hat{\mc{X}}_i$, with similar notation for $\Hat{\mc{Y}}_i$, $\Hat{\mc{Y}}$. We denote by $(x_i, u_i)$ a generic element of $\Hat{\mc{X}}_i$, $(y_i, v_i)$ a generic element of $\Hat{\mc{Y}}_i$ and, slightly abusing the ordering, $(x_{1:n}, u_{1:n})$ and $(y_{1:n}, v_{1:n})$ are generic elements of $\Hat{\mc{X}}$ and $\Hat{\mc{Y}}$ respectively. For $k \geq 1$ we write $\lambda^k$ for the uniform measure on $[0,1]^k$. For a measure $\mu \in \probabilities{\mc{X}}$, we denote by $\Hat{\mu} \in \probabilities{\Hat{\mc{X}}}$ the pushforward of the measure $\mu \otimes \lambda^n$ under the reordering $(x, u) \mapsto (x_1, u_1, \ldots, x_n, u_n)$. Similarly, up to the obvious reordering, for $\nu \in \probabilities{\mc{Y}}$, $\Hat{\nu} \coloneqq \nu \otimes \lambda^n \in \probabilities{\Hat{\mc{Y}}}$. Note that $\mu\in \Gprobabilities{\mc{X}}$ implies $\Hat{\mu}\in \Gprobabilities{\Hat{\mc{X}}}$.

\begin{definition}[Lifted $G$-bicausal coupling]
    \label{new_definiton}
    For $\mu \in \probabilities{\mc{X}}$, $\nu \in \probabilities{\mc{Y}}$, we say a coupling $\pi \in \couplings{\mu}{\nu}$ is \emph{lifted $G$-bicausal} if there exists $\Hat{\pi} \in \Gmonge{\Hat{\mu}}{\Hat{\nu}}$ such that $\pi = \pushforward{\proj{\mc{X}\times\mc{Y}}}{\Hat{\pi}}$. We write $\newGbicouplings{\mu}{\nu}$ for the set of lifted $G$-bicausal couplings between $\mu$ and $\nu$.\\
    More generally, the set of \emph{generalised lifted $G$-bicausal} couplings is given by
    $$\tildeGbicouplings{\mu}{\nu} := \left\{\pushforward{\proj{\mc{X}\times\mc{Y}}}{\Hat{\pi}} : \Hat{\pi} \in \Gbicouplings{\Hat{\mu}}{\Hat{\nu}}\right\}.
    $$    
\end{definition}
The definition of lifted couplings was inspired by \cite{Beiglbock2022Biadapted}, in which bicausal couplings are written as projections of biadapted Monge couplings on a lifted space in order to obtain a denseness result. 
We show below that for transitively closed graphs, lifted graph bicausality is indeed a relaxation of graph bicausality. The more general definition of generalised lifted graph bicausal couplings is necessary to deal with non-transitively closed DAGs, but in the case of transitively closed DAGs it agrees with the notion of lifted graph bicausal couplings. We note that for non-transitively closed DAGs the two are generally different, see Example \ref{example:generalised_different}.

\begin{theorem}\label{thm:lifted}
    Suppose $G$ is a transitively closed DAG and $\mu \in \Gprobabilities{\mc{X}}$, $\nu \in \Gprobabilities{\mc{Y}}$. Then
$$\Gbicouplings{\mu}{\nu} \subseteq \newGbicouplings{\mu}{\nu}.$$
    Moreover, if $\Hat{\pi} \in \newGbicouplings{\mu}{\nu}$, then there exists a probability measure $\rho \in \probabilities{\probabilities{\mc{X} \times \mc{Y}}}$ such that $\Hat{\pi} = \int \pi \,d\rho(\pi)$ and $\rho$ is supported on $\Gbicouplings{\mu}{\nu}$.\\ In particular, the following hold:    
    \begin{equation}\label{eq:liftedinclusions}
    \Gbicouplings{\mu}{\nu} \subseteq \newGbicouplings{\mu}{\nu} = \tildeGbicouplings{\mu}{\nu} \subseteq \closedconvex{\Gbicouplings{\mu}{\nu}},
    \end{equation}
    and all inclusions above may be strict.
\end{theorem}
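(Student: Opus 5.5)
The proof has three parts: (a) the inclusion $\Gbicouplings{\mu}{\nu} \subseteq \newGbicouplings{\mu}{\nu}$, (b) the mixture representation, which gives the last inclusion in \eqref{eq:liftedinclusions}, and (c) the equality $\newGbicouplings{\mu}{\nu} = \tildeGbicouplings{\mu}{\nu}$. Strictness is then handled by examples.

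\emph{Step (a).} Take $\pi \in \Gbicouplings{\mu}{\nu}$ with $G$ sorted, and use the disintegration from Proposition \ref{equivalent_characterisations} 3), $\pi = \bigotimes_i \pi_i(dx_i,dy_i\mid x_{\parents{i}},y_{\parents{i}})$. The idea is to encode the randomness of each kernel in the lifted uniform coordinates. Node by node, we look for measurable maps $T_i$ sending $(x_i,u_i,x_{\parents{i}},u_{\parents{i}})$ to $(y_i,v_i)$ such that $(x_i,u_i) \mapsto (y_i,v_i)$ is, for fixed parent data, a bijection pushing $\mu_i(\cdot\mid x_{\parents{i}})\otimes\lambda$ onto $\nu_i(\cdot\mid y_{\parents{i}})\otimes\lambda$. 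Here $y_{\parents{i}}$ is already determined by the earlier components of $T$, which is why transitive closure is needed: the parents' $y$-values depend on the grandparents' data, and those must themselves be parents. The target is that the joint law of $(x_i,y_i)$ equals $\pi_i$.

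To build these maps, I would first reduce to atomless measures, noting that $\mu_i\otimes\lambda$ is always atomless. I would then use the measurable version of the lemma of \cite{Beiglbock2022Biadapted} (in the spirit of their Theorem 3.11), which states that any coupling of atomless measures is the projection of a bijective Monge coupling of the lifted measures. The construction is: split $u_i$ into two independent uniforms; use one of them together with $x_i$ to sample $y_i \sim \pi_i(dy_i\mid x_i,\ldots)$; then choose $v_i$ so that the map is invertible, via a measurable isomorphism of standard Borel probability spaces
\[
(\mc{X}_i\times[0,1],\ \text{conditional on } y_i) \;\cong\; [0,1].
\]
The result is a $G$-adapted $T$ whose inverse is $G$-adapted as well, again by transitive closure.

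\emph{Step (b).} Let $\Hat{\pi}=\pushforward{(\mathrm{Id},T)}{\Hat{\mu}}$. Disintegrate with respect to the lifted coordinates $u$, which are independent of $x$ under $\Hat{\mu}$, and set $\rho$ to be the law of $u\mapsto \pi^u := \law{(X,T^{\mc{Y}}(X,u))}$. Then $\pi^u$ is the coupling obtained by fixing the noise. For each $u$, $\pi^u$ is $G$-causal: conditionally on $u$, $Y_i$ is a function of $X_i$ and $X_{\parents{i}}$, so the conditional independences of Proposition \ref{prop:equivalent_Gcausal} 3) hold.

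The obstacle is that the second marginal of $\pi^u$ need not be $\nu$, and anticausality need not hold for fixed $u$. I would therefore not fix all of $u$. Instead, I would mix only over the $u$-coordinates in such a way that, conditionally, the inverse map still gives anticausality. Concretely, I would condition on $(u,v)$ jointly using the bijection: the pair $(u,v)$ is a function of $(x,u)$, and I would seek a disintegration of $\Hat{\pi}$ over a $\sigma$-algebra that is independent of both $X$ and $Y$. This is the main step, and I expect it to require the product structure of $\lambda^n$ together with induction over nodes in the sorted order. Granted this, $\Hat{\pi}$, or rather its projection, equals $\int \pi\, d\rho(\pi)$ with $\rho$ supported on $\Gbicouplings{\mu}{\nu}$, and hence lies in $\closedconvex{\Gbicouplings{\mu}{\nu}}$.

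\emph{Step (c).} The inclusion $\newGbicouplings{\mu}{\nu}\subseteq\tildeGbicouplings{\mu}{\nu}$ is immediate, since $\Gmonge{\Hat{\mu}}{\Hat{\nu}}\subseteq\Gbicouplings{\Hat{\mu}}{\Hat{\nu}}$. For the converse, apply Step (a) on the lifted spaces: $\Gbicouplings{\Hat\mu}{\Hat\nu}$ is contained in the projection of $\Gmonge$ of a doubly lifted space. Then identify $[0,1]^2$ with $[0,1]$ through a measure-preserving Borel isomorphism acting nodewise, which preserves $G$-biadaptedness.

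\emph{Strictness.} I would give explicit examples. A mixture of two Monge bicausal couplings of $\Gsplit$ with non-$G$-bicausal average gives the first strict inclusion. A weak limit violating the conditional independence gives the last one, using the fact that closures can escape $\Gbicouplings{\mu}{\nu}$ by the counterexample in Proposition \ref{prop:P_G_closed}.
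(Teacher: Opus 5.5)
Your Steps (a) and (c) are essentially the paper's argument. Step (a) is a node-by-node induction that uses the measurable family of Borel isomorphisms of \cite[Theorem 2.3]{Beiglbock2022Biadapted} and glues via Lemma \ref{recursive_Gbiadapted}. Step (c) applies Step (a) on the lifted space and then uses a nodewise isomorphism $[0,1]^2\cong[0,1]$ together with Lemma \ref{closed_composition}.

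\textbf{Step (b) is not proved.} This step carries the whole ``moreover'' statement and the last inclusion in \eqref{eq:liftedinclusions}. You correctly see that fixing $u$ fails and that you must disintegrate over something independent of both $X$ and $Y$. But you then write ``granted this'' without constructing it, and that construction is the entire difficulty. The obvious candidates fail:
\begin{itemize}
\item conditioning on $U$ spoils the $Y$-marginal;
\item conditioning on $(U,V)$ spoils the $X$-marginal, since $V$ is a function of $(X,U)$;
\item independence from $X$ and $Y$ alone is not enough, since the trivial $\sigma$-algebra has it and $\Hat{\pi}$ itself need not be $G$-bicausal.
\end{itemize}

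The missing idea in the paper is to introduce fresh noise. Set $Z_i=(X_i,Y_i)$ and $W_i=(U_i,V_i)$. Proposition \ref{equivalent_characterisations}, applied to $\Hat{\pi}\in\Gbicouplings{\Hat{\mu}}{\Hat{\nu}}$, shows that the law of $(Z_1,W_1,\ldots,Z_n,W_n)$ is compatible with an auxiliary DAG. In it, $z_i$ has parents $z_{\parents{i}},w_{\parents{i}}$, and $w_i$ has parents $z_i,z_{\parents{i}},w_{\parents{i}}$.

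Proposition \ref{prop:Gcomp_characterisation} then gives $W_i=g_i(Z_i,Z_{\parents{i}},W_{\parents{i}},R_i)$. The $R_i$ are independent of each other and of the noises driving the $Z_i$. By transitive closure, $W_{\parents{i}}$ is a function of $(Z_{\parents{i}},R_{\parents{i}})$, so conditioning on $R_{1:n}$ collapses the lifted parent coordinates. This yields $\condindep{(X_i,Y_i)}{R_{1:n},X_{\parents{i}},Y_{\parents{i}}}{(X_{1:i-1},Y_{1:i-1})}$.

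The chain rule, applied to the anticausal constraints of $\Hat{\pi}$ together with $X\indep U$, gives $\condindep{X_i}{X_{\parents{i}}}{(R_{1:n},Z_{1:i-1},W_{1:i-1})}$, and symmetrically for $Y_i$. Hence $\condlaw{X,Y}{R_{1:n}}\in\Gbicouplings{\mu}{\nu}$ almost surely, with the correct marginals. You then take $\rho$ to be the law of $\condlaw{X,Y}{R_{1:n}}$.

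\textbf{Your strictness argument for the last inclusion also fails.} A weak limit that leaves $\Gbicouplings{\mu}{\nu}$ need not leave $\newGbicouplings{\mu}{\nu}$:
\begin{itemize}
\item Without splits, $\Gbicouplings{\mu}{\nu}$ is weakly closed (Theorem \ref{thm:lifted_nosplits}), so no limit escapes.
\item For $\Gsplit$, the limit in Example \ref{example:split_not_closed} is $(X_1,X_2,X_3)\mapsto(X_1,ZX_2,ZX_3)$. It is produced by the $\Gsplit$-biadapted lifted map that flips the signs of $x_2,x_3$ according to $u_1$, so it lies in $\newGbicouplings{\mu}{\mu}$.
\item Proposition \ref{prop:P_G_closed} concerns $\Gprobabilities{\mc{X}}$, not couplings, so it does not supply the needed example.
\end{itemize}

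The right witness is non-convexity. Take two unconnected nodes. Every element of $\newGbicouplings{\mu}{\nu}$ is then a product $\pi_1\otimes\pi_2$, because $Y_1=T_1(X_1,U_1)$ and $Y_2=T_2(X_2,U_2)$ have independent inputs. Now take two fair coins and average the identity coupling with the coupling that flips both coordinates. This average lies in $\closedconvex{\Gbicouplings{\mu}{\nu}}$, but it is not a product, since $\{X_1=Y_1\}=\{X_2=Y_2\}$ almost surely.

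For the first strict inclusion your $\Gsplit$ mixture is the right idea. You still need to exhibit the lifted $\Gsplit$-biadapted map, which exists because the selecting coin can be read off from $u_1$, the lifted coordinate of the common parent.
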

The important question of when equality holds in the first inclusion in (\ref{eq:liftedinclusions}) is addressed next. 
\begin{theorem}\label{thm:lifted_nosplits}
Suppose $G$ is a transitively closed DAG and $\mu \in \Gprobabilities{\mc{X}}$, $\nu \in \Gprobabilities{\mc{Y}}$. If $G$ has no splits then 
    $$\Gbicouplings{\mu}{\nu} = \newGbicouplings{\mu}{\nu}$$
    and is weakly closed. Otherwise, if $G$ has a split, the inclusion $\Gbicouplings{\mu}{\nu} \subseteq \newGbicouplings{\mu}{\nu}$ may be strict, and $\Gbicouplings{\mu}{\nu}$ may not be weakly closed.
\end{theorem}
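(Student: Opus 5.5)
We prove Theorem \ref{thm:lifted_nosplits} in two halves: the positive statement when $G$ has no splits, and explicit counterexamples when $G$ has a split.

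\textbf{Positive part.} Assume $G$ is transitively closed with no splits. By Theorem \ref{thm:lifted} we already have $\Gbicouplings{\mu}{\nu} \subseteq \newGbicouplings{\mu}{\nu} = \tildeGbicouplings{\mu}{\nu}$. It therefore suffices to show that projections of $G$-bicausal couplings of the lifted measures are $G$-bicausal, i.e.\ that
$$\pushforward{\proj{\mc{X}\times\mc{Y}}}{\Hat{\pi}} \in \Gbicouplings{\mu}{\nu} \quad \text{for every } \Hat{\pi}\in\Gbicouplings{\Hat{\mu}}{\Hat{\nu}}.$$
The obstruction to this in general is that marginalising out the $u_i, v_i$ coordinates can create conditional dependencies which are not mediated by the parents.

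I would handle this with the structural description of transitively closed split-free DAGs already used for Theorem \ref{glue_characterisation} and Proposition \ref{prop:P_G_closed}. By Lemma \ref{make_graph_two_operations}, every such graph is built from a single vertex by two operations:
\begin{enumerate}
\item disjoint unions (independent components);
\item adding a new root vertex that is a parent of every existing vertex.
\end{enumerate}
I would prove the claim by induction along this construction.

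\emph{Disjoint unions.} By characterisation 2) of Proposition \ref{equivalent_characterisations}, bicausality splits into bicausality on each component plus independence between the components' pairs $(X,Y)$. Both properties pass to marginals.

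\emph{Adding a root $r$.} Here $(X_r,U_r,Y_r,V_r)$ is coupled first, and all remaining kernels depend on the full root pair. Conditional on $(x_r,u_r,y_r,v_r)$, the rest is a bicausal coupling on the smaller graph between $\Hat\mu^{x_r}$ and $\Hat\nu^{y_r}$. The key point is that these conditional marginals do not depend on $(u_r,v_r)$, because $U_r$ is independent of $X$ under $\Hat\mu$. Integrating out $(u_r,v_r)$ therefore yields a mixture of couplings of the same marginals $\mu^{x_r},\nu^{y_r}$. Each element of the mixture is, by induction, a projection of a lifted coupling.

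The main obstacle is that a mixture of bicausal couplings of fixed marginals need not be bicausal, since $\Gbicouplings{\cdot}{\cdot}$ is not convex. I would handle this by absorbing the mixing variable $(u_r,v_r)$ into the randomisation of the children. For this I expect to show that the lifted coupling $\newGbicouplings{\mu^{x_r}}{\nu^{y_r}}$ of the subgraph is convex: randomise via the $u$-coordinate of a minimal node, using a measurable splitting of $[0,1]$. Combined with the inductive equality, the set of projections at each stage is then exactly $\Gbicouplings{\mu}{\nu}$.

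Once this is done, the root's pair kernel $\pi_r(dx_r,dy_r)$ together with the measurable family of conditional couplings in $\Gbicouplings{\mu^{x_r}}{\nu^{y_r}}$ gives a disintegration of the form in Proposition \ref{equivalent_characterisations} 3). Since $r$ is a parent of every vertex, this is exactly the required form. Measurable selection issues would be treated via the lemmas of Appendix \ref{appendixA}.

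\textbf{Weak closedness.} Weak closedness follows along the same induction, as in Lemmas \ref{closedness_independent} and \ref{closedness_add}. Under weak limits with fixed marginals, tightness is automatic. Independence of the components is preserved under weak limits. For the root step, the property that matters is conditional independence of $(X_{-r},Y_{-r})$ from nothing beyond $(X_r,Y_r)$. This reduces to the marginal constraints $\condindep{X_{-r}}{X_r}{Y_r}$ and $\condindep{Y_{-r}}{Y_r}{X_r}$. Each of these is a closed condition once expressed as
$$\int f(x_{-r})\,g(x_r)\,h(y_r)\,d\pi = \int \bar f(x_r)\,g(x_r)\,h(y_r)\,d\pi,$$
where $\bar f$ is the (fixed) conditional expectation under $\mu$, approximated by continuous functions using Lusin's theorem and the fixed marginal.

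\textbf{Negative part.} If $G$ has a split $(i,j,k)$, I would use the subgraph $\Gsplit$ and extend by product measures elsewhere. Take $\mu=\nu$ with $X_1$ uniform and $X_2,X_3$ i.i.d.\ given $X_1$. Using the root randomisation $u_1$, build a lifted coupling whose projection correlates $(Y_2,Y_3)$ with $(X_2,X_3)$ through the hidden common variable. Bicausality forbids this: it requires $\condindep{(X_2,Y_2)}{X_1,Y_1}{(X_3,Y_3)}$. Concretely, take $X_1=Y_1$, and let $Y_2,Y_3$ equal $X_2,X_3$ or be independent copies according to a hidden coin. This is a mixture of two bicausal couplings, lies in $\newGbicouplings{\mu}{\nu}$, but violates the conditional independence, so the inclusion is strict. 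Approximating the hidden coin by finer and finer dependence on $x_1$ gives bicausal couplings converging weakly to this non-bicausal one, so $\Gbicouplings{\mu}{\nu}$ is not weakly closed; this mirrors Example \ref{closedness_counterexamples}.
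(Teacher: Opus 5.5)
Your negative part is fine. The hidden-coin coupling is a variant of the paper's flip example, and encoding the coin in a rapidly oscillating function of $x_1$ is a legitimate alternative to the paper's perturbation $Y_1^{(k)}=\mathrm{frac}(X_1+Z/k)$.

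The positive part, however, rests on a misreading of Lemma~\ref{make_graph_two_operations}. In Definition~\ref{graph_operations}, $G^+$ adds a vertex $*$ with edges $(i,*)$ for all $i\in V$. This is a new \emph{sink} that is a child of every existing vertex. The proof of the lemma removes a childless node $x$ and writes $G=(G_{\parents{x}})^+\oplus G_{V\setminus(\parents{x}\cup\{x\})}$. Reversing edges swaps splits and merges, so your ``root above everything'' plus disjoint unions generates the transitively closed DAGs without \emph{merges} instead. It produces $\Gsplit$ (a root over two isolated vertices), for which the theorem is false. It also misses split-free graphs such as $\Gmerge$, in which no vertex is a parent of all others.

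Consistently, your root step breaks exactly at the obstacle you flagged. Given $(X_r,Y_r)$, the law of the remaining coordinates is a mixture over $(U_r,V_r)$ of elements of $\specificGbicouplings{G'}{\mu^{x_r}}{\nu^{y_r}}$. This set is not convex once $G'$ is disconnected. For two isolated nodes, a $G'$-biadapted map has the form $(T_2(x_2,u_2),T_3(x_3,u_3))$, so both the bicausal and the lifted sets consist of products $\pi_2\otimes\pi_3$. Randomising through the $u$-coordinate of one minimal node cannot correlate two components whose randomisations must stay independent. Your own counterexample is precisely this configuration.

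The closedness argument has the same hole. In the root step you check only $\condindep{X_{-r}}{X_r}{Y_r}$ and its mirror. You also need $G'$-bicausality of the conditional law given $(X_r,Y_r)$ to survive the limit, and that is not a weakly closed condition. The limit in Example~\ref{example:split_not_closed} satisfies both of your constraints and is not bicausal.

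With the correct (sink) decomposition the convexity problem disappears, and this is how the paper argues. The disjoint-union step is as you describe. In the $G^+$ step, let $\Hat{\pi}$ be a $G^+$-biadapted Monge coupling of $\Hat{\mu}^+$ and $\Hat{\nu}^+$.
\begin{itemize}
\item Its restriction to the first $n$ lifted nodes is a $G$-biadapted Monge coupling. So the projection onto $\mc{X}_{1:n}\times\mc{Y}_{1:n}$ lies in $\Gbicouplings{\mu^+_{1:n}}{\nu^+_{1:n}}$ by induction.
\item The conditional law of $(X_{n+1},Y_{n+1})$ given $(X_{1:n},Y_{1:n})$ is an average over the hidden $(U_{1:n},V_{1:n})$ of elements of $\couplings{\mu_{n+1}^{+,x_{1:n}}}{\nu_{n+1}^{+,y_{1:n}}}$, because $U$ is independent of $X$ and $V$ of $Y$. \emph{This} plain coupling set is convex and closed.
\end{itemize}
Since $\parents{n+1}=\{1,\dots,n\}$, characterisation 3) of Proposition~\ref{equivalent_characterisations} then gives $G^+$-bicausality.

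For weak closedness the paper does not induct. In characterisation 2), the first family of relations says the pair process $((X_i,Y_i))_{i\in V}$ is $G$-compatible, which is weakly closed by Proposition~\ref{prop:P_G_closed}. The other two say $((X_{\parents{i}},X_i),(Y_{\parents{i}},Y_i))$ is a two-step bicausal coupling with fixed marginals, which is weakly closed by \cite[Proposition 3.4]{Beiglbock2022Biadapted}. Your Lusin-type argument would also work inductively along the sink decomposition. There the only new constraints are $\condindep{X_{n+1}}{X_{1:n}}{Y_{1:n}}$ and its mirror, which involve the fixed kernel $\mu_{n+1}^{x_{1:n}}$ exactly as in your displayed identity.
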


\begin{remark}\label{rk:monge_on_extended_space}
The first inclusion in Theorem \ref{thm:lifted} can be reformulated by saying that 
if $\pi \in \Gbicouplings{\mu}{\nu}$, then there exists a $G$-biadapted map $T : \Hat{\mc{X}} \to \Hat{\mc{Y}}$ such that
$\Hat{\pi} \coloneqq \pushforward{(\Id{}, T)}{\Hat{\mu}} \in \Gbicouplings{\Hat{\mu}}{\Hat{\nu}}$ (i.e., $\pushforward{T}{\Hat{\mu}} = \Hat{\nu}$) and $\pushforward{\proj{\mc{X} \times \mc{Y}}}{\Hat{\pi}} = \pi.$
When $G=G_n$ is the linear graph, this recovers the forward direction of \cite[Theorem 3.5]{Beiglbock2022Biadapted}. And, as a special case when $n=1$, we see that any coupling can be seen as a projection of a Monge coupling induced by a bijective map on the lifted space, a well-known result in classical optimal transport. As $G_n$ has no splits, Theorem \ref{thm:lifted_nosplits} recovers the converse direction in \cite[Theorem 3.5]{Beiglbock2022Biadapted}, namely $\specificGbicouplings{\Glinear{n}}{\mu}{\nu} = \newspecificGbicouplings{\Glinear{n}}{\mu}{\nu}$. For general transitively closed DAGs however this equality no longer holds true, see the discussion at the start of Section \ref{proof:lifted_nosplits}.
\end{remark}

\begin{remark}
    We note that as a consequence of Theorem \ref{thm:lifted_nosplits} above and the counterexample in \cite[Proposition 3.6(iv)]{Eckstein2023CausalGraphs}, which extends to any non-transitively closed DAG as observed in Example \ref{example:split_not_closed} below, we see that $\Gbicouplings{\mu}{\nu}$ is weakly closed in general if and only if $G$ is transitively closed and contains no splits, thus complementing the results of \cite[Proposition 3.6]{Eckstein2023CausalGraphs}. 
\end{remark}

The proofs of Theorems \ref{thm:lifted} and \ref{thm:lifted_nosplits} are given, respectively, in Sections \ref{proof_lifted} and \ref{proof:lifted_nosplits}.
We note that from the definition, the following inclusions are immediate:
\begin{equation}\label{eq:obviousliftedinclusions}
    \Gbicouplings{\mu}{\nu} \subseteq \tildeGbicouplings{\mu}{\nu}\qquad \textrm{and}\qquad \newGbicouplings{\mu}{\nu} \subseteq \tildeGbicouplings{\mu}{\nu}.
\end{equation}

We now present a first important application of the results above.
\begin{corollary}
    \label{coro:GOT}
    Suppose $G$ is a transitively closed DAG, $\mu\in \Gprobabilities{\mc{X}},\nu\in \Gprobabilities{\mc{Y}}$ and $\xi: \mc{X}\times \mc{Y}\to \mbb{R}$ is measurable with $\xi(x,y)\geq a(x)+b(y)$ for two upper semicontinuous functions $a\in L^1(\mu),b\in L^1(\nu)$. Then
    $$ \GOT{\mu}{\nu}= \inf_{\pi\in \newGbicouplings{\mu}{\nu}} \int_{\mc{X}\times \mc{Y}} \xi(x,y)d \pi(x,y) = \inf_{\pi\in \tildeGbicouplings{\mu}{\nu}} \int_{\mc{X}\times \mc{Y}} \xi(x,y)d \pi(x,y).
    $$
    If $G$ has no splits and $\xi$ is lower semicontinuous, then $ \GOT{\mu}{\nu}$ is attained by some $\pi \in \Gbicouplings{\mu}{\nu}$.
\end{corollary}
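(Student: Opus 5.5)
The plan is to use the chain of inclusions \eqref{eq:liftedinclusions} from Theorem \ref{thm:lifted} together with the mixture representation stated there. Since $\Gbicouplings{\mu}{\nu} \subseteq \newGbicouplings{\mu}{\nu} = \tildeGbicouplings{\mu}{\nu}$, taking infima over larger sets gives immediately
$$\GOT{\mu}{\nu} \geq \inf_{\pi\in \newGbicouplings{\mu}{\nu}} \int \xi \, d\pi = \inf_{\pi\in \tildeGbicouplings{\mu}{\nu}} \int \xi \, d\pi.$$
Before this I will check that all integrals are well defined in $(-\infty,\infty]$. This holds because $\xi \geq a\oplus b$, and $\int (a\oplus b)\, d\pi = \int a\, d\mu + \int b\, d\nu$ is finite for every coupling $\pi$. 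In particular, integrating $\xi$ against a coupling is affine and unambiguous.

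For the reverse inequality I will take $\Hat{\pi} \in \newGbicouplings{\mu}{\nu}$ and use the second part of Theorem \ref{thm:lifted}. This gives $\rho$ supported on $\Gbicouplings{\mu}{\nu}$ with $\Hat{\pi} = \int \pi \, d\rho(\pi)$. I would then like to write
$$\int \xi \, d\Hat{\pi} = \int\Big(\int \xi\, d\pi\Big) d\rho(\pi) \geq \GOT{\mu}{\nu}.$$
This step is the main technical point. It requires measurability of $\pi \mapsto \int \xi\, d\pi$ on $\probabilities{\mc{X}\times\mc{Y}}$ and a Fubini-type exchange. I would handle it by splitting $\xi = (\xi - a\oplus b) + a\oplus b$. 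The first term is nonnegative and measurable, so $\pi\mapsto\int(\xi-a\oplus b)\,d\pi$ is Borel, as a monotone limit of integrals of bounded measurable functions, and the exchange holds by monotone class and monotone convergence. The second term integrates to the same constant $\int a\,d\mu+\int b\,d\nu$ under every $\pi$ in the support of $\rho$ and under $\Hat\pi$, since all of these have marginals $\mu,\nu$. Since $\rho$ is concentrated on $\Gbicouplings{\mu}{\nu}$, the inner integral is at least $\GOT{\mu}{\nu}$ $\rho$-a.s., which gives the bound. If $\rho$ is only supported on the set in a weaker, closure sense, I would instead need $\rho(\Gbicouplings{\mu}{\nu})=1$. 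I expect the construction in Theorem \ref{thm:lifted} to give exactly this, since the mixture is obtained by freezing the lifted randomisation.

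For attainment, assume $G$ has no splits and $\xi$ is lower semicontinuous. By Theorem \ref{thm:lifted_nosplits}, $\Gbicouplings{\mu}{\nu}$ is weakly closed. It is also contained in $\couplings{\mu}{\nu}$, which is weakly compact, so $\Gbicouplings{\mu}{\nu}$ is compact. The functional $\pi\mapsto\int\xi\,d\pi$ is weakly lower semicontinuous on $\couplings{\mu}{\nu}$ under the lower bound $\xi\geq a\oplus b$. This is the standard argument, as in \cite[Lemma 4.3]{Villani2009OptimalNew}. The upper semicontinuity of $a,b$ makes $\xi - a\oplus b$ a nonnegative lower semicontinuous function, and integrals of such functions are weakly l.s.c. by approximating from below with bounded continuous functions. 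The term $\int a\oplus b \, d\pi$ is constant on $\couplings{\mu}{\nu}$. A minimising sequence therefore has a weakly convergent subsequence in $\Gbicouplings{\mu}{\nu}$, and its limit attains $\GOT{\mu}{\nu}$. If the value is $+\infty$, attainment is trivial, because the set is nonempty: it contains $\mu\otimes\nu$ by Proposition \ref{equivalent_characterisations}.
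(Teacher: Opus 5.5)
Your proposal is correct and follows the paper's own argument. The paper likewise reduces to the nonnegative cost $\xi - a\oplus b$. It then uses the inclusions and the mixture representation from Theorem \ref{thm:lifted}, together with Fubini, to get equality of the three infima, and obtains attainment from the weak closedness in Theorem \ref{thm:lifted_nosplits} combined with the classical compactness and lower-semicontinuity argument of \cite[Thm.~4.1]{Villani2009OptimalNew}. Your worry about what ``supported on'' means resolves as you expected: Step 2 of the proof of Theorem \ref{thm:lifted} builds $\rho$ as the law of $\condlaw{X,Y}{R_{1:n}}$, which lies in $\Gbicouplings{\mu}{\nu}$ almost surely, so the pointwise bound $\int \xi\,d\pi \geq \GOT{\mu}{\nu}$ holds $\rho$-a.s.
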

\begin{proof}
    Integrability assumptions on $a,b$ allow us to replace $\xi$ by $\xi-a-b\geq 0$ and use Fubini. 
    Theorem \ref{thm:lifted} then yields the first assertion, allowing us to replace $\Gbicouplings{\mu}{\nu}$ in \eqref{eq:GOT} by $\newGbicouplings{\mu}{\nu} = \tildeGbicouplings{\mu}{\nu}$.
    The second part follows from classical OT proofs, see \cite[Thm.~4.1]{Villani2009OptimalNew}, and the fact that $\Gbicouplings{\mu}{\nu}$ is closed by Theorem \ref{thm:lifted_nosplits}.
\end{proof}

Taking $\mc{X} = \mc{Y}$ and $c = d_{\mc{X}}^p$ we obtain 
  \begin{equation}
    \label{alternative_Wasserstein}
  W_{G,p}(\mu, \nu)^p = \inf_{\Hat{\pi} \in \newGbicouplings{\mu}{\nu}} \int d_{\mc{X}}(x,y)^p \, d\Hat{\pi}(x,y).
  \end{equation}
We use this representation to prove Theorem \ref{main_result_metric}. 
The key advantage of lifted $G$-bicausal couplings is that they have nice gluing properties provided $G$ is transitively closed: if $\pi \in \newGbicouplings{\mu}{\nu}$ and $\pi' \in \newGbicouplings{\nu}{\eta}$ for marginals $\mu, \nu, \eta$, then $\pi \glue \pi' \in \newGbicouplings{\mu}{\eta}$, as shown in Lemma \ref{bicausal_gluing_lemma}, and using the key observations noted in Lemma \ref{closed_composition} above. 
With this lifted graph bicausal version of the gluing lemma in place, one can then replicate the classical proof of the triangle inequality for Wasserstein distances to obtain Theorem \ref{main_result_metric} (see Section \ref{proof_metric}).

To close this section, we consider DAGs which are not transitively closed. We know that $W_{G,p}$ may no longer be a metric in this case and Corollary \ref{coro:GOT} actually suggests a possible remedy and an improved definition of $G$-causal Wasserstein distances.
\begin{definition}[Lifted graph causal Wasserstein distances]
\label{def:lifted_G-Wass}
    Let $G$ be a directed graph and $p \geq 1$. The $p$\textsuperscript{th} order \emph{lifted $G$-causal Wasserstein distance} between $\mu, \nu \in \Gpprobabilities{p}{\mc{X}}$ is given by
    $$\widetilde{W}_{G,p}(\mu, \nu)^p \coloneq \inf_{\pi \in \tildeGbicouplings{\mu}{\nu}} \int d_{\mc{X}}(x,y)^p \, d\pi(x,y).$$
\end{definition}
It turns out this object is always a metric for DAGs and we can understand precisely why. To this end, for a general DAG, consider its transitive closure, which is obtained explicitly by adding in all ancestors of each vertex to its parent set.
\begin{definition}[Transitive closure]\label{def:transitive_closure}
    For a directed graph $G = (V, E)$, we define its \emph{transitive closure}, denoted by $\Gbar$, as the directed graph with vertex set $V$ and edge set $\closure{E} = \setdef{(i,j)}{j \in V, i \in \hist{j} \setminus \curlybrackets{j}}$.
\end{definition}
It is easy to see that $\Gbar$ is the DAG with the least number of edges on the same vertex set which is transitively closed and has $G$ as a subgraph. When $G$ is a sorted DAG with $n$ vertices, then $G\subseteq \Gbar\subseteq G_n$. 
\begin{proposition}
\label{prop:nonHP_metric}
    Let $G$ be a DAG, $p \geq 1$, $\mu, \nu \in \Gpprobabilities{p}{\mc{X}}$. Then $\widetilde{W}_{G,p}(\mu, \nu) = W_{\Gbar, p}(\mu, \nu)$ and is a metric on $\Gpprobabilities{p}{\mc{X}}$. 
\end{proposition}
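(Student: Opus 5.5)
Our plan is to prove the set-level identity
\[
\tildeGbicouplings{\mu}{\nu}=\tildespecificGbicouplings{\Gbar}{\mu}{\nu},
\]
valid for $\mu,\nu\in\Gprobabilities{\mc{X}}$. Two easy facts set this up. First, $\Gprobabilities{\mc{X}}\subseteq\specificGprobabilities{\Gbar}{\mc{X}}$: in a sorted labelling, $\condindep{X_i}{X_{\parents{i}}}{X_{1:i-1}}$ implies $\condindep{X_i}{X_{\closure{\mathrm{pa}}_i}}{X_{1:i-1}}$ by weak union, because $\parents{i}\subseteq\closure{\mathrm{pa}}_i\subseteq 1:i-1$. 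Hence $W_{\Gbar,p}(\mu,\nu)$ is well defined. Second, once the identity holds, Theorem \ref{thm:lifted} applied to the transitively closed $\Gbar$ gives $\tildespecificGbicouplings{\Gbar}{\mu}{\nu}=\newspecificGbicouplings{\Gbar}{\mu}{\nu}$. Corollary \ref{coro:GOT} with cost $d_{\mc{X}}^p$ then gives $\widetilde{W}_{G,p}=W_{\Gbar,p}$ on $\Gpprobabilities{p}{\mc{X}}$. The metric property follows from Theorem \ref{main_result_metric} for $\Gbar$, restricted to the subset $\Gpprobabilities{p}{\mc{X}}\subseteq\specificGpprobabilities{\Gbar}{p}{\mc{X}}$.

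For the inclusion $\tildeGbicouplings{\mu}{\nu}\subseteq\tildespecificGbicouplings{\Gbar}{\mu}{\nu}$, it suffices to show $\Gbicouplings{\Hat\mu}{\Hat\nu}\subseteq\specificGbicouplings{\Gbar}{\Hat\mu}{\Hat\nu}$ for $G$-compatible marginals. We argue with characterisation 3) of Proposition \ref{prop:equivalent_Gcausal}. We need
\[
\condindep{Y_i}{X_i,X_{\closure{\mathrm{pa}}_i},Y_{\closure{\mathrm{pa}}_i}}{(X,Y_{1:i-1})}
\]
knowing the same relation with $\parents{i}$ in place of $\closure{\mathrm{pa}}_i$. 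Since $\parents{i}\subseteq\closure{\mathrm{pa}}_i$ and the conditioning variables remain inside $(X,Y_{1:i-1})$, this again follows by weak union. The anticausal direction is symmetric.

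The reverse inclusion is the main obstacle. Given $\Hat\pi\in\specificGbicouplings{\Gbar}{\Hat\mu}{\Hat\nu}$, we must produce a $G$-bicausal coupling on a lifted space whose projection to $\mc{X}\times\mc{Y}$ agrees. We plan to use the lifting in a second stage. By Theorem \ref{thm:lifted} for $\Gbar$, we may take $\pi$ to be a projection of a $\Gbar$-biadapted Monge coupling, $Y_i=T_i(X_i,U_i,X_{\closure{\mathrm{pa}}_i},U_{\closure{\mathrm{pa}}_i})$. The idea is to absorb the dependence on non-parent ancestors into the uniform variables. Since $\mu$ is $G$-compatible, Proposition \ref{prop:Gcomp_characterisation} 2) lets us write $X_i=f_i(X_{\parents{i}},\cdot)$ with independent noise. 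On the lifted space we then recode $(x_i,u_i)$ as a new pair $(x_i,\tilde u_i)$, where $\tilde u_i$ encodes, via a Borel isomorphism $[0,1]\cong[0,1]^\infty$, enough information to reconstruct the relevant ancestor data from parent data. The difficulty is that the ancestor values of $X$ are \emph{not} determined by parents alone. Consequently, a $G$-adapted Monge map on $\Hat{\mc{X}}$ can only reproduce the law of $\pi$, not the map itself, and the $G$-condition $\condindep{X_i}{X_{\parents{i}}}{Y_{\parents{i}}}$ must be maintained.

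We therefore expect to proceed inductively along a sorted order, constructing kernels in the disintegration of Proposition \ref{equivalent_characterisations} 3) on the lifted space. At node $i$ we choose a coupling of $\Hat\mu_i(\cdot\mid\Hat x_{\parents{i}})$ and $\Hat\nu_i(\cdot\mid\Hat y_{\parents{i}})$. Its $(x_i,y_i)$-part will reproduce $\pi$'s conditional law given the full $\Gbar$-ancestry, with that ancestry "carried" inside the lifted coordinates $u_{\parents{i}},v_{\parents{i}}$, which the parents copy forward from their own ancestors. Verifying that the resulting marginals are exactly $\Hat\mu,\Hat\nu$ (the uniforms must remain independent and uniform) is the delicate point. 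If this fails, we will fall back on the technical lemmata of Appendix \ref{section_nonHP}.
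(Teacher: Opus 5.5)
The steps you actually carry out are correct. Both $\Gprobabilities{\mc X}\subseteq\specificGprobabilities{\Gbar}{\mc X}$ and $\tildeGbicouplings{\mu}{\nu}\subseteq\tildespecificGbicouplings{\Gbar}{\mu}{\nu}$ follow by weak union, and the metric property does reduce to Theorem \ref{main_result_metric} for $\Gbar$.

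The gap is the reverse direction. This is the entire nontrivial content of the proposition, you only sketch it, and the point you call delicate is exactly the missing idea. You rightly see that $u_i$ cannot encode ancestor values of $X$ without breaking $U\indep X$. In fact no $G$-biadapted Monge map on $\Hat{\mc X}$ can reproduce $\pi$ in general: for $G=\Gmarkov$, Example \ref{example:generalised_different} exhibits $\pi\in\specificGbicouplings{\Gbar}{\mu}{\mu}$ with $\pi\notin\newGbicouplings{\mu}{\mu}$. What you never supply is a mechanism that lets the lifted coordinates carry the ancestry while $U$ and $V$ each stay i.i.d.\ uniform and independent of $X$ resp.\ $Y$. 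Without it, nothing guarantees that your inductively chosen kernels have marginals $\Hat\mu,\Hat\nu$, and deferring to the appendix if this fails is not an argument.

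The missing step is a one-time pad (Lemma \ref{independent_construction}), and it makes the kernel bookkeeping unnecessary. Start from $(X,Y)\sim\pi\in\specificGbicouplings{\Gbar}{\mu}{\nu}$; no Monge representation is needed. Proceed along the sorted order:
\begin{itemize}
\item encode $\Xi_i:=(X_{\parents{i}},Y_{\parents{i}},U_{\parents{i}},V_{\parents{i}})$ injectively as a bit sequence;
\item let the binary digits of $U_i$ be fresh fair coins, independent of $(X,Y,U_{1:i-1},V_{1:i-1})$;
\item let the binary digits of $V_i$ be their XOR with the bits of $\Xi_i$.
\end{itemize}
Then $U_i$ is uniform and independent of $(X,Y,U_{1:i-1})$, and $V_i$ is uniform and independent of $(X,Y,V_{1:i-1})$. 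Hence $(X,U)\sim\Hat\mu$, $(Y,V)\sim\Hat\nu$, and the projection is $\pi$ with nothing to check. Yet $\sigma(U_i,V_i)=\sigma(U_i,\Xi_i)=\sigma(V_i,\Xi_i)$.

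Inductively this gives three facts:
\begin{itemize}
\item $\sigma(X_i,X_{\parents{i}},Y_{\parents{i}},U_i,U_{\parents{i}},V_{\parents{i}})=\sigma(X_i,X_{\hist{i}},Y_{\hist{i}},U_i,U_{\hist{i}})$;
\item $V_i$ is measurable for this $\sigma$-algebra;
\item $V_{1:i-1}$ is a function of $(X,Y_{1:i-1},U)$.
\end{itemize}
So $G$-causality of $\law{(X,U),(Y,V)}$ reduces to $\condindep{Y_i}{X_i,X_{\hist{i}},Y_{\hist{i}},U_i,U_{\hist{i}}}{(X,Y_{1:i-1},U)}$. This follows from $\Gbar$-causality of $\pi$ and $U\indep(X,Y)$ via the chain rule and weak union; anticausality is symmetric.

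The result is $\specificGbicouplings{\Gbar}{\mu}{\nu}\subseteq\tildeGbicouplings{\mu}{\nu}$ (Lemma \ref{lem:nonHP_inclusion}), and this already suffices. Combined with your easy inclusion and Corollary \ref{coro:GOT} for $\Gbar$, it gives $W_{\Gbar,p}\geq\widetilde{W}_{G,p}\geq\widetilde{W}_{\Gbar,p}=W_{\Gbar,p}$.

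Your full set identity is true but not needed. It follows by applying this inclusion to $\Hat\mu,\Hat\nu$ and merging the two uniforms at each node through a Borel isomorphism $[0,1]^2\cong[0,1]$. That map is a node-wise bijection, so it preserves $G$-bicausality.
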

\begin{proof}
    The key observation is that $\specificGbicouplings{\Gbar}{\mu}{\nu} \subseteq \tildeGbicouplings{\mu}{\nu}$, see Lemma \ref{lem:nonHP_inclusion}. This shows that $W_{\Gbar,p}\geq \widetilde{W}_{G,p}$. We know from Corollary \ref{coro:GOT} that $W_{\Gbar,p}= \widetilde{W}_{\Gbar,p}$. Finally, since $\Gbar$ has more edges than $G$, we have $\tildeGbicouplings{\mu}{\nu}\subseteq \tildespecificGbicouplings{\Gbar}{\mu}{\nu}$ and hence $\widetilde{W}_{\Gbar,p}\leq \widetilde{W}_{G,p}$, giving the desired equality by sandwiching. $W_{\Gbar,p}$ is a metric by Theorem \ref{main_result_metric}.
\end{proof}
Note that $\widetilde{W}_{\Gbar,p}$ is, in a certain way, the minimal object which cures the non-metric issue of $W_{G,p}$ in general. Specifically, if we look for $G'$, with $G\subseteq G'$, for which $W_{G',p}$ is a metric, then any $G' \subsetneq \Gbar$ is not transitively closed, so it contains $\Gmarkov$ as a proper subgraph and hence, by the arguments in the proof of Theorem \ref{main_result_metric}, there is a choice of $(\mc{X},d_{\mc{X}})$ for which $W_{G',p}$ does not satisfy the triangle inequality on $\Gprobabilities{p}{\mc{X}}$. Hence, $G'=\Gbar$ is the minimal choice.

\subsection{Complexity and Prospects for Duality for Graph Causal OT}
\label{sec:duality}

Duality in the classical optimal transport theory reformulates the minimisation problem over couplings as a maximisation problem over pairs of potentials, see \cite{RamachandranRuschendorf1995} and the references therein. It has played a major role, both for the theoretical aspects, such as in characterising optimal couplings via certain geometric properties of their support, and for its numerical methods, such as for the \citep{Cuturi2013Sinkhorn} Sinkhorn algorithm. More recently, for causal optimal transport, duality was first established in \cite{Backhoff2016CausalDiscrete}. 

A natural question is what is the dual formulation to the general graph causal OT problem (\ref{eq:GOT}), outside of the special cases of classical OT (corresponding to $G = G_1$) and causal/adapted OT (corresponding to $G = G_n$). While an abstract theoretical formulation can be described (by looking at the closed convex hull of the constraint set $\closedconvex{\Gbicouplings{\mu}{\nu}}$), we argue that a tractable dual formulation is, in general, most likely out of reach.

Indeed, for the case that $G$ is the graph with two independent nodes $(\curlybrackets{1,2}, \varnothing)$ and marginals $\mu = \mu_1 \otimes \mu_2,  \nu = \nu_1 \otimes \nu_2 \in \Gprobabilities{\mbb{R}^2}$, the set of permissible couplings for the graph causal OT problem (\ref{eq:GOT}) is $\setdef{\pi_1 \otimes \pi_2}{\pi_i \in \couplings{\mu_i}{\nu_i}}$ (by Proposition \ref{equivalent_characterisations}). When $\mu_i, \nu_i$ are supported uniformly on $m \geq 1$ points each, the problem thus reduces to the \emph{bilinear assignment problem (BAP)} of \cite{altman1968bilinear}: 
\begin{align*}
\min_{\pi^{(1)}_{ij}, \pi^{(2)}_{kl}} \quad & \sum_{1 \leq i,j,k,l \leq m}
  \xi_{ijkl} \pi^{(1)}_{ij} \pi^{(2)}_{kl} \\
\text{s.t.} \quad
  & \sum_{j'=1}^{m} \pi^{(1)}_{ij'} = 1, \, \sum_{i'=1}^{m} \pi^{(1)}_{i'j} = 1,\qquad 1 \leq i, j \leq m, \\
  & \sum_{l'=1}^{m} \pi^{(2)}_{kl'} = 1, \, \sum_{k'=1}^{m} \pi^{(2)}_{k'l} = 1,\qquad 1 \leq k, l \leq m, \\
  & \pi^{(1)}_{ij}, \pi^{(2)}_{kl} \in \{0,1\}, \qquad \quad \,\,\,\, 1 \leq i,j,k,l \leq m.
\end{align*}
If we removed the causal constraint we would have obtained a simple LP problem, solvable in polynomial time. In contrast, 
the bilinear assignment problem is well-known to be NP-hard \citep{Custic2017BAP}. Finding a polynomial algorithm to solve it or its dual would imply (P)=(NP) and hence is unlikely to be simple. All DAGs, apart from those with a linear information structure $G_n$, have the graph with two independent nodes as a proper subgraph, and hence have its graph causal OT problem as a subproblem of their associated graph causal OT problem. The lack of a tractable duality can be seen to propagate from the BAP, to the graph causal OT for the DAG on two independent nodes, and to the graph causal OT problem for all DAGs except those with a linear information structure $G_n$.

The difficulty in describing the dual of the graph causal OT problem lies in the fact that the conditional independence constraints in Proposition \ref{equivalent_characterisations} involve random variables from both marginals. The causal constraints given by $G_n$ can however be rewritten as $\condindep{X}{X_{1:i}}{Y_{1:i}}$, $\condindep{Y}{X_{1:i}}{X_{1:i}}$ for $1 \leq i \leq n$, thus allowing the causal OT problem to be written as an LP program when its marginals are finitely supported, as shown in \cite[Lemma 3.11]{Eckstein2022ComputationalTransport}.

We note that when $\mu, \nu$ are finitely supported, but not necessarily uniformly on the same number of points, the graph causal OT problem on two independent nodes above coincides with the CO-OT problem \cite{Redko2020COOT}. Another related problem is the computation of the Gromov-Wasserstein distance \citep{Memoli2011GromovWassersteinMatching}, a popular OT distance between measures on different domains, whose elements cannot be easily compared. The marginals in this problem take the form $\mu = \mu_1 \otimes \mu_1$, $\nu = \nu_1 \otimes \nu_1$, while the set of permissible couplings is now $\setdef{\pi_1 \otimes \pi_1}{\pi_1 \in \couplings{\mu_1}{\nu_1}}$. When $\mu_i, \nu_i$ are supported uniformly on $m \geq 1$ points each, one recovers a \emph{quadratic assignment problem (QAP)} \citep{Cela1998QAP}, corresponding to adding the constraint that $\pi^{(1)}_{ij} = \pi^{(2)}_{ij}$ to the BAP. The QAP itself is also NP-hard and by the same token as above, a tractable dual formulation for Gromov-Wasserstein OT is not accessible. At the same time, dual formulations are possible and different variants could be envisaged to obtain specific types of results. \cite{Zhang2022GWDuality} provide a recent illustration: whilst their dual formulation is not necessarily numerically tractable itself, it lends itself to the study of entropic relaxations and statistical properties such as sample complexity. Extending such results to graph causal OT would be an interesting direction for future research.

\section{Dynamic Programming Principle for Graph Causal Distances}
\label{section_dpp}

We recall that the graph causal Wasserstein distance corresponding to the linear graph $G_n$ is called the adapted Wasserstein distance. In this section we establish that this distance is in fact shared between all the subrgaphs of $G_n$ which do not have merges, i.e., are perfect.
This can be achieved by a dynamic programming principle (DPP) approach. We present the key results here and defer further discussion, technical results and proofs to Section \ref{proof_dpp}. 

We assume (via a possible relabelling) that $G$ is a sorted DAG. In order to establish a convenient equivalent characterisation of perfect DAGs, we introduce the following notation:

\begin{definition}[Proximal parent]
    We define the \emph{proximal parent map} $\prox : V \to V$ by
    \begin{equation*}
        \prox (i) \coloneqq 
        \begin{cases}
            \max{\parents{i}} & \text{if } \parents{i} \neq \varnothing, \\
            i & \text{if } \parents{i} = \varnothing. 
        \end{cases}
    \end{equation*}
    If $\parents{i} \neq \varnothing$, we call $\prox(i)$ the \emph{proximal parent} of i.
\end{definition}

\begin{proposition}
\label{prop:perfection}
    Let $G$ be a sorted DAG. Then the following are equivalent:
    \begin{enumerate}[label=\arabic*)]
        \item $G$ is perfect, in the sense of Definition \ref{def:DAGthree_structures};
        \item for every $i \in V$, $\parents{i} \subseteq \parents{\prox(i)} \cup \curlybrackets{\prox(i)}$.
    \end{enumerate}
\end{proposition}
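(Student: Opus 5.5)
We prove both implications directly from Definition \ref{def:DAGthree_structures}, using that $G$ is sorted, so every parent of a node has a smaller label. Throughout, $\prox(i)=\max\parents{i}$ whenever $\parents{i}\neq\varnothing$.

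\emph{1) $\Rightarrow$ 2).} Assume $G$ is perfect and fix $i\in V$. If $\parents{i}=\varnothing$ there is nothing to prove. Otherwise set $m\coloneqq\prox(i)$ and take any $j\in\parents{i}\setminus\{m\}$. Both $j$ and $m$ are parents of $i$, and since there is no merge $(i,j,m)$, either $j\in\parents{m}$ or $m\in\parents{j}$. Because $G$ is sorted and $j<m$ (as $m$ is the maximum of $\parents{i}$ and $j\neq m$), the second option $m\in\parents{j}$ would force $m<j$, which is impossible. Hence $j\in\parents{m}$, and therefore $\parents{i}\subseteq\parents{m}\cup\{m\}$.

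\emph{2) $\Rightarrow$ 1).} Assume 2) holds and suppose, for contradiction, that $(i,j,k)$ is a merge: $j,k\in\parents{i}$, $j\neq k$, and neither is a parent of the other. Let $m=\prox(i)$. Since $m$ is a parent of $i$ and $m\geq j,k$, we distinguish two cases.
\begin{itemize}
\item If $m\in\{j,k\}$, say $m=k$, then 2) gives $j\in\parents{k}\cup\{k\}$. As $j\neq k$, this means $j\in\parents{k}$, contradicting the merge.
\item If $m\notin\{j,k\}$, then 2) gives $j,k\in\parents{m}$, i.e.\ $(m,j,k)$ is a merge at $m$ unless $j$ and $k$ are adjacent. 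This case does not close immediately: $j,k\in\parents{m}$ only yields a new merge $(m,j,k)$ at the strictly smaller node $m<i$.
\end{itemize}

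The second case is the main obstacle, and the natural route through it is induction on $i$. Take the merge $(i,j,k)$ with $i$ minimal. In the case $m\notin\{j,k\}$, the triple $(m,j,k)$ is again a merge, now at the node $m<i$, which contradicts the minimality of $i$. The first case gives a contradiction directly. Therefore no merge exists, and $G$ is perfect.
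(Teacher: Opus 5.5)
Your proof is correct and follows the paper's argument: 1)$\Rightarrow$2) is identical, and your minimal-counterexample treatment of 2)$\Rightarrow$1) repackages the paper's descent, which repeatedly replaces the common child by its proximal parent until one of the two parents is the proximal parent. You can drop the hedge in your second bullet, because non-adjacency of $j$ and $k$ is part of the merge hypothesis, so $(m,j,k)$ is automatically a merge.
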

We note that the notion of perfection is well studied in the graphical models literature in the more general context of chain graphs, see \cite[Section 2.1]{LauritzenBook}. We prove the above Proposition, provide further discussion, comparison with \cite{LauritzenBook}, and yet another equivalent characterisation in Section \ref{proof_dpp}. 

The assumption that $G$ is perfect thus ensures that either $i \in V$ has no parents, or there exists a unique $j \in \parents{i}$ such that no other $k \in \parents{i}$ is a child of $j$. In this latter case, $j = \prox(i)$ and it is the unique node with the property that the longest possible directed path from it to node $i$ has length $1$. All other elements of $\parents{i}$ have a directed path of length at least $2$ from themselves to $i$. Interpreting proximity between nodes in this sense, $\prox(i)$ is the closest node to $i$ among $\parents{i}$, justifying the name of proximal parent.

We note that this characterisation of proximal parents is label-independent, and thus choosing a different relabelling of $G$ into a sorted DAG would not change whether $G$ is perfect. This was also clear from Definition \ref{def:DAGthree_structures}. Perfect DAGs look like forests of directed trees to which edges have been added according to the following rule: if an edge is added from a node $i$ to a node $j$, an edge must be added from $k$ to $j$ for every node $k$ on the shortest directed path in the tree from $i$ to $j$. Proposition \ref{condition_explained} makes this intuitive description precise.

Assume $G$ is perfect. Suppose our cost function $c : \mc{X} \times \mc{Y} \to [0, \infty]$ is \emph{$G$-separable} in the sense that it can be written as
\begin{equation}
    \label{eq:G-sep cost}
c(x,y) = \sum_{i=1}^n c_i(x_i, x_{\parents{i}}, y_i, y_{\parents{i}})
\end{equation}
for some $c_i : \mc{X}_{\curlybrackets{i} \cup \parents{i}} \times \mc{Y}_{\curlybrackets{i} \cup \parents{i}} \to  [0, \infty]$. 

Recalling the third characterisation of graph bicausality from Proposition \ref{equivalent_characterisations} and using the shorthand $k^b$ for a kernel $k(da \mid b)$, the $G$-bicausal transport problem may be written as
\begin{align*}
    &V^{G,c}(\mu, \nu) = \inf_{\pi \in \Gbicouplings{\mu}{\nu}} \int c(x,y) \, d\pi(x,y) \\
    &= \inf_{\pi \in \Gbicouplings{\mu}{\nu}} \int c_1 + \brackets{\int c_2 + \brackets{\ldots + \brackets{\int c_n \, d\pi_n^{x_{\parents{n}}, y_{\parents{n}}}} \ldots} \, d\pi_2^{x_{\parents{2}}, y_{\parents{2}}}} \, d\pi_1.
\end{align*}

This inspires a DPP approach similar to the one in \cite[Section 5]{Backhoff2016CausalDiscrete}. We define the following intermediate problems
$$V_n^{G,c}(\xparents{n}, \yparents{n}) \coloneqq \inf_{\pi \in \couplings{\mu_n^{\xparents{n}}}{\nu_n^{\yparents{n}}}} \int c_n(x_n, \xparents{n}, y_n, \yparents{n}) \, d\pi(x_n,y_n)$$
and recursively for $n - 1 \geq k \geq 1$, $V_k^{G,c}(\xparents{k}, \yparents{k}) \coloneqq$
\begin{align*}
    \coloneqq \inf_{\pi \in \couplings{\mu_k^{\xparents{k}}}{\nu_k^{\yparents{k}}}} \int c_k(x_k, \xparents{k}, y_k, \yparents{k}) + \sum_{i \in A_k} V_i^{G,c}(\xparents{i}, \yparents{i}) \, d\pi(x_k,y_k),
\end{align*}
where $A_k \coloneqq \curlybrackets{i \in V \setminus \curlybrackets{k} \vert \, \prox(i) = k}$.

Note that $A_k \subseteq \curlybrackets{k + 1, \ldots, n}$, so the recursion is well-defined provided the integrals exist, which we show in Proposition \ref{dpp_proof}. Moreover, as $G$ is perfect, we see that for each $i \in A_k$, $\parents{i} \subseteq \parents{k} \cup \curlybrackets{k}$, so indeed $V_k^{G,c}$ only depends on $\xparents{k}, \yparents{k}$. When $\parents{k} = \varnothing$,  $V_k^{G,c}$ is a constant.

Finally, we define $A_0 \coloneqq \setdef{k \in V}{\parents{k} = \varnothing} = \setdef{k \in V}{\prox(k) = k}$ and $$\dpp^{G,c}(\mu, \nu) \coloneqq \sum_{k \in A_0} V_k^{G,c}.$$

We can now state the DPP for $G$-bicausal transport.

\begin{proposition}
\label{dpp_proof}
    If $G$ is a sorted perfect DAG, $\mu \in \Gprobabilities{\mc{X}}$, $\nu \in \Gprobabilities{\mc{Y}}$, $c : \mc{X} \times \mc{Y} \to [0, \infty]$ is $G$-separable, and $c_k$ is lower semi-analytic for all $1 \leq k \leq n$, then $\dpp^{G,c}(\mu, \nu)$ is well-defined and 
    $$V^{G,c}(\mu, \nu) = \dpp^{G,c}(\mu, \nu).$$
\end{proposition}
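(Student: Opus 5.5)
We prove the two inequalities $V^{G,c}\ge \dpp^{G,c}$ and $V^{G,c}\le\dpp^{G,c}$ separately, following the approach of \cite[Section 5]{Backhoff2016CausalDiscrete}. Throughout we use the disintegration characterisation 3) of Proposition \ref{equivalent_characterisations}. A coupling $\pi\in\Gbicouplings{\mu}{\nu}$ is exactly a choice of kernels $\pi_i^{\xparents{i},\yparents{i}}\in\couplings{\mu_i^{\xparents{i}}}{\nu_i^{\yparents{i}}}$, glued as $\pi=\bigotimes_i\pi_i$.

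\textbf{Well-definedness.} We argue by backward induction on $k=n,\dots,1$. The claim is that $V_k^{G,c}$ is a well-defined, $[0,\infty]$-valued, lower semi-analytic function of $(\xparents{k},\yparents{k})$. Perfection is used here. By Proposition \ref{prop:perfection}, every $i\in A_k$ satisfies $\parents{i}\subseteq\parents{k}\cup\{k\}$, so the integrand $c_k+\sum_{i\in A_k}V_i^{G,c}$ depends only on $(x_k,\xparents{k},y_k,\yparents{k})$. Since $A_k\subseteq\{k+1,\dots,n\}$, the induction hypothesis gives that this integrand is lower semi-analytic. The map $(\xparents{k},\yparents{k},\pi)\mapsto\int(\cdots)\,d\pi$ is then lower semi-analytic, using the standard Bertsekas–Shreve results on integrals of l.s.a.\ functions against Borel kernels. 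The set $\{(\xparents{k},\yparents{k},\pi):\pi\in\couplings{\mu_k^{\xparents{k}}}{\nu_k^{\yparents{k}}}\}$ is Borel, because the kernels are Borel. Hence the partial infimum $V_k^{G,c}$ is lower semi-analytic. The same theory provides, for each $\eps>0$, universally measurable $\eps$-optimal selectors $(\xparents{k},\yparents{k})\mapsto\pi_k^{\eps,\xparents{k},\yparents{k}}$. For the case of infinite values we use the standard truncation $\min(V,1/\eps)$ in the $\eps$-optimality criterion.

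\textbf{Lower bound.} Fix $\pi\in\Gbicouplings{\mu}{\nu}$. We show by backward induction that
$$\int\Big(c_k+\sum_{i\in\desc{k}^{\prox}}c_i\Big)\,d\bigotimes\pi_\cdot\;\ge\;\int V_k^{G,c}\,d\pi,$$
where $\desc{k}^{\prox}$ is the set of nodes whose iterated proximal parent chain passes through $k$. These sets form a tree (in fact a forest) rooted at $A_0$ under the map $i\mapsto\prox(i)$. The inductive step uses two facts. First, $\pi_k$ is admissible in the infimum defining $V_k$. Second, integrating out the subtree of each $i\in A_k$ conditional on $(x_k,\xparents{k},y_k,\yparents{k})$ yields at least $V_i(\xparents{i},\yparents{i})$. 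This second fact relies on the conditional independence structure from Proposition \ref{equivalent_characterisations}: conditional on $(X_{\parents{i}},Y_{\parents{i}})$, the subtree variables are generated by the kernels $\pi_j$ alone. Summing over the roots in $A_0$ gives $\int c\,d\pi\ge\dpp^{G,c}$, by Tonelli since $c\ge0$.

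\textbf{Upper bound and main obstacle.} The hardest step is to glue the $\eps$-optimal selectors $\pi_k^\eps$ into $\pi^\eps=\bigotimes_k\pi_k^\eps$. We then need to verify that $\pi^\eps\in\Gbicouplings{\mu}{\nu}$, which holds by the converse direction of Proposition \ref{equivalent_characterisations} 3), after modifying the selectors on null sets so that they are Borel. Once this is done, the same backward computation with equalities up to $\eps$ gives $\int c\,d\pi^\eps\le\dpp^{G,c}+n\eps$. The measurability issue is the main difficulty. Universally measurable selectors must be replaced by Borel versions $\pi$-a.e., which requires the relevant marginal laws of $(\xparents{k},\yparents{k})$ to be determined before $\pi_k$ is chosen. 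This works because the gluing is performed sequentially in the sorted order, so each law is known at the moment it is needed. The case $V=\infty$ is handled trivially.
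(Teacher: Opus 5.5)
Your proof is correct and has the same skeleton as the paper's. It shows lower semi-analyticity of $V_k^{G,c}$ by backward induction via Bertsekas--Shreve, using perfection through $\parents{i}\subseteq\parents{k}\cup\{k\}$ for $i\in A_k$. It then glues universally measurable $\eps$-optimal selectors into an element of $\Gbicouplings{\mu}{\nu}$ by characterisation 3 of Proposition \ref{equivalent_characterisations}, and finishes with two inductive inequalities. What differs is how the induction is organised.

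\emph{The paper's bookkeeping.} The paper unrolls the cost as an iterated integral in the sorted order, $I_k^c(x_{1:k-1},y_{1:k-1})=\int c_k+I_{k+1}^c\,d\pi_k^{\xparents{k},\yparents{k}}$. It compares $I_k^c$ pointwise with $\sum_{i\in B_k}V_i^{G,c}$, where $B_k=\{i\ge k:\prox(i)<k \text{ or } \prox(i)=i\}$ is the frontier of pending value functions. All it needs is $B_{k+1}=A_k\cup(B_k\setminus\{k\})$ and $k\notin\parents{i}$ for $i\in B_k\setminus\{k\}$, so those terms pass through the $k$-th integral.

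\emph{Your bookkeeping.} You recurse along the proximal forest instead, splitting $c$ into the costs of the proximal subtrees $S_k$, $k\in A_0$. This makes the tree structure of the DPP and the error $\eps\abs{S_k}$ more transparent.

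\emph{The subtree fact.} You invoke that, conditionally on $(X_{\parents{i}},Y_{\parents{i}})$, the variables indexed by $S_i$ are generated by their own kernels. This is true, since by Proposition \ref{condition_explained} every parent of a node of $S_i$ lies in $S_i\cup\parents{i}$. You do not prove it, but you also do not need it: the integrated inequality you display closes the induction on its own, because
\[
\int\sum_{j\in S_k}c_j\,d\pi=\int c_k\,d\pi+\sum_{i\in A_k}\int\sum_{j\in S_i}c_j\,d\pi\ \ge\ \int\Bigl(c_k+\sum_{i\in A_k}V_i^{G,c}\Bigr)\,d\pi .
\]
The last integrand depends only on $(x_k,\xparents{k},y_k,\yparents{k})$. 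Integrating it first against $\pi_k^{\xparents{k},\yparents{k}}$, the conditional law of $(X_k,Y_k)$ given $(X_{\parents{k}},Y_{\parents{k}})$, gives at least $\int V_k^{G,c}\,d\pi$. The upper bound is the same computation with $\le$ and one extra $\eps$ per node.

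\emph{Measurability.} The paper composes the universally measurable selectors directly via \cite[Proposition 7.45]{BerstekasShreve1978`}; your sequential replacement by Borel versions almost everywhere also works. No truncation is needed: $c\ge 0$, and the $\eps$-optimality requirement in \cite[Proposition 7.50(b)]{BerstekasShreve1978} is vacuous where $V_k^{G,c}=+\infty$.
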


As a consequence of the DPP, we obtain the following:

\begin{theorem}
    \label{AWisWG}
     Let $G$ be a sorted perfect DAG and $p \geq 1$. Assume the underlying metric on $\mc{X}$ takes the form $d_{\mc{X}}(x,y)^p = \sum_{i \in V} d_{\mc{X}_i}(x_i, y_i)^p$ for all $x,y \in \mc{X}$. If $\mu, \nu \in \Gpprobabilities{p}{\mc{X}}$, then 
     $$W_{G,p}(\mu, \nu) = AW_p(\mu, \nu).$$
\end{theorem}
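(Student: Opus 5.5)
One inequality is immediate. Since $G$ is sorted, $G\subseteq G_n$. Every $G$-bicausal coupling is $G_n$-bicausal: a $G$-bicausal disintegration as in Proposition \ref{equivalent_characterisations} 3) is in particular a product of kernels coupling $\mu_i^{x_{1:i-1}}=\mu_i^{\xparents{i}}$ and $\nu_i^{y_{1:i-1}}=\nu_i^{\yparents{i}}$, since $\mu,\nu$ are $G$-compatible. Hence $W_{G,p}\geq AW_p$. The substance of the proof is the reverse inequality $W_{G,p}^p\leq AW_p^p$.

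The cost $d_{\mc{X}}^p=\sum_i d_{\mc{X}_i}(x_i,y_i)^p$ is $G$-separable with $c_i=d_{\mc{X}_i}^p$, and it is continuous, hence lower semi-analytic. Proposition \ref{dpp_proof} therefore gives $W_{G,p}^p=\dpp^{G,c}(\mu,\nu)$. The same cost is also $G_n$-separable, and $G_n$ is perfect, so $AW_p^p=\dpp^{G_n,c}(\mu,\nu)$. This is the classical DPP for adapted transport, with $\prox(i)=i-1$ and $V_k$ depending on $(x_{1:k-1},y_{1:k-1})$.

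It remains to show that the two recursions produce the same value. I would prove by backward induction on $k$ the following identity, valid for $\mu\otimes\nu$-a.e.\ $(x_{1:k-1},y_{1:k-1})$:
\begin{equation*}
V_k^{G_n,c}(x_{1:k-1},y_{1:k-1}) = \sum_{i\in D_k} V_i^{G,c}(\xparents{i},\yparents{i}),
\end{equation*}
where $D_k:=\setdef{i\geq k}{\parents{i}\subseteq 1:k-1}$ is the set of later nodes whose $G$-recursion is "rooted'' before time $k$. For $k=n+1$ both sides vanish. At $k=1$, the set $D_1$ is exactly $A_0$, which yields $AW_p^p=\dpp^{G,c}$.

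For the inductive step, write $V_k^{G_n}$ as the infimum over couplings of $\mu_k^{\xparents{k}}$ and $\nu_k^{\yparents{k}}$ of the integral of $c_k+V_{k+1}^{G_n}$. By the induction hypothesis, $V_{k+1}^{G_n}=\sum_{i\in D_{k+1}}V_i^G$.

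Split $D_{k+1}=(D_k\setminus\{k\})\cup\setdef{i\in D_{k+1}}{k\in\parents{i}}$. The terms with $i\in D_k\setminus\{k\}$ do not depend on $(x_k,y_k)$ and factor out of the infimum.

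Perfection is used on the second set. Take $i$ with $k\in\parents{i}\subseteq 1:k$. Then $k=\max\parents{i}=\prox(i)$, so this set is exactly $A_k$. Moreover, the factoring of the previous step only matters when $k\in D_k$, i.e.\ $\parents{k}\subseteq 1:k-1$, which holds automatically for a sorted DAG. Hence $D_k$ always contains $k$.

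The infimum that remains is therefore over $\couplings{\mu_k^{\xparents{k}}}{\nu_k^{\yparents{k}}}$ of $\int c_k+\sum_{i\in A_k}V_i^G\,d\pi$. By definition this equals $V_k^G$, which closes the induction.

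The step needing most care is measurability and the a.e.\ qualifications. The DPP values are only lower semi-analytic, and the kernel identities $\mu_k^{x_{1:k-1}}=\mu_k^{\xparents{k}}$ hold only almost surely. I would rely on the measurability guarantees already packaged in Proposition \ref{dpp_proof}, and check that the factoring of terms independent of $(x_k,y_k)$ out of the infimum is legitimate pointwise.
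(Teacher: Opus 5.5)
Your argument is correct, but it follows a different route from the paper. The paper deduces the statement from Theorem~\ref{aw_equality}. Proposition~\ref{dpp_proof} first gives $W_{G,p}^p=\mathrm{DPP}^{G,c}(\mu,\nu)$, which the paper repackages as an isometric embedding into $G$-nested distributions. It then proves $W_{G,p}=AW_p$ by induction on the number of vertices, with two cases:
\begin{itemize}
\item if $|A_0|>1$, it writes $G=G_1\oplus G_2$, so $\mu,\nu$ are products and Lemma~\ref{aw_independent} gives additivity of $AW_p^p$ over the independent blocks;
\item if $A_0=\{1\}$, it removes the root, applies the induction hypothesis to the remaining perfect graph inside the first step of the DPP, and closes with the DPP for $AW_p$.
\end{itemize}
You instead apply Proposition~\ref{dpp_proof} to both $G$ and $G_n$. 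You then match the two recursions level by level via $V^{G_n}_k=\sum_{i\in D_k}V^G_i$ and the disjoint decomposition $D_{k+1}=(D_k\setminus\{k\})\cup A_k$. This is a single backward induction in the time index, and multiple roots are absorbed automatically by $D_1=A_0$. So you need neither the graph surgery, nor Lemma~\ref{aw_independent}, nor the identification of $\sum_{i\in A_1}V_i^G$ with $W_{\tilde G,p}(\mu^{x_1},\nu^{y_1})^p$. The paper's route additionally yields the nested-distribution isometry, which this statement does not need. Your preliminary inequality $W_{G,p}\geq AW_p$ is harmless but redundant, since the induction already gives equality.

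Two small corrections.

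First, the identification $\{i\in D_{k+1}: k\in\mathrm{pa}_i\}=A_k$ uses only that $G$ is sorted, not perfection. Perfection enters solely through Proposition~\ref{dpp_proof}. There it makes $V^G_k$ a function of $(x_{\mathrm{pa}_k},y_{\mathrm{pa}_k})$ alone, because $\mathrm{pa}_i\subseteq\mathrm{pa}_k\cup\{k\}$ for $i\in A_k$. Hence the concatenated $\varepsilon$-optimal kernels are $G$-bicausal, which is what gives $W_{G,p}^p=\mathrm{DPP}^{G,c}$.

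Second, the almost-everywhere bookkeeping can be avoided entirely. Since $\mu=\bigotimes_k\mu_k(dx_k\mid x_{\mathrm{pa}_k})$, you may take $x_{1:k-1}\mapsto\mu_k^{x_{\mathrm{pa}_k}}$ (and likewise for $\nu$) as the kernels in the $G_n$-recursion. The proof of Proposition~\ref{dpp_proof} works verbatim for any Borel version of the disintegration, and with this choice your identity holds pointwise. If you kept unrelated versions, ``$\mu\otimes\nu$-a.e.'' would be the wrong notion: a product-null set, such as a diagonal, can be charged by the couplings in the infimum. The exceptional sets must instead be of the form $\{x\in N_1\}\cup\{y\in N_2\}$ with $N_1$ $\mu$-null and $N_2$ $\nu$-null.
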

    While bicausal transport plans are more general, allowing you to use all the information of the previous nodes at each step, Theorem \ref{AWisWG} shows that when $\mu$, $\nu$ are themselves compatible with the graph structure, this extra information results in no improvement. This readily implies that graph causal Wasserstein distances generated by all the ``intermediate" graphs also coincide:
    \begin{equation}\label{eq:intermediateGeq}
        W_{G,p}(\mu, \nu)=W_{G',p} (\mu, \nu),\quad \forall G\subseteq G' \subseteq G_n; \mu,\nu\in \Gprobabilities{\mc{X}}.
    \end{equation}
In particular, taking $G'=\Gbar$ we also have equality  $W_{G,p}=\widetilde{W}_{G,p}=AW_p$ on $\Gprobabilities{\mc{X}}$.

    Assumptions in Theorem \ref{AWisWG} are sharp in the sense that when the distance is not $G$-separable, or when $G$ is not assumed perfect, $W_{G,p}\neq AW_p$ in general, as shown in Examples \ref{ex:AWDnotGW} and \ref{ex:AWDnotGW2}.

\section{Proofs}
\label{proof_section}

This section contains the proofs of our main results presented so far, alongside numerous technical side results required. 
We start with some examples and counterexamples which are helpful in building intuition, as well as serving later on to give reverse directions in some of the proofs. The proofs are then presented in their logical order, which is different to the order in which we presented the results in Sections \ref{section_main_results}--\ref{section_dpp}. Specifically, we first prove Theorem \ref{thm:lifted}. This, in particular, implies Corollary \ref{coro:GOT} holds. It enables us to then prove Theorem \ref{main_result_metric}. 
Once we have the metric property, we can study topology and prove Theorem \ref{theorem:sametopology}, along with some stepping-stone results. 
We then turn to characterising when bicausal couplings have the gluing property and establish Theorem \ref{glue_characterisation}. This includes a series of intermediate results and counterexamples. It leads on to the proof of Theorem \ref{thm:lifted_nosplits}.
Next we focus on the denseness of Monge couplings, Theorem \ref{main_dense}. The proof relies on us being able to work on the lifted space and Theorem \ref{thm:lifted} in particular. 
Finally we move to the discussion of the dynamic programming principles for perfect sorted DAGs. We prove Proposition \ref{prop:perfection} and another characterisation in Proposition \ref{condition_explained}. We then show Proposition \ref{dpp_proof} holds. Isometric embedding is established in Theorem  \ref{aw_equality} and Theorem \ref{AWisWG} is deduced from it. We finish the proofs looking at the applications and the associated Lipschitz continuity results stated in Section \ref{section:applications}.

\subsection{Examples}
\label{section_counterexamples}

The following explicit counterexample complements the numerical counterexample provided in \cite[Appendix B]{Eckstein2023CausalGraphs} showing that $\Gmarkov$-causal Wasserstein distances do not generally respect the triangle inequality.

\begin{example}\label{example_counter_Markov}
    Consider the following three $\Gmarkov$-compatible measures:
    $$\mu = \frac{1}{4}\brackets{\delta_{(1,1,1)} + \delta_{(1,1,0)}+\delta_{(0,1,1)}+\delta_{(0,1,0)}},$$
    $$\nu = \frac{1}{4}\brackets{\delta_{(1,2,1)} + \delta_{(1,2,0)}+\delta_{(0,3,1)}+\delta_{(0,3,0)}},$$
    $$\eta =  \frac{1}{4}\brackets{\delta_{(1,4,1)} + \delta_{(1,4,0)}+\delta_{(0,4,1)}+\delta_{(0,4,0)}}.$$
    
    We consider the bijections $T_1$ between the supports of $\mu$ and $\nu$, and $T_2$ between the supports of $\nu$ and $\eta$ given by the diagram below:
\begin{align*}
    &(1,1,1) \overset{T_1}{\mapsto} (1,2,1) \overset{T_2}{\mapsto} (1,4,1) \\
    &(1,1,0) \mapsto (1,2,0) \mapsto (1,4,0) \\
    &(0,1,1) \mapsto (0,3,1) \mapsto (0,4,0) \\
    &(0,1,0) \mapsto (0,3,0) \mapsto (0,4,1) 
\end{align*}

 One can check that $\pushforward{(\Id{}, T_1)}{\mu} \in \specificGbicouplings{\Gmarkov}{\mu}{\nu}$, and $\pushforward{(\Id{}, T_2)}{\nu} \in \specificGbicouplings{\Gmarkov}{\nu}{\eta}$. Crucially, note that $\pushforward{(\Id{}, T_2 \circ T_1)}{\mu} \notin \specificGbicouplings{\Gmarkov}{\mu}{\eta}$. 
 
 The bijections split the supports of the three measures into four equivalence classes, whose elements form each row of the diagram above. Denote this equivalence relation by $\approx$.

 Let $\eps > 0$, $p \geq 1$. One may define a metric $d_{\eps}$ on $\supp{\mu} \cup \supp{\nu} \cup \supp{\eta}$ via $d_{\eps}(x,y) \coloneqq (\eps \mathds{1}_{x \approx y} + \frac{1}{\eps} \mathds{1}_{x \not\approx y})\mathds{1}_{x\neq y}$. 

 Under this metric, $W_{\Gmarkov,p}(\mu,\nu) \leq \brackets{\int d_{\eps}^p(x,T_1(x))\, d\mu(x)}^{\frac{1}{p}} = \eps$, and symmetrically  $W_{\Gmarkov,p}(\nu,\eta) \leq \eps$. 

 As both $\mu$ and $\eta$ have constant second coordinates, \cite[Corollary 3.5(iii)]{Eckstein2023CausalGraphs} implies that any $\pi \in \specificGbicouplings{\Gmarkov}{\mu}{\eta}$ can be disintegrated as $\pi = \pi_1 \otimes \delta_{(1,4)} \otimes \pi_3$ for some $\pi_1, \pi_3 \in \couplings{P}{ P}$, where $P \coloneqq \frac{1}{2}(\delta_0 + \delta_1) = \mu_1 = \eta_1 = \mu_3 = \eta_3$. One can easily check that any such coupling places at most a mass of $\frac{1}{2}$ on the set $\setdef{(x,y) \in \supp \mu \times \supp \eta}{x\approx y}$. Hence, provided we choose $\eps < \frac{1}{2}$, $$
 W_{\Gmarkov,p}(\mu,\eta) \geq 
 \brackets{\frac{1}{2\eps^p}}^{\frac{1}{p}} \geq \frac{1}{2\eps} > 2\eps \geq W_{\Gmarkov,p}(\mu,\nu) + W_{\Gmarkov,p}(\nu,\eta).$$
\end{example}

The next two examples show that graph causal Wasserstein distances are indeed generally different from Wasserstein or adapted Wasserstein distances, despite coinciding for certain classes of graphs and underlying costs, as discussed in Theorem \ref{AWisWG}. This was already shown in Example \ref{example:notequivalent} and these two examples complement the discussion in Section \ref{sec:topology}.

\begin{example}
\label{ex:AWDnotGW}
    Let $G = \brackets{\curlybrackets{1,2}, \emptyset}$ be the graph with two unconnected nodes, $a, b, c, d$ be distinct real numbers, and $\mu, \nu \in \Gprobabilities{\mbb{R}^2}$ be given by $\mu = \frac{1}{4}\brackets{\delta_{(a,a)}+\delta_{(a,b)}+\delta_{(b,a)}+\delta_{(b,b)}}$, $\nu = \frac{1}{4}\brackets{\delta_{(c,c)}+\delta_{(c,d)}+\delta_{(d,c)}+\delta_{(d,d)}}$. We consider a cost function as given by the following table, which can be extended to a metric on $\mbb{R}^2$:

\begin{center}
\begin{tabular}{|c|c|c|c|c|}
\hline
 & $(c,c)$ & $(c,d)$ & $(d,c)$ & $(d,d)$ \\
\hline
$(a,a)$ & 3 & 4 & 4 & 3 \\
\hline
$(a,b)$ & 4 & 3 & 3 & 4 \\
\hline
$(b,a)$ & 3 & 4 & 4 & 3 \\
\hline
$(b,b)$ & 4 & 3 & 3 & 4 \\
\hline
\end{tabular}
\end{center}
Since any $G$-bicausal coupling must take the form $\pi_1 \otimes \pi_2$ for $\pi_1, \pi_2 \in \couplings{\frac{1}{2}\delta_a + \frac{1}{2}\delta_b}{\frac{1}{2}\delta_c + \frac{1}{2}\delta_d}$ (which are fully parametrised by $\pi_i((a,c))$), one can check all of them result in the same cost, and so $W_{G, 1}(\mu, \nu) = 3.5$. 

However, the map given by $(a,a) \mapsto (c,c)$, $(a,b) \mapsto (c,d)$, $(b,a) \mapsto (d,d)$,$(b,b) \mapsto (d,c)$ is a biadapted Monge map between $\mu$ and $\nu$ which induces the optimal cost of $3$. Hence, $W_1(\mu, \nu) = AW_1(\mu, \nu) = 3 < 3.5 = W_{G,1}(\mu, \nu)$.
\end{example}

It was essential in the example above that the cost did not separate in the form $d_1(x_1,y_1) + d_2(x_2, y_2)$, as otherwise we would be in the setting of Theorem \ref{AWisWG}. If we want our cost to separate, we need to look at a larger graph. In these settings, it usually suffices to simply add a new node which is a child of every node in the original graph, as illustrated in the example below. 

\begin{example}
\label{ex:AWDnotGW2}
    Let $G = \brackets{\curlybrackets{1,2,3}, \curlybrackets{(1,3), (2,3)}}$, $a, b, c \in \mbb{R}^2$ represent the vertices of an equilateral triangle with side length 2, and $\mu, \nu \in \Gprobabilities{\mbb{R} \times \mbb{R} \times \mbb{R}^2}$ be given by $$\mu = \frac{1}{4}\brackets{\delta_{(0,0,a)}+\delta_{(0,1,b)}+\delta_{(1,0,c)}+\delta_{(1,1,c)}},$$ $$\nu = \frac{1}{4}\brackets{\delta_{(0,0,c)}+\delta_{(0,1,c)}+\delta_{(1,0,b)}+\delta_{(1,1,a)}}.$$ We take the metric $d(x,y) = \abs{x_1 - y_1} + \abs{x_2 - y_2} + \sqnorm{x_3 - y_3}$ as our underlying cost, and note that it separates. Our cost table then looks like this:

    \begin{center}
\begin{tabular}{|c|c|c|c|c|}
\hline
 & $(0,0,c)$ & $(0,1,c)$ & $(1,0,b)$ & $(1,1,a)$ \\
\hline
$(0,0,a)$ & 2 & 3 & 3 & 2 \\
\hline
$(0,1,b)$ & 3 & 2 & 2 & 3 \\
\hline
$(1,0,c)$ & 1 & 2 & 2 & 3 \\
\hline
$(1,1,c)$ & 2 & 1 & 3 & 2 \\
\hline
\end{tabular}
\end{center}

By the characterisation of bicausality in terms of disintegrations from Proposition \ref{equivalent_characterisations}, one sees that $G$-bicausal couplings between $\mu$ and $\nu$ take the form $\pi_1 \otimes \pi_2 \otimes \pi_3^{x_{1:2}, y_{1:2}}$ for $\pi_1, \pi_2 \in \couplings{\frac{1}{2}\delta_0 + \frac{1}{2}\delta_1}{\frac{1}{2}\delta_0 + \frac{1}{2}\delta_1}$ and $\pi_3^{x_{1:2}, y_{1:2}}$ supported on a single point, consisting of the third coordinates of the vector beginning with $(x_1,x_2)$ in the support of $\mu$, respectively of the vector beginning with $(y_1,y_2)$ in the support of $\nu$.

Thus again we can describe this family with just two parameters, $\alpha_i \coloneqq \pi_i(0,0)$. Letting $A \coloneq \alpha_1 \alpha_2$, $B \coloneq \alpha_1 (\frac{1}{2} - \alpha_2)$, $C \coloneq (\frac{1}{2} - \alpha_1)\alpha_2$, $D \coloneq (\frac{1}{2} - \alpha_1) (\frac{1}{2} - \alpha_2)$, a general $G$-bicausal coupling takes the form:

    \begin{center}
\begin{tabular}{|c|c|c|c|c|}
\hline
 & $(0,0,c)$ & $(0,1,c)$ & $(1,0,b)$ & $(1,1,a)$ \\
\hline
$(0,0,a)$ & A & B & C & D\\
\hline
$(0,1,b)$ & B & A & D & C \\
\hline
$(1,0,c)$ & C & D & A & B \\
\hline
$(1,1,c)$ & D & C & B & A \\
\hline
\end{tabular}
\end{center}
All such couplings have a cost of at least $8(A+B+C+D) = 2$, and so $W_{G,1}(\mu, \nu) \geq 2$.

But the map given by $(0,0,a) \mapsto (1,1,a)$, $(0,1,b) \mapsto (1,0,b)$, $(1,0,c) \mapsto (0,0,c)$, $(1,1,c) \mapsto (0,1,c)$ is a biadapted Monge map from $\mu$ to $\nu$ achieving a cost of $1.5$. So, $W_1(\mu, \nu) \leq AW_1(\mu, \nu) \leq 1.5 < 2 \leq W_{G,1}(\mu,\nu)$.
\end{example}

\subsection{Proof of Theorem \ref{thm:lifted}}
\label{proof_lifted}

\subsubsection*{Step 1: $\Gbicouplings{\mu}{\nu} \subseteq \newGbicouplings{\mu}{\nu}$
}

We start by showing that $G$-bicausal couplings can be seen as projections of $G$-biadapted Monge couplings on a lifted space. The proof is inspired by the corresponding causal OT result in \cite[Theorem 3.5]{Beiglbock2022Biadapted}. The key difference from the original proof in \cite[Theorem 3.5]{Beiglbock2022Biadapted} is that we do the induction forward instead of backward along the causal directions, as this is more suitable for working with the graph structure. 

The following elementary lemma paves the way to a recursive characterisation of $G$-biadapted maps in Lemma \ref{recursive_Gbiadapted}. For a function $f : \mc{A} \times \mc{B} \to \mc{C}$, and any $a \in \mc{A}$, we  denote by $f^a$ the function $b \mapsto f(a,b)$.

\begin{lemma}
\label{simple_recursive_biadapted}
    A function $T : \mc{X}_1 \times \mc{X}_2 \to \mc{Y}_1 \times \mc{Y}_2$ given by $T(x_1,x_2) = (T_1(x_1), T_2(x_1, x_2))$ is biadapted if and only if 
    \begin{enumerate}
        \item $T_1 : \mc{X}_1 \to \mc{Y}_1$ is bijective;
        \item $T_2^{x_1} : \mc{X}_2 \to \mc{Y}_2$ is bijective for all $x_1 \in \mc{X}_1$.
    \end{enumerate}
    In this case, the inverse of $T$ is given by $S(y_1, y_2) = (T_1^{-1}(y_1), \brackets{T_2^{T_1^{-1}(y_1)}}^{-1}(y_2))$ i.e. $S_1 = T_1^{-1}$ and $S_2^{y_1} = \brackets{T_2^{x_1}}^{-1}$ for $x_1 = T_1^{-1}(y_1)$.
\end{lemma}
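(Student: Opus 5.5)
The plan is to verify both directions directly from the definition of a biadapted map for the two-node linear graph $\Glinear{2}$. Here $T$ is adapted by assumption, so $T$ is biadapted exactly when it is bijective and its inverse has the form $S(y_1,y_2) = (S_1(y_1), S_2(y_1,y_2))$.

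\emph{($\Leftarrow$)} Assume $T_1$ and every section $T_2^{x_1}$ are bijective, and define $S$ by the displayed formula. Then $S$ is adapted: its first coordinate depends only on $y_1$, and its second on $(y_1,y_2)$. Two direct computations give $S\circ T=\Id{}$ and $T\circ S=\Id{}$.
\begin{itemize}
\item For $S\circ T$: we have $S(T_1(x_1),T_2(x_1,x_2))=(x_1,(T_2^{x_1})^{-1}(T_2^{x_1}(x_2)))=(x_1,x_2)$.
\item For $T\circ S$: writing $x_1=T_1^{-1}(y_1)$, we get $T(S(y))=(y_1,T_2^{x_1}((T_2^{x_1})^{-1}(y_2)))=y$.
\end{itemize}
Hence $T$ is bijective with adapted inverse $S$, so it is biadapted.

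\emph{($\Rightarrow$)} Assume $T$ is bijective with adapted inverse $S=(S_1(y_1),S_2(y_1,y_2))$.
\begin{enumerate}
\item Reading off the first coordinates of $S\circ T=\Id{}$ gives $S_1(T_1(x_1))=x_1$. Reading off the first coordinates of $T\circ S=\Id{}$ gives $T_1(S_1(y_1))=y_1$. So $T_1$ is bijective with inverse $S_1$.
\item Fix $x_1$ and set $y_1=T_1(x_1)$. The second coordinate of $S\circ T=\Id{}$ gives $S_2^{y_1}(T_2^{x_1}(x_2))=x_2$ for all $x_2$.
\item For any $y_2$, apply $T\circ S=\Id{}$ at $(y_1,y_2)$. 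Since $S_1(y_1)=x_1$, this yields $T_2^{x_1}(S_2^{y_1}(y_2))=y_2$.
\end{enumerate}
Thus $T_2^{x_1}$ is bijective with inverse $S_2^{y_1}$, where $x_1=T_1^{-1}(y_1)$. This also shows that the inverse is the claimed formula, since inverses are unique.

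Measurability is not part of the stated definition of $G$-adapted maps, so no extra argument is needed for it. The only mildly delicate point is bookkeeping the correspondence $x_1 = T_1^{-1}(y_1)$ when identifying sections in step 3. No real obstacle is expected: the statement is purely set-theoretic.
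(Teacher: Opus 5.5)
Your proof is correct and follows essentially the same route as the paper. In the forward direction you read off the coordinates of $S\circ T=\mathrm{Id}$ and $T\circ S=\mathrm{Id}$ and match sections via $y_1=T_1(x_1)$. In the converse you build $S$ from $T_1^{-1}$ and the inverses of the sections $T_2^{x_1}$, then check both compositions directly.
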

\begin{proof}
    First, suppose $T$ is biadapted. Let $S$ be its inverse. Then for all $(x_1, x_2) \in \mc{X}_1 \times \mc{X}_2$
    \begin{equation}
    \label{simple_lemma_crux}
        S(T(x_1, x_2)) = S(T_1(x_1), T_2^{x_1}(x_2)) = \brackets{S_1(T_1(x_1)), S_2^{T_1(x_1)}(T_2^{x_1}(x_2))}.
    \end{equation}
    Since, $S \circ T = \Id{\mc{X}_1 \times \mc{X}_2}$, this implies that $S_1 \circ T_1 = \Id{\mc{X}_1}$ and $S_2^{T_1(x_1)} \circ T_2^{x_1} = \Id{\mc{X}_2}$ for all $x_1 \in \mc{X}_1$. Symmetrically, from $T \circ S = \Id{\mc{Y}_1 \times \mc{Y}_2}$, we get that $T_1 \circ S_1 = \Id{\mc{Y}_1}$ and $T_2^{S_1(y_1)} \circ S_2^{y_1} = \Id{\mc{Y}_2}$ for all $y_1 \in \mc{Y}_1$. Plugging in $y_1 = T_1(x_1)$, the conclusion follows.

    For the converse, let $S_1$ be the inverse of $T_1$ and, for all $y_1 \in \mc{Y}_1$, $S_2^{y_1}$ be the inverse of $T_2^{S_1(y_1)}$. We define the map $S$ via $(y_1, y_2) \mapsto (S_1(y_1), S_2^{y_1}(y_2))$. It now suffices to show that $S \circ T = \Id{\mc{X}_1 \times \mc{X}_2}$ and $T \circ S = \Id{\mc{Y}_1 \times \mc{Y}_2}$. But these follow easily from identity (\ref{simple_lemma_crux}) and its reverse analogue.
\end{proof}

\begin{lemma}
    \label{recursive_Gbiadapted}
    Suppose $n > 1$ and that $G$ is transitively closed. A function $T : \mc{X} \to \mc{Y}$ given by $T(x) = (T_{1:n-1}(x_{1:n-1}), T_n(x_n, x_{\parents{n}}))$ is $G$-biadapted if and only if 
    \begin{enumerate}
        \item $T_{1:n-1} : \mc{X}_{1:n-1} \to \mc{Y}_{1:n-1}$ is $G_{1:n-1}$-biadapted;
        \item $T_n^{x_{\parents{n}}}: \mc{X}_n \to \mc{Y}_n$ is bijective for all $x_{\parents{n}} \in \mc{X}_{\parents{n}}$.
    \end{enumerate}
    In this case, the inverse of $T$ is given by $S(y) = (S_{1:n-1}(y_{1:n-1}), S_n^{y_{1:n-1}}(y_n))$, where $S_{1:n-1} = T_{1:n-1}^{-1}$ and $S_n^{y_{1:n-1}} = \brackets{T^{x_{\parents{n}}}_n}^{-1}$ for $x_{1:n-1} = S_{1:n-1}(y_{1:n-1})$. 
\end{lemma}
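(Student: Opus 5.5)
The plan is to reduce Lemma \ref{recursive_Gbiadapted} to Lemma \ref{simple_recursive_biadapted} by grouping the first $n-1$ coordinates into a single block. Since $G$ is sorted, $\parents{n} \subseteq 1:n-1$. For $i \leq n-1$ the graph $G_{1:n-1}$ (the proper subgraph on $1:n-1$) has the same parent sets as $G$. Hence $T$ is $G$-adapted if and only if $T_{1:n-1}$ is $G_{1:n-1}$-adapted and $T_n$ depends only on $(x_n,x_{\parents{n}})$. The same holds for a candidate inverse $S$.

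For the forward direction, assume $T$ is $G$-biadapted with $G$-adapted inverse $S$. Writing $S(y) = (S_{1:n-1}(y_{1:n-1}), S_n(y_n, y_{\parents{n}}))$, the map $T$ has the two-block form $(T_{1:n-1}(x_{1:n-1}), T_n^{x_{1:n-1}}(x_n))$ of Lemma \ref{simple_recursive_biadapted}, and so does $S$. That lemma gives that $T_{1:n-1}$ is bijective with inverse $S_{1:n-1}$, and that each section $T_n^{x_{1:n-1}} = T_n^{x_{\parents{n}}}$ is bijective. Every $x_{\parents{n}}$ extends to some $x_{1:n-1}$, so item 2 follows. Item 1 follows because $S_{1:n-1}$ is $G_{1:n-1}$-adapted.

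For the converse, assume items 1 and 2. Lemma \ref{simple_recursive_biadapted}, applied with $\mc{X}_1 := \mc{X}_{1:n-1}$, gives that $T$ is bijective, with inverse $S(y) = (S_{1:n-1}(y_{1:n-1}), S_n^{y_{1:n-1}}(y_n))$ where $S_n^{y_{1:n-1}} = (T_n^{x_{\parents{n}}})^{-1}$ and $x_{1:n-1} = S_{1:n-1}(y_{1:n-1})$. This is the stated formula for the inverse. The first block of $S$ is $G_{1:n-1}$-adapted by item 1. It remains to check that $S_n$ depends on $y_{1:n-1}$ only through $y_{\parents{n}}$, i.e.\ that $x_{\parents{n}}$ is a function of $y_{\parents{n}}$ alone.

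That last point is the main obstacle, and it is where transitive closedness enters. I would show that for each $j \in \parents{n}$, the coordinate $(S_{1:n-1})_j(y_{1:n-1})$ depends only on $(y_j, y_{\parents{j}})$, since $S_{1:n-1}$ is $G_{1:n-1}$-adapted. Transitivity gives $\parents{j} \subseteq \parents{n}$, so $\{j\}\cup\parents{j} \subseteq \parents{n}$. Hence $x_{\parents{n}} = (S_{1:n-1}(y_{1:n-1}))_{\parents{n}} =: R(y_{\parents{n}})$ for a map $R$. Then $S_n(y_n, y_{\parents{n}}) = (T_n^{R(y_{\parents{n}})})^{-1}(y_n)$, which is of the required form. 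So $S$ is $G$-adapted and $T$ is $G$-biadapted.
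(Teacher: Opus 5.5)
Your proposal is correct and follows essentially the same route as the paper. Both arguments reduce to Lemma \ref{simple_recursive_biadapted} by treating $x_{1:n-1}$ as a single block, and both use transitive closedness ($\parents{j}\subseteq\parents{n}$ for $j\in\parents{n}$) to show that $x_{\parents{n}}=(S_{1:n-1}(y_{1:n-1}))_{\parents{n}}$ depends only on $y_{\parents{n}}$; this is exactly the paper's observation that $I=\parents{n}$.
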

\begin{proof}
The proof builds on Lemma \ref{simple_recursive_biadapted} by viewing $T$ as a function $T: \brackets{\mc{X}_{1:n-1}} \times \mc{X}_n \to \brackets{\mc{Y}_{1:n-1}} \times \mc{Y}_n$ depending on two parameters, ($x_{1:n-1}, x_n$). In fact the final statement follows directly from the final statement of Lemma \ref{simple_recursive_biadapted}.

Suppose $T$ is $G$-biadapted. Then, we know its inverse, $S$ can be written as $S(y) = \brackets{S_i(y_i, y_{\parents{i}})}_{1 \leq i \leq n}$. By Lemma \ref{simple_recursive_biadapted}, we get that $T_{1:n-1}$ is bijective, with inverse $S_{1:n-1}$. Both $T_{1:n-1}$ and $S_{1:n-1}$ are $G_{1:n-1}$-adapted, so $T_{1:n-1}$ is $G_{1:n-1}$-biadapted. Lemma \ref{simple_recursive_biadapted} also says that $T_n^{x_{1:n-1}} = T_n^{\xparents{n}}$ is bijective for all $x_{1:n-1} \in \mc{X}_{1:n-1}$.

For the converse, as $T$ is clearly $G$-adapted, it suffices to check that its inverse $S$ is too. By Lemma \ref{simple_recursive_biadapted}, we know $S_{1:n-1} = T_{1:n-1}^{-1}$. So, as $T_{1:n-1}$ is $G_{1:n-1}$-biadapted, $S_{1:n-1}$ is $G_{1:n-1}$-adapted. Moreover, we know that $S_n^{y_{1:n-1}} = \brackets{T_n^{x_{1:n-1}}}^{-1}$ for $x_{1:n-1} = S_{1:n-1}(y_{1:n-1})$. But $T_n^{x_{1:n-1}}$ only depends on $x_{1:n-1}$ through $\xparents{n} = \brackets{S_j(y_j, y_{\parents{j}})}_{j \in \parents{n}}$. Hence, $S_n^{y_{1:n-1}}$ only depends on $y_{1:n-1}$ through $y_I$, where $I = \parents{n} \cup \brackets{\bigcup_{j \in \parents{n}} \parents{j}}$. As $G$ is transitively closed, we see that in fact $I = \parents{n}$, so indeed we can write $S_n^{y_{1:n-1}} = S_n^{\parents{n}}$, ensuring $S$ is $G$-adapted.
\end{proof}

\begin{proof}[Proof that $\Gbicouplings{\mu}{\nu} \subseteq \newGbicouplings{\mu}{\nu}$
]
    We proceed by induction on the number of nodes $n$ of our graph $G$. If $n=1$, then a map is $G$-biadapted if and only if it is bijective, and the result follows via \cite[Theorem 2.1]{Beiglbock2022Biadapted}.

    Now assume that $n > 1$ and we have shown that the result holds for all transitively closed sorted DAGs with at most $n-1$ nodes. Then, by the induction hypothesis, as $\pi_{1:n-1} \in \specificGbicouplings{G_{1:n-1}}{\mu_{1:n-1}}{\nu_{1:n-1}}$, there exists a $G_{1:n-1}$-biadapted map $\Tilde{T} : \Hat{\mc{X}}_{1:n-1} \to \Hat{\mc{Y}}_{1:n-1}$ such that $\pushforward{\Tilde{T}}{\Hat{\mu}_{1:n-1}} = \Hat{\nu}_{1:n-1}$ and $\pushforward{\proj{\mc{X}_{1:n-1} \times \mc{Y}_{n-1}}}{\pushforward{(\Id{}, \Tilde{T})}{\Hat{\mu}_{1:n-1}}} = \pi_{1:n-1}$.

    Using \cite[Theorem 2.3]{Beiglbock2022Biadapted}, there exists a measurable map $$S: \brackets{\mc{X}_{\parents{n}} \times \mc{Y}_{\parents{n}}} \times \Hat{\mc{X}}_n \to \Hat{\mc{Y}}_n$$
    such that for all $(x_{\parents{n}}, y_{\parents{n}}) \in \mc{X}_{\parents{n}} \times \mc{Y}_{\parents{n}}$, the map $S^{\xparents{n}, \yparents{n}} : \Hat{\mc{X}}_n \to \Hat{\mc{Y}}_n$ is a Borel isomorphism which satisfies $\pushforward{S^{\xparents{n}, \yparents{n}}}{\widehat{\mu_n^{\xparents{n}}}} = \widehat{\nu_n^{\yparents{n}}}$ and $\pushforward{\proj{\mc{X}_{n} \times \mc{Y}_{n}}}{\pushforward{(\Id{}, S^{\xparents{n}, \yparents{n}})}{\widehat{\mu_n^{\xparents{n}}}}} = \pi_{n}^{\xparents{n}, \yparents{n}}$.

    We now define $T:\Hat{\mc{X}} \to \Hat{\mc{Y}}$ via
    $$T(x_{1:n}, u_{1:n}) = \brackets{\Tilde{T}(x_{1:n-1}, u_{1:n-1}), S^{\xparents{n}, \proj{\mc{Y}_{\parents{n}}}\brackets{\Tilde{T}(x_{1:n-1}, u_{1:n-1})}}(x_n, u_n)}.$$
    By Lemma \ref{recursive_Gbiadapted}, $T$ is $G$-biadapted. It remains to check other desired properties:
     \begin{enumerate}
        \item[(P1)] $\pushforward{T}{\Hat{\mu}} = \Hat{\nu}$, or equivalently, $\Hat{\pi} \coloneqq \pushforward{(\Id{}, T)}{\Hat{\mu}} \in \Gbicouplings{\Hat{\mu}}{\Hat{\nu}}$, and 
        \item[(P2)] $\pushforward{\proj{\mc{X} \times \mc{Y}}}{\Hat{\pi}} = \pi.$
    \end{enumerate}

    For property (P1), let $f : \Hat{\mc{Y}} \to \mbb{R}$ be an arbitrary bounded measurable function. Then
    \begin{align*}
        &\int f(y_{1:n}, 
        v_{1:n}) \, d\pushforward{T}{\Hat{\mu}}(y_{1:n}, v_{1:n}) = \\
        & = \int f\brackets{\Tilde{T}(x_{1:n-1}, u_{1:n-1}), S^{\xparents{n}, \proj{\mc{Y}_{\parents{n}}}\brackets{\Tilde{T}(x_{1:n-1}, u_{1:n-1})}}(x_n, u_n)} \, d\Hat{\mu}(x_{1:n}, u_{1:n}) \\
        & = \iint f\brackets{\Tilde{T}(x_{1:n-1}, u_{1:n-1}), S^{\xparents{n}, \proj{\mc{Y}_{\parents{n}}}\brackets{\Tilde{T}(x_{1:n-1}, u_{1:n-1})}}(x_n, u_n)} \, d\widehat{\mu^{\xparents{n}}_n}(x_n, u_n) d\Hat{\mu}_{1:n-1} \\ 
        & = \iint f\brackets{\Tilde{T}(x_{1:n-1}, u_{1:n-1}), y_n, v_n} \, d\pushforward{S^{\xparents{n}, \proj{\mc{Y}_{\parents{n}}}\brackets{\Tilde{T}(x_{1:n-1}, u_{1:n-1})}}}{\widehat{\mu^{\xparents{n}}_n}}(y_n, v_n) d\Hat{\mu}_{1:n-1} \\
        & = \iint f\brackets{\Tilde{T}(x_{1:n-1}, u_{1:n-1}), y_n, v_n} \, d\widehat{\nu}^{\proj{\mc{Y}_{\parents{n}}}\brackets{\Tilde{T}(x_{1:n-1}, u_{1:n-1})}}_n(y_n, v_n) d\Hat{\mu}_{1:n-1}(x_{1:n-1}, u_{1:n-1}) \\
        &= \iint f\brackets{y_{1:n}, v_{1:n}} \, d\widehat{\nu^{\yparents{n}}_n}(y_n, v_n) d\pushforward{\Tilde{T}}{\Hat{\mu}_{1:n-1}}(y_{1:n-1}, v_{1:n-1}) \\
        &= \iint f\brackets{y_{1:n}, v_{1:n}} \, d\widehat{\nu^{\yparents{n}}_n}(y_n, v_n) d\Hat{\nu}_{1:n-1}(y_{1:n-1}, v_{1:n-1}) \\
        &= \int f(y_{1:n}, 
        v_{1:n}) \, d\Hat{\nu}(y_{1:n}, v_{1:n}),
    \end{align*}
    so $\pushforward{T}{\Hat{\mu}} = \Hat{\nu}$.

    For property (P2), let $f : \mc{X} \times \mc{Y} \to \mbb{R}$ be an arbitrary bounded measurable function. Then
    \begin{align*}
        \int f(&x_{1:n}, y_{1:n}) \,d\pushforward{\proj{\mc{X} \times \mc{Y}}}{\Hat{\pi}}(x_{1:n}, y_{1:n}) = \\
        = \int f(&x_{1:n}, y_{1:n}) \,d\pushforward{\proj{\mc{X} \times \mc{Y}}}{\pushforward{(\Id{}, T)}{\Hat{\mu}}}(x_{1:n}, y_{1:n})\\
        = \iint f(&\proj{\mc{X} \times \mc{Y}} \circ(\Id{}, T)(x_{1:n}, u_{1:n})) \,d\widehat{\mu_n^{\xparents{n}}}(x_n, u_n)d\Hat{\mu}_{1:n-1}(x_{1:n-1}, u_{1:n-1}) \\
        = \iint f(&\proj{\mc{X}_{1:n-1} \times \mc{Y}_{1:n-1}} \circ(\Id{}, \Tilde{T})(x_{1:n-1}, u_{1:n-1}), \\
        &\proj{\mc{X}_{n} \times \mc{Y}_{n}} \circ S^{\xparents{n}, \proj{\mc{Y}_{\parents{n}}}\brackets{\Tilde{T}(x_{1:n-1}, u_{1:n-1})}}(x_n,u_n)) \,d\widehat{\mu_n^{\xparents{n}}}d\Hat{\mu}_{1:n-1} \\
        = \iint f(&\proj{\mc{X}_{1:n-1} \times \mc{Y}_{1:n-1}} \circ(\Id{}, \Tilde{T})(x_{1:n-1}, u_{1:n-1}), \\
        &x_n, y_n) \,d\pi_n^{\xparents{n}, \proj{\mc{Y}_{\parents{n}}}\brackets{\Tilde{T}(x_{1:n-1}, u_{1:n-1})}}(x_n, y_n)d\Hat{\mu}_{1:n-1} \\
        = \iint f(&\proj{\mc{X}_{1:n-1} \times \mc{Y}_{1:n-1}} \circ(\Id{}, \Tilde{T})(x_{1:n-1}, u_{1:n-1}), \\
        &x_n, y_n) \,d\pi_n^{\proj{\mc{X}_{\parents{n}} \times \mc{Y}_{\parents{n}}} \brackets{\proj{\mc{X}_{1:n-1} \times \mc{Y}_{1:n-1}} \circ(\Id{}, \Tilde{T})(x_{1:n-1}, u_{1:n-1})}}(x_n, y_n)d\Hat{\mu}_{1:n-1} \\
        = \iint f(&x_{1:n}, 
        y_{1:n}) \,d\pi_n^{\xparents{n}, \yparents{n}}(x_n, y_n)d\pushforward{\proj{\mc{X}_{1:n-1} \times \mc{Y}_{n-1}}}{\pushforward{(\Id{}, \Tilde{T})}{\Hat{\mu}_{1:n-1}}}(x_{1:n-1}, y_{1:n-1}) \\
        = \iint f(&x_{1:n}, 
        y_{1:n}) \,d\pi_n^{\xparents{n}, \yparents{n}}(x_n, y_n)d\pi_{1:n-1}(x_{1:n-1}, y_{1:n-1}) \\
        = \int f(&x_{1:n}, y_{1:n}) \,d\pi(x_{1:n}, y_{1:n}), 
    \end{align*} 
    so (P2) holds.
\end{proof}

The fact that the inclusion $\Gbicouplings{\mu}{\nu} \subseteq \newGbicouplings{\mu}{\nu}$ may be strict is discussed in detail in Section \ref{proof:lifted_nosplits}.

\subsubsection*{Step 2: $\newGbicouplings{\mu}{\nu}$ as convex combinations of elements in $\Gbicouplings{\mu}{\nu}$}

\begin{proof}
We now prove a partial converse to first inclusion, showing that lifted $G$-bicausal couplings can be written as convex combinations of $G$-bicausal couplings. 

    Let $\pi \in \newGbicouplings{\mu}{\nu}$. Then, by definition, there exists $\Hat{\pi} \in \Gmonge{\Hat{\mu}}{\Hat{\nu}}$ such that $\pi = \pushforward{\proj{\mc{X}\times\mc{Y}}}{\Hat{\pi}}$. Let $(X,U,Y,V) \sim \Hat{\pi}$ and define for brevity $Z_i \coloneq (X_i, Y_i)$, $W_i \coloneq (U_i, V_i)$. Since $\Hat{\pi} \in \Gbicouplings{\Hat{\mu}}{\Hat{\nu}}$, then by Proposition \ref{equivalent_characterisations}, $\condindep{(Z_i, W_i)}{Z_{\parents{i}}, W_{\parents{i}}}{(Z_{1:i-1}, W_{1:i-1})}$ and so $\condindep{Z_i}{Z_{\parents{i}}, W_{\parents{i}}}{(Z_{1:i-1}, W_{1:i-1})}$ and $\condindep{W_i}{Z_i, Z_{\parents{i}}, W_{\parents{i}}}{(Z_{1:i}, W_{1:i-1})}$ for all $i\in V$.  In other words, the law of $(Z_1, W_1, \ldots, Z_n, W_n)$ is compatible to the graph $\Hat{G}$ with vertex set $\curlybrackets{z_1, w_1, \ldots z_n, w_n}$ in which node $z_i$ has parents $\curlybrackets{z_{\parents{i}}, w_{\parents{i}}}$ and node $w_i$ has parents $\curlybrackets{z_i, z_{\parents{i}}, w_{\parents{i}}}$.

    Thus, by Proposition \ref{prop:Gcomp_characterisation} applied to $\Hat{G}$, there exist (by possibly enlarging the sample space) independent real-valued random variables $P_1, \ldots, P_n, R_1, \ldots, R_n$ and measurable functions $f_i : (\hat{\mc{X}}_{\parents{i}} \times \hat{\mc{Y}}_{\parents{i}}) \times \mbb{R} \to \mc{X}_{i} \times \mc{Y}_i$, $g_i : (\mc{X}_{i} \times \mc{Y}_i) \times (\hat{\mc{X}}_{\parents{i}} \times \hat{\mc{Y}}_{\parents{i}}) \times \mbb{R} \to [0,1]^2$  such that for all $i\in V$
    \begin{align*}
        \label{convexity_proof_eq}
        Z_i = f_i(Z_{\parents{i}}, W_{\parents{i}}, P_i), \\
    W_i = g_i(Z_i, Z_{\parents{i}}, W_{\parents{i}}, R_i).
    \end{align*}

    This implies 
    \begin{equation}
        \label{convex_proof_eq}
    \condindep{Z_i}{Z_{\parents{i}}, W_{\parents{i}}}{(R_{1:n}, Z_{1:i-1}, W_{1:i-1})},
    \end{equation} and hence $\condindep{Z_i}{R_{1:n}, Z_{\parents{i}}, W_{\parents{i}}}{Z_{1:i-1}}$. But, using transitive closure of $G$ and the second equation above, we see $W_{\parents{i}}$ can be written as a function of $Z_{\parents{i}}, R_{\parents{i}}$ and thus $\sigma\brackets{R_{1:n}, Z_{\parents{i}}, W_{\parents{i}}} = \sigma\brackets{R_{1:n}, Z_{\parents{i}}}$. So, $\condindep{Z_i}{R_{1:n}, Z_{\parents{i}}}{Z_{1:i-1}}$ i.e. $\condindep{(X_i, Y_i)}{R_{1:n}, X_{\parents{i}}, Y_{\parents{i}}}{(X_{1:i-1}, Y_{1:i-1})}$, for all $i \in V$.

    But, as $\Hat{\pi} \in \Gbicouplings{\Hat{\mu}}{\Hat{\nu}}$, we also have that $\condindep{(X_i, U_i)}{X_{\parents{i}}, U_{\parents{i}}}{(Y_{\parents{i}}, V_{\parents{i}})}$, so $\condindep{X_i}{X_{\parents{i}}, U_{\parents{i}}}{(Z_{\parents{i}}, W_{\parents{i}})}$. Moreover, $\condindep{X_i}{X_{\parents{i}}}{U_{\parents{i}}}$ (by independence of $X$ and $U$) and $\condindep{X_i}{Z_{\parents{i}}, W_{\parents{i}}}{(R_{1:n}, Z_{1:i-1}, W_{1:i-1})}$ (from (\ref{convex_proof_eq})), so the chain rule of conditional expectation implies
    \begin{equation}
        \label{convex_proof_eq_2}
    \condindep{X_i}{X_{\parents{i}}}{(R_{1:n}, Z_{1:i-1}, W_{1:i-1})}.
    \end{equation}

    In particular, $\condindep{X_i}{R_{1:n}, X_{\parents{i}}}{Y_{\parents{i}}}$. Symmetrically, we get $\condindep{Y_i}{R_{1:n}, Y_{\parents{i}}}{X_{\parents{i}}}$, which together with the previously obtained $\condindep{(X_i, Y_i)}{R_{1:n}, X_{\parents{i}}, Y_{\parents{i}}}{(X_{1:i-1}, Y_{1:i-1})}$ show that $\condlaw{X, Y}{R_{1:n}}$ is $G$-bicausal by Proposition \ref{equivalent_characterisations}.

    All that remains to be done is to determine the marginals. By (\ref{convex_proof_eq_2}), $\condindep{X_i}{X_{\parents{i}}}{R_{1:n}, X_{1:i-1}}$, and so $\condlaw{X_i}{R_{1:n}, X_{1:i-1}} = \condlaw{X_i}{X_{\parents{i}}} = \mu_i^{X_{\parents{i}}}$. Thus, $\condlaw{X}{R_{1:n}} = \mu$. Symmetrically, we see  $\condlaw{Y}{R_{1:n}} = \nu$.

    Thus, $\condlaw{X, Y}{R_{1:n}} \in \Gbicouplings{\mu}{\nu}$ and so
    $$\pi = \law{X,Y} = \int \condlaw{X, Y}{R_{1:n}} \, d\law{R_{1:n}} \in \closedconvex{\Gbicouplings{\mu}{\nu}}.$$
    \end{proof}

    Note that the result above does not imply that $\newGbicouplings{\mu}{\nu}$ is convex. Indeed, if $G$ is the graph on two independent nodes, then (see Theorem \ref{thm:lifted_nosplits} and Proposition \ref{equivalent_characterisations}) $$\newGbicouplings{\mu_1 \otimes \mu_2}{\nu_1 \otimes \nu_2} = \Gbicouplings{\mu_1 \otimes \mu_2}{{\nu_1 \otimes \nu_2}} = \setdef{\pi_1 \otimes \pi_2}{\pi_i \in \couplings{\mu_i}{\nu_i}},$$ which is not convex in general, and we provide an explicit counterexample in Example \ref{must_split}. As a consequence, the inclusion $\newGbicouplings{\mu}{\nu} \subseteq \closedconvex{\Gbicouplings{\mu}{\nu}}$ may be strict.

\subsubsection*{Step 3: $\newGbicouplings{\mu}{\nu} = \tildeGbicouplings{\mu}{\nu}$
}
From the definition, it is clear that $\newGbicouplings{\mu}{\nu} \subseteq \tildeGbicouplings{\mu}{\nu}$ in general. The converse is a relatively straightforward consequence of the result in Step 1. 

Let $\pi \in \tildeGbicouplings{\mu}{\nu}$. Then, by definition, there exists $\Tilde{\pi} \in \Gbicouplings{\Hat{\mu}}{\Hat{\nu}}$ such that $\pi = \pushforward{\proj{\mc{X} \times \mc{Y}}}{\Tilde{\pi}}$. By Step 1, we know that $\Gbicouplings{\Hat{\mu}}{\Hat{\nu}} \subseteq \newGbicouplings{\Hat{\mu}}{\Hat{\nu}}$, so there exists $\Hat{\pi} \in \Gmonge{\doublehat{\mu}}{\doublehat{\nu}}$ such that $\Tilde{\pi} = \pushforward{\proj{\Hat{\mc{X}} \times \Hat{\mc{Y}}}}{\Hat{\pi}}$. Composing the projections, we see $\pi = \pushforward{\proj{\mc{X} \times \mc{Y}}}{\Hat{\pi}}$. In other words, if $(X, U, \mathfrak{U}, Y, V, \mathfrak{V}) \sim \Hat{\pi}$, then $(X,Y) \sim \pi$ and moreover the $U_1, \ldots, U_n, \mathfrak{U}_1, \ldots \mathfrak{U}_n$ are independent $\mathrm{Unif}[0,1]$, independent of $X$, the $V_1, \ldots, V_n, \mathfrak{V}_1, \ldots \mathfrak{V}_n$ are independent $\mathrm{Unif}[0,1]$, independent of $Y$, and there exists a $G$-biadapted $T : \doublehat{\mc{X}} \to \doublehat{\mc{Y}}$ such that $(Y, V, \mathfrak{V}) = T(X, U, \mathfrak{U})$. 

Applying suitable variant of the isomorphism theorem for measure spaces, such as 
\cite[Theorem 17.41]{Kechris1995DescriptiveSetTheory}, gives us a measurable bijective function  $f : [0,1]^2 \to [0,1]$ 
such that $\pushforward{f}{\lambda^2} = \lambda$, and we define $\Tilde{U}_i \coloneq f(U_i, \mathfrak{U}_i)$ and $\Tilde{V}_i \coloneq f(V_i, \mathfrak{V}_i)$. 
Moreover, define $\Phi : \doublehat{\mc{X}} \to \Hat{\mc{X}}$ and $\Psi : \doublehat{\mc{Y}} \to \Hat{\mc{Y}}$ via $\Phi(x, u, \mathfrak{u}) = (x_i, f(u_i, \mathfrak{u}_i))_{1 \leq i \leq n}$ and $\Psi(y, v, \mathfrak{v}) = (y_i, f(v_i, \mathfrak{v}_i))_{1 \leq i \leq n}$. Then $\Phi$ and $\Psi$ are $G$-biadapted with $\pushforward{\Phi}{\doublehat{\mu}} = \Hat{\mu}$ and $\pushforward{\Psi}{\doublehat{\nu}} = \Hat{\nu}$. Defining $\Tilde{T} \coloneq \Psi \circ T \circ \Phi^{-1}: \Hat{\mc{X}} \to \Hat{\mc{Y}}$, $\Tilde{T}$ is $G$-biadapted by Lemma \ref{closed_composition}. It is now trivial to check that $(Y, \Tilde{V}) = \Tilde{T}(X,\Tilde{U})$, with $(X, \Tilde{U}) \sim \Hat{\mu}$ and $(Y, \Tilde{V}) \sim \Hat{\nu}$, and hence conclude that $\pi = \law{X,Y} \in \newGbicouplings{\mu}{\nu}$.

\subsection{Proof of Theorem \ref{main_result_metric}}
\label{proof_metric}

\begin{proof}[Proof of Theorem \ref{main_result_metric}]

Recall Example \ref{example_counter_Markov} which showed the triangle inequality may fail for $\Gmarkov$. The converse statement then follows by observing that if $G$ is not transitively closed, then it has $\Gmarkov$ as a proper subgraph. Hence, we can extend our explicit counterexample for $\Gmarkov$ to one for $G$ by setting all random variables corresponding to nodes which do not belong to the $\Gmarkov$ subgraph to be constant.

We move to the positive assertion. 
    By Corollary \ref{coro:GOT}, we can work with the representation in \eqref{alternative_Wasserstein}.
Non-negativity is clear since $d_{\mc{X}}\geq 0$ and $\newGbicouplings{\mu}{\nu}\neq \emptyset$ as the independent coupling is always $G$-bicausal, see Proposition \ref{equivalent_characterisations}, and thus also lifted $G$-bicausal by Theorem \ref{thm:lifted}. The symmetry follows by the symmetry of lifted $G$-bicausal couplings: the pushforward of a coupling in $\newGbicouplings{\mu}{\nu}$ under the map $(x,y) \mapsto (y,x)$ is clearly in $\newGbicouplings{\nu}{\mu}$. Thus, the only non-trivial property is the triangle inequality, which can be proved using a classical gluing argument as in \cite[Theorem 7.3]{Villani2003TopicsTransportation} by simply replacing the classical gluing lemma \cite[Lemma 7.6]{Villani2003TopicsTransportation} with its lifted $G$-bicausal version, Lemma \ref{bicausal_gluing_lemma} below.
\end{proof}

\begin{lemma}[Lifted $G$-bicausal gluing lemma]
    \label{bicausal_gluing_lemma}
    Let $\mu, \nu, \eta \in \Gpprobabilities{p}{\mc{X}}$, and $\pi_1 \in \newGbicouplings{\mu}{\nu}$, $\pi_2 \in \newGbicouplings{\nu}{\eta}$. Then there exists $\pi \in \probabilities{\mc{X}^3}$ such that its marginal on the first two coordinates is $\pi_1$, its marginal on the last two coordinates is $\pi_2$, and $\pi_3$, its marginal on the first and third coordinates, has the property that $\pi_3 \in \newGbicouplings{\mu}{\eta}$.
\end{lemma}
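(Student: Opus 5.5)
The construction glues at the level of Monge maps on the lifted space, where composition is available. Here $G$ is a transitively closed DAG, as in the proof of Theorem \ref{main_result_metric}. By definition there exist $G$-biadapted maps $T_1:\Hat{\mc{X}}\to\Hat{\mc{X}}$ with $\pushforward{T_1}{\Hat{\mu}}=\Hat{\nu}$ and $\pushforward{\proj{\mc{X}\times\mc{X}}}{\pushforward{(\Id{},T_1)}{\Hat{\mu}}}=\pi_1$, and $T_2$ with $\pushforward{T_2}{\Hat{\nu}}=\Hat{\eta}$ and the analogous projection property for $\pi_2$.

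The naive choice $T_2\circ T_1$ is wrong. It forces the lifted randomness $V$ in the middle to be shared between the two couplings, so the $(x,y)$ and $(y,z)$ marginals would not be conditionally independent given $y$ in the required way, and we would not recover $\pi_1,\pi_2$ as marginals.

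The fix is to double the randomisation before composing, in the spirit of Step 3 in the proof of Theorem \ref{thm:lifted}.
\begin{enumerate}
\item Fix the measurable bijection $f:[0,1]^2\to[0,1]$ with $\pushforward{f}{\lambda^2}=\lambda$. Work on $\doublehat{\mc{X}}$ with points $(x,u,\mathfrak{u})$.
\item Decode the first-layer randomness. Let $\Phi(x,u,\mathfrak{u})=(x_i,f(u_i,\mathfrak{u}_i))_i$, which is $G$-biadapted. Then $\Phi^{-1}$ turns a single uniform into a pair.
\item Define $\mathbf{T}_1:\doublehat{\mc{X}}\to\doublehat{\mc{X}}$ by letting $T_1$ act on $(x,u)$ and carrying $\mathfrak{u}$ along coordinatewise. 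This map is $G$-biadapted, and it pushes $\doublehat{\mu}$ to $\doublehat{\nu}$ because $\mathfrak{u}$ is independent of $(x,u)$.
\item Define $\mathbf{T}_2$ to let $T_2$ act on the $(y,\mathfrak{v})$ components and carry the $v$ component along. It is again $G$-biadapted and pushes $\doublehat{\nu}$ to $\doublehat{\eta}$.
\item Set $T\coloneqq\Phi\circ\mathbf{T}_2\circ\mathbf{T}_1\circ\Phi^{-1}$. By Lemma \ref{closed_composition} this is $G$-biadapted on $\Hat{\mc{X}}$, and $\pushforward{T}{\Hat{\mu}}=\Hat{\eta}$.
\end{enumerate}

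Next define $\pi$ as the law of $(X,Y,Z)$ under the composite. Take $(X,U,\mathfrak{U})\sim\doublehat{\mu}$, set $(Y,V,\mathfrak{U})=\mathbf{T}_1(X,U,\mathfrak{U})$, and then $(Z,V,W)=\mathbf{T}_2(Y,V,\mathfrak{U})$, relabelling coordinates so that $\mathfrak{U}$ plays the role of the lifted variable fed to $T_2$.

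The $(X,Y)$ marginal is the law of $(X,\proj{\mc{X}}T_1(X,U))$, which is $\pi_1$.

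For the $(Y,Z)$ marginal, note that $(Y,V)\sim\Hat{\nu}$ and $\mathfrak{U}$ is independent of $(X,U)$, hence independent of $(Y,V)$. So $(Y,\mathfrak{U})\sim\Hat{\nu}$, and $Z=\proj{\mc{X}}T_2(Y,\mathfrak{U})$ gives law $\pi_2$.

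The $(X,Z)$ marginal is $\pushforward{\proj{\mc{X}\times\mc{X}}}{\pushforward{(\Id{},T)}{\Hat{\mu}}}$. To see this, note that the $\mc{X}$-components are unchanged by $\Phi$ and $\Phi^{-1}$, and $(X,f(U,\mathfrak{U}))\sim\Hat{\mu}$. Hence the $(X,Z)$ marginal lies in $\newGbicouplings{\mu}{\eta}$.

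The main obstacle is verifying that $\mathbf{T}_1$ and $\mathbf{T}_2$ are $G$-biadapted. Each coordinate of $\mathbf{T}_1$ at node $i$ is $(T_{1,i}((x,u)_i,(x,u)_{\parents{i}}),\mathfrak{u}_i)$, which depends only on node $i$ and its parents. Its inverse is $T_1^{-1}$ on the first part and the identity on $\mathfrak{u}$, which is likewise adapted. The same check applies to $\mathbf{T}_2$, and $\Phi$ is handled coordinatewise. The pushforward identities reduce to independence of the auxiliary uniforms together with the fact that $\Hat{\cdot}$ is a product with $\lambda^n$.
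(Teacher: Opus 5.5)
Your construction is correct, but your reason for rejecting the composition $T_2\circ T_1$ is mistaken, and that composition is exactly the paper's proof. The paper sets $\Hat{\pi}\coloneqq\pushforward{(\Id{},T_1,T_2\circ T_1)}{\Hat{\mu}}$ and $\pi\coloneqq\pushforward{\proj{\mc{X}^3}}{\Hat{\pi}}$. Both marginals are recovered. The first two coordinates project to $\pi_1$. Since $\pushforward{T_1}{\Hat{\mu}}=\Hat{\nu}$, the last two coordinates have law $\pushforward{(T_1,T_2\circ T_1)}{\Hat{\mu}}=\pushforward{(\Id{},T_2)}{\Hat{\nu}}$, which projects to $\pi_2$. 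The $(1,3)$-marginal is admissible because $T_2\circ T_1$ is $G$-biadapted.

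Sharing the middle randomness $V$ only destroys the conditional independence of $X$ and $Z$ given $Y$. The lemma as stated does not require it: any $\pi$ with the two prescribed marginals is enough for the Minkowski step in the triangle inequality.

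What your doubling buys is precisely that conditional independence. You feed $T_2$ the fresh uniforms $\mathfrak{U}$, which are independent of $(X,U)$ and hence of $(X,Y)$, so $\condindep{X}{Y}{Z}$. Your $\pi$ is therefore exactly $\bigglue{\pi_1}{\pi_2}$, and its $(1,3)$-marginal is the genuine gluing $\pi_1\glue\pi_2$. This is the stronger form asserted in the prose of Section~\ref{lifted_section}, namely ``$\pi\glue\pi'\in\newGbicouplings{\mu}{\eta}$''. The paper's composition does not literally deliver it. Already for $n=1$, if $T_1$ hides $X$ in the uniform coordinate and $T_2$ reads $Z$ back from it, then $\pi_1$ and $\pi_2$ can both be independent couplings while $Z=X$.

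So the paper's route is shorter and sufficient for Theorem~\ref{main_result_metric}. Yours costs the extra layer $\doublehat{\mc{X}}$ and the Borel isomorphism $f$, but it proves the gluing property in its natural form.
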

\begin{proof}
    By definition, there exist $G$-biadapted maps $T_1, T_2 : \Hat{\mc{X}} \to \Hat{\mc{X}}$ such that $\pushforward{T_1}{\Hat{\mu}} = \Hat{\nu}$ and $\pushforward{T_2}{\Hat{\nu}} = \Hat{\eta}$, with their induced Monge couplings $\hat{\pi}_1 \in \Gmonge{\Hat{\mu}}{\Hat{\nu}}$, $\hat{\pi}_2 \in \Gmonge{\Hat{\nu}}{\Hat{\eta}}$ satisfying $\pushforward{\proj{\mc{X} \times \mc{X}}}{\hat{\pi}_i} = \pi_i$. Define $\hat{\pi} \coloneqq \pushforward{(\mathrm{Id}, T_1, T_2 \circ T_1)}{\Hat{\mu}}$ and $\pi \coloneqq \pushforward{\proj{\mc{X} \times \mc{X} \times \mc{X}}}{\hat{\pi}}$. Then the marginal of $\pi$ on the first two coordinates is $\pi_1$, and the marginal of $\pi$ on the last two coordinates is $\pi_2$. The marginal of $\pi$ on the first and third coordinate is $\pi_3 \coloneqq \pushforward{\proj{\mc{X} \times \mc{X}}}{\hat{\pi}_3}$, where $\hat{\pi}_3 \coloneqq \pushforward{(\mathrm{Id}, T_2 \circ T_1)}{\Hat{\mu}}$. As $G$ is transitively closed, by Lemma \ref{closed_composition}, $T_2 \circ T_1$ is $G$-biadapted. Thus $\hat{\pi}_3 \in \Gmonge{\hat{\mu}}{\hat{\eta}}$ and $\pi_3 \in \newGbicouplings{\mu}{\eta}$.
\end{proof}

\subsection{Proof of Theorem \ref{theorem:sametopology}}
\label{proof:sametopology}

It is well known that convergence in the classical $p$-Wasserstein distance is equivalent to weak convergence plus convergence of $p$\textsuperscript{th} moments (see \cite[Theorem 7.12]{Villani2003TopicsTransportation} for a precise statement). Moreover, if the underlying metric is bounded, one can drop the $p$\textsuperscript{th} moment condition, and the Wasserstein topology is simply the topology of weak convergence. Thus, if we denote by $\singlebar{W}_p$ the $p$-Wasserstein distance induced by the metric $\singlebar{d}$ on $\mc{X}$ given by $\singlebar{d}(x,y) \coloneq d(x,y) \wedge 1$, then $\singlebar{W}_p$ metrizes weak convergence, and $W_p$ convergence is equivalent to $\singlebar{W}_p$ convergence plus convergence of $p$\textsuperscript{th} moments. 

We prove a similar result for $W_{G,p}$. For each $i \in V$, define $\singlebar{d_i}(x_i,y_i) \coloneq d_i(x_i, y_i) \wedge 1$, so that $\singlebar{d_i}$ is a bounded metric on $\mc{X}_i$ inducing the same topology as $d_i$.  Let $\doublebar{d}(x,y)^p \coloneq \sum_{i=1}^n \singlebar{d_i}(x_i, y_i)^p$. Then both $\singlebar{d}$ and $\doublebar{d}$ are bounded metrics on $\mc{X}$ (inducing the same topology as $d$) and we can thus define their induced $p$-Wasserstein distances via
$$\singlebar{W}_{G,p}(\mu, \nu)^p \coloneq \inf_{\pi \in \Gbicouplings{\mu}{\nu}} \int \singlebar{d}(x,y)^p \, d\pi(x,y),$$
$$\doublebar{W}_{G,p}(\mu, \nu)^p \coloneq \inf_{\pi \in \Gbicouplings{\mu}{\nu}} \int \doublebar{d}(x,y)^p \, d\pi(x,y),$$
for all $\mu, \nu \in \Gprobabilities{\mc{X}}$. By Theorem \ref{main_result_metric}, $\singlebar{W}_{G,p}$, $\doublebar{W}_{G,p}$  are metrics on $\Gprobabilities{\mc{X}}$, and hence also on its subspace $\Gpprobabilities{p}{\mc{X}}$ (which is still defined with respect to the original metric $d$). 

\begin{proposition} 
    \label{prop:bounded_metric}
    For a sequence $(\mu^{(m)})_{m \geq 1}$ in $\Gpprobabilities{p}{\mc{X}}$, and $\nu \in \Gpprobabilities{p}{\mc{X}}$, the following are equivalent:
    \begin{enumerate}
        \item $W_{G,p}(\mu^{(m)}, \nu) \to 0$ as $m \to \infty$;
        \item $\singlebar{W}_{G,p}(\mu^{(m)}, \nu) \to 0$ and $\int d(x, \tilde{x})^p \,d\mu^{(m)}(x) \to \int d(x, \tilde{x})^p \,d\nu(x),$ as $m \to \infty$ for some (and hence all) $\tilde{x} \in \mc{X}$;
        \item $\doublebar{W}_{G,p}(\mu^{(m)}, \nu) \to 0$ and $\int d(x, \tilde{x})^p \,d\mu^{(m)}(x) \to \int d(x, \tilde{x})^p \,d\nu(x),$ as $m \to \infty$ for some (and hence all) $\tilde{x} \in \mc{X}$.
    \end{enumerate}
\end{proposition}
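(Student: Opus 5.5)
The plan is to imitate the classical proof that $W_p$-convergence is equivalent to weak convergence plus convergence of $p$-th moments. The one new point is that every coupling in the argument must stay in $\Gbicouplings{\mu^{(m)}}{\nu}$, and none of the comparisons below leave that set. First, $\singlebar{d}\le d$, and $\singlebar{d}^p=(d\wedge 1)^p\le \doublebar{d}^p\le \sum_i d_i^p = d^p$ (using $d^p\ge d_i^p$ for each $i$). Hence $\singlebar{W}_{G,p}\le \doublebar{W}_{G,p}\le W_{G,p}$. Conversely, $\doublebar{d}^p\le n\,\singlebar{d}^p$, since each $(d_i\wedge 1)^p\le (d\wedge 1)^p$. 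Because all three quantities are infima over the same set $\Gbicouplings{\mu^{(m)}}{\nu}$, this gives $\doublebar{W}_{G,p}\le n^{1/p}\singlebar{W}_{G,p}$, so 2) and 3) are equivalent.

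\textbf{1) $\Rightarrow$ 2).} The bound $\singlebar{W}_{G,p}\le W_{G,p}$ gives the first part. For the moments, note that $W_p\le W_{G,p}$, because $\Gbicouplings{\mu^{(m)}}{\nu}\subseteq\couplings{\mu^{(m)}}{\nu}$. So $W_p(\mu^{(m)},\nu)\to0$, and convergence of $p$-th moments follows from the classical result \cite[Theorem 7.12]{Villani2003TopicsTransportation}. Alternatively, one can argue directly with Minkowski's inequality applied to $d(x,\tilde x)\le d(x,y)+d(y,\tilde x)$ under a near-optimal coupling.

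\textbf{2) $\Rightarrow$ 1)} is the main step. I would take $\pi^{(m)}\in\Gbicouplings{\mu^{(m)}}{\nu}$ with $\int \singlebar{d}^p\,d\pi^{(m)}\to0$. Then $\singlebar{d}\to0$ in $\pi^{(m)}$-probability, and hence $d(X,Y)\to0$ in probability under $\pi^{(m)}$. It remains to show that $d^p$ is uniformly integrable along $\pi^{(m)}$. Write $d(x,y)^p\le 2^{p-1}(d(x,\tilde x)^p+d(\tilde x,y)^p)$. The $Y$-marginal is the fixed measure $\nu\in\mc{P}_p$, so that term is uniformly integrable. For the $X$-marginals, $\singlebar{W}_p(\mu^{(m)},\nu)\le\singlebar{W}_{G,p}(\mu^{(m)},\nu)\to0$ gives weak convergence $\mu^{(m)}\to\nu$. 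Together with convergence of $p$-th moments, this yields uniform integrability of $d(\cdot,\tilde x)^p$ under $\mu^{(m)}$, again by \cite[Theorem 7.12]{Villani2003TopicsTransportation}. Convergence in probability plus uniform integrability then gives $\int d^p\,d\pi^{(m)}\to0$, i.e. $W_{G,p}(\mu^{(m)},\nu)\to0$.

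The point needing most care is this uniform integrability step. The argument must reuse the \emph{same} $G$-bicausal couplings $\pi^{(m)}$ rather than switching to classical optimal couplings, and it does so. The only input from the graph structure is that all three costs are minimised over the same coupling set. The claim ``for some (and hence all) $\tilde x$'' is standard: moment convergence for one $\tilde x$, together with weak convergence, implies uniform integrability, and hence convergence for every other point.
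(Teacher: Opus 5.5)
Your proof is correct and follows the paper's route. The easy directions come from comparing the three costs over the common set $\Gbicouplings{\mu^{(m)}}{\nu}$, and 2) $\Rightarrow$ 1) is the argument of \cite[Theorem 7.12]{Villani2003TopicsTransportation} run with $\eps$-optimal $G$-bicausal couplings for $\singlebar{W}_{G,p}$, which is exactly your convergence-in-probability plus uniform-integrability step; the only organisational difference is that you close 2) $\Leftrightarrow$ 3) directly via $\doublebar{d}^p \le n\,\singlebar{d}^p$, whereas the paper uses the cycle 1) $\Rightarrow$ 3) $\Rightarrow$ 2) $\Rightarrow$ 1).
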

\begin{proof}
    Note $\singlebar{d} \leq \doublebar{d} \leq d$. Hence, that 1) implies 3) implies 2) is trivial after noting that $W_{G,p}$ convergence is stronger than $W_p$ convergence, which implies the moment condition by \cite[Theorem 7.12]{Villani2003TopicsTransportation}. The proof that 2) implies 1) is essentially the same as that of the iii) implies i) direction of \cite[Theorem 7.12]{Villani2003TopicsTransportation}, with the minor difference of having to use $\eps$-optimal couplings instead of optimal couplings for $\singlebar{W}_{G,p}$ because of possible lack of attainment of the optimal cost.
\end{proof}

\begin{lemma}
    \label{lemma:WassersteinPQIneq}
    For $1 \leq p \leq q$, we have that $W_{G, p}(\mu, \nu) \leq W_{G, q}(\mu, \nu), \forall \mu, \nu \in \Gpprobabilities{q}{\mc{X}}$. If moreover the space $(\mc{X},d)$ is bounded, then $W_{G,p}(\mu,\nu) \leq \mathrm{diam}(\mc{X})^{\frac{p-1}{p}} W_{G,1}(\mu, \nu)^{\frac{1}{p}}, \forall \mu, \nu \in \Gprobabilities{\mc{X}}$, and thus all $W_{G,p}$ metrics induce the same topology on $\Gprobabilities{\mc{X}}$.
\end{lemma}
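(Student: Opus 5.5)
The plan is to work coupling by coupling, in the same way as the classical proof that $W_p$ is monotone in $p$. This works because the set of admissible couplings $\Gbicouplings{\mu}{\nu}$ does not depend on $p$. So it suffices to compare the integrals $\int d^p\,d\pi$ and $\int d^q\,d\pi$ for a fixed $\pi \in \Gbicouplings{\mu}{\nu}$ and then take infima.

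\textbf{First inequality.} Fix $\mu,\nu \in \Gpprobabilities{q}{\mc{X}}$ and $\pi \in \Gbicouplings{\mu}{\nu}$. The set of such $\pi$ is nonempty, since it contains $\mu\otimes\nu$ by Proposition \ref{equivalent_characterisations}. Apply Jensen's (or H\"older's) inequality to the probability measure $\pi$ with the convex function $t\mapsto t^{q/p}$:
$$\Big(\int d(x,y)^p\,d\pi\Big)^{1/p} \le \Big(\int d(x,y)^q\,d\pi\Big)^{1/q}.$$
The left-hand side is at least $W_{G,p}(\mu,\nu)$. Taking the infimum of the right-hand side over $\pi$ gives $W_{G,p}(\mu,\nu)\le W_{G,q}(\mu,\nu)$. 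Finiteness holds because $q$-th moments are finite. Note that $\Gpprobabilities{q}{\mc{X}}\subseteq \Gpprobabilities{p}{\mc{X}}$, so the left-hand side is defined.

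\textbf{Bounded case.} If $D\coloneq\mathrm{diam}(\mc{X})<\infty$, then pointwise $d^p = d^{p-1}\cdot d \le D^{p-1} d$. Hence for every $\pi\in\Gbicouplings{\mu}{\nu}$,
$$W_{G,p}(\mu,\nu)^p\le \int d^p\,d\pi \le D^{p-1}\int d\,d\pi.$$
Taking the infimum over $\pi$ and then $p$-th roots gives $W_{G,p}\le D^{(p-1)/p}W_{G,1}^{1/p}$. Here all measures in $\Gprobabilities{\mc{X}}$ automatically have finite moments of every order.

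\textbf{Topology.} Combine the two inequalities on $\Gprobabilities{\mc{X}}$ for a bounded space: $W_{G,1}\le W_{G,p}\le D^{(p-1)/p}W_{G,1}^{1/p}$. So a sequence, or more precisely any ball, is small in $W_{G,1}$ if and only if it is small in $W_{G,p}$. Concretely, every $W_{G,p}$-ball around $\mu$ contains a $W_{G,1}$-ball around $\mu$, and conversely. This uses that both quantities are metrics by Theorem \ref{main_result_metric}, under the standing transitive-closure assumption of this section. It follows that all $W_{G,p}$ induce the same topology, namely that of $W_{G,1}$.

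There is no real obstacle here. The only point requiring care is that the argument never uses attainment or closedness of $\Gbicouplings{\mu}{\nu}$, which may fail when $G$ has splits. Working with arbitrary admissible $\pi$ and infima avoids this entirely.
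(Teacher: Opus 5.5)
Your core argument is correct for a single ground metric $d$ that does not depend on $p$. It is exactly the classical coupling-by-coupling proof that the paper invokes: Jensen for monotonicity in $p$, the pointwise bound $d^p\le \mathrm{diam}(\mc{X})^{p-1}d$ in the bounded case, then a comparison of balls. You are also right that no attainment or closedness of $\Gbicouplings{\mu}{\nu}$ is needed.

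\textbf{Missing ingredient.} The paper's proof explicitly adds one step you omit: the equivalence, for all $p\ge 1$, of the product metrics $\bigl(\sum_{i=1}^n d_i(x_i,y_i)^p\bigr)^{1/p}$. In this part of the paper the ground metric is itself $p$-dependent, $d(x,y)^p=\sum_i d_i(x_i,y_i)^p$. So $W_{G,p}$ and $W_{G,q}$ are built on different metrics, and your argument does not compare them.

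Under that convention the inequality $W_{G,p}\le W_{G,q}$ fails without a constant. For $\mu=\delta_{(0,\dots,0)}$ and $\nu=\delta_{(1,\dots,1)}$ in $\mathbb{R}^n$ with $n\ge 2$, one gets $W_{G,p}=n^{1/p}>n^{1/q}=W_{G,q}$.

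This matters because the lemma is used under exactly this convention. In the proof of Theorem \ref{theorem:sametopology}, $\doublebar{W}_{G,p}$, built on $\doublebar{d}$ with $\doublebar{d}^p=\sum_i\singlebar{d}_i^p$, is reduced to the case $p=1$. There the ground metric is $\sum_i\singlebar{d}_i$, a different metric for $p>1$.

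\textbf{The fix.} Write $d^{(r)}(x,y)=\bigl(\sum_i d_i(x_i,y_i)^r\bigr)^{1/r}$. For $p\le q$ one has pointwise $d^{(q)}\le d^{(p)}\le n^{1/p-1/q}d^{(q)}$. The admissible set $\Gbicouplings{\mu}{\nu}$ does not depend on the ground metric. So these bounds transfer directly: the $G$-causal Wasserstein distances of a given order, computed with $d^{(p)}$ or with any fixed $d^{(r)}$, agree up to multiplicative constants depending only on $n$, $p$, $r$.

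Combining this with your fixed-metric argument gives $W_{G,p}\le n^{1/p-1/q}W_{G,q}$ under the section's convention. It also gives the topological conclusion in the form the paper actually uses.
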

\begin{proof}
Follows analogously to the corresponding result for classical Wasserstein distances, see e.g. \cite[Section 5.1]{Santambrogio}, together with the equivalence for all $p \geq 1$ of the metrics $(\sum_{i=1}^n d_i(x_i, y_i)^p)^{\frac{1}{p}}$ (in analogy to equivalence of $l_p$ norms on $\mbb{R}^n$).
\end{proof}

We can now prove Theorem \ref{theorem:sametopology}.

\begin{proof}[Proof of Theorem \ref{theorem:sametopology}]
    We need to show that for a sequence $(\mu^{(m)})_{m \geq 1}$ in $\Gpprobabilities{p}{\mc{X}}$, and $\nu \in \Gpprobabilities{p}{\mc{X}}$, $W_{G,p}(\mu^{(m)}, \nu) \to 0$ as $m \to \infty$ if and only if $AW_p(\mu^{(m)}, \nu) \to 0$ as $m \to \infty$.
    
    We assume without loss of generality (via a possible relabelling) that $G$ is sorted. By Proposition \ref{prop:bounded_metric} applied to both $G$ and the linear graph $G_n = \setdef{(i,j)}{1\leq i < j \leq n}$, we see it is enough to show that $\doublebar{W}_{G,p}(\mu^{(m)}, \nu) \to 0$ as $m \to \infty$ if and only if $\doublebar{AW}_p(\mu^{(m)}, \nu) \to 0$ as $m \to \infty$, where we use the notation $\doublebar{AW}_p$ for $\doublebar{W}_{G_n, p}$ (since $AW_p = W_{G_n, p}$). Moreover, since $\doublebar{d}$ is bounded, by Lemma \ref{lemma:WassersteinPQIneq} it is enough to prove this for $p=1$.

    That $\doublebar{W}_{G,1}$ convergence implies $\doublebar{AW}_1$ convergence follows trivially since $G$-bicausality is a stronger condition than $G_n$-bicausality, and thus $\doublebar{AW}_1 \leq \doublebar{W}_{G,1}$. It remains to prove that if $\doublebar{AW}_1(\mu^{(m)}, \nu) \to 0$ then $\doublebar{W}_{G,1}(\mu^{(m)}, \nu) \to 0$ as $m \to \infty$.

    The proof proceeds by induction on $n$, the number of nodes of $G$. If $n=1$, $W_{G,1} = W_1 = AW_1$ and there is nothing to prove. For $n >1$, assume the result has been shown for all transitively closed DAGs with at most $n-1$ nodes and let $\tilde{G} \coloneq G_{1:n-1}$.

    Let $\eps > 0$. By Lusin's Theorem \cite[Box 1.6]{Santambrogio} applied to the function $f$ given by $y_{1:n-1} \in \mc{X}_{1:n-1} \mapsto \nu_n^{\yparents{n}} \in \probabilities{\mc{X}_n}$, there exists a compact set $K_{\eps} \subseteq \mc{X}_{1:n-1}$ with $\nu_{1:n-1}(\setcomplement{K_\eps}) \leq \eps$ such that $f|_{K_\eps}$ is uniformly continuous. Thus, there exists a $\delta > 0$ such that whenever $y_{1:n-1}, \tilde{y}_{1:n-1} \in K_{\eps}$ are such that $\sum_{i=1}^{n-1} \singlebar{d}_i(y_i, \tilde{y}_i) \leq \delta$ then $\singlebar{W}_1(\nu_n^{\yparents{n}}, \nu_n^{\tildeyparents{n}}) \leq \eps$ (where $\singlebar{W}_1$ here is the $1$-Wasserstein distance with respect to the metric $\singlebar{d}_n$ on $\mc{X}_n$).
    
    Note that node $n$ has no children by the sorted assumption, and thus $\mu^{(m)}_{1:n-1} \in \specificGprobabilities{\tilde{G}}{\mc{X}_{1:n-1}}$, $\nu_{1:n-1} \in \specificGprobabilities{\tilde{G}}{\mc{X}_{1:n-1}}$, and whenever $\pi \in \Gbicouplings{\mu^{(m)}}{\nu}$ then $\pi_{1:n-1} \in \specificGbicouplings{\tilde{G}}{\mu^{(m)}_{1:n-1}}{\nu_{1:n-1}}$. Consequently, $\doublebar{AW}_1(\mu^{(m)}_{1:n-1}, \nu_{1:n-1}) \leq \doublebar{AW}_1(\mu^{(m)}, \nu)$, and so $\doublebar{AW}_1(\mu^{(m)}_{1:n-1}, \nu_{1:n-1}) \to 0$ as $m \to \infty$. By the induction hypothesis, this implies $\doublebar{W}_{\tilde{G}, 1}(\mu^{(m)}_{1:n-1}, \nu_{1:n-1}) \to 0$ as $m \to \infty$. So, there exists a sequence $\tilde{\pi}^{(m)} \in \specificGbicouplings{\tilde{G}}{\mu^{(m)}_{1:n-1}}{\nu_{1:n-1}}$ and $M_1 \geq 1$ such that for all $m \geq M_1$
    \begin{equation}
        \label{topologyeq1}
    \int \sum_{i=1}^{n-1} \singlebar{d}_i(x_i, \tilde{y}_i) \, d\tilde{\pi}^{(m)}(x_{1:n-1},\tilde{y}_{1:n-1}) \leq \min(\eps, \eps\frac{\delta}{2}).
    \end{equation}

    Furthermore, as $\doublebar{AW}_1(\mu^{(m)}, \nu) \to 0$, using the DPP for adapted Wasserstein distances \cite[Proposition 5.2]{Backhoff2016CausalDiscrete}, there exists a sequence $\pi^{(m)} \in \bicouplings{\mu^{(m)}_{1:n-1}}{\nu_{1:n-1}}$ and $M_2 \geq 1$ such that for all $m \geq M_2$
    \begin{equation}
        \label{topologyeq2}
    \int \sum_{i=1}^{n-1} \singlebar{d}_i(x_i, y_i) + \singlebar{W}_1(\mu_n^{(m)\xparents{n}}, \nu_n^{\yparents{n}}) \, d{\pi}^{(m)}(x_{1:n-1},y_{1:n-1}) \leq \min(\eps, \eps\frac{\delta}{2}).
    \end{equation}

    Let $M \coloneq \max(M_1, M_2)$ and fix $m \geq M$. 

    By inequalities above, the set $A_\eps \coloneq \setdef{(x_{1:n-1}, y_{1:n-1}) \in \mc{X}_{1:n-1}^2}{\metricplaceholder{x}{y} \leq \frac{\delta}{2}}$ has $\pi^{(m)}(\setcomplement{A_\eps}) \leq \eps$ and $\tilde{\pi}^{(m)}(\setcomplement{A_\eps}) \leq \eps$.

    Let $\hat{\pi}^{(m)} \in \probabilities{\mc{X}_{1:n-1}^3}$ be the gluing of $\pi^{(m)}$ and $\tilde{\pi}^{(m)}$ along their common $\mu^{(m)}_{1:n-1}$ marginal, so that if $(Y_{1:n-1},X_{1:n-1},\tilde{Y}_{1:n-1}) \sim \hat{\pi}^{(m)}$ then $(X_{1:n-1},Y_{1:n-1}) \sim \pi^{(m)}$, $(X_{1:n-1},\tilde{Y}_{1:n-1}) \sim \tilde{\pi}^{(m)}$. Define the set
    $$B_\eps \coloneq \setdef{(y_{1:n-1}, x_{1:n-1}, \tilde{y}_{1:n-1}) \in K_{\eps} \times \mc{X}_{1:n-1} \times K_\eps}{\metricplaceholder{x}{y} \leq \frac{\delta}{2}, \metricplaceholder{x}{\tilde{y}} \leq \frac{\delta}{2}}.$$
    Then $(y_{1:n-1}, x_{1:n-1}, \tilde{y}_{1:n-1}) \in \mc{X}_{1:n-1}^3 \setminus B_{\eps}$ if and only if one of $y_{1:n-1} \in \setcomplement{K_\eps}$, $\tilde{y}_{1:n-1} \in \setcomplement{K_\eps}$, $(x_{1:n-1},y_{1:n-1}) \in \setcomplement{A_{\eps}}$, or $(x_{1:n-1},\tilde{y}_{1:n-1}) \in \setcomplement{A_{\eps}}$ holds. As $\pi^{(m)}(\setcomplement{A_\eps}) \leq \eps$, $\tilde{\pi}^{(m)}(\setcomplement{A_\eps}) \leq \eps$, and $\nu_{1:n-1}(\setcomplement{K_\eps}) \leq \eps$, we see $\hat{\pi}^{(m)}(\setcomplement{B_\eps}) \leq 4\eps$. Moreover, for $(y_{1:n-1}, x_{1:n-1}, \tilde{y}_{1:n-1}) \in B_\eps$, it holds that $\metricplaceholder{y}{\tilde{y}} \leq \delta$ and $y_{1:n-1}, \tilde{y}_{1:n-1} \in K_\eps$, so $\singlebar{W}_1(\nu_n^{\yparents{n}}, \nu_n^{\tildeyparents{n}}) \leq \eps$. Therefore,
    \begin{align*}
        &\abs{\int \singlebar{W}_1(\mu_n^{(m)\xparents{n}}, \nu_n^{\yparents{n}}) \, d{\pi}^{(m)} - \int \singlebar{W}_1(\mu_n^{(m)\xparents{n}}, \nu_n^{\tildeyparents{n}}) \, d\tilde{\pi}^{(m)}} \\
        & = \abs{\int \singlebar{W}_1(\mu_n^{(m)\xparents{n}}, \nu_n^{\yparents{n}}) - \singlebar{W}_1(\mu_n^{(m)\xparents{n}}, \nu_n^{\tildeyparents{n}}) \, d\hat{\pi}^{(m)}(y_{1:n-1}, x_{1:n-1}, \tilde{y}_{1:n-1})} \\
        &\leq \int \abs{\singlebar{W}_1(\mu_n^{(m)\xparents{n}}, \nu_n^{\yparents{n}}) - \singlebar{W}_1(\mu_n^{(m)\xparents{n}}, \nu_n^{\tildeyparents{n}})} \, d\hat{\pi}^{(m)}(y_{1:n-1}, x_{1:n-1}, \tilde{y}_{1:n-1}) \\
        &\leq \int \singlebar{W}_1(\nu_n^{\yparents{n}}, \nu_n^{\tildeyparents{n}}) \, d\hat{\pi}^{(m)} (y_{1:n-1}, x_{1:n-1}, \tilde{y}_{1:n-1}) \leq \hat\pi^{(m)}(\setcomplement{B_\eps}) + \eps \leq 5\eps.
    \end{align*}
    Combining this with (\ref{topologyeq1}) and (\ref{topologyeq2}), we see
    \begin{align*}
    \doublebar{W}_{G,1}&(\mu^{(m)}, \nu)\leq \int \sum_{i=1}^{n-1} \singlebar{d}_i(x_i, \tilde{y}_i) + \singlebar{W}_1(\mu_n^{(m)\xparents{n}}, \nu_n^{\tildeyparents{n}}) \, d{\tilde{\pi}}^{(m)}(x_{1:n-1},\tilde{y}_{1:n-1}) \\
    &\leq \int \sum_{i=1}^{n-1} \singlebar{d}_i(x_i, \tilde{y}_i) \, d\tilde{\pi}^{(m)}(x_{1:n-1},\tilde{y}_{1:n-1}) + \int \singlebar{W}_1(\mu_n^{(m)\xparents{n}}, \nu_n^{\yparents{n}}) \, d{\pi}^{(m)} + 5\eps \leq 7\eps,
    \end{align*}
    where the first inequality holds by considering the coupling which combines $\tilde{\pi}^{(m)}$ with measurably selected optimal couplings \cite[Corollary 5.22]{Villani2009OptimalNew} in $\couplings{\mu_n^{(m)\xparents{n}}}{\nu_n^{\tildeyparents{n}}}$. As $\eps >0$ was fixed arbitrarily, the result follows.
\end{proof}
\subsection{Proof of Theorem \ref{glue_characterisation}}
\label{proof_gluing}

We define a set of operations on graphs which we will use in the reminder of the text. 

\begin{definition}
\label{graph_operations}
    Let $G = (V, E)$ be a directed graph, and $A \subseteq V$. We write $G_A$ for the subgraph of $G$ given by $(A, E \cap (A\times A))$.
    
    Moreover, the graph $G^+$ is defined as the directed graph with vertex set $V \cup \curlybrackets{*}$, where $*$ is some vertex label that is not already in $V$, and edge set
    $$E \cup \setdef{(i, *)}{i \in V}.$$

    When $V = \curlybrackets{1, \ldots, n}$, we canonically pick the label $*$ to be $n+1$.

    Let $G_1 = (V_1, E_1)$, $G_2 = (V_2, E_2)$ be directed graphs. Through a relabelling, we may assume without loss of generality that $V_1$ and $V_2 $ are disjoint. We define their \emph{disjoint union graph} $G_1 \oplus G_2$ as the directed graph with vertex set $V_1 \cup V_2$ and edge set
    $E_1 \cup E_2$. 
\end{definition}

We begin with some examples of graphs without the causal gluing property. In our examples, we choose $\mc{X}_i = \mc{Y}_i = \mc{Z}_i = \mbb{R}$ for all $i = 1,2,3$, but they can easily be adapted to any choice of state spaces, as long as each of them consists of at least 2 distinct elements.

\begin{example}
\label{must_history}
    The graph $\Gmarkov \coloneqq (\curlybrackets{1,2,3}, \curlybrackets{(1,2), (2,3)})$ does not have the bicausal gluing property:

    Let $\mu = \eta = \frac{1}{4} \sum_{i=1}^4 \delta_{x^{(i)}}$, where $x^{(1)} = (0,0,0)$, $x^{(2)} = (0,0,1)$, $x^{(3)} = (1,0,0)$, $x^{(4)} = (1,0,1)$. Let $\nu = \frac{1}{4} \sum_{i=1}^4 \delta_{y^{(i)}}$, where $y^{(1)} = (0,1,0)$, $y^{(2)} = (0,1,1)$, $y^{(3)} = x^{(3)}$, $y^{(4)} = x^{(4)}$. Then $\mu, \nu, \eta$ are $\Gmarkov$-compatible.

    Let $X \sim \mu$ and define $Y = (X_1, 1 - X_1, X_3)$, $Z = (Y_1, 0, \mathbbm{1}_{Y_2 = Y_3})$. Moreover, $X = (Y_1, 0, Y_3)$, and $Y = (Z_1, 1 - Z_1, Z_3 \mathbbm{1}_{Y_2=1} + (1-Z_3) \mathbbm{1}_{Y_2=0})$. Then $Y \sim \nu$, $Z \sim \eta$, and clearly $\law{X, Y}$, $\law{Y,Z}$ are $\Gmarkov$-bicausal. However, as $X_2 = Z_2 = 0$, $X_1 \indep X_3$ and $Z_3 = X_3 \mathbbm{1}_{X_1=0} + (1 - X_3) \mathbbm{1}_{X_1=1}$, we do not have that $\condindep{Z_3}{X_3, X_2, Z_2}{(X, Z_{1:2})}$. Informally, going through $Y$ allows us to store information about $X_1$ in $Y_2$, which we then use in $Z_3$. Thus, $\law{X, Z} = \law{X, Y} \glue \law{Y,Z}$ is not $\Gmarkov$-bicausal.

\end{example}

\begin{example}
\label{must_split}
    The graph $\Gsplit \coloneqq (\curlybrackets{1,2,3}, \curlybrackets{(1,2), (1,3)})$ does not have the bicausal gluing property:

    Let $P \coloneqq \frac{1}{2} (\delta_0 + \delta_1)$. We define measures via their disintegrations $\mu = \eta \coloneqq \delta_0 \otimes P \otimes P$, $\nu \coloneqq P \otimes P \otimes P$. They are all $\Gsplit$-compatible.

    For $q \in [0,0.5]$, we define $\pi_q \coloneqq q (\delta_{(0,0)} + \delta_{(1,1)}) + (0.5 - q)(\delta_{(0,1)} + \delta_{(1,0)})$. Note that $\pi_{0.5} = \pushforward{(\textrm{Id},\textrm{Id})}{P}$. Clearly, $\couplings{P}{P} = \setdef{\pi_q}{q \in [0, 0.5]}$.  
    
    Fix $q_1 = q_2 = 0.25, q_3 = q_4 = 0.2$. By Proposition \ref{equivalent_characterisations}, the coupling $$\mbb{P} \coloneqq \frac{1}{2} \delta_{(0,0)} \otimes \pi_{q_1} \otimes \pi_{q_2} + \frac{1}{2} \delta_{(0,1)} \otimes \pi_{q_3} \otimes \pi_{q_4} \in \probabilities{\prod_{i=1}^3(\mc{X}_i \times \mc{Y}_i)},$$
    which via the appropriate reordering we view as a measure on $\probabilities{\mc{X} \times \mc{Y}}$, is in $\specificGbicouplings{\Gsplit}{\mu}{\nu}$. Similarly, 
    $$\mbb{Q} \coloneqq \frac{1}{2} \delta_{(0,0)} \otimes \pi_{0.5} \otimes \pi_{0.5} + \frac{1}{2} \delta_{(1,0)} \otimes \pi_{0.5} \otimes \pi_{0.5} \in \probabilities{\prod_{i=1}^3(\mc{Y}_i \times \mc{Z}_i)}$$
    is in $\specificGbicouplings{\Gsplit}{\nu}{\eta}$. However,
    $$\mbb{P} \glue \mbb{Q} = \delta_{(0,0)} \otimes \brackets{\frac{1}{2} \pi_{q_1} \otimes \pi_{q_2} + \frac{1}{2} \pi_{q_3} \otimes \pi_{q_4}} \in \probabilities{\prod_{i=1}^3(\mc{X}_i \times \mc{Z}_i)}$$
    is not in $\specificGbicouplings{\Gsplit}{\mu}{\eta}$, as all such $\Gsplit$-bicausal couplings must, by Proposition \ref{equivalent_characterisations}, be of the form
    $\delta_{(0,0)} \otimes \pi_{q_5} \otimes \pi_{q_6},$
    for some $q_5, q_6 \in [0,0.5]$, and one can check that there are no $q_5, q_6$ such that $\frac{1}{2} \pi_{q_1} \otimes \pi_{q_2} + \frac{1}{2} \pi_{q_3} \otimes \pi_{q_4} = \pi_{q_5} \otimes \pi_{q_6}$. Thus, what fails is the fact that the set $\setdef{\alpha \otimes \beta}{\alpha, \beta \in \couplings{P}{P}}$ is not convex.
\end{example}

The examples above can be extended via the following result to any $G$ which has $\Gmarkov$ or $\Gsplit$ as a substructure, i.e., which either is not transitively closed or contains a split.

\begin{proposition}
\label{substructures}
    Suppose that for some non-empty subset $A \subseteq V$, the graph $G_A$ does not satisfy the bicausal gluing property. Then $G$ does not satisfy the bicausal gluing property. 
\end{proposition}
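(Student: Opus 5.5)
Since $G_A$ fails the bicausal gluing property, we can fix a counterexample on $A$ and extend it to all of $V$ by making every node outside $A$ trivial. Concretely, take $\mu_A \in \specificGprobabilities{G_A}{\mc{X}_A}$, $\nu_A \in \specificGprobabilities{G_A}{\mc{Y}_A}$, $\eta_A \in \specificGprobabilities{G_A}{\mc{Z}_A}$ together with $\pi_A \in \specificGbicouplings{G_A}{\mu_A}{\nu_A}$ and $\tilde{\pi}_A \in \specificGbicouplings{G_A}{\nu_A}{\eta_A}$ such that $\pi_A \glue \tilde{\pi}_A \notin \specificGbicouplings{G_A}{\mu_A}{\eta_A}$. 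For $v \notin A$, choose arbitrary Polish spaces and points $x^0_v, y^0_v, z^0_v$. Then set $\mu \coloneqq \mu_A \otimes \bigotimes_{v\notin A}\delta_{x^0_v}$, and define $\nu,\eta$ analogously. Likewise set $\pi \coloneqq \pi_A \otimes \bigotimes_{v \notin A}\delta_{(x^0_v,y^0_v)}$ and $\tilde{\pi} \coloneqq \tilde{\pi}_A \otimes \bigotimes_{v\notin A}\delta_{(y^0_v,z^0_v)}$, after the obvious reordering of coordinates.

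The first step is to check $G$-compatibility and $G$-bicausality. For this we use the conditional-independence characterisations in Definition \ref{G-compatible} and Proposition \ref{equivalent_characterisations}, item 2). These are invariant under relabelling, so we may fix a sorted labelling of $G$; its restriction to $A$ is then a sorted labelling of $G_A$. Two facts do all the work. First, a constant random variable is independent of everything, so conditioning on it or adding it to either side of a conditional independence changes nothing. Second, for $i \in A$ we have $\parents{i}^{G_A} = \parents{i}\cap A$, and the coordinates indexed by $\parents{i}\setminus A$ are constant. Hence every relation required for $G$, such as $\condindep{(X_i,Y_i)}{X_{\parents{i}},Y_{\parents{i}}}{(X_{1:i-1},Y_{1:i-1})}$, reduces for $i\in A$ exactly to the corresponding relation for $G_A$ (with predecessors restricted to $A$), and holds trivially for $i\notin A$. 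The same argument gives $\mu\in\Gprobabilities{\mc{X}}$, and similarly for $\nu$ and $\eta$.

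The second step uses the same argument in reverse. The gluing factorises: since the kernel of $\tilde{\pi}$ is $\tilde{\pi}_A$'s kernel tensored with Dirac masses, we get $\pi\glue\tilde{\pi} = (\pi_A\glue\tilde{\pi}_A)\otimes\bigotimes_{v\notin A}\delta_{(x^0_v,z^0_v)}$. If this measure were $G$-bicausal, then the reduction above would show that $\pi_A\glue\tilde{\pi}_A$ satisfies all the conditional independences of Proposition \ref{equivalent_characterisations} for $G_A$. That would make it $G_A$-bicausal, contradicting the choice of counterexample.

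The main obstacle is the bookkeeping in the reduction. We must make sure the conditioning sets $X_{1:i-1}$ for $G$ correspond to the predecessors within $A$ in the induced order, and that conditional independence statements with extra constant variables really are equivalent to the ones without them. Both follow from elementary properties of conditional independence; for example, $\sigma(Z, c) = \sigma(Z)$ modulo null sets when $c$ is constant. Nothing deeper is needed.
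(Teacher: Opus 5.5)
Your proof is correct and follows the paper's approach. You pad a counterexample for $G_A$ with constant coordinates outside $A$, so the padded couplings are $G$-bicausal exactly when their restrictions to $A$ are $G_A$-bicausal, and you check that gluing commutes with this padding. The paper verifies that equivalence directly from the structural-equation form of Definition \ref{G_causal_definition_2} rather than through conditional independences, and it leaves the factorisation of the gluing implicit. If you keep your route, the compatibility check should use item 4) of Proposition \ref{prop:Gcomp_characterisation} rather than the non-descendant form of Definition \ref{G-compatible}, since only the former reduces node by node.
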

\begin{proof}
    This is a simple consequence of the fact that if $X = (X_v)_{v\in V}, Y = (Y_v)_{v\in V}$ are random variables such that $X_v$, $Y_v$ are constant for all $v \in V \setminus A$, then $\law{X, Y}$ is $G$-causal if and only if $\law{X_A, Y_A}$ is $G_A$-causal. This is easy to check from Definition \ref{G_causal_definition_2}.
\end{proof}

Hence, we now focus on transitively closed DAGs without splits.

\begin{lemma}
\label{make_graph_two_operations}
    Suppose a transitively closed DAG $G$ contains no splits. Then it can be constructed from elementary graphs with just one node using the operations $(\cdot)^{+}$ and $\oplus$ given in Definition \ref{graph_operations}.
\end{lemma}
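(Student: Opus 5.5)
We argue by strong induction on $|V|$, building the graph from one-node graphs via $(\cdot)^+$ and $\oplus$. If $|V|=1$ there is nothing to prove. Suppose $|V|>1$. If the underlying undirected graph of $G$ is disconnected, write $V=V_1\cup V_2$ with no edges between the two nonempty parts. Then $G=G_{V_1}\oplus G_{V_2}$. Each $G_{V_i}$ is still a transitively closed DAG without splits, since both properties are defined by triples of vertices and the edges among them, and these are inherited by subgraphs on a vertex subset. The induction hypothesis then applies to each part.

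Now assume $G$ is (weakly) connected. The key claim is that $G$ has a \emph{universal sink}: a vertex $s$ such that every other vertex is a parent of $s$. Granting this, $s$ has no children, because acyclicity together with $\parents{s}=V\setminus\{s\}$ forbids any edge out of $s$. Hence $G=(G_{V\setminus\{s\}})^+$ with $*=s$. Also $G_{V\setminus\{s\}}$ is transitively closed and split-free, so the induction hypothesis applies to it.

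To prove the claim, I would take a sorted labelling and let $s$ be a sink. It is convenient to choose $s=n$, which is a sink because the labelling is sorted. I then show that $\hist{s}\cup\{s\}=V$; by transitive closure this gives $\hist{s}=\parents{s}$, so every other vertex is a parent of $s$.

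Suppose instead that $W:=\hist{s}\cup\{s\}\neq V$. By connectivity there is an edge between some $w\in W$ and some $v\notin W$.
\begin{itemize}
\item If the edge is $v\to w$, then $v\in\hist{s}$ (when $w=s$ or $w\in\hist{s}$), contradicting $v\notin W$.
\item So the edge is $w\to v$, and $w\neq s$ because $s$ is a sink. Then $w\in\parents{v}\cap\parents{s}$, using transitive closure for $w\in\parents{s}$. By the no-split condition, either $v\in\parents{s}$, which is impossible since $v\notin W$, or $s\in\parents{v}$, which is impossible since $s$ is a sink. This is a contradiction.
\end{itemize}
Hence $W=V$.

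The main obstacle is this connectivity argument for the universal sink. The cases must be handled carefully, in particular the case where the boundary edge enters $s$ itself. With the claim in place, the induction closes, and the construction exhibits $G$ as built from single nodes using exactly the operations of Definition~\ref{graph_operations}.
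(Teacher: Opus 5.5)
Your proof is correct and rests on the same observation as the paper's: for a childless vertex, an edge from one of its parents to a non-parent would form a split, while an edge from a non-parent into its parent set would violate transitive closure. The paper does this in a single step, writing $G=(G_{\parents{x}})^+\oplus G_{V\setminus(\parents{x}\cup\{x\})}$ for an arbitrary childless $x$, whereas you first split off weakly connected components and then show that each connected piece has a universal sink.
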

\begin{proof}
    We proceed by induction on the number of nodes of $G$. If $G$ is itself a graph with one node then the statement is trivial.

    Suppose now the statement holds for all DAGs with at most $n \geq 1$ nodes, and assume $G$ has $n+1$ nodes. Then, as no cycles are allowed, $G$ must have a node $x$ with no children. 

    If $i, j \in V \setminus \curlybrackets{x}$ are such that $i$ is a parent of $j$, then we must have that either $i, j \in \parents{x}$ or $i, j \in \parents{x}^\mathsf{c}$. Indeed, if $j \in \parents{x}$ then by transitive closure $i \in \parents{x}$. Otherwise, if $j \in \parents{x}^\mathsf{c}$ and $i \in \parents{x}$, then $i, j, x$ form a split (c.f., Definition \ref{split_def}), contradicting our assumption.

    Hence, $G$ can be constructed as $(G_{\parents{x}}^+) \oplus G_{V \setminus (\parents{x} \cup \curlybrackets{x})}$, where both $G_{\parents{x}}$ and $G_{V \setminus (\parents{x} \cup \curlybrackets{x})}$ have at most $n$ nodes, are transitively closed and have no splits.
\end{proof}

Clearly, if $G$ is a graph with just one node, $G$ has the causal gluing property since $\Gcouplings{\mu}{\nu} = \couplings{\mu}{\nu}$ in that case. We now show that we can construct graphs with the causal gluing property from other graphs with this property via the two operations in Lemma \ref{make_graph_two_operations}.

\begin{proposition}
\label{plus_good}
    Suppose the DAG $G$ has the causal gluing property. Then so does $G^+$.
\end{proposition}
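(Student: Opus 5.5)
We work with the conditional independence characterisation of causality in Proposition \ref{prop:equivalent_Gcausal} 3), taking $G$ sorted on $\curlybrackets{1,\ldots,n}$. In $G^+$ the new node is $n+1$, with $\specificGparents{G^+}{n+1} = 1:n$; this labelling is still sorted, and nodes $1,\ldots,n$ keep their parents. Fix $\mu\in\specificGprobabilities{G^+}{\mc{X}}$, $\nu\in\specificGprobabilities{G^+}{\mc{Y}}$, $\eta\in\specificGprobabilities{G^+}{\mc{Z}}$, together with $\pi\in\Gpluscouplings{\mu}{\nu}$ and $\tilde\pi\in\Gpluscouplings{\nu}{\eta}$. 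Realise the conditional independent product $\bigglue{\pi}{\tilde\pi}$ as $(X,Y,Z)$, so that $\condindep{X}{Y}{Z}$.

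\textbf{Step 1: restriction to $G$.} Restrict to the first $n$ nodes. Since $n+1$ has no children in $G^+$, the marginals $\mu_{1:n},\nu_{1:n},\eta_{1:n}$ are $G$-compatible. Condition 3) of Proposition \ref{prop:equivalent_Gcausal} for $i\le n$ involves only the first $n$ coordinates; note that $X$ itself is a full vector, and conditioning on $X_{n+1}$ as well is harmless by independence arguments below. Hence $\law{X_{1:n},Y_{1:n}}\in\Gcouplings{\mu_{1:n}}{\nu_{1:n}}$, and likewise for $(Y,Z)$. We must also check that the gluing of these restricted couplings along $Y_{1:n}$ equals $\law{X_{1:n},Z_{1:n}}$. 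It does: a $G^+$-causal coupling satisfies $\condindep{Z_{1:n}}{Y_{1:n}}{Y_{n+1}}$, because $Y$-causality at node $n+1$ in the reversed direction is not needed here. More precisely, from $\condindep{Z_{1:n}}{Y_{1:n}}{Y}$, which follows from causality at nodes $1,\ldots,n$ via the chain rule, we get that the kernel of $Z_{1:n}$ given $Y$ depends only on $Y_{1:n}$.

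The induction hypothesis then gives $\law{X_{1:n},Z_{1:n}}\in\Gcouplings{\mu_{1:n}}{\eta_{1:n}}$. This delivers the conditions $\condindep{Z_i}{X_i,X_{\parents{i}},Z_{\parents{i}}}{(X_{1:n},Z_{1:i-1})}$ for $i\le n$. We must upgrade the conditioning to include $X_{n+1}$. Here we use causality of $\pi$ and $\tilde\pi$: $\condindep{Z_{1:n}}{Y_{1:n}}{(X,Y)}$ and $\condindep{Y_{1:n}}{X_{1:n}}{X}$. Together these give $\condindep{Z_{1:n}}{X_{1:n}}{X_{n+1}}$, and the chain rule of conditional independence then lets us add $X_{n+1}$ to the conditioning set.

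\textbf{Step 2: node $n+1$.} Here the condition reads $\condindep{Z_{n+1}}{X_{1:n+1},Z_{1:n}}{(X,Z_{1:n})}$, i.e. it is automatic, since the conditioning set already contains all of $X$ and $Z_{1:n}$. So only Step 1 carries content.

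\textbf{Main obstacle.} The delicate point is the bookkeeping of conditional independences when passing between the full glued law and its restriction to $1:n$, namely showing that the extra coordinates $X_{n+1},Y_{n+1}$ carry no information about $Z_{1:n}$ beyond $X_{1:n}$. I would prove this by disintegrating $\pi=\pi_{1:n}\otimes\pi_{n+1}^{x_{1:n},y_{1:n}}$, and similarly for $\tilde\pi$. The glued law of $(X,Y,Z)$ is then the gluing of the restricted laws, multiplied by a kernel for $(X_{n+1},Y_{n+1},Z_{n+1})$ given the first $n$ coordinates. This factorisation yields the needed independences directly.
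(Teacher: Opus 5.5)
Your proof is correct and follows essentially the same route as the paper. You restrict to the first $n$ nodes and identify the restriction of $\bigglue{\pi}{\tilde\pi}$ with the conditional independent product of the restrictions; the paper cites Proposition \ref{marginal_of_gluing} for this, and that result is proved by the same kernel argument you sketch. You then apply the gluing property of $G$, upgrade the conditioning using $\condindep{Y_{1:n}}{X_{1:n}}{X_{n+1}}$, $\condindep{Z_{1:n}}{Y_{1:n}}{Y_{n+1}}$ and $\condindep{X}{Y}{Z}$, and note that node $n+1$ is trivial, exactly as the paper does.

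There are two small slips. What you call the ``induction hypothesis'' is just the causal gluing property assumed for $G$. Also, the relation $\condindep{Z_{1:n}}{Y_{1:n}}{(X,Y)}$ needs the gluing relation $\condindep{X}{Y}{Z}$ as well as the causality of $\tilde\pi$; your kernel argument does supply this.
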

\begin{proof}
We assume without loss of generality, via a relabelling, that $G$ is a sorted DAG with $V = \curlybrackets{1, \ldots, n}$. Then, by our convention in Definition \ref{graph_operations}, $G^+$ has vertex set $V \cup \curlybrackets{n+1}$ and edge set $E \cup (V \times \curlybrackets{n+1})$.

Let $\mu^+ \in \specificGprobabilities{G^+}{\mc{X}_{1:n+1}}$, $\nu^+ \in \specificGprobabilities{G^+}{\mc{Y}_{1:n+1}}$, $\eta^+ \in \specificGprobabilities{G^+}{\mc{Z}_{1:n+1}}$, $\pi^+ \in \Gpluscouplings{\mu^+}{\nu^+}$, $\tilde{\pi}^+ \in \Gpluscouplings{\nu^+}{\eta^+}$. We define $\mu \coloneqq \pushforward{\proj{\mc{X}_{1:n}}}{\mu^+}$, $\pi \coloneqq \pushforward{\proj{\mc{X}_{1:n} \times \mc{Y}_{1:n}}}{\pi^+}$, with similar definitions for $\nu, \eta, \tilde{\pi}$. Note that $\pi \in \Gcouplings{\mu}{\nu}$, $\tilde{\pi} \in \Gcouplings{\nu}{\eta}$ by Proposition \ref{marginal_of_coupling}.

Let $(X, Y, Z) \sim \bigglue{\pi^+}{\tilde{\pi}^+}$. Then $(X,Z) \sim {\pi^+} \glue {\tilde{\pi}^+}$, and we want to show $\condindep{Z_i}{Z_{\parents{i}}, X_i, X_{\parents{i}}}{(Z_{1:i-1}, X)}$ for all $1 \leq i \leq n + 1$. The case $i = n+1$ is trivial since $\parents{n+1} = \curlybrackets{1, \ldots, n}$.

By Proposition \ref{marginal_of_gluing}, $(X_{1:n}, Y_{1:n}, Z_{1:n}) \sim \bigglue{\pi}{\tilde{\pi}}$. In particular, $(X_{1:n}, Z_{1:n}) \sim \pi \glue \tilde{\pi}$, which is in $\Gcouplings{\mu}{\eta}$ since $G$ has the causal gluing property. Thus, $\condindep{Z_i}{X_i, X_{\parents{i}}, Z_{\parents{i}}}{(Z_{1:i-1}, X_{1:n})}$ for $1 \leq i \leq n$. By the chain rule of conditional independence \cite[Theorem 8.12]{Kallenberg2021FoundationsThird} it is enough to show $\condindep{Z_i}{X_{1:n}, Z_{1:i-1}}{X_{n+1}}$ for all $1 \leq i \leq n$. Applying the chain rule again, this is equivalent to proving $\condindep{Z_{1:n}}{X_{1:n}}{X_{n+1}}$.

As $(X, Y) \sim \pi^+ \in \Gpluscouplings{\mu^+}{\nu^+}$, we know $\condindep{Y_i}{X_i, X_{\parents{i}}, Y_{\parents{i}}}{(Y_{1:i-1}, X)}$ for all $1 \leq i \leq n$. Hence, $\condindep{Y_i}{X_{1:n}, Y_{1:i-1}}{X_{n+1}}$ for all $1 \leq i \leq n$, which is equivalent to $\condindep{Y_{1:n}}{X_{1:n}}{X_{n+1}}$, where both statements follow by the chain rule. Similarly, we obtain $\condindep{Z_{1:n}}{Y_{1:n}}{Y_{n+1}}$.

By properties of $\otimesdot$, $\condindep{X}{Y}{Z}$. In particular, $\condindep{X}{Y}{Z_{1:n}}$. Combining this with $\condindep{Z_{1:n}}{Y_{1:n}}{Y_{n+1}}$, we see that $\condindep{(X, Y_{n+1})}{Y_{1:n}}{Z_{1:n}}$, so $\condindep{X_{n+1}}{X_{1:n}, Y_{1:n}}{Z_{1:n}}$, both  using the chain rule
. This, together with the fact that $\condindep{Y_{1:n}}{X_{1:n}}{X_{n+1}}$ gives us by one more application of the chain rule that $\condindep{X_{n+1}}{X_{1:n}}{(Y_{1:n}, Z_{1:n})}$, so in particular $\condindep{X_{n+1}}{X_{1:n}}{Z_{1:n}}$, which is what we needed to show.
\end{proof}

\begin{proposition}
\label{oplus_good}
    Suppose the DAGs $G_1, G_2$ have the causal gluing property. Then so does $G_1 \oplus G_2$.
\end{proposition}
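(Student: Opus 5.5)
We will show directly that if $G_1$ and $G_2$ have the causal gluing property, then so does $G = G_1 \oplus G_2$. Relabel so that $V_1 = \curlybrackets{1,\ldots,n_1}$ and $V_2 = \curlybrackets{n_1+1,\ldots,n}$; then $G$ is sorted whenever $G_1, G_2$ are, and no edge joins $V_1$ to $V_2$. Take $\mu \in \Gprobabilities{\mc{X}}$, $\nu \in \Gprobabilities{\mc{Y}}$, $\eta\in\Gprobabilities{\mc{Z}}$, and couplings $\pi\in\Gcouplings{\mu}{\nu}$, $\tilde\pi\in\Gcouplings{\nu}{\eta}$. Let $(X,Y,Z)\sim\bigglue{\pi}{\tilde\pi}$, and write $X=(X^1,X^2)$ for the split along $V_1,V_2$, and similarly for $Y$ and $Z$.

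The first step records the structure forced by the disjoint union. By $G$-compatibility (characterisation 4 of Proposition \ref{prop:Gcomp_characterisation}), $X^1\indep X^2$, and likewise for $Y$ and $Z$. By characterisation 3 of Proposition \ref{prop:equivalent_Gcausal} together with the chain rule \cite[Theorem 8.12]{Kallenberg2021FoundationsThird}, causality of $\pi$ splits into two families of conditions:
\begin{itemize}
\item for $i\in V_1$, $\condindep{Y_i}{X_i,X_{\parents{i}},Y_{\parents{i}}}{(X,Y_{1:i-1})}$, which gives in particular $\condindep{Y^1}{X^1}{X^2}$;
\item for $i\in V_2$, the analogous relation with $Y^1$ in the conditioning set.
\end{itemize}
Proposition \ref{marginal_of_coupling} then gives $\pi^1 := \law{X^1,Y^1}\in\specificGbicouplings{}{}{}$; more precisely, it lies among the $G_1$-causal couplings of $\mu^1,\nu^1$. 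Similarly, $\pi^2:=\law{X^2,Y^2}$ is $G_2$-causal, and the same holds for $\tilde\pi$.

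The second step is to identify the glued law on each block. Using Proposition \ref{marginal_of_gluing} (applied to the sub-block $V_1$, exactly as in the proof of Proposition \ref{plus_good}), we get $(X^1,Y^1,Z^1)\sim\bigglue{\pi^1}{\tilde\pi^1}$. Hence $\law{X^1,Z^1}$ is $G_1$-causal by hypothesis, and symmetrically $\law{X^2,Z^2}$ is $G_2$-causal. We need a little care here: Proposition \ref{marginal_of_gluing} requires the block $V_1$ to be "closed" in the appropriate causal sense. That is true for a union of connected components, since $\condindep{Y^1}{X^1}{X^2}$ means the kernel of $Y^1$ given $X$ depends only on $X^1$.

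The third step, which is the crux, is the cross-block independence
$$\condindep{(X^1,Z^1)}{\varnothing}{(X^2,Z^2)}.$$
We first show $\condindep{(X^1,Y^1)}{}{(X^2,Y^2)}$ under $\pi$. From $\condindep{Y^1}{X^1}{X^2}$, the law of $(X^1,Y^1)$ given $X^2$ is $\law{X^1,Y^1}$ because $X^1\indep X^2$. Next, the $V_2$ relations give $\condindep{Y^2}{X^2,Y^{1}_{\parents{}}\ldots}{}$; since parents in $V_2$ lie in $V_2$, the chain rule yields $\condindep{Y^2}{X^2}{(X^1,Y^1)}$. Combining the two gives the claim. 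The same argument gives $\condindep{(Y^1,Z^1)}{}{(Y^2,Z^2)}$ under $\tilde\pi$. Now $Z$ is generated from $Y$ by the kernel $\kappa$ of $\tilde\pi$, independently of $X$. This kernel factorises as $\kappa^1(dz^1\mid y^1)\otimes\kappa^2(dz^2\mid y^2)$: given $Y$, the pairs $Z^1,Z^2$ are conditionally independent, and each depends only on its own block. So $(X^1,Y^1,Z^1)\indep(X^2,Y^2,Z^2)$. I expect verifying this kernel factorisation cleanly from the conditional independences to be the main technical step. I would do it by showing $\law{Z^1,Z^2\mid Y}=\law{Z^1\mid Y^1}\otimes\law{Z^2\mid Y^2}$ through the product structure $\tilde\pi=\tilde\pi^1\otimes\tilde\pi^2$ of the previous independence.

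Finally, since $\law{X,Z}=\law{X^1,Z^1}\otimes\law{X^2,Z^2}$ with each factor causal for its block, the condition $\condindep{Z_i}{X_i,X_{\parents{i}},Z_{\parents{i}}}{(X,Z_{1:i-1})}$ holds for every $i$. For $i\in V_1$ it follows from the $G_1$ relation plus independence from the other block. For $i\in V_2$ it follows from the $G_2$ relation plus independence from $(X^1,Z^1)$. By Proposition \ref{prop:equivalent_Gcausal}, this gives $\pi\glue\tilde\pi\in\Gcouplings{\mu}{\eta}$.
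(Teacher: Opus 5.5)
Your proof is correct and follows essentially the same route as the paper. Both arguments identify each block of the glued law via Proposition \ref{marginal_of_gluing}, apply the gluing property of $G_1$ and $G_2$ blockwise, and combine this through the chain rule with the cross-block independence $(X_A,Y_A,Z_A)\indep(X_B,Y_B,Z_B)$. The paper merely asserts that independence, whereas you justify it via $\pi=\pi^1\otimes\pi^2$, $\tilde\pi=\tilde\pi^1\otimes\tilde\pi^2$ and the resulting factorisation of the gluing kernel. That factorisation already shows each block is distributed as $\bigglue{\pi^j}{\tilde\pi^j}$, so your appeal to Proposition \ref{marginal_of_gluing} is actually redundant.
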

\begin{proof}
    We may assume without loss of generality via a relabelling that $G_1$, $G_2$ are sorted and have vertex sets $A \coloneqq \curlybrackets{1, \ldots, n_1}$, and $B \coloneqq \curlybrackets{n_1 + 1, \ldots, n_1 + n_2}$ respectively. Let $E_i$ denote the edge set of graph $G_i$. Then, we may write $G_1 \oplus G_2 = (A \cup B, E_1 \cup E_2)$.
    
    Let $\mu \in \specificGprobabilities{G_1 \oplus G_2}{\mc{X}_{A\cup B}}$, $\nu \in \specificGprobabilities{G_1 \oplus G_2}{\mc{Y}_{A\cup B}}$, $\eta \in \specificGprobabilities{G_1 \oplus G_2}{\mc{Z}_{A\cup B}}$, $\pi \in \specificGcouplings{G_1 \oplus G_2}{\mu}{\nu}$, $\tilde{\pi} \in \specificGcouplings{G_1 \oplus G_2}{\nu}{\eta}$. We define $\mu_1 \coloneqq \pushforward{\proj{\mc{X}_{A}}}{\mu}$, $\mu_2 \coloneqq \pushforward{\proj{\mc{X}_{B}}}{\mu}$, $\pi_1 \coloneqq \pushforward{\proj{\mc{X}_{A} \times \mc{Y}_{A}}}{\pi}$, $ \pi_2 \coloneqq \pushforward{\proj{\mc{X}_{B} \times \mc{Y}_{B}}}{\pi}$, with corresponding definitions for $\nu_i, \eta_i, \tilde{\pi}_i$. Note that $\pi_i \in \specificGcouplings{G_i}{\mu_i}{\nu_i}$, $\tilde{\pi}_i \in \specificGcouplings{G_i}{\nu_i}{\eta_i}$ by Proposition \ref{marginal_of_coupling}.

    Let $(X,Y,Z) \sim \bigglue{\pi}{\tilde{\pi}}$. Then $(X,Z) \sim \pi \glue \tilde{\pi}$, and we want to show $\condindep{Z_i}{Z_{\parents{i}}, X_i, X_{\parents{i}}}{(Z_{1:i-1}, X)}$ for all $i \in A \cup B$.

    Note that by construction of $G_1 \oplus G_2$, and the fact that $\mu, \nu, \eta$ are $G_1 \oplus G_2$-compatible and $\pi, \tilde{\pi}$ are $G_1 \oplus G_2$-causal, we see that
    \begin{equation}
    \label{indep_equation}
        {(X_A, Y_A, Z_A)} \indep {(X_B, Y_B, Z_B)}.
    \end{equation}
    
    Let $i \in A$. By Proposition \ref{marginal_of_gluing}, $(X_A, Y_A, Z_A) \sim \bigglue{\pi_1}{\tilde{\pi}_1}$. As $G_1$ has the causal gluing property, this means $(X_A, Z_A) \sim \pi_1 \glue \tilde{\pi}_1 \in \specificGcouplings{G_1}{\mu_1}{\eta_1}$. Thus, we know $\condindep{Z_i}{Z_{\parents{i}}, X_i, X_{\parents{i}}}{(Z_{1:i-1}, X_A)}$. By (\ref{indep_equation}), $ {(X_A, Z_{1:i})} \indep {X_B}$, which in turn implies that also $\condindep{Z_i}{Z_{1:i-1}, X_A}{X_B}$. Combining these via the chain rule of conditional independence \cite[Theorem 8.12]{Kallenberg2021FoundationsThird} gives us that $\condindep{Z_i}{Z_{\parents{i}}, X_i, X_{\parents{i}}}{(Z_{1:i-1}, X_A, X_B)}$, which is what we wanted. Importantly, we used the fact that $\parents{i} \subseteq \curlybrackets{1, \ldots, i-1} \subseteq A$.

    Now fix $i \in B$. As above, by Proposition \ref{marginal_of_gluing} we obtain $\condindep{Z_i}{Z_{\parents{i}}, X_i, X_{\parents{i}}}{(Z_{n_1 + 1:i-1}, X_B)}$. By (\ref{indep_equation}), ${(X_A, Z_{A})} \indep {X_B, Z_{n_1 + 1:i}}$, which in particular implies that also $\condindep{Z_i}{Z_{n_1 + 1:i-1}, X_B}{(X_A, Z_{A})}$. Finally, applying the chain rule to these two independence relations, we obtain the desired result: $\condindep{Z_i}{Z_{\parents{i}}, X_i, X_{\parents{i}}}{(Z_A, Z_{n_1+1:i-1}, X_A, X_B)} = (Z_{1:i-1}, X)$. Again, the key to this application of the chain rule is the fact that $\parents{i} \subseteq \curlybrackets{n_1 + 1, \ldots, i-1} \subseteq B$.
\end{proof}

We now have all the tools to prove Theorem \ref{glue_characterisation}.

\begin{proof}[Proof of Theorem \ref{glue_characterisation}]
    As in Remark \ref{causal_implies_bicausal_gluing}, we already know that $1) \Rightarrow 2)$. Examples \ref{must_history}, \ref{must_split}, show, via Proposition \ref{substructures}, that $2) \Rightarrow 3)$. Finally, as the graph with one node clearly has the causal gluing property, Lemma \ref{make_graph_two_operations} and Propositions \ref{plus_good} and \ref{oplus_good} imply that $3) \Rightarrow 1)$.
\end{proof}

\subsection{Proof of Theorem \ref{thm:lifted_nosplits}}

\label{proof:lifted_nosplits}
\begin{proof}[Proof of Theorem \ref{thm:lifted_nosplits}]
That the inclusion $\Gbicouplings{\mu}{\nu} \subseteq \newGbicouplings{\mu}{\nu}$ may be strict when $G$ has a split follows by the example below.

\begin{example}
    Let $U_1, U_2, U_3\sim \mathrm{Unif}[0,1]$ and $X_2, X_3 \sim \frac{1}{2}(\delta_0 + \delta_1)$ be independent random variables. Let $Y_i \coloneq \mathbbm{1}_{U_1 \geq 0.5}X_i + \mathbbm{1}_{U_1 < 0.5}(1-X_i)$ for $i \in \curlybrackets{2,3}$. Then $\pi \coloneq \law{(0,X_2, X_3), (0,Y_2,Y_3)}$ is not in $\specificGbicouplings{\Gsplit}{\mu}{\mu}$ for $\mu \coloneq \law{0,X_2,X_3} = \law{0,Y_2,Y_3} \in \specificGprobabilities{\Gsplit}{\mbb{R}^3}$, as $(X_3, Y_3)$ is not independent of $(X_2, Y_2)$.

   Let $T : (\mbb{R} \times [0,1])^3 \to (\mbb{R} \times [0,1])^3$ be given by $T((x_i, u_i))_{1 \leq i \leq 3} = ((x_1, u_1), (\mathbbm{1}_{u_1 \geq 0.5} x_2 + \mathbbm{1}_{u_1 < 0.5}(1-x_2), u_2), (\mathbbm{1}_{u_1 \geq 0.5} x_3 + \mathbbm{1}_{u_1 < 0.5}(1-x_3), u_3))$. $T$ is $\Gsplit$-biadapted and it easy to see that $T((0,U_1), (X_2, U_2), (X_3, U_3)) = ((0,U_1), (Y_2, U_2), (Y_3, U_3))$. Noting that $U_{1:3}$ is independent of $Y_2, Y_3$, we see that $\pushforward{T}{\hat{\mu}} = \hat{\mu}$ and thus $\pi \in \newspecificGbicouplings{\Gsplit}{\mu}{\mu}$.
\end{example}

Example \ref{example:split_not_closed} below shows that when $G$ contains a split $\Gbicouplings{\mu}{\nu}$ may not be weakly closed. 

\begin{example}
    \label{example:split_not_closed}
    Consider the graph $\Gsplit$, with vertices $\curlybrackets{1,2,3}$ and edges $\curlybrackets{(1,2), (1,3)}$. Let $P \coloneq \frac{1}{2} \brackets{\delta_{-1} + \delta_1}$, $\lambda$ denote the uniform measure on $[0,1]$, and $\mu \coloneq \lambda \otimes P \otimes P \in \specificGprobabilities{\Gsplit}{\mbb{R}^3}$.

    Let $(X_1, X_2, X_3, Z) \sim \mu \otimes P$. For $k \geq 3$ define $Y_1^{(k)} \coloneq \mathrm{frac}\brackets{X_1 + \frac{1}{k}Z}$ and $Y_i \coloneq ZX_i$ for $i = 2,3$, where by $\mathrm{frac}(x)$ we mean the fractional part of $x$, namely $x - \lfloor x \rfloor$. Note that $\law{Y_1^{(k)}, Y_2, Y_3} = \mu$ and $\sigma\brackets{X_1, Y_{1}^{(k)}} = \sigma\brackets{X_1, Z}$. Thus, it is easy to see that $$\law{(X_1, X_2, X_3), (Y_1^{(k)}, Y_2, Y_3)} \in \specificGbicouplings{\Gsplit}{\mu}{\mu}.$$

    However, as $k \to \infty$, the law above converges weakly to $\law{(X_1, X_2, X_3), (X_1, Y_2, Y_3)}$, which is not $\Gsplit$-bicausal, as $(X_2, Y_2)$ is not conditionally independent from $(X_3, Y_3)$ given $(X_1, X_1)$. Thus, $\specificGbicouplings{\Gsplit}{\mu}{\mu}$ is not weakly closed. 

    This example may be extended to any DAG which has a split, i.e., has $\Gsplit$ as a proper subgraph, by adding in constant random variables for all other nodes in the graph. In a similar way, the counterexample in \cite[Proposition 3.6(iv)]{Eckstein2023CausalGraphs} for $\Gmarkov$ can be extended to any DAG which is not transitively closed, thus showing $\Gbicouplings{\mu}{\nu}$ is not weakly closed in general for such DAGs.
\end{example}

We now show that 
$\Gbicouplings{\mu}{\nu} = \newGbicouplings{\mu}{\nu}$ and is weakly closed if $G$ contains no splits (and is transitively closed). We may assume without loss of generality that $G$ is sorted. 

To show that $\Gbicouplings{\mu}{\nu}$ is weakly closed we use the characterisation of graph bicausality from point 2) of Proposition \ref{equivalent_characterisations}. Let $(\law{X^{(k)}, Y^{(k)}})_{k \geq 1}$ be a sequence in $\Gbicouplings{\mu}{\nu}$ converging weakly to some $\law{X,Y} \in \probabilities{\mc{X} \times \mc{Y}}$. We first note that $\couplings{\mu}{\nu}$ is weakly closed and thus $\law{X,Y} \in \couplings{\mu}{\nu}$. By Proposition \ref{equivalent_characterisations} 2), we know that for $2 \leq i \leq n$:
$$\condindep{(X^{(k)}_i, Y^{(k)}_i)}{X^{(k)}_{\parents{i}}, Y^{(k)}_{\parents{i}}}{(X^{(k)}_{1:i-1}, Y^{(k)}_{1:i-1})}, \quad \condindep{X^{(k)}_i}{X^{(k)}_{\parents{i}}}{Y^{(k)}_{\parents{i}}}, \quad \condindep{Y^{(k)}_i}{Y^{(k)}_{\parents{i}}}{X^{(k)}_{\parents{i}}}.$$

The first set of conditional independence relations above imply by Proposition \ref{prop:Gcomp_characterisation} that $\law{(X^{(k)}_i, Y^{(k)}_i)_{i \in V}}$ is $G$-compatible (here we associate to each node $i$ the random variable $(X^{(k)}_i, Y^{(k)}_i)$). Since $G$ is transitively closed and contains no splits, by Proposition \ref{prop:P_G_closed} the set $\Gprobabilities{(\mc{X}_i, \mc{Y}_i)_{i \in V}}$ is weakly closed, and thus contains $\law{(X_i, Y_i)_{i \in V}}$, implying that $\condindep{(X_i, Y_i)}{X_{\parents{i}}, Y_{\parents{i}}}{(X_{1:i-1}, Y_{1:i-1})}$ for $2 \leq i \leq n$. 

%$(Z_1, Z_2), (W_1, W_2)$ bicausal iff $\condindep{Z_1}{W_1}{W_2}$ and $\condindep{W_1}{Z_1}{Z_2}$. Substitute $Z_1 = X_{\parents{i}}, Z_2 = X_i$,  $W_1 = Y_{\parents{i}}$, $W_2 = Y_i$. 

The second and third sets of conditional independence relations above are equivalent to saying that $\law{(X^{(k)}_{\parents{i}}, X^{(k)}_{i}), (Y^{(k)}_{\parents{i}}, Y^{(k)}_{i})}$ are ($G_2$)-bicausal couplings of $\law{(X^{(k)}_{\parents{i}}, X^{(k)}_{i})}$ and $\law{(Y^{(k)}_{\parents{i}}, Y^{(k)}_{i})}$ viewed as 2-step stochastic processes for all $2 \leq i \leq n$. However, the sets $\specificGbicouplings{G_2}{\mu_{\curlybrackets{i} \cup \parents{i}}}{\nu_{\curlybrackets{i} \cup \parents{i}}}$ are weakly closed by \cite[Proposition 3.4]{Beiglbock2022Biadapted}, see also \cite{beiglbock2018denseness}. Thus, we obtain that the same conditional independence relations hold in the limit for $2 \leq i \leq n$: $\condindep{X_i}{X_{\parents{i}}}{Y_{\parents{i}}}, \condindep{Y_i}{Y_{\parents{i}}}{X_{\parents{i}}}$. 

We conclude via Proposition \ref{equivalent_characterisations} 2) that $\law{X,Y} \in \Gbicouplings{\mu}{\nu}$, and so $\Gbicouplings{\mu}{\nu}$ is weakly closed.

The proof that $\Gbicouplings{\mu}{\nu} = \newGbicouplings{\mu}{\nu}$ now proceeds similarly to the proof of Theorem \ref{glue_characterisation}. Clearly, the result is true if $G$ is the graph with just one node. We can now use induction via Lemma \ref{make_graph_two_operations} by noting the following two facts:

First, it is easy to see, for any two DAGs $G_1, G_2$ and $\mu_i, \nu_i$ $G_i$-compatible measures, that $\specificGbicouplings{G_1 \oplus G_2}{\mu_1 \otimes \mu_2}{\nu_1 \otimes \nu_2}$ is equal to the set $\specificGbicouplings{G_1}{\mu_1}{\nu_1} \otimes \specificGbicouplings{G_2}{\mu_2}{\nu_2}$, and the set $\newspecificGbicouplings{G_1 \oplus G_2}{\mu_1 \otimes \mu_2}{\nu_1 \otimes \nu_2}$ is equal to the set $\newspecificGbicouplings{G_1}{\mu_1}{\nu_1} \otimes \newspecificGbicouplings{G_2}{\mu_2}{\nu_2}$ (modulo a reordering of the variables). So, if $\newspecificGbicouplings{G_i}{\mu_i}{\nu_i} = \specificGbicouplings{G_i}{\mu_i}{\nu_i}$, then $\newspecificGbicouplings{G_1 \oplus G_2}{\mu_1 \otimes \mu_2}{\nu_1 \otimes \nu_2} = \specificGbicouplings{G_1 \oplus G_2}{\mu_1 \otimes \mu_2}{\nu_1 \otimes \nu_2}$.

Secondly, any $\pi \in \newspecificGbicouplings{G^+}{\mu+}{\nu+}$ for $\mu^+ \in \specificGprobabilities{G^+}{\mc{X}_{1:n+1}}, \nu^+ \in \specificGprobabilities{G^+}{\mc{Y}_{1:n+1}}$ can be disintegrated as $\pi(dx_{1:n+1}, dy_{1:n+1}) = \pi_{1:n}(dx_{1:n}, dy_{1:n}) \otimes \pi_{n+1}^{x_{1:n}, y_{1:n}}(dx_{n+1}, dy_{n+1})$ with $\pi_{1:n} \in \newGbicouplings{\mu^+_{1:n}}{\nu^+_{1:n}}$ and $\pi_{n+1}^{x_{1:n}, y_{1:n}}(dx_{n+1}, dy_{n+1}) \in \couplings{\mu^{+, x_{1:n}}_{n+1}}{\nu^{+, y_{1:n}}_{n+1}}$ $\pi$-a.s. To see this, let $\Hat{\pi} \in \specificGmonge{G^+}{\Hat{\mu}^+}{\Hat{\nu}^+}$ be such that $\pi$ is the marginal of $\Hat{\pi}$ on the $\mc{X}_{1:n+1} \times \mc{Y}_{1:n+1}$ space. When $(X_{1:n+1}, U_{1:n+1}, Y_{1:n+1}, V_{1:n+1}) \sim \Hat{\pi}$, then by the definition of $G^+$-biadapted maps, $$\law{X_{1:n}, U_{1:n}, Y_{1:n}, V_{1:n}} \in \Gmonge{\Hat{\mu}^+_{1:n}}{\Hat{\nu}^+_{1:n}},$$ and so $\law{X_{1:n}, Y_{1:n}} \in \newGbicouplings{\mu^+_{1:n}}{\nu^+_{1:n}}$. Moreover, $\condlaw{X_{n+1}, Y_{n+1}}{X_{1:n}, Y_{1:n}}$ is equal to
$$\condexpectation{\condlaw{X_{n+1}, Y_{n+1}}{X_{1:n}, U_{1:n}, Y_{1:n}, V_{1:n}}}{X_{1:n}, Y_{1:n}} \in \couplings{\mu_{n+1}^{+, X_{1:n}}}{\nu_{n+1}^{+, Y_{1:n}}},$$
since $\condlaw{X_{n+1}, Y_{n+1}}{X_{1:n}, U_{1:n}, Y_{1:n}, V_{1:n}} \in \couplings{\mu_{n+1}^{+, X_{1:n}}}{\nu_{n+1}^{+, Y_{1:n}}}$ (by independence of $U$ from $X$, and $V$ from $Y$), which is weakly closed and convex. Thus, by Proposition \ref{equivalent_characterisations}, we see that if $\newGbicouplings{\mu^+_{1:n}}{\nu^+_{1:n}} = \Gbicouplings{\mu^+_{1:n}}{\nu^+_{1:n}}$, then $\newspecificGbicouplings{G^+}{\mu^+}{\nu^+} = \specificGbicouplings{G^+}{\mu^+}{\nu^+}$.
\end{proof}

\subsection{Proof of Theorem \ref{main_dense}}
\label{proof_dense}

This section closely follows \cite{Beiglbock2022Biadapted}. We adapt their proof that couplings generated by biadapted Monge maps are dense in the set of bicausal couplings into a proof of the denseness of couplings generated by $G$-biadapted Monge maps in the set of lifted $G$-bicausal couplings. The extra difficulty added by the graph structure is handled by a more technical argument in Proposition \ref{partitions} and by inducting by adding leaf nodes, back to front, as opposed to front to back in time as in \cite{Beiglbock2022Biadapted}.

Since the sets $\Gmonge{\mu}{\nu}$ and $\Gbicouplings{\mu}{\nu}$ are invariant under relabelling, without any loss of generality, throughout this section, we assume $G$ is sorted and when $\mu$ satisfies Assumption \ref{continuity_assumption} it does so with respect to this ordering. In addition, we consider the metric on $\mc{X}$ given by $d(x,y) \coloneqq \sum_{i=1}^n d_i(x_i,y_i)$ for metrics $d_i$ on $\mc{X}_i$ metrising the topologies of $\mc{X}_i$ and which we assume to be bounded above by $1$. This metric $d$ then metrises the product topology of $\mc{X}$. 
Under this assumption, the following result about perturbations of a probability measure into its lifted version holds.

\begin{proposition}
    \label{partitions}
   Suppose $\mu$ satisfies Assumption \ref{continuity_assumption} and $G$ is transitively closed. Then for any $\eps > 0$ there exists a $G$-biadapted map $T : \mc{X} \to \Hat{\mc{X}}$ such that $\pushforward{T}{\mu} = \widehat{\mu}$ and 
   $$\int d(x, \proj{\mc{X}}(T(x))) \, d\mu(x) \leq \eps.$$
\end{proposition}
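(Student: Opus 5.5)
We proceed by induction on the number of nodes, adding leaf nodes, and mirror the one-node construction of \cite{Beiglbock2022Biadapted}.

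\textbf{Base case.} Here $n=1$, $\mu$ is atomless on the Polish space $\mc{X}_1$, and we need a bijection $T_1:\mc{X}_1\to\mc{X}_1\times[0,1]$ with $\pushforward{T_1}{\mu}=\mu\otimes\lambda$ and first coordinate $\eps$-close to the identity on average.

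We first partition $\mc{X}_1$ into countably many Borel cells $(B_k)$ of $d_1$-diameter at most $\eps$. Each cell of positive mass carries the atomless normalised measure $\mu|_{B_k}/\mu(B_k)$. By the measure isomorphism theorem \cite[Theorem 17.41]{Kechris1995DescriptiveSetTheory}, there is a Borel bijection $B_k\to B_k\times[0,1]$ pushing $\mu|_{B_k}$ to $\mu|_{B_k}\otimes\lambda$. Null cells and the uncountable/countable cardinality mismatches are handled on $\mu$-null sets, which is routine.

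Gluing these bijections cell by cell gives $T_1$. Since $\proj{\mc{X}_1}T_1(x)$ lies in the same cell as $x$, the transport cost is at most $\eps$.

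\textbf{Inductive step.} Assume the result for transitively closed sorted DAGs with $n-1$ nodes, and let node $n$ be a leaf. By induction, fix a $G_{1:n-1}$-biadapted $\tilde T:\mc{X}_{1:n-1}\to\Hat{\mc{X}}_{1:n-1}$ pushing $\mu_{1:n-1}$ to $\Hat\mu_{1:n-1}$ with cost at most $\eps$.

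We then define
$$T(x)=\brackets{\tilde T(x_{1:n-1}),\,S^{\xparents{n}}(x_n)},$$
where $S^{\xparents{n}}:\mc{X}_n\to\mc{X}_n\times[0,1]$ is a bijection pushing $\mu_n^{\xparents{n}}$ to $\mu_n^{\xparents{n}}\otimes\lambda$ with cost at most $\eps$. Note that $S$ depends only on $\xparents{n}$, not on the image variables.

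Lemma~\ref{recursive_Gbiadapted} then gives $G$-biadaptedness. Transitive closure enters exactly there: the inverse's dependence on $x_{\parents{n}}$ must be rewritten through the lifted image coordinates at $\parents{n}$, which are functions of those coordinates and their parents, and these parents are again contained in $\parents{n}$.

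The pushforward identity $\pushforward{T}{\mu}=\Hat\mu$ follows by disintegration exactly as in property (P1) of Step~1 of the proof of Theorem~\ref{thm:lifted}. Since the kernel of $\Hat\mu$ at node $n$ depends on $x_{\parents{n}}$, which is the $\mc{X}$-part of $\tilde T(x_{1:n-1})$ only approximately, the cleanest route is to keep the original $x_{\parents{n}}$ as argument. Then $\pushforward{T}{\mu}$ has node-$n$ kernel $\mu_n^{\xparents{n}}\otimes\lambda$ evaluated at the preimage coordinates.

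\textbf{Main obstacle.} That preimage-coordinate issue is the difficulty, and it is why the proof needs a different layout than a naive induction. The first-$(n-1)$-coordinate map does not preserve the $x$-values, so the conditional law at node $n$ would be taken at the wrong parents.

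To fix this, we would make each node's map preserve the cell structure rather than the exact point. Concretely, we choose $\tilde T$ so that at each node $i$ the image $x$-coordinate equals $x_i$ exactly, and only the $[0,1]$-coordinate is created. Equivalently, we build $T$ as $x\mapsto(x_i,\phi_i(x_i,\xparents{i}))_i$ with $\phi_i$ a measurable function. However, such a $T$ cannot be injective onto $\Hat{\mc X}$.

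The genuine construction therefore uses the cells. Within each product of cells $B_{k_{\parents{n}}}$ we let $S$ depend only on the cell index of $x_{\parents{n}}$, and we use a measurable-in-parameter version of the isomorphism theorem, namely \cite[Theorem 2.3]{Beiglbock2022Biadapted}, which provides Borel isomorphisms depending measurably on a parameter. This reduces the mismatch to the within-cell variation of the kernels.

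The remaining delicate point is ensuring that the law is exactly $\Hat\mu$ rather than merely close to it. I would first try a parametrised isomorphism $S^{\xparents{n}}$ pushing $\mu_n^{\xparents{n}}$ to $\mu_n^{\xparents{n}}\otimes\lambda$ exactly, applied with the true $x_{\parents{n}}$. I would then verify via Lemma~\ref{recursive_Gbiadapted} that the inverse recovers $x_{\parents{n}}$ from the image: the image at $\parents{n}$ determines $x_{\parents{n}}$ through the inverse of $\tilde T$ restricted to $\parents{n}\cup\parents{\parents{n}}=\parents{n}$, by transitive closure.
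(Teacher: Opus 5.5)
Your base case is fine (it is essentially \cite[Lemma 3.9]{Beiglbock2022Biadapted}, which the paper simply cites), and Lemma~\ref{recursive_Gbiadapted} does give $G$-biadaptedness of your inductive map. The gap is the pushforward identity. The construction you finally settle on is $T(x)=\brackets{\tilde{T}(x_{1:n-1}),S^{\xparents{n}}(x_n)}$ with $\pushforward{S^{\xparents{n}}}{\mu_n^{\xparents{n}}}=\mu_n^{\xparents{n}}\otimes\lambda$. This is exactly the map you yourself diagnosed as broken. Conditionally on the image $(y_{1:n-1},v_{1:n-1})=\tilde{T}(x_{1:n-1})$, the node-$n$ image has law $\mu_n^{\xparents{n}}\otimes\lambda$ with $x_{1:n-1}=\tilde{T}^{-1}(y_{1:n-1},v_{1:n-1})$. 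But $\Hat{\mu}$ demands $\mu_n^{\yparents{n}}\otimes\lambda$. Since $\tilde{T}$ cannot fix the $\mc{X}$-coordinates, $\pushforward{T}{\mu}\neq\Hat{\mu}$ in general. Showing that the inverse recovers $\xparents{n}$ only re-proves biadaptedness, not the law. Letting $S$ depend on cell indices only makes the law approximately right. So neither step closes the gap.

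The missing idea is to aim the node-$n$ bijection at the kernel evaluated at the \emph{image} parents. By transitive closure, $\tilde{T}_{\parents{n}}(\xparents{n})\coloneqq(\tilde{T}_i(x_i,\xparents{i}))_{i\in\parents{n}}$ is a function of $\xparents{n}$ alone. So with $\yparents{n}\coloneqq\proj{\mc{X}_{\parents{n}}}\tilde{T}_{\parents{n}}(\xparents{n})$, you may take $T_n^{\xparents{n}}$ bijective with $\pushforward{T_n^{\xparents{n}}}{\mu_n^{\xparents{n}}}=\mu_n^{\yparents{n}}\otimes\lambda$. The (P1)-type computation then gives $\pushforward{T}{\mu}=\Hat{\mu}$ exactly, and $T$ remains $G$-adapted. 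The dependence on image variables that you deliberately avoided is precisely what is needed, and transitive closure is what makes it legitimate.

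The price is that the node-$n$ cost becomes $W_1(\mu_n^{\xparents{n}},\mu_n^{\yparents{n}})+\eps$, using density of bijective Monge couplings between atomless measures plus measurable selection. Controlling this requires a quantitative argument your proposal lacks:
\begin{itemize}
\item Lusin's theorem gives a compact $K$ with $\mu_{\parents{n}}(K^c)\le\eps$ on which $\xparents{n}\mapsto\mu_n^{\xparents{n}}$ is uniformly $W_1$-continuous, with some modulus $\delta$.
\item The induction hypothesis must be invoked with the finer tolerance $\delta\eps$ (not $\eps$). Then, by Markov's inequality, the parent displacement exceeds $\delta$ only on a set of mass at most $\eps$.
\item Since $\yparents{n}\sim\mu_{\parents{n}}$, the event $\yparents{n}\notin K$ also has mass at most $\eps$.
\item The bound $d_n\le 1$ absorbs these exceptional sets, giving total cost $O(\eps)$.
\end{itemize}
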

\begin{proof}
We again proceed by induction on the number of nodes $n$ of the graph $G$. If $n=1$, this follows from \cite[Lemma 3.9]{Beiglbock2022Biadapted}.

For $n > 1$ assume the result holds for all graphs with at most $n-1$ nodes. Assuming without loss of generality via a relabelling that $G$ is sorted, let $\tilde{G} \coloneqq G_{1:n-1}$. In particular, the result holds for $\tilde{G}$. Fix $\eps > 0$.

Let $f: \mc{X}_{\parents{n}} \to \probabilities{\mc{X}_n}$ be the measurable function given by the disintegration $\xparents{n} \mapsto \mu_n^{\xparents{n}}$. We endow $\probabilities{\mc{X}_n}$ with the $1$-Wasserstein metric $W_1$ induced by $d_n$, under which it becomes a Polish space. We can thus apply Lusin's Theorem \cite[Box 1.6]{Santambrogio} to obtain a compact set $K \subseteq \mc{X}_{\parents{n}}$ such that $\mu_{\parents{n}}(K^c) \leq \eps$ and $f|_K$ is uniformly continuous. Thus, there exists a $\delta > 0$ such that $W_1(f(\xparents{n}), f(\yparents{n})) \leq \eps$ whenever $\xparents{n}, \yparents{n} \in K$ and $\sum_{i\in \parents{n}}d_i(x_i, y_i) \leq \delta$. We may choose $\delta \leq 1$.

By the induction hypothesis, there exists $\Tilde{T}:\mc{X}_{1:n-1} \to \hat{\mc{X}}_{1:n-1}$ a $\Tilde{G}$-biadapted map such that $\pushforward{\Tilde{T}}{\mu_{1:n-1}} = \widehat{\mu_{1:n-1}}$ and
$$\int \sum_{i=1}^{n-1} d_i(x_i, \proj{\mc{X}_{i}}(\Tilde{T}_i(x_i, \xparents{i}))) \, d\mu_{1:n-1}(x_{1:n-1}) \leq \delta\eps \leq \eps.$$ 
In particular, this implies that $\mu_{\parents{n}}(A) \leq \eps$ for the set $$A \coloneqq \setdef{\xparents{n} \in \mc{X}_{\parents{n}}}{\sum_{i\in \parents{n}} d_i(x_i, \proj{\mc{X}_{i}}(\Tilde{T}_i(x_i, \xparents{i}))) > \delta}.$$

Now, as $G$ is transitively closed, we may define the map $\Tilde{T}_{\parents{n}}:\mc{X}_{\parents{n}} \to \hat{\mc{X}}_{\parents{n}}$ given by $(x_i)_{i\in \parents{n}} \mapsto (\Tilde{T}_i(x_i, \xparents{i}))_{i \in \parents{n}}$. Since $\pushforward{\Tilde{T}}{\mu_{1:n-1}} = \widehat{\mu_{1:n-1}}$ transitive closure implies $\pushforward{{\Tilde{T}_{\parents{n}}}}{\mu_{\parents{n}}} = \widehat{\mu_{\parents{n}}}$. Thus, for $B \coloneqq \tilde{T}_{\parents{n}}^{-1}(K^c \times [0,1]^{\abs{\parents{n}}})$ we have
\begin{align*}
    \mu_{\parents{n}}(B) = \mu_{\parents{n}}(\tilde{T}_{\parents{n}}^{-1}(K^c \times [0,1]^{\abs{\parents{n}}})) = \widehat{\mu_{\parents{n}}}(K^c \times [0,1]^{\abs{\parents{n}}}) = \mu_{\parents{n}}(K^c) \leq \eps.
\end{align*}

Using a measurable selection argument \cite[Proposition 7.50]{BerstekasShreve1978}, since we know Monge couplings are dense in the set of couplings $\couplings{\mu_n^{\xparents{n}}}{\widehat{\mu_n}^{\Tilde{T}_{\parents{n}}(\xparents{n})}}$, we may find a measurable map $T_n : \mc{X}_n \times \mc{X}_{\parents{n}} \to \hat{\mc{X}}_n$ such that $T_n^{\xparents{n}}$ is bijective, $\pushforward{T_n^{\xparents{n}}}{\mu_n^{\xparents{n}}} = \widehat{\mu_n}^{\Tilde{T}_{\parents{n}}(\xparents{n})}$, and $$\int d_n(x_n, \proj{\mc{X}_n}(T_n^{\xparents{n}}(x_n))) \, d\mu_n^{\xparents{n}}(x_n) \leq W_1\brackets{\mu_n^{\xparents{n}}, \mu_n^{\Tilde{T}_{\parents{n}}(\xparents{n})}} + \eps.$$

The key to our construction so far is that for $\xparents{n} \in K \cap A^c \cap B^c$, $$W_1\brackets{\mu_n^{\xparents{n}}, \mu_n^{\Tilde{T}_{\parents{n}}(\xparents{n})}} \leq \eps.$$

Thus, if we define $T : \mc{X} \to \hat{\mc{X}}$ via $T(x) = (\Tilde{T}(x_{1:n-1}), T_n(x_n, \xparents{n}))$, $T$ is $G$-biadapted by Lemma \ref{recursive_Gbiadapted}, $\pushforward{T}{\mu} = \widehat{\mu}$, and 
\begin{align*}
    &\int \sum_{i=1}^{n} d_i(x_i, \proj{\mc{X}_{i}}({T}_i^{\xparents{i}}(x_i))) \, d\mu(x) = \int \sum_{i=1}^{n-1} d_i(x_i, \proj{\mc{X}_{i}}(\Tilde{T}_i^{\xparents{i}}(x_i))) \, d\mu_{1:n-1}(x_{1:n-1}) \\&+ \iint d_n(x_n, \proj{\mc{X}_n}(T_n^{\xparents{n}}(x_n))) \, d\mu_n^{\xparents{n}}(x_n) \, d\mu_{\parents{n}}(\xparents{n}) \\ 
    &\leq \eps + \mu_{\parents{n}}(K \cap A^c \cap B^c) \cdot 2\eps + \mu_{\parents{n}}((K \cap A^c \cap B^c)^c) \cdot 1 \leq \eps + 2\eps + 3\eps = 6\eps.
\end{align*}

As $\eps >0$ was fixed arbitrarily, the result follows.
\end{proof}

\begin{figure}[h]
\begin{center}
\begin{tikzpicture}[node distance=1cm]

  \node (X)        at (0,0)   {$\mu$};
  \node (Y)        at (6,0)  {$\nu$};
  \node (T)        at (3, 1.2) {$T_m$};

  \draw[dotted] (X)  -- (Y) node[midway, above] {$\pi$};

  \node (Xhat) at (0,2) {$\hat{\mu}$};
  \node (Yhat) at (6,2) {$\hat{\nu}$};

  \draw[-stealth] (Xhat)  -- (Yhat) node[midway, above] {$\hat{T}$};

  \draw[-stealth] (X) -- (Xhat) node[midway, left] {$\phi_m$};
  \draw[stealth-] (Y) -- (Yhat) node[midway, right] {$\psi_m^{-1}$};

  \draw[-stealth] (X) edge [bend left=30] (Y);

\end{tikzpicture}
\end{center}
\caption{A visual diagram of the proof of Theorem \ref{main_dense}.}
\label{denseness_diagram}
\end{figure}

\begin{proof}[Proof of Theorem \ref{main_dense}]
    By Theorem \ref{thm:lifted}, $\Gbicouplings{\mu}{\nu} \subseteq \newGbicouplings{\mu}{\nu}$, thus we may assume more generally that $\pi \in \newGbicouplings{\mu}{\nu}$. 
    Equipped with Proposition \ref{partitions}, the proof can now follow analogously to the proof of \cite[Theorem 3.11]{Beiglbock2022Biadapted}, but using the defining property of a lifted $G$-bicausal coupling.
    Hence, we only sketch the idea of the proof, as illustrated in Figure \ref{denseness_diagram}. 
    
    Let $m \geq 1$. By the definition of lifted $G$-bicausality, there exists a $G$-biadapted map $\hat{T} : \hat{\mc{X}} \to \hat{\mc{Y}}$ such that $\pushforward{\hat{T}}{\hat{\mu}} = \hat{\nu}$ and the projection of its induced Monge coupling onto $\mc{X} \times \mc{Y}$ is $\pi$. Let $\phi_m, \psi_m$ be the $G$-biadapted perturbation maps obtained from Proposition \ref{partitions} with $\eps = \frac{1}{m}$ for $\mu$ and $\nu$ respectively. Then, if we define $T_m \coloneqq \psi_m^{-1} \circ \hat{T} \circ \phi_m$, by Lemma \ref{closed_composition} as $G$ is transitively closed, $T_m$ is $G$-biadapted. As the Monge coupling induced by $\hat{T}$ projects to $\pi$, and the perturbations given by $\phi_m, \psi_m$ can be made arbitrarily small as $m \to \infty$, one can show that the Monge maps induced by the $T_m$ approximate $\pi$ as $m \to \infty$.
\end{proof}

\begin{remark}
    Note that the proof above shows that one could have in fact replaced $\Gbicouplings{\mu}{\nu}$ by $\newGbicouplings{\mu}{\nu}$ in the statement of Theorem \ref{main_dense} to obtain that $\Gmonge{\mu}{\nu}$ is weakly dense in $\newGbicouplings{\mu}{\nu}$. 
\end{remark}
\subsection{Proof of Dynamic Programming Principle Results}
\label{proof_dpp}

\subsection*{DPP Compatible Graphs}

We begin with a further discussion of our definition of a perfect DAG in Definition \ref{def:DAGthree_structures} and by proving Propostion \ref{prop:perfection}.
In \cite[Section 2.1]{LauritzenBook}, the notion of a perfect DAG $G$ is defined via the notion of \emph{moralisation}. For a DAG $G$, we may define its \emph{moral graph} $G^m$ to be the \emph{un}directed graph with the same vertex set $V$ as $G$, which has an edge between $i \neq j \in V$ if and only if $i$ is a parent of $j$, $j$ is a parent of $i$, or there exists a $k \in V$ such that $i, j \in \parents{k}$. If the third condition adds no new edges to $G^m$ other than those added by the first two conditions, $G$ is said to be perfect. In other words a DAG $G$ is perfect if whenever $i, j \in \parents{k}$ with $i \neq j$  then either $i \in \parents{j}$ or $j \in \parents{i}$, or equivalently, if it has no merges. We now establish the equivalent characterisation introduced in Section \ref{section_dpp}.

\begin{proof}[Proof of Proposition \ref{prop:perfection}]
    To show that 2) implies 1), let $i < j < k$ be such that $i, j \in \parents{k}$. Since, $\parents{k} \subseteq \parents{\prox(k)} \cup \curlybrackets{\prox(k)}$, and $\prox(k)$ is by definition the largest (with respect to the sorted order) parent of $k$, we must have that either $j = \prox(k)$ and $i \in \parents{\prox(k)}$ in which case $i$ is a parent of $j$, or both $i$ and $j$ are parents of $\prox(k) < k$. One can then restart this argument replacing $k$ with $\prox(k)$. As every time $k$ must decrease, we cannot iterate indefinitely and thus must obtain that $i$ is a parent of $j$ after a finite number of steps.

    Conversely, assume 1) holds and let $k \in V$, and $i \in \parents{k}$. Assume that $i$ is not equal to $j \coloneq \prox(k) \in \parents{k}$. Then, by 1) we must have that $i$ is a parent of $j$ or $j$ is a parent of $i$, as otherwise $(k,i,j)$ would be a merge. As $j$ is the largest parent of $k$ in the sorted order, we cannot have that $j \in \parents{i}$. Thus, $i \in \parents{j} = \parents{\prox(k)}$. Therefore, $\parents{k} \subseteq \parents{\prox(k)} \cup \curlybrackets{\prox(k)},$ and 2) holds.
\end{proof}

To better understand the structure of a perfect DAG, we associate to $G$ its proximal graph:

\begin{definition}[Proximal graph]
    The \emph{proximal graph} of $G$, $\prox(G)$, is defined as the directed graph with vertex set $V$ and edge set $$\prox(E) \coloneqq \curlybrackets{(\prox(i), i) \, \vert \, i \in V, \parents{i} \neq \varnothing} \subseteq E.$$
\end{definition}

As $G$ is sorted, $\prox(i) \leq i$ and the equality holds if and only if $i$ has no parents. Thus, $\prox(G)$ is acyclic and each of its vertices has at most one parent by definition. In other words, $\prox(G)$ is a forest with edges directed from parents to their children. 

Hence, if $i$ is an ancestor of $j$ in $\prox(G)$, there exists a unique directed path in $\prox(G)$ from $i$ to $j$, which we call the proximal path from $i$ to $j$. Clearly, as the only possible parent in $\prox(G)$ of a vertex $v$ is $\prox(v)$, such a proximal path must take the form $i = \prox^m(j) \to \ldots \to \prox(j) \to j$ for some $m \geq 1$. 

Moreover, for $i \in V$ we define its depth, $\depth(i)$ as the length of the path from the root of the forest component of $\prox(G)$ containing $i$ to the vertex $i$. Equivalently, $\depth(i)$ is the smallest $m \geq 0$ such that $\prox^{m}(i) = \prox^{m + 1}(i)$.

Figure \ref{fig:proximal_figure} illustrates the concepts defined above.

\begin{figure}[H]
    \centering
    \definecolor{cbblue}{RGB}{0,114,178}      
\definecolor{cborange}{RGB}{230,159,0}      
\definecolor{cbvermillion}{RGB}{213,94,0} 

\begin{tikzpicture}[
    >=stealth,
    every node/.style={circle, fill=black, inner sep=2.5pt},
    blacknode/.style={circle, fill=black, inner sep=2.5pt},
    bluenode/.style={circle, fill=cbblue, inner sep=2.5pt},
    orangenode/.style={circle, fill=cborange, inner sep=2.5pt},
    rednode/.style={circle, fill=cbvermillion, inner sep=2.5pt},
    grayedge/.style={->, draw=gray!70, thick},
    blackedge/.style={->, draw=black, thick},
    orangeedge/.style={->, draw=cborange, very thick, densely dashed},
    label/.style={font=\large, fill=none, inner sep=0pt},
    dotedge/.style={-, draw=gray!30, thick, dotted}
]

\begin{scope}[local bounding box=Gbox]
    \node (r)   at (0,2.08)      {};
    \node (l1)  at (-1.2,1.04) {};
    \node (r1)  at ( 1.2,1.04) {};
    \node (l1a) at (-1.8,0) {};
    \node (l1b) at (-0.6,0) {};
    \node (r1a) at ( 0.6,0) {};
    \node (r1b) at ( 1.8,0) {};
    \node (l2a) at ( 0.6,-1.04) {};
    \node (l2b) at ( 1.8,-1.04) {};

    \draw[blackedge] (r)   -- (l1);
    \draw[blackedge] (r)   -- (r1);
    \draw[blackedge] (l1)  -- (l1a);
    \draw[blackedge] (l1)  -- (l1b);
    \draw[blackedge] (r1)  -- (r1a);
    \draw[blackedge] (r1)  -- (r1b);
    \draw[blackedge] (r1a) -- (l2a);
    \draw[blackedge] (r1b) -- (l2b);

    \draw[grayedge] (r)  to[out=-25, in=85]  (r1b);
    \draw[grayedge] (r)  to[out=-5, in=55]  (l2b);
    \draw[grayedge] (r1) to[out=-90, in=55] (l2a);
    \draw[grayedge] (r) to[left, out=-130, in=80] (l1b);

    \node[label] at (-1.5,2.08) {$G$};

    \node[label, rotate=-90] (d) at (-3.5, 0.52)  {};
\end{scope}

\begin{scope}[xshift=5.4cm, local bounding box=Pbox]
    \node[bluenode]  (R)    at (0,2.08)       {};
    \node[blacknode] (L1)   at (-1.2,1.04)  {};
    \node[orangenode](R1)   at ( 1.2,1.04)  {};
    \node[blacknode] (L1a)  at (-1.8,0)  {};
    \node[blacknode] (L1b)  at (-0.6,0)  {};
    \node[blacknode] (R1a)  at ( 0.6,0)  {};
    \node[orangenode](R1b)  at ( 1.8,0)  {};
    \node[blacknode] (L2a)  at ( 0.6,-1.04) {};
    \node[rednode]   (L2b)  at ( 1.8,-1.04) {};

    \node[label, rotate=-90] (d) at (3.5, 0.52)  {\small\textbf{Depth}};
    \node[label] (0) at (3.0, 2.08)  {\small$0$};
    \node[label] (1) at (3.0, 1.04)  {\small$1$};
    \node[label] (2) at (3.0, 0)  {\small$2$};
    \node[label] (3) at (3.0,-1.04)  {\small$3$};

    \begin{pgfonlayer}{background}
    \draw[dotedge] (R)   -- (0);
    \draw[dotedge] (L1)  -- (1);
    \draw[dotedge] (L1a)  -- (2);
    \draw[dotedge] (L2a)  -- (3);
    \end{pgfonlayer}

    \draw[blackedge] (R)   -- (L1);
    \draw[blackedge] (L1)  -- (L1a);
    \draw[blackedge] (L1)  -- (L1b);
    \draw[blackedge] (R1)  -- (R1a);
    \draw[blackedge] (R1a) -- (L2a);

    \draw[orangeedge] (R)   -- (R1);
    \draw[orangeedge] (R1)  -- (R1b);
    \draw[orangeedge] (R1b) -- (L2b);

    \node[label] at (-1.5,2.08) {$\mathrm{prox}(G)$};

    \node[star, star points=5, fill=white, draw=none, inner sep=0pt, minimum size=4pt] at (L2b) {};
    \node[regular polygon, regular polygon sides=4, fill=white, draw=none, inner sep=0pt, minimum size=4pt] at (R) {};
\end{scope}

\end{tikzpicture}
    \caption{Example of a DAG $G$ and its associated proximal graph. $\prox(G)$ is annotated with the depths of its vertices, and the proximal path from the blue vertex with a white square to the orange vertex with a white star is highlighted in dashed yellow arrows.}
    \label{fig:proximal_figure}
\end{figure}

\begin{proposition}
\label{condition_explained}
    Let $G$ be a sorted DAG. Then the following are equivalent:
    \begin{enumerate}[label=\arabic*)]
        \item $G$ is perfect;
        \item For all $i, j \in V$ such that $i \in \parents{j}$, we have that $i$ is an ancestor of $j$ in $\prox(G)$ and $i \in \parents{v}$ for all vertices $v \neq i$ of the proximal path between $i$ and $j$.
    \end{enumerate}
\end{proposition}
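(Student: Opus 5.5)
We prove both directions using Proposition \ref{prop:perfection}, which characterises perfection by $\parents{k} \subseteq \parents{\prox(k)} \cup \curlybrackets{\prox(k)}$ for every $k \in V$.

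\textbf{1) $\Rightarrow$ 2).} Assume $G$ is perfect, and fix $i \in \parents{j}$. We argue by strong induction on $j$ in the sorted order. Since $\parents{j} \neq \varnothing$, $\prox(j)$ is defined and $i \leq \prox(j)$, because $\prox(j)$ is the largest parent of $j$.

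If $i = \prox(j)$, the proximal path is the single edge $i \to j$. Its only vertex other than $i$ is $j$, and $i \in \parents{j}$, so 2) holds.

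Otherwise $i < \prox(j)$. Proposition \ref{prop:perfection} then gives $i \in \parents{\prox(j)}$, and $\prox(j) < j$. By the induction hypothesis applied to the pair $(i, \prox(j))$, the vertex $i$ is a $\prox(G)$-ancestor of $\prox(j)$, and $i$ is a parent of every vertex $v \neq i$ on the proximal path from $i$ to $\prox(j)$. The proximal path from $i$ to $j$ is that path followed by the edge $\prox(j) \to j$. Its only new vertex is $j$, and $i \in \parents{j}$, so 2) holds for $(i,j)$.

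The base case is vacuous: the smallest vertex has no parents.

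\textbf{2) $\Rightarrow$ 1).} We verify condition 2) of Proposition \ref{prop:perfection} directly. Let $k \in V$ and $i \in \parents{k}$ with $i \neq \prox(k)$. By 2), $i$ is a $\prox(G)$-ancestor of $k$. The proximal path from $i$ to $k$ has the form $i = \prox^m(k) \to \cdots \to \prox(k) \to k$, and since $i \neq \prox(k)$ we have $m \geq 2$. So $\prox(k)$ is a vertex on this path distinct from $i$, and 2) gives $i \in \parents{\prox(k)}$. Hence $\parents{k} \subseteq \parents{\prox(k)} \cup \curlybrackets{\prox(k)}$, and $G$ is perfect.

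This direction could also be argued via merges: given $i, j \in \parents{k}$ with $i < j$, both are ancestors of $k$ in the forest $\prox(G)$, so both lie on the unique root-to-$k$ path, and $i$ is an ancestor of $j$. Applying 2) to $i \in \parents{k}$ then gives $i \in \parents{j}$, so there is no merge.

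\textbf{Main obstacle.} The only delicate point is the bookkeeping that the proximal path from $i$ to $j$ extends the proximal path from $i$ to $\prox(j)$. This holds because in $\prox(G)$ each vertex has at most one parent, which makes such paths unique. The rest is routine.
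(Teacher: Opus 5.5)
Your proof is correct and follows the paper's argument essentially verbatim. Both directions go through Proposition \ref{prop:perfection}. The only difference is in the forward direction, where the paper inducts on $\depth(j)$ in $\prox(G)$ rather than on the sorted index of $j$; this is immaterial, since $\prox(j)<j$ and $\depth(\prox(j))=\depth(j)-1$. In the converse you use, as the paper does, that $\prox(k)$ lies on every proximal path ending at $k$, and your alternative merge-based argument for 2) $\Rightarrow$ 1) is also valid.
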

\begin{proof}
    In this proof, we use the equivalent characterisation of a perfect DAG established in Proposition \ref{prop:perfection}.
    
    Suppose $G$ is perfect. 
    We prove 2) by induction on the depth of $j$. If $\depth(j) = 0$, $\parents{j} = \varnothing$, so there is nothing to prove. Let $n \geq 1$ and assume we have proved $2)$ for all $i, j \in V$ with $\depth(j) \leq n - 1$. 

    Fix $i, j \in V$ such that $\depth(j) = n $ and $i \in \parents{j}$. 1) then implies that $i \in \parents{j} \subseteq \parents{\prox(j)} \cup \curlybrackets{\prox(j)}$. 
    If $i = \prox(j)$, then clearly the condition in 2) holds for the pair $(i,j)$. Otherwise, we must have that $i \in \parents{\prox(j)}$. As $\depth(\prox(j)) = n - 1$, by the induction hypothesis we see then that $i$ must be an ancestor of $\prox(j)$ in $\prox(G)$ and $i \in \parents{v}$ for all vertices $v \neq i$ of the proximal path between $i$ and $\prox(j)$. But any $\prox(G)$ ancestor of $\prox(j)$ is also an ancestor of $j$ and the proximal path from $i$ to $j$ is formed by the vertices in the proximal path from $i$ to $\prox(j)$, and $j$. Thus 2) also holds for the pair $(i, j)$ and the proof follows by induction.

    For the converse, fix $j \in V$. If $i \in \parents{j}$, then as $\prox(j)$ must always be on any proximal path ending in $j$, 2) implies that either $i = \prox(j)$ or $i$ is a parent in $G$ of $\prox(j)$. Thus, $i \in \parents{\prox(j)} \cup \curlybrackets{\prox(j)}$.
\end{proof}

\subsection*{DPP for $G$-bicausal Transport}
The proof is inspired by \cite[Proposition 5.2]{Backhoff2016CausalDiscrete}.

\begin{proof}[Proof of Proposition \ref{dpp_proof}]
    We begin by showing recursively that each $V_k^{G,c}$ is well-defined and lower semi-analytic (l.s.a.). 

    Let $D_k$ denote the set $$\curlybrackets{(x_{\parents{k}}, y_{\parents{k}}, \pi) \in \mc{X}_{\parents{k}} \times \mc{Y}_{\parents{k}} \times \probabilities{\mc{X}_{k} \times \mc{Y}_{k}} \, \middle| \, \pi \in \couplings{\mu_k^{\xparents{k}}}{\nu_k^{\yparents{k}}}},$$
    and the function $g_k : D_k \to \mbb{R}$ be defined as
    $$g_k(x_{\parents{k}}, y_{\parents{k}}, \pi) \coloneqq \int f_k(x_{\parents{k}}, y_{\parents{k}}, x_k, y_k )\, d\pi(x_k,y_k),$$
    where $f_k : \mc{X}_{\parents{k}} \times \mc{Y}_{\parents{k}} \times \mc{X}_{k} \times \mc{Y}_{k} \to \mbb{R}$ is given by
    $$f_k(x_{\parents{k}}, y_{\parents{k}}, x_k, y_k) \coloneqq c_k(x_k, \xparents{k}, y_k, \yparents{k}) + \sum_{i \in A_k} V_i^{G,c}(\xparents{i}, \yparents{i}).$$
    By $D_k(x_{\parents{k}}, y_{\parents{k}})$ we denote the fibre of the set $D_k$ at $(x_{\parents{k}}, y_{\parents{k}})$, i.e., the set $\couplings{\mu_k^{\xparents{k}}}{\nu_k^{\yparents{k}}} \subseteq \probabilities{\mc{X}_{k} \times \mc{Y}_{k}}$.

    We first remark that the set $D_k$ is Borel, thus analytic. Indeed, it is the preimage of the closed (in the weak topology) set $$\setdef{(\tilde{\mu}, \tilde{\nu}, \pi) \in \probabilities{\mc{X}_k} \times \probabilities{\mc{Y}_k} \times \probabilities{\mc{X}_{k} \times \mc{Y}_{k}}}{\pi \in \couplings{\tilde{\mu}}{\tilde{\nu}}}$$
    under the Borel map $(x_{\parents{k}}, y_{\parents{k}}, \pi) \mapsto (\mu_k^{\xparents{k}}, \nu_k^{\yparents{k}}, \pi)$.

    For notational convenience in the proof below, we define $\parents{n+1} \coloneqq \varnothing$, $A_n \coloneqq \curlybrackets{n + 1}$ and $V_{n+1}^{G,c} \coloneqq 0$ so that the recursive formula in the definition of $V_{k}^{G,c}$ extends to the case $k = n$. $V_{n+1}^{G,c}$ is trivially well-defined and l.s.a. 
    
    Let $1 \leq k \leq n$ and suppose we have shown that $V_i^{G,c}$ is well-defined and l.s.a. for all $k + 1 \leq i \leq n + 1$. Fixing, $i \in A_k \subseteq [k + 1, n+1]$, $V_i^{G,c}$ is l.s.a. as a map with domain $\mc{X}_{\parents{i}} \times \mc{Y}_{\parents{i}}$. As $i \in A_k$, and $G$ is perfect, $\parents{i} \subseteq \parents{k} \cup \curlybrackets{k}$. Thus, we may extend the domain of $V_i^{G,c}$ to the product space $\mc{X}_{\parents{k}} \times \mc{Y}_{\parents{k}} \times \mc{X}_{k} \times \mc{Y}_{k}$ by composing it with the Borel projection map $(x_{\parents{k}}, y_{\parents{k}}, x_k, y_k) \mapsto (x_{\parents{i}}, y_{\parents{i}})$. By \cite[Lemma 7.30(3)]{BerstekasShreve1978} the result remains l.s.a. Now $f_k$ is defined as a sum of such functions and the l.s.a. function $c_k$, so it is l.s.a. by \cite[Lemma 7.30(4)]{BerstekasShreve1978}. In particular, $f_k$ is universally measurable so the integral $g_k$ is well-defined.

    Applying \cite[Proposition 7.48]{BerstekasShreve1978} with $X = \mc{X}_{\parents{k}} \times \mc{Y}_{\parents{k}} \times \probabilities{\mc{X}_k \times \mc{Y}_k}$, $Y = \mc{X}_k \times \mc{Y}_k$, the l.s.a. map $(x_{\parents{k}}, y_{\parents{k}}, \pi, x_k, y_k) \mapsto f_k(x_{\parents{k}}, y_{\parents{k}}, x_k, y_k)$ and the Borel stochastic kernel $q$ on $Y$ given $X$ given by $q(dx_k, dy_k \mid  \xparents{k}, \yparents{k}, \pi) = \pi(dx_k, dy_k)$, we get that $g_k$ is moreover l.s.a.

    $V_k^{G,c}(x_{\parents{k}}, y_{\parents{k}})$ is the infimum of the function $g_k$ over the fibre $D_k(x_{\parents{k}}, y_{\parents{k}})$. By \cite[Proposition 7.47]{BerstekasShreve1978}, since $D_k$ is analytic and $g_k$ is well-defined and l.s.a., then so is $V_k^{G,c}$. 
    
    It now follows by induction that $V_k^{G,c}$ is well-defined and l.s.a. for all $1 \leq k \leq n$.

    We now prove $V^{G,c}(\mu, \nu) = \dpp^{G,c}(\mu, \nu).$ Fix $\varepsilon > 0$. For each $1 \leq k \leq n$, as $D_k$ is analytic and $g_k$ is l.s.a., we can find by \cite[Proposition 7.50(b)]{BerstekasShreve1978} a universally measurable selection of $\varepsilon$-minimisers for $V_k^{G,c}$ i.e. a universally measurable map $\varphi_{k, \varepsilon} : \mc{X}_{\parents{k}} \times \mc{Y}_{\parents{k}} \to \probabilities{\mc{X}_k \times \mc{Y}_k}$ such that for all $(\xparents{k}, \yparents{k}) \in \mc{X}_{\parents{k}} \times \mc{Y}_{\parents{k}}$, $\varphi_{k, \varepsilon}(\xparents{k}, \yparents{k}) \in \couplings{\mu_k^{\xparents{k}}}{\nu_k^{\yparents{k}}}$ and moreover
    \begin{align*}
        \int &c_k(x_k, \xparents{k}, y_k, \yparents{k}) + \sum_{i \in A_k} V_i^{G,c}(\xparents{i}, \yparents{i}) \, d\varphi_{k, \varepsilon}^{\xparents{k}, \yparents{k}}(x_k, y_k) \leq  \\ &\leq V_k^{G,c}(\xparents{k}, \yparents{k}) + \varepsilon,
    \end{align*}
    where we have written $\varphi_{k, \varepsilon}^{\xparents{k}, \yparents{k}}$ instead of $\varphi_{k, \varepsilon}(\xparents{k}, \yparents{k})$, keeping with our conventions for stochastic kernels. We do the same for $\tilde{\varphi}_{k, \varepsilon}$ introduced below.

    The maps $\varphi_{k, \varepsilon}$ can be trivially extended to stochastic kernels $\tilde{\varphi}_{k, \varepsilon} : \mc{X}_{1:k-1} \times \mc{Y}_{1:k-1} \to \probabilities{\mc{X}_k \times \mc{Y}_k}$ such that $\tilde{\varphi}_{k, \varepsilon}(x_{1:k-1}, y_{1:k-1}) \coloneqq \varphi_{k, \varepsilon}(\xparents{k}, \yparents{k})$, which inherit the property of being universally measurable by \cite[Proposition 7.44]{BerstekasShreve1978}. 
    
    By \cite[Proposition 7.45]{BerstekasShreve1978} we may now compose these stochastic kernels to obtain a probability measure on $\probabilities{\mc{X} \times \mc{Y}}$ given by 
    $$\pi_{\varepsilon}(dx, dy) \coloneqq \bigotimes_{k=1}^n \tilde{\varphi}_{k, \varepsilon}^{x_{1:k-1}, y_{1:k-1}}(dx_{k}, dy_k) =
    \bigotimes_{k=1}^n \varphi_{k, \varepsilon}^{\xparents{k}, \yparents{k}}(dx_{k}, dy_k)$$
    As it always holds that $\varphi_{k, \varepsilon}^{\xparents{k}, \yparents{k}} \in \couplings{\mu_k^{\xparents{k}}}{\nu_k^{\yparents{k}}}$, we see in fact that $\pi_{\varepsilon} \in \Gbicouplings{\mu}{\nu}$, so
    \begin{align}
    \label{inequality_1}
        V^{G,c}&(\mu, \nu) \leq \int c(x,y) \, d\pi_{\varepsilon}(x,y) \\
        &= \int c_1 + \brackets{\int c_2 + \brackets{\ldots + \brackets{\int c_n \, d\varphi_{n, \varepsilon}^{\xparents{n}, \yparents{n}}} \ldots} \, d\varphi_{2, \varepsilon}^{\xparents{2}, \yparents{2}}} \, d\varphi_{1, \varepsilon}. \notag
    \end{align}
    
    Now define $I_n^c(x_{1:n-1}, y_{1:n-1}) \coloneqq \int c_n \, d\varphi_{n, \varepsilon}^{\xparents{n}, \yparents{n}}$ and recursively for $1 \leq k < n$, $$I_k^c(x_{1:k-1}, y_{1:k-1}) \coloneqq \int c_k + I_{k+1}^c(x_{1:k}, y_{1:k}) \, d\varphi_{k, \varepsilon}^{\xparents{k}, \yparents{k}}.$$
    We aim to show inductively that $I_k^c(x_{1:k-1}, y_{1:k-1}) \leq \sum_{i \in B_k} V_i^{G,c}(\xparents{i}, \yparents{i}) + (n - k + 1) \varepsilon$, where we defined the sets $B_k \coloneqq \setdef{i \geq k}{\prox(i) < k \, \mathrm{or}\, \prox(i) = i}$. Note that $B_{k+1} = A_k \cup (B_k \setminus \curlybrackets{k})$.
    
    The case for $k=n$ is immediate from the definition of  $\varphi_{n, \varepsilon}$ as an $\varepsilon$-optimiser. Let $1 \leq k < n$ and assume $I_{k + 1}^c(x_{1:k}, y_{1:k}) \leq \sum_{i \in B_{k+1}} V_i^{G,c}(\xparents{i}, \yparents{i}) + (n - k) \varepsilon$. Then
    \begin{align*}
        I_k^c(x_{1:k-1}, y_{1:k-1}) &= \int c_k + I_{k+1}^c(x_{1:k}, y_{1:k}) \, d\varphi_{k, \varepsilon}^{\xparents{k}, \yparents{k}} \\
        &\leq \int c_k + \sum_{i \in B_{k+1}} V_i^{G,c}(\xparents{i}, \yparents{i}) \, d\varphi_{k, \varepsilon}^{\xparents{k}, \yparents{k}} + (n - k) \varepsilon \\
        &= \int c_k + \sum_{i \in A_k} V_i^{G,c}(\xparents{i}, \yparents{i}) \, d\varphi_{k, \varepsilon}^{\xparents{k}, \yparents{k}} \\
        &+ \sum_{i \in B_k \setminus \curlybrackets{k}} V_i^{G,c}(\xparents{i}, \yparents{i}) + (n - k) \varepsilon \\ 
        &\leq V_k^{G,c}(\xparents{k}, \yparents{k}) + \varepsilon + \sum_{i \in B_k \setminus \curlybrackets{k}} V_i^{G,c}(\xparents{i}, \yparents{i}) + (n - k) \varepsilon \\
        &= \sum_{i \in B_k} V_i^{G,c}(\xparents{i}, \yparents{i}) + (n - k + 1) \varepsilon,
    \end{align*}
    where the first inequality follows by the inductive hypothesis, the second equality since $k \notin \parents{i}$ when $i \in B_k \setminus \curlybrackets{k}$, and the final equality by the choice of  $\varphi_{k, \varepsilon}$ as an $\varepsilon$-optimiser.
    
    By induction then, using equation (\ref{inequality_1}), we see
    \begin{align*}
        V^{G,c}(\mu, \nu) \leq I_1^c \leq \sum_{i \in B_1} V_i^{G,c}(\xparents{i}, \yparents{i}) + n \varepsilon = \dpp^{G,c}(\mu, \nu)  + n \varepsilon,
    \end{align*}
    since $i \in B_1$ if and only if $\prox(i) = i$, i.e., $i \in A_0$. 
    
    Hence, as $\varepsilon$ was fixed arbitrarily, $V^{G,c}(\mu, \nu) \leq \dpp^{G,c}(\mu, \nu)$.

    Conversely, for $\varepsilon > 0$, there exists $\pi_{\varepsilon} \in \Gbicouplings{\mu}{\nu}$ such that $\int c \, d\pi_{\varepsilon} \leq V^{G,c}(\mu, \nu) + \varepsilon$. $\pi_{\varepsilon}$ may then be disintegrated as $\pi_{\varepsilon} = 
    \bigotimes_{k=1}^n \varphi_{k, \varepsilon}^{\xparents{k}, \yparents{k}}(dx_{k}, dy_k)$ for some stochastic kernels with $\varphi_{k, \varepsilon}^{\xparents{k}, \yparents{k}} \in \couplings{\mu_k^{\xparents{k}}}{\nu_k^{\yparents{k}}}$. Using the same notation as above, one may similarly show by induction that $I_k^c(x_{1:k-1}, y_{1:k-1}) \geq \sum_{i \in B_k} V_i^{G,c}(\xparents{i}, \yparents{i})$. 
    
    Indeed, the case $k = n$ is trivial by optimality of $V_n^{G,c}$. Assuming the result has been proved for $k+1$, we see similarly as above
    \begin{align*}
         I_k^c&(x_{1:k-1}, y_{1:k-1}) = \int c_k + I_{k+1}^c(x_{1:k}, y_{1:k}) \, d\varphi_{k, \varepsilon}^{\xparents{k}, \yparents{k}} \\ &\geq \int c_k + \sum_{i \in B_{k+1}} V_i^{G,c}(\xparents{i}, \yparents{i}) \, d\varphi_{k, \varepsilon}^{\xparents{k}, \yparents{k}} \\
          &= \int c_k + \sum_{i \in A_k} V_i^{G,c}(\xparents{i}, \yparents{i}) \, d\varphi_{k, \varepsilon}^{\xparents{k}, \yparents{k}} + \sum_{i \in B_k \setminus \curlybrackets{k}} V_i^{G,c}(\xparents{i}, \yparents{i}) \\ 
         &\geq  V_k^{G,c}(\xparents{k}, \yparents{k}) + \sum_{i \in B_k \setminus \curlybrackets{k}} V_i^{G,c}(\xparents{i}, \yparents{i}) = \sum_{i \in B_k} V_i^{G,c}(\xparents{i}, \yparents{i}), 
    \end{align*}
    where the last inequality follows by optimality of $V_k^{G,c}$.

    So, $V^{G,c}(\mu, \nu) + \varepsilon \geq \int c \, d\pi_{\varepsilon} = I_1^c \geq \sum_{i \in B_1} V_i^{G,c}(\xparents{i}, \yparents{i})  = \dpp^{G,c}(\mu, \nu)$. Take $\varepsilon \searrow 0$ to see $V^{G,c}(\mu, \nu) \geq \dpp^{G,c}(\mu, \nu)$. Combining this with the previously obtained reverse inequality, we get our desired result.
\end{proof}

\subsection*{Isometry to Nested Measures}

We now specialise our cost functions to obtain an embedding of $(\Gpprobabilities{p}{\mc{X}}, W_{G,p})$ into a certain space of nested measures, which we now introduce. In what follows, we fix $p \geq 1$ and metrics $d_k$ on $\mc{X}_k$ with respect to which the space $\Gpprobabilities{p}{\mc{X}}$ is defined as usual. Moreover, given a finite collection of metric spaces $(D_1, d_{D_1}), \ldots, (D_m, d_{D_m})$, $\Pp{p}{D_1}$ is understood as a metric space under the classical $p$-Wasserstein distance induced by $d_{D_1}$, and the product space $D \coloneqq \prod_{i =1}^m D_i$ is understood as a metric space under the metric $d_D$ given by
$$d_D((x_1, \ldots, x_m), (\tilde{x}_1, \ldots, \tilde{x}_m)) ^ p \coloneqq \sum_{i =1}^m d_{D_i}^p (x_i, \tilde{x}_i).$$

\begin{definition}[$G$-nested distribution]
\label{nested_distribution}
    Let $\mc{N}_n^{G,p} \coloneqq \Pp{p}{\mc{X}_n}$. For $1 \leq k < n$ we iteratively define the spaces $\mc{N}_k^{G,p} \coloneqq \Pp{p}{\mc{X}_k \times \prod_{i \in A_k}{\mc{N}_i^{G,p}}}$. The space of \emph{$G$-nested distributions} $\mc{N}^{G,p}$ is then given by
    $$\mc{N}^{G,p} \coloneqq \prod_{k \in A_0}  \mc{N}_k^{G,p}.$$

    For $\mu \in \Gpprobabilities{p}{\mc{X}}$ and a random variable $X \sim \mu$, let $\tilde{X}_n \coloneqq \condlaw{X_n}{X_{\parents{n}}}$ and iteratively for $1 \leq k < n$, 
    $$\tilde{X}_k \coloneqq \condlaw{X_k, \brackets{\tilde{X}_i}_{i \in A_k}}{X_{\parents{k}}},$$
    where the operation $\condlaw{\cdot}{X_{\varnothing}}$ is interpreted as $\law{\cdot}$.
    Note that each random variable $\tilde{X}_k$ takes values in $\mc{N}_k^{G,p}$. 
    
    Finally, define the $G$-nested distribution corresponding to $\mu$ as $$\mathfrak{N}^G(\mu) \coloneqq \brackets{\tilde{X}_k}_{k \in A_0} \in \mc{N}^{G,p}.$$
\end{definition}

Theorem \ref{AWisWG} is a consequence of the following result:

\begin{theorem}
\label{aw_equality}
    If $G$ is a sorted perfect DAG, then the map $\mathfrak{N}^G : \Gpprobabilities{p}{\mc{X}} \to \mc{N}^{G,p}$ defined above is an isometric embedding, i.e.,
    $$W_{G,p}(\mu, \nu) = d_{\mc{N}^{G,p}}(\mathfrak{N}^G(\mu), \mathfrak{N}^G(\nu)) = AW_p (\mu, \nu), \, \forall \, \mu, \nu \in \Gpprobabilities{p}{\mc{X}}.$$

\end{theorem}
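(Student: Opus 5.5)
The plan has two halves. First, $W_{G,p}(\mu,\nu)^p = d_{\mc{N}^{G,p}}(\mathfrak{N}^G(\mu),\mathfrak{N}^G(\nu))^p$ comes from the DPP of Proposition \ref{dpp_proof}. Second, the nested distance equals $AW_p$, via the known isometry for the linear graph \cite{veraguas2020fundamental,Bartl2021WassersteinSpace}.

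\textbf{Step 1: $W_{G,p}$ as a nested distance.} Take the $G$-separable cost $c_i = d_{\mc{X}_i}(x_i,y_i)^p$. By Proposition \ref{dpp_proof}, $W_{G,p}(\mu,\nu)^p = \sum_{k\in A_0} V_k^{G,c}$. I prove by backward induction on $k$ (from $n$ down to $1$) that
$$V_k^{G,c}(\xparents{k},\yparents{k}) = W_p\big(\tilde X_k(\xparents{k}), \tilde Y_k(\yparents{k})\big)^p,$$
with the Wasserstein distance taken on $\mc{X}_k\times\prod_{i\in A_k}\mc{N}_i^{G,p}$. Here $\tilde X_k(\xparents{k})$ denotes the conditional law, written as a function of the parents.

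\emph{Base case $k=n$.} This is the definition of $V_n^{G,c}$.

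\emph{Inductive step.} Let $i\in A_k$. Perfection gives $\parents{i}\subseteq\parents{k}\cup\{k\}$. So once $\xparents{k}$ is fixed, $\tilde X_i$ is a function of $x_k$ alone, and likewise for $Y$. Hence $\tilde X_k(\xparents{k})$ is the pushforward of $\mu_k^{\xparents{k}}$ under $x_k\mapsto (x_k,(\tilde X_i)_{i\in A_k})$. Using the induction hypothesis $V_i^{G,c} = d_{\mc{N}_i}(\tilde X_i,\tilde Y_i)^p$, the integrand in $V_k^{G,c}$ becomes exactly $d^p$ on the product space. It remains to check that infimising over $\couplings{\mu_k^{\xparents{k}}}{\nu_k^{\yparents{k}}}$ gives the same value as infimising over couplings of the two pushforwards. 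Every coupling of the pushforwards lifts to a coupling of the base measures by gluing with the graph maps, and the cost depends only on the image. This disintegration/gluing argument is standard.

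\emph{Conclusion.} Summing over $k\in A_0$, where the $\tilde X_k$ are unconditional laws, gives $W_{G,p}^p = d_{\mc{N}^{G,p}}^p$. The lower semi-analyticity of the $V_k$ needed along the way is already supplied by Proposition \ref{dpp_proof}.

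\textbf{Step 2: identification with $AW_p$.} The marginals are $G$-compatible, so $\condlaw{X_i}{X_{1:i-1}} = \condlaw{X_i}{X_{\parents{i}}}$. I would run the same DPP for the linear graph $G_n$, with value functions depending on the full past. I then show by backward induction that the $G_n$ value function $V^{G_n}_k(x_{1:k-1},y_{1:k-1})$ equals a sum $\sum_{i\in B_k} V_i^{G,c}(\xparents{i},\yparents{i})$, with $B_k$ as in the proof of Proposition \ref{dpp_proof}. Two facts drive the induction:
\begin{itemize}
\item[(a)] the $G_n$ next-step kernel couples $\mu_k^{\xparents{k}}$ and $\nu_k^{\yparents{k}}$;
\item[(b)] the terms indexed by $i\in B_k\setminus\{k\}$ do not depend on $(x_k,y_k)$, so they factor out of the infimum.
\end{itemize}
Fact (b) is exactly the identity used in the proof of Proposition \ref{dpp_proof}. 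Combined with the adapted DPP of \cite[Proposition 5.2]{Backhoff2016CausalDiscrete}, at $k=1$ this gives $AW_p^p = \dpp^{G,c} = W_{G,p}^p$.

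\textbf{Main obstacle.} I expect the delicate point to be Step 2. One must verify that $B_k$ absorbs all the remaining terms, i.e. that no term indexed by $i>k$ depends jointly on the current step and on nodes outside $\parents{k}\cup\{k\}$. Perfection (Proposition \ref{prop:perfection}) is precisely what guarantees this. I would check it with the proximal-path characterisation of Proposition \ref{condition_explained}.

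Injectivity of the embedding follows from the metric property: $W_{G,p}$ is a metric because $AW_p$ is.
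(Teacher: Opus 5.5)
Your proposal is correct. Step~1 is essentially the paper's argument: the DPP for the separable cost $\sum_k d_k^p$, then a backward induction identifying $V_k^{G,c}$ with $W_p$ between the pushforwards of $\mu_k^{\xparents{k}}$ and $\nu_k^{\yparents{k}}$ under the injective maps $x_k\mapsto(x_k,(\tilde X_i)_{i\in A_k})$. You also spell out the coupling-lifting detail that the paper leaves implicit.

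Step~2 takes a genuinely different route. The paper proves $W_{G,p}=AW_p$ by induction on the number of vertices, in two cases.
\begin{itemize}
\item If $|A_0|>1$, it writes $G=G_1\oplus G_2$ and factorises $\mu,\nu$ as products. It then combines the nested isometry on each piece with the product formula for $AW_p$ (Lemma~\ref{aw_independent}).
\item If $A_0=\{1\}$, it removes the root and writes $W_{G,p}^p=\inf_{\pi\in\couplings{\mu_1}{\nu_1}}\int d_1^p+W_{\tilde G,p}(\mu^{x_1},\nu^{y_1})^p\,d\pi$. It then uses the induction hypothesis and the adapted DPP.
\end{itemize}

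You instead run the linear-graph DPP, choosing the version $\mu_k^{x_{1:k-1}}:=\mu_k^{\xparents{k}}$ so that your fact (a) holds pointwise. You then show directly that $V_k^{G_n}=\sum_{i\in B_k}V_i^{G,c}$, which at $k=1$ gives $AW_p^p=\dpp^{G,c}=W_{G,p}^p$. This works because $B_{k+1}$ is the disjoint union of $A_k$ and $B_k\setminus\{k\}$.

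The point you flag as the main obstacle is in fact immediate. For $i\in B_k\setminus\{k\}$, either $\parents{i}=\varnothing$ or $\prox(i)=\max\parents{i}<k$, so all parents of $i$ are $<k$. No appeal to Proposition~\ref{condition_explained} is needed. Perfection enters only through the fact, already supplied by Proposition~\ref{dpp_proof}, that $V_k^{G,c}$ depends on $(\xparents{k},\yparents{k})$ alone.

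Comparing the two routes:
\begin{itemize}
\item Yours is a single induction that reuses the $B_k$ bookkeeping from the proof of Proposition~\ref{dpp_proof}. It makes $W_{G,p}=AW_p$ independent of the nested isometry, and it needs neither the splitting of a perfect DAG with several proximal roots into a disjoint union nor Lemma~\ref{aw_independent}.
\item The paper's keeps the nested-distribution picture central and exploits the forest structure of $\prox(G)$, at the cost of those extra structural facts.
\end{itemize}

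Your opening summary says the identification with $AW_p$ goes through the known nested isometry for the linear graph, but your Step~2 never uses it. That is harmless, but the summary should be corrected to match the argument you actually give.
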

\begin{proof}
    Fix $\mu, \nu \in \Gpprobabilities{p}{\mc{X}}$. As $W_{G,p}(\mu, \nu)^p = V^{G,c}(\mu, \nu)$ for the Borel and $G$-separable cost function given by $c(x,y) \coloneqq \sum_{k = 1}^n d_{k}(x_k, y_k)^p$, we may apply Proposition \ref{dpp_proof} to see $W_{G,p}(\mu, \nu)^p = \mathrm{DPP}^{G,c}(\mu, \nu)$.

    We define $\mathfrak{N}^G_n(\mu, \xparents{n}) \coloneqq \mu_n^{\xparents{n}} \in \Pp{p}{\mc{X}_n} = \mc{N}_n^{G,p}$ and recursively for $n-1 \geq k \geq 1$, $\mathfrak{N}^G_k(\mu, \xparents{k})$ is defined as the pushforward of the measure $\mu_k^{\xparents{k}}$ under the map $\phi_{k, \xparents{k}} : \mc{X}_k \to \mc{X}_k \times \prod_{i \in A_k}{\mc{N}_i^{G,p}}$ given by
    $$x_k \mapsto \brackets{x_k, \brackets{\mathfrak{N}^G_i(\mu, \xparents{i})}_{i \in A_k}},$$
    which is well defined since for all $i \in A_k$, $i > k$ and $\parents{i} \subseteq \parents{k} \cup \curlybrackets{k}$ as $G$ is perfect. Note that $\mathfrak{N}^G_k(\mu, \xparents{k}) \in \mc{N}_k^{G,p}$ for all $1 \leq k \leq n$. We may define $\mathfrak{N}^{G}_k(\nu, \yparents{k})$ similarly. 

    Using the notation of Proposition \ref{dpp_proof}, one easily observes by induction that $$V_k^{G,c}(\xparents{k}, \yparents{k}) = W_p(\mathfrak{N}^G_k(\mu, \xparents{k}), \mathfrak{N}^G_k(\nu, \yparents{k}))^p, \, \forall \, k = 1, \ldots, n.$$

    One also observes that, in the notation of Definition \ref{nested_distribution}, $\tilde{X}_k = \mathfrak{N}^G_k(\mu, X_{\parents{k}})$ $\mu$-a.s., thus $\mathfrak{N}^G(\mu) = \brackets{\tilde{X}_k}_{k \in A_0} = \brackets{\mathfrak{N}^G_k(\mu)}_{k \in A_0}$, as any $k \in A_0$ has no parents. By symmetry, $\mathfrak{N}^G(\nu) =  \brackets{\mathfrak{N}^G_k(\nu)}_{k \in A_0}$.

    Hence, 
    \begin{align*}
        \mathrm{DPP}^{G,c}(\mu, \nu) &= \sum_{k \in A_0} V_k^{G,c} = \sum_{k \in A_0} W_p(\mathfrak{N}^G_k(\mu), \mathfrak{N}^G_k(\nu))^p \\
        &= d_{\mc{N}^{G,p}}(\mathfrak{N}^G(\mu), \mathfrak{N}^G(\nu))^p.
    \end{align*}

    We have thus shown that $\mathfrak{N}^G$ is an isometry. We now proceed to show $W_{G,p}(\mu, \nu) =  AW_p(\mu, \nu)$ via induction on the number of nodes in $G$. 

    If $G$ has a single node, then clearly $W_{G,p}(\mu, \nu) = W_p (\mu, \nu) = AW_p(\mu, \nu)$.

    Suppose we have proved that $W_{\tilde{G},p}$ coincides with $AW_p$ on spaces of $\tilde{G}$-compatible measures for any sorted perfect DAG $\tilde{G}$ with at most $n - 1$ vertices. Let $G=(V,E)$ be (as before) a sorted DAG with $n$ vertices.

    If $m \coloneqq \abs{A_0} > 1$, $G$ may be written (up to a relabelling) as $G = G_1 \oplus G_2$ for two sorted DAGs $G_1 =(V_1, E_1)$,  $G_2 =(V_2, E_2)$, where the $V_i$ partition $V$ and the $E_i = E \cap (V_i \times V_i)$ partition $E$. Moreover, we may assume without loss of generality that $V_1 = \curlybrackets{1, \ldots, m}$ for some $1 \leq m < n$ via a relabelling. Then $\mu \in \Gprobabilities{\mc{X}}$ can be written as the product measure 
    $$\mu(dx) = \mu_{V_1}(dx_{V_1}) \otimes \mu_{V_2}(dx_{V_2}),$$
    with $\mu_{V_i} \in \specificGprobabilities{G_i}{\mc{X}_{V_i}}$. Applying the first part of this proof to $G$ and the $G_i$, which are perfect, together with the induction hypothesis, we see that 
    \begin{align*}
        W_{G,p}(\mu, \nu)^p &= d_{\mc{N}^{G,p}}(\mathfrak{N}^G(\mu), \mathfrak{N}^G(\nu))^p = \sum_{i=1}^2 d_{\mc{N}^{G_i,p}}(\mathfrak{N}^{G_i}(\mu_{V_i}), \mathfrak{N}^{G_i}(\nu_{V_i}))^p \\
        &= \sum_{i=1}^2 AW_p(\mu_{V_i}, \nu_{V_i})^p = AW_p(\mu, \nu)^p,
    \end{align*}
    where the last equality follows via Lemma \ref{aw_independent}.

    If $\abs{A_0} = 1$, then we have $A_0 = \curlybrackets{1}$. Let $\tilde{G}$ be the perfect graph obtained by removing the vertex $1$ from $G$, along with any edges emitting from it. We write $\mu^{x_1} \coloneqq \bigotimes_{k=2}^n \mu_k^{\xparents{k}}, \nu^{y_1} \coloneqq \bigotimes_{k=2}^n \nu_k^{\yparents{k}} \in \specificGprobabilities{\tilde{G}}{\mc{X}_{2:n}}$. Using the DPP structure in this case, we see that
    \begin{align*}
        W_{G,p}&(\mu, \nu)^p = \inf_{\pi \in \couplings{\mu_1}{\nu_1}} \int d_1(x_1, y_1)^p + W_{\tilde{G},p}(\mu^{x_1}, \nu^{y_1})^p \, d\pi(x_1, y_1) \\
        &= \inf_{\pi \in \couplings{\mu_1}{\nu_1}} \int d_1(x_1, y_1)^p + AW_{
p}(\mu^{x_1}, \nu^{y_1})^p \, d\pi(x_1, y_1) = AW_p(\mu, \nu)^p,
    \end{align*}
    where the second equality follows by the induction hypothesis and the third by the DPP structure of the adapted Wasserstein distance (which can be seen as a special case of Proposition \ref{dpp_proof}).
\end{proof}

\begin{example}
    We illustrate the (at times opaque) definitions above via a few examples. For different graphs $G= (V, E)$, we explicitly write out the form of the $G$-nested distribution corresponding to some general $\mu \in \Gprobabilities{\mc{X}}$ with the aid of a random variable $X \sim \mu$.

    \begin{itemize}
        \item If $V = \curlybrackets{1, 2, 3}$, $E = \setdef{(i, j) \in V \times V}{i < j}$, then $\mathfrak{N}^G(\mu)$ is the usual nested distribution \cite{Pflug2009NestedDistributions}
        $$\mathfrak{N}^G(\mu) = \law{X_1, \condlaw{X_2, \condlaw{X_3}{X_1, X_2}}{X_1}}.$$
        \item If $V = \curlybrackets{1, 2, 3}$, $E = \curlybrackets{(1,2), (2,3)}$, then
        $$\mathfrak{N}^G(\mu) = \law{X_1, \condlaw{X_2, \condlaw{X_3}{X_2}}{X_1}}.$$
        However, as $\condindep{X_3}{X_2}{X_1}$, $\condlaw{X_3}{X_1, X_2} = \condlaw{X_3}{X_2}$, and we could have simply defined $\mathfrak{N}^G(\mu)$ as in the item above, since they are equal.
        \item If $V = \curlybrackets{1, 2, 3, 4, 5}$, $E = \curlybrackets{(1,2), (1,3), (3,4)}$, then
        $$\mathfrak{N}^G(\mu) = \brackets{\law{X_1, \condlaw{X_2}{X_1}, \condlaw{X_3, \condlaw{X_4}{X_3}}{X_1}}, \law{X_5}}.$$
    \end{itemize}
\end{example}

\subsection{Proof of Statements in Section \ref{section:applications}}
\label{proof_lipschitz_optimisation}

We first prove Theorem \ref{thm:lipschitz_optimisation} on Lipschitz continuity for stochastic team problems.

\begin{proof}[Proof of Theorem \ref{thm:lipschitz_optimisation}]
   Fix $\pi \in \Gbicouplings{\mu}{\nu}$ and let $(X, Y) \sim \pi$. 
   
    Let $\varepsilon > 0$. Then there exist measurable functions $H_k : \mc{X}_k \times \mc{X}_{\parents{k}} \to \mbb{R}$ such that $\alpha = H(Y)$ is an $\eps$-optimiser for $v(\nu)$, i.e., $$v(\nu) \leq \int Q(y, H(y)) \, d\nu(y) \leq v(\nu) + \eps.$$

    By Lemma \ref{applications_lemma} below applied with $A = \curlybrackets{k} \cup \parents{k}$, using the fact that $G$ is transitively closed and hence $\parents{A} \subseteq A$ and $\Gbicouplings{\mu}{\nu} \subseteq \newGbicouplings{\mu}{\nu}$ (see Theorem \ref{thm:lifted}), we know $\condindep{(Y_k, Y_{\parents{k}})}{X_k, X_{\parents{k}}}{X}$. Thus, one may define the measurable functions $\Tilde{H}_k : \mc{X}_k \times \mc{X}_{\parents{k}} \to \mbb{R}$ via
    $$\Tilde{H}_k(x_k, \xparents{k}) \coloneqq \int H_k(y_k, \yparents{k}) \, d\pi^{x_k, x_{\parents{k}}}(y_k, \yparents{k}) = \int H_k(y_k, \yparents{k}) \, d\pi^{x}(y_k, \yparents{k}),$$
    and thus we can write coordinate-wise that $\Tilde{H}(x) = \int H(y) \, d\pi^{x}(y) \in A$.
    
    Then, by convexity properties of $Q$ and Jensen's inequality,
    \begin{align*}
        \int &Q(x, \Tilde{H}(x)) \, d\mu(x) = \int Q\brackets{x, \int H(y) \, d\pi^{x}(y)} \, d\mu(x) \\
        & \leq \iint Q\brackets{x, H(y)} \, d\pi^{x}(y) \, d\mu(x) = \int Q\brackets{x, H(y)} \, d\pi(x,y).
    \end{align*}
    Hence, using the Lipschitz property of $Q$,
    \begin{align*}
        v(\mu) - v(\nu) - \varepsilon \leq \int Q\brackets{x, H(y)} -  Q\brackets{y, H(y)}\, d\pi(x,y) 
        \leq L \int d_{\mc{X}}(x,y) d\pi(x,y).
    \end{align*}
    We can obtain a similar relation by switching the roles of $\mu$ and $\nu$. As $\pi$ and $\eps$ were fixed arbitrarily, the result then follows.
\end{proof}

\begin{lemma}
    \label{applications_lemma}
    Let $G$ be a transitively closed DAG, $\mu \in \Gprobabilities{\mc{X}}, \nu \in \Gprobabilities{\mc{Y}}$, $\pi \in \newGbicouplings{\mu}{\nu}$. Then, if $(X, Y) \sim \pi$, we have that $\condindep{Y_A}{X_A, X_{\parents{A}}}{X}$, for all $A \subseteq V$.
\end{lemma}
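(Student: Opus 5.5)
The plan is to work directly from the definition of $\newGbicouplings{\mu}{\nu}$, that is, on the lifted space. Pick $\Hat{\pi} \in \Gmonge{\Hat{\mu}}{\Hat{\nu}}$ with $\pi = \pushforward{\proj{\mc{X}\times\mc{Y}}}{\Hat{\pi}}$, and write $(X,U,Y,V) \sim \Hat{\pi}$. Here $(Y,V) = T(X,U)$ for a $G$-biadapted map $T$, $U=(U_1,\dots,U_n)$ is i.i.d. uniform, and $U$ is independent of $X$.

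Fix $A \subseteq V$ and set $B \coloneqq A \cup \parents{A}$. The key structural step is to show that $G$-adaptedness of $T$ together with transitive closure makes $(Y_A,V_A)$ a function of $(X_B,U_B)$ alone. For $i \in A$ we have $(Y_i,V_i) = T_i((X_i,U_i),(X,U)_{\parents{i}})$. Since $\parents{i} \subseteq \parents{A}$, each such coordinate depends only on $(X_B, U_B)$. So there is a measurable $F$ with $Y_A = F(X_B, U_B)$, and this uses only $G$-adaptedness. Transitive closure gives the extra fact $\parents{B} \subseteq B$: a parent of a parent of $i\in A$ is itself a parent of $i$. This is not strictly needed for the identity but makes $B$ ancestrally closed.

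Next comes the conditional independence. Because $U_B$ is independent of $X$ (all of $U$ is independent of $X$ under $\Hat{\mu} = \mu\otimes\lambda^n$), we get $\law{Y_A \mid X} = \law{F(x_B,U_B)}|_{x_B = X_B}$. This conditional law is a measurable function of $X_B = (X_A, X_{\parents{A}})$ only. Equivalently, for every bounded measurable $h$,
\[
\mathbb{E}[h(Y_A)\mid X] = \int h(F(X_B,u_B))\,d\lambda^{|B|}(u_B),
\]
which is $\sigma(X_B)$-measurable. This is precisely $\condindep{Y_A}{X_A, X_{\parents{A}}}{X}$, via the standard characterisation that $\mathbb{E}[h(Y_A)\mid X] = \mathbb{E}[h(Y_A)\mid X_B]$. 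The computation holds under $\Hat{\pi}$ and therefore under its marginal $\pi$.

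I expect the main obstacle to be only the bookkeeping in identifying that $Y_A$ depends on $(X,U)$ solely through indices in $A\cup\parents{A}$, together with the measurability of $F$. The latter follows from $T$ being Borel, with componentwise functions $T_i$ measurable by definition of $G$-adapted maps. The independence of $U$ from $X$ is what prevents hidden information from entering. Note that we never need bijectivity of $T$ or the anticausal direction for this implication.
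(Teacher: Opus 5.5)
Your proposal is correct and follows essentially the same route as the paper's proof. The paper also writes $Y_A$ as a function of $(X_A, X_{\mathrm{pa}_A}, U_A, U_{\mathrm{pa}_A})$ via the $G$-adapted lifted Monge map and concludes from the independence of $U$ and $X$. Your remark that transitive closure and bijectivity are not actually needed for this step is consistent with the paper's argument, which does not use them either.
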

\begin{proof}
Let $T : \hat{\mc{X}} \to \hat{\mc{Y}}$ be a $G$-biadapted map such that $\pi = \pushforward{\proj{\mc{X} \times \mc{Y}}}{\hat{\pi}}$, where $\hat{\pi}\coloneqq \pushforward{(\mathrm{Id}, T)}{\hat{\mu}}$. Then if $(X,U,Y,V) \sim \hat{\pi}$, $(X,Y) \sim \pi$. Moreover, for all $i \in V$, $Y_i = T_i(X_i, U_i, X_{\parents{i}}, U_{\parents{i}})$, and thus $Y_A = T_A(X_A, X_{\parents{A}}, U_A, U_{\parents{A}})$, where we make a slight abuse of notation and add in inputs that might not be used. As $U$ is independent of $X$, this implies our conclusion.
\end{proof}

Finally, we end with the following simple lemma used in the proof of Corollary \ref{corollary:ate}. We work under the assumptions and notation of Proposition \ref{prop:continuity_ate} and Corollary \ref{corollary:ate}, presented in Section \ref{section:applications}.

\begin{lemma}
    \label{lemma:ate}
    Let $G = (V, E)$ and $G' = (V, E')$ be DAGs such that $E \subseteq E'$. Then $\specificGprobabilities{G, \delta}{\mc{X}} \subseteq \specificGprobabilities{G', \delta}{\mc{X}}$ and $\psi^{\mu, G} = \psi^{\mu, G'}$ for all $\mu \in \specificGprobabilities{G, \delta}{\mc{X}}$.
\end{lemma}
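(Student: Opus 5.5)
The plan has two parts: first show that $\specificGprobabilities{G,\delta}{\mc{X}} \subseteq \specificGprobabilities{G',\delta}{\mc{X}}$, then show that the back-door adjusted quantities computed with $\parents{j}$ and with $\parents{j}'$ agree. Throughout I take $G$ and $G'$ sorted with respect to the same order $1:n$, as in Corollary \ref{corollary:ate}, and write $\parents{i}'$ for the parent sets in $G'$. Since $E \subseteq E'$, we have $\parents{i} \subseteq \parents{i}' \subseteq 1:i-1$.

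\emph{Inclusion.} Let $\mu \in \Gprobabilities{\mc{X}}$ and $X \sim \mu$. By Proposition \ref{prop:Gcomp_characterisation} 4), $\condindep{X_i}{X_{\parents{i}}}{X_{1:i-1}}$. Because $X_{\parents{i}'}$ is a function of $X_{1:i-1}$ and $\parents{i} \subseteq \parents{i}'$, we get $\condindep{X_i}{X_{\parents{i}'}}{X_{1:i-1}}$, so $\mu \in \specificGprobabilities{G'}{\mc{X}}$. For the $\delta$-condition, the same independence gives
$$\mu(x_j=1 \mid x_{\parents{j}'}) = \mu(x_j=1 \mid x_{1:j-1}) = \mu(x_j=1 \mid x_{\parents{j}}) \qquad \mu\text{-a.s.},$$
which lies in $[\delta,1-\delta]$.

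\emph{Equality of ATEs.} Write $P = \parents{j}$, $P' = \parents{j}'$ and $Q = P'\setminus P$. The target identity is
$$\int\!\!\int x_k\, \mu(dx_k \mid x_j=1, x_{P'})\, \mu(dx_{P'}) = \int\!\!\int x_k\, \mu(dx_k \mid x_j=1, x_P)\, \mu(dx_P).$$
I would obtain it in three steps.
\begin{enumerate}
\item Disintegrate $\mu(dx_{P'}) = \mu(dx_P)\,\mu(dx_Q \mid x_P)$.
\item Use the local Markov property for $G$ at node $j$ (Proposition \ref{prop:Gcomp_characterisation} 4)) to get $\condindep{X_j}{X_P}{X_Q}$. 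Hence $\mu(dx_Q \mid x_P) = \mu(dx_Q \mid x_P, x_j=1)$; this conditional is well defined thanks to the $\delta$-positivity.
\item Apply the tower property: integrating $\mu(dx_k \mid x_j=1,x_P,x_Q)$ against $\mu(dx_Q \mid x_P,x_j=1)$ yields $\mu(dx_k \mid x_j=1,x_P)$.
\end{enumerate}
Combining the three steps gives the identity. The case $x_j=0$ is identical, so taking the absolute difference gives $\psi^{\mu,G}=\psi^{\mu,G'}$. Alternatively, one could cite \cite[Theorem 3.3.2]{Pearl2009Causality}, since $P'$ satisfies the back-door criterion, but the direct computation avoids interventional semantics.

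\emph{Main obstacle.} The delicate point is step 2: making the substitution $\mu(dx_Q\mid x_P)=\mu(dx_Q\mid x_P,x_j=1)$ rigorous without densities. I would handle it by noting that $\mu(x_j=1\mid x_P)\ge\delta>0$, so that
$$\mu(dx_Q\mid x_P, x_j=1) = \frac{\mu(x_j=1 \mid x_P, x_Q)\,\mu(dx_Q\mid x_P)}{\mu(x_j=1\mid x_P)}.$$
The numerator factor $\mu(x_j=1 \mid x_P, x_Q)$ equals $\mu(x_j=1\mid x_P)$ a.s. by the conditional independence, so the ratio reduces to $\mu(dx_Q\mid x_P)$. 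A further check is that $X_k$ is bounded (since $\mc{X}_k$ is compact), so all integrals are finite and Fubini applies.
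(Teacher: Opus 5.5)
Your proposal is correct and follows essentially the same route as the paper. The inclusion comes from the conditional independence of $X_j$ and $X_{\mathrm{pa}'_j}$ given $X_{\mathrm{pa}_j}$, inherited from $G$-compatibility. The equality of the adjusted means comes from disintegrating over $Q=\mathrm{pa}'_j\setminus\mathrm{pa}_j$, replacing $\mu(dx_Q\mid x_P)$ by $\mu(dx_Q\mid x_P,x_j=1)$, and applying the tower property. Beyond the paper, you explicitly verify $G'$-compatibility via weak union and justify the swap by Bayes' rule with $\delta$-positivity; the paper leaves both of these implicit.
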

\begin{proof}
    We denote by $\specificGparents{G}{j}$ the parents of node $j$ in $G$ and $\specificGparents{G'}{j}$ the parents of node $j$ in $G'$. As $E \subseteq E'$, clearly $\specificGparents{G}{j} \subseteq \specificGparents{G'}{j}$, and $\specificGparents{G'}{j}$ cannot contain a descendant of $j$ in $G$, as otherwise $G'$ would be cyclic.

    Let $\mu \in  \specificGprobabilities{G, \delta}{\mc{X}}$. Then, as $\mu$ is $G$-compatible, by the remark above $\condindep{X_j}{X_{\specificGparents{G}{j}}}{X_{\specificGparents{G'}{j}}}$ holds under $\mu$. Consequently, $\mu(x_j =1 \, \vert \, x_{\specificGparents{G'}{j}}) = \mu(x_j =1 \, \vert \, x_{\specificGparents{G}{j}}) \in [\delta, 1-\delta]$, and so $\mu \in\specificGprobabilities{G', \delta}{\mc{X}}$.

    To show $\psi^{\mu, G} = \psi^{\mu, G'}$, it is enough to note that $\specificGparents{G'}{j}$ satisfies the back-door adjustment criterion \cite[Definition 3.3.1]{Pearl2009Causality} for $(X_j,X_k)$ in $G$ and apply \cite[Theorem 3.3.2]{Pearl2009Causality}. We offer a direct proof for completeness. 

    Let $A \coloneq \specificGparents{G'}{j} \setminus \specificGparents{G}{j}$. Then:
    \begin{align*}
        \int_{\mc{X}_{\specificGparents{G'}{j}}} &\int_{\mc{X}_k} x_k \, \mu(dx_k \,\vert\,x_j=1, x_{\specificGparents{G'}{j}}) \, \mu(dx_{\specificGparents{G'}{j}}) = \\
        &= \int_{\mc{X}_{\specificGparents{G}{j}}} \int_{\mc{X}_A} \int_{\mc{X}_k} x_k \, \mu(dx_k \,\vert\,x_j=1, x_{\specificGparents{G}{j}}, x_A) \, \mu(dx_A \,\vert\, x_{\specificGparents{G}{j}}) \, \mu(dx_{\specificGparents{G}{j}}) \\
        &= \int_{\mc{X}_{\specificGparents{G}{j}}} \int_{\mc{X}_A} \int_{\mc{X}_k} x_k \, \mu(dx_k \,\vert\,x_j=1, x_{\specificGparents{G}{j}}, x_A) \, \mu(dx_A \,\vert\, x_j =1, x_{\specificGparents{G}{j}}) \, \mu(dx_{\specificGparents{G}{j}}) \\
        &=  \int_{\mc{X}_{\specificGparents{G}{j}}} \int_{\mc{X}_k} x_k \, \mu(dx_k \,\vert\,x_j=1, x_{\specificGparents{G}{j}}) \, \mu(dx_{\specificGparents{G}{j}}),
    \end{align*}
where the second equality follows since $\condindep{X_A}{X_{\specificGparents{G}{j}}}{X_j}$ under $\mu$ (as above). We can obtain a symmetric result for $x_j=0$, and thus we see that the average treatment effects under $G$ and $G'$ agree.
\end{proof}
\section*{Acknowledgements}
The authors would like to thank Yifan Jiang, Fang Rui Lim, Ivan Guo, Samuel N. Cohen, Stephan Eckstein, Mathias Beiglb\"ock, Gudmund Pammer, Stefan Schrott, Daniel Lacker, and Serdar Y\"uksel for interesting conversations regarding this work and related topics. Vlad Tuchilu\textcommabelow{s}' research is supported by the EPSRC Centre for Doctoral Training in Mathematics of Random Systems: Analysis, Modelling and Simulation (EPSRC Grant EP/S023925/1).

\bibliographystyle{plainnat_less} 
\bibliography{refs}  

\appendix
\section{Technical Results}
\label{appendixA}

In this appendix we prove some measure-theoretic results used in Section \ref{proof_section}. 

\begin{proposition}
\label{marginal_of_coupling}
    Let $G = (V, E)$ be a sorted DAG. Suppose $A \subseteq V$ is such that $\parents{i} \subseteq A$ for all $i \in A$. Let $\mu \in \Gprobabilities{\mc{X}}$, $\nu \in \Gprobabilities{\mc{Y}}$, $\pi \in \Gcouplings{\mu}{\nu}$. Let $\mu_A \coloneqq \pushforward{\proj{\mc{X}_A}}{\mu}$, $\nu_A \coloneqq \pushforward{\proj{\mc{Y}_A}}{\nu}$, $\pi_A \coloneqq \pushforward{\proj{\mc{X}_A \times \mc{Y}_A}}{\pi}$. 
    
    Then, if we define the graph $G_A \coloneqq (A, E \cap (A \times A))$, obtained by deleting from $G$ all the vertices that do not belong in $A$ along with any incident edges, we have that $\pi_A \in \specificGcouplings{G_A}{\mu_A}{\nu_A}$.
\end{proposition}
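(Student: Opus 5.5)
The plan is to argue directly from the structural representation in Definition \ref{G_causal_definition_2}, which Proposition \ref{prop:equivalent_Gcausal} tells us is equivalent to Definition \ref{G_causal_definition} for $G$-compatible marginals. Since $\pi \in \Gcouplings{\mu}{\nu}$, there are $(X,Y)\sim\pi$, independent $\mbb{R}$-valued $(U_i)_{i\in V}$ independent of $\sigma(X)$, and measurable $f_i$ with $Y_i = f_i(X_i, X_{\parents{i}}, Y_{\parents{i}}, U_i)$ for all $i\in V$. For $i\in A$ the hypothesis $\parents{i}\subseteq A$ means these equations involve only the coordinates $X_A$, $Y_A$ and $U_A$. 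In $G_A$ the parent set of $i\in A$ is $\parents{i}\cap A=\parents{i}$, so the same $f_i$ have the correct signature for $G_A$.

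What remains is that Definition \ref{G_causal_definition_2} for $G_A$ needs $\sigma(X_A)$ and $(\sigma(U_i))_{i\in A}$ to be independent. This holds because $\sigma(X_A)\subseteq\sigma(X)$ and sub-families of an independent family remain independent. Also $(X_A,Y_A)\sim\pi_A$, which lies in $\couplings{\mu_A}{\nu_A}$. Hence $\pi_A$ is $G_A$-causal in the sense of Definition \ref{G_causal_definition_2}.

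To switch back to Definition \ref{G_causal_definition}, I would use Proposition \ref{prop:equivalent_Gcausal}. This requires $\mu_A\in\specificGprobabilities{G_A}{\mc{X}_A}$ and $\nu_A\in\specificGprobabilities{G_A}{\mc{Y}_A}$. These follow from characterisation 2) of Proposition \ref{prop:Gcomp_characterisation}: write $X_i=g_i(X_{\parents{i}},W_i)$ for all $i\in V$. Restricted to $i\in A$, these equations are closed in $A$ because $A$ is ancestrally closed, so they exhibit $\mu_A$ as $G_A$-compatible. The same argument applies to $\nu_A$.

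Alternatively, the whole proof could use characterisation 3) of Proposition \ref{prop:equivalent_Gcausal}, after relabelling $A$ in increasing order. One would then check $\condindep{Y_i}{X_i,X_{\parents{i}},Y_{\parents{i}}}{(X_A,Y_{A\cap 1:i-1})}$ by weakening the conditional independence that holds for the full vectors. That route needs the weak-union and decomposition properties. The structural route above avoids them.

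I expect no real obstacle. The only point needing care is that $G_A$'s parent sets coincide with $G$'s on $A$, which is exactly the closure assumption $\parents{i}\subseteq A$. Without that assumption the $f_i$ would depend on variables outside $A$ and the argument would fail.
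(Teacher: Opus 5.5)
Your proof is correct. It follows a different route from the paper's.

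\textbf{The paper's route.} It relabels so that $A=\{1,\dots,m\}$ while keeping $G$ sorted, which is possible precisely because $A$ contains the parents of each of its vertices. It then invokes the disintegration characterisation \cite[Theorem 3.4(iv)]{Eckstein2023CausalGraphs}: $\pi=\bigotimes_{i=1}^n\pi(dx_i,dy_i\mid x_{\parents{i}},y_{\parents{i}})$ with $\pi(dx_i\mid x_{\parents{i}},y_{\parents{i}})=\mu(dx_i\mid x_{\parents{i}})$ almost surely. It observes that the first $m$ factors already form $\pi_A$ and inherit the kernel constraint.

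\textbf{Your route.} You restrict the structural equations of Definition \ref{G_causal_definition_2} to $A$. This needs no relabelling and no disintegration. It only uses the equivalence of the two definitions of causality from Proposition \ref{prop:equivalent_Gcausal}. That is why you rightly check separately that $\mu_A$ and $\nu_A$ are $G_A$-compatible; in the paper's argument this comes for free from the truncated product of kernels.

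\textbf{What each buys.} Your version makes it transparent that the closure assumption $\parents{i}\subseteq A$ is exactly what keeps the restricted system of equations closed. The paper's version delivers $\pi_A$ directly in kernel form. That is the language used in Proposition \ref{equivalent_characterisations} and in the gluing arguments where this proposition is applied.

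\textbf{A small correction to your alternative route.} If you go through characterisation 3), the conditioning variables $(X_i,X_{\parents{i}},Y_{\parents{i}})$ are the same for $G$ and $G_A$. You only pass to a sub-vector of $(X,Y_{1:i-1})$, so decomposition alone suffices and weak union is not needed.
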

\begin{proof}
    We may assume without loss of generality via a relabelling that $A = \curlybrackets{1, \ldots, m}$$ \subseteq \curlybrackets{1, \ldots, n} = V$ for some $n \geq m \geq 1$. As $\parents{i} \subseteq A$ for all $i \in A$, we may ensure that $G$ remains sorted under this relabelling. Once this has been done, the statement is a simple consequence of \cite[Theorem 3.4(iv)]{Eckstein2023CausalGraphs}, as outlined below.

    By \cite[Theorem 3.4(iv)]{Eckstein2023CausalGraphs}, $\pi$ may be disintegrated as
    \begin{align*}
        \pi(dx, dy) &= \bigotimes_{i=1}^{n} \pi(dx_i, dy_i \mid  \xparents{i}, \yparents{i}) \\
        &=  \brackets{\bigotimes_{i=1}^{m} \pi(dx_i, dy_i \mid \xparents{i}, \yparents{i})} \otimes \brackets{\bigotimes_{i=m+1}^{n} \pi(dx_i, dy_i \mid \xparents{i}, \yparents{i})},
    \end{align*}
    where $\pi(dx_i \mid \xparents{i}, \yparents{i}) = \mu(dx_i \mid \xparents{i})$ $\pi$-a.s. for all $1 \leq i \leq n$. 

    Clearly now $\pi_A(dx_A, dy_A) = \bigotimes_{i=1}^{m} \pi(dx_i, dy_i \mid \xparents{i}, \yparents{i})$ with the kernels satisfying $\pi(dx_i \mid \xparents{i}, \yparents{i}) = \mu(dx_i \mid \xparents{i}) = \mu_A(dx_i \mid \xparents{i})$ $\pi$-a.s. so also $\pi_A$-a.s. for all $1 \leq i \leq m$. Thus, $\pi_A \in \specificGcouplings{G_A}{\mu_A}{\nu_A}$.
\end{proof}

In what follows, we denote the DAG with 2 vertices connected by a directed arrow by $\Garrow \coloneqq (\curlybrackets{1, 2}, \curlybrackets{(1,2)})$. 

\begin{lemma}
\label{marginal_of_gluing_easy}
    Suppose $\mu \in \specificGprobabilities{\Garrow}{\mc{X}_{1:2}}$, $\nu \in \specificGprobabilities{\Garrow}{\mc{Y}_{1:2}}$, $\eta \in \specificGprobabilities{\Garrow}{\mc{Z}_{1:2}}$, $\pi \in \specificGcouplings{\Garrow}{\mu}{\nu}$, $\tilde{\pi} \in \specificGcouplings{\Garrow}{\nu}{\eta}$. Let $\pi_1 \coloneqq \pushforward{\proj{\mc{X}_1 \times \mc{Y}_1}}{\pi}$, and $\tilde{\pi}_1 \coloneqq \pushforward{\proj{\mc{Y}_1 \times \mc{Z}_1}}{\tilde{\pi}}$. Then 
    $$\pushforward{\proj{(\mc{X}_1 \times \mc{Y}_1 \times \mc{Z}_1)}}{\brackets{\bigglue{\pi}{\tilde{\pi}}}} = \bigglue{\pi_1}{\tilde{\pi}_1}.$$
\end{lemma}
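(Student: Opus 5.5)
The plan is to compare the two measures on rectangles $A\times B\times C$ with $A\subseteq\mc{X}_1$, $B\subseteq\mc{Y}_1$, $C\subseteq\mc{Z}_1$ Borel. Write $(X,Y,Z)\sim\bigglue{\pi}{\tilde{\pi}}$. By construction $\condindep{X}{Y}{Z}$ and $\law{Y,Z}=\tilde{\pi}$. The left-hand side is then $\law{X_1,Y_1,Z_1}$. Disintegrating, we get
$$\mbb{P}(X_1\in A,Y_1\in B,Z_1\in C)=\int \mathbbm{1}_A(x_1)\mathbbm{1}_B(y_1)\,\kappa(Z_1\in C\mid y_1,y_2)\,d\pi(x,y),$$
where $\kappa(dz\mid y)$ is the kernel of $\tilde{\pi}$ given $y=(y_1,y_2)$. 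The right-hand side equals
$$\int \mathbbm{1}_A(x_1)\mathbbm{1}_B(y_1)\,\kappa_1(C\mid y_1)\,d\pi_1(x_1,y_1),$$
with $\kappa_1(dz_1\mid y_1)$ the conditional law of $Z_1$ given $Y_1$ under $\tilde{\pi}_1$. So everything reduces to showing that one may replace $\kappa(Z_1\in C\mid y_1,y_2)$ by a function of $y_1$ alone.

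The key step is to use that $\tilde{\pi}\in\specificGcouplings{\Garrow}{\nu}{\eta}$ is $\Garrow$-causal. By Proposition \ref{prop:equivalent_Gcausal} 3) with $i=1$ and $\parents{1}=\varnothing$, this gives $\condindep{Z_1}{Y_1}{Y}$, i.e.\ $\condindep{Z_1}{Y_1}{Y_2}$. Hence $\kappa(Z_1\in C\mid y_1,y_2)=\kappa_1(C\mid y_1)$ for $\nu$-a.e.\ $y$. Since the $\mc{Y}$-marginal of $\pi$ is $\nu$, this equality also holds $\pi$-a.e. Substituting it into the first integral makes the integrand depend only on $(x_1,y_1)$. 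Integrating out $(x_2,y_2)$ under $\pi$ then gives exactly the second integral, because $\pi_1$ is the $(x_1,y_1)$-marginal of $\pi$ and $\tilde{\pi}_1$ has the same $\mc{Y}_1$-marginal $\nu_1$. The gluing $\bigglue{\pi_1}{\tilde{\pi}_1}$ is therefore well defined and uses precisely the kernel $\kappa_1$.

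Rectangles form a $\pi$-system generating the Borel $\sigma$-algebra of $\mc{X}_1\times\mc{Y}_1\times\mc{Z}_1$, so the two measures agree. The only delicate point is the null-set bookkeeping. The conditional independence gives the kernel identity only $\nu$-a.s., so I will stress that this transfers to $\pi$-a.s.\ via the marginal. I also note that causality of $\pi$ itself is not needed; only the causality of $\tilde{\pi}$ at the root node is used.
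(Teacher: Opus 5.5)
Your proof is correct and is essentially the paper's argument. Both use only two facts. The first is $\condindep{Z_1}{Y_1}{Y}$, which comes from the $\Garrow$-causality of $\tilde{\pi}$ at the root node. The second is the conditional independence of $X$ and $Z$ given $Y$ that is built into the gluing; you are right that causality of $\pi$ and compatibility of the marginals are not needed. The only difference is packaging: the paper combines these two relations via the chain rule of conditional independence to obtain $\condindep{X_1}{Y_1}{Z_1}$, while you carry out the same step explicitly by replacing $\kappa(\cdot\mid y_1,y_2)$ with $\kappa_1(\cdot\mid y_1)$ and checking equality on rectangles.
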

\begin{proof} Let $(X, Y, Z) \sim \bigglue{\pi}{\tilde{\pi}}$. Clearly, $(X_1, Y_1) \sim \pi_1$, $(Y_1, Z_1) \sim \tilde{\pi}_1$. It suffices to show that $\condindep{X_1}{Y_1}{Z_1}$, since this will imply that $(X_1, Y_1, Z_1) \sim \bigglue{\pi_1}{\tilde{\pi}_1}$, which is what we needed to show.

As $\tilde{\pi} \in \specificGcouplings{\Garrow}{\nu}{\eta}$, we know $\condindep{Z_1}{Y_1}{Y}$. By properties of $\otimesdot$, $\condindep{X}{Y}{Z}$, so particularly also $\condindep{Z_1}{Y}{X_1}$. Combining these two relations via the chain rule \cite[Theorem 8.12]{Kallenberg2021FoundationsThird}, we get $\condindep{Z_1}{Y_1}{(X_1, Y)}$, which implies $\condindep{Z_1}{Y_1}{X_1}$.
\end{proof}

\begin{proposition}
\label{marginal_of_gluing}
    Let $G = (V, E)$ be a sorted DAG. Suppose $A \subseteq V$ is such that $\parents{i} \subseteq A$ for all $i \in A$. Let $\mu \in \Gprobabilities{\mc{X}}$, $\nu \in \Gprobabilities{\mc{Y}}$, $\eta \in \Gprobabilities{\mc{Z}}$, $\pi \in \Gcouplings{\mu}{\nu}$, $\tilde{\pi} \in \Gcouplings{\nu}{\eta}$ and $\pi_A \coloneqq \pushforward{\proj{\mc{X}_A \times \mc{Y}_A}}{\pi}$, $\tilde{\pi}_A \coloneqq \pushforward{\proj{\mc{Y}_A \times \mc{Z}_A}}{\tilde{\pi}}$. Then 
    $$\pushforward{\proj{(\mc{X}_A \times \mc{Y}_A \times \mc{Z}_A)}}{\brackets{\bigglue{\pi}{\tilde{\pi}}}} = \bigglue{\pi_A}{\tilde{\pi}_A}.$$
\end{proposition}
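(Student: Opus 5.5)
The plan is to reduce the statement to the single conditional independence $\condindep{X_A}{Y_A}{Z_A}$ under $(X,Y,Z)\sim\bigglue{\pi}{\tilde{\pi}}$, exactly as in Lemma \ref{marginal_of_gluing_easy}. Indeed, $(X_A,Y_A)\sim\pi_A$ and $(Y_A,Z_A)\sim\tilde{\pi}_A$ hold trivially. A measure on $\mc{X}_A\times\mc{Y}_A\times\mc{Z}_A$ with these two marginals in which $X_A$ and $Z_A$ are conditionally independent given $Y_A$ is, by definition of the conditional independent product, equal to $\bigglue{\pi_A}{\tilde{\pi}_A}$. So the whole proof consists of establishing $\condindep{Z_A}{Y_A}{X_A}$.

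First, I would show that $\tilde{\pi}$ being $G$-causal gives $\condindep{Z_A}{Y_A}{Y}$, i.e.\ $Z_A$ depends on $Y$ only through $Y_A$. For this I use Definition \ref{G_causal_definition_2} via Proposition \ref{prop:equivalent_Gcausal}: on a possibly enlarged space, $Z_i=f_i(Y_i,Y_{\parents{i}},Z_{\parents{i}},U_i)$ with $(U_i)$ independent of each other and of $Y$. Since $A$ is closed under taking parents, I would order $A$ topologically and iterate these equations. This writes $Z_A$ as a measurable function of $(Y_A,U_A)$, where $U_A$ is independent of $Y$. Hence $\law{Z_A\mid Y}$ is a function of $Y_A$, which is the claimed relation.

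Second, the defining property of $\otimesdot$ gives $\condindep{X}{Y}{Z}$, and in particular $\condindep{Z_A}{Y}{X_A}$. I then combine this with $\condindep{Z_A}{Y_A}{Y}$ through the chain rule \cite[Theorem 8.12]{Kallenberg2021FoundationsThird}, in the same way as in Lemma \ref{marginal_of_gluing_easy}. This yields $\condindep{Z_A}{Y_A}{(X_A,Y)}$, and therefore $\condindep{Z_A}{Y_A}{X_A}$. That is the required conditional independence, which finishes the argument.

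The main obstacle is the first step. The representation from Definition \ref{G_causal_definition_2} holds only in law for some coupling, so one has to be careful that iterating it still expresses the conditional law of $Z_A$ given $Y$ correctly. A cleaner alternative avoids representations altogether. Assume $A=\{1,\dots,m\}$ after a sorted relabelling, which is allowed because $A$ is parent-closed. Then use the sorted characterisation $\condindep{Z_i}{Y_i,Y_{\parents{i}},Z_{\parents{i}}}{(Y,Z_{1:i-1})}$ from Proposition \ref{prop:equivalent_Gcausal} 3), applied to the pair $(Y,Z)$. An induction on $i\le m$ with the chain rule then gives $\condindep{Z_{1:i}}{Y_{1:i}}{Y}$, because $\{i\}\cup\parents{i}\subseteq 1{:}i$. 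I would take this route, since it uses only results already stated in the paper.
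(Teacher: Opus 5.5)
Your proposal is correct and is essentially the paper's argument. The paper groups $\mc{X}_A$ and $\mc{X}_{V\setminus A}$ into the two nodes of $\Garrow$ and invokes Lemma \ref{marginal_of_gluing_easy}, whose proof is exactly your chain-rule step combining $\condindep{Z_A}{Y_A}{Y}$ with $\condindep{X}{Y}{Z}$; the only difference is that you actually prove $\condindep{Z_A}{Y_A}{Y}$ by induction over the sorted characterisation and parent-closedness of $A$, whereas the paper simply asserts that $\tilde{\pi}$ is $\Garrow$-causal in the grouped form.
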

\begin{proof}
    This is a simple consequence of Lemma \ref{marginal_of_gluing_easy}. Indeed, we may consider the product space $\mc{X}_A$ as a single Polish space $\tilde{\mc{X}}_1$, the product space $\mc{X}_{V \setminus A}$ as a single Polish space $\tilde{\mc{X}}_2$, with similar definitions for $\tilde{\mc{Y}}_i$, $\tilde{\mc{Z}}_i$. Then, as $\parents{i} \subseteq A$ for all $i \in A$, $\mu \in \specificGprobabilities{\Garrow}{\tilde{\mc{X}}_{1:2}}$ when viewed as a measure on $\tilde{\mc{X}}_1 \times \tilde{\mc{X}}_2$, $\pi \in \specificGcouplings{\Garrow}{\mu}{\nu}$ when viewed as a measure on $(\tilde{\mc{X}}_1 \times \tilde{\mc{X}}_2) \times (\tilde{\mc{Y}}_1 \times \tilde{\mc{Y}}_2)$, with similar relations holding for $\nu, \eta, \tilde{\pi}$. We may now apply Lemma \ref{marginal_of_gluing_easy} to get our result.
\end{proof}

\begin{lemma}
\label{aw_independent}
    Let $1 \leq m < n$. Suppose $\mu, \nu \in \Pp{p}{\mc{X}_{1:n}}$ are such that they can be written as product measures $\mu = \mu_1 \otimes \mu_2$, $\nu = \nu_1 \otimes \nu_2$ for some $\mu_1, \nu_1 \in \Pp{p}{\mc{X}_{1:m}}$, $\mu_2, \nu_2 \in \Pp{p}{\mc{X}_{m+1:n}}$. Then,
    $$AW_p(\mu, \nu)^p = AW_p(\mu_1, \nu_1)^p + AW_p(\mu_2, \nu_2)^p.$$
\end{lemma}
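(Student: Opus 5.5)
We prove the two inequalities separately, working with the adapted Wasserstein distance $AW_p = W_{G_n,p}$ and the additive cost $d(x,y)^p=\sum_{i=1}^n d_i(x_i,y_i)^p$. Since the cost splits as $c_1(x_{1:m},y_{1:m})+c_2(x_{m+1:n},y_{m+1:n})$, the task is to show that optimal bicausal plans can be taken of product form.

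For the upper bound, take $\eps$-optimal $\pi_1\in\bicouplings{\mu_1}{\nu_1}$ and $\pi_2\in\bicouplings{\mu_2}{\nu_2}$. We check, using the characterisation of $G_n$-bicausality via the disintegration in Proposition \ref{equivalent_characterisations} 3), that the product (after reordering) $\pi_1\otimes\pi_2$ lies in $\bicouplings{\mu}{\nu}$. For $i\le m$ the kernels are those of $\pi_1$, which couple $\mu_{1,i}^{x_{1:i-1}}$ with $\nu_{1,i}^{y_{1:i-1}}$. For $i>m$ the kernel of $\pi_1\otimes\pi_2$ given $(x_{1:i-1},y_{1:i-1})$ is the kernel of $\pi_2$ given $(x_{m+1:i-1},y_{m+1:i-1})$. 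By the product structure, this couples $\mu_i^{x_{1:i-1}}=\mu_{2,i}^{x_{m+1:i-1}}$ with the corresponding conditional of $\nu$. Integrating the separable cost gives the sum of the two costs, and letting $\eps\to0$ yields $\le$.

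For the lower bound, take any $\pi\in\bicouplings{\mu}{\nu}$. Its projection $\pi_{1:m}$ onto the first $m$ coordinates of both sides is bicausal between $\mu_1$ and $\nu_1$, since the causality conditions for $i\le m$ only involve earlier coordinates. This is the marginal statement used in the proof of Theorem \ref{theorem:sametopology}. Hence $\int c_1\,d\pi\ge AW_p(\mu_1,\nu_1)^p$.

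The main obstacle is the tail part: we must show that $\int c_2\,d\pi\ge AW_p(\mu_2,\nu_2)^p$. The projection of $\pi$ onto the coordinates $m+1{:}n$ need not itself be bicausal, because the tail of $Y$ may depend on $X_{1:m}$. Instead we disintegrate $\pi$ along the first block, writing
\[
\pi=\int \pi^{x_{1:m},y_{1:m}}\,d\pi_{1:m}.
\]
Consider a fixed $(x_{1:m},y_{1:m})$. The bicausality of $\pi$ together with the independence of the blocks under $\mu$ and $\nu$ gives the following for $i>m$. The conditional law of $(X_i,Y_i)$ given $(X_{1:i-1},Y_{1:i-1})$ couples $\mu_{2,i}^{x_{m+1:i-1}}$ with $\nu_{2,i}^{y_{m+1:i-1}}$. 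By Proposition \ref{equivalent_characterisations} 3) applied to $G_{n-m}$, this means that $\pi_{1:m}$-a.s. the measure $\pi^{x_{1:m},y_{1:m}}$ lies in $\bicouplings{\mu_2}{\nu_2}$. To ensure that its first marginal really is $\mu_2$, we iterate the kernel identity for $\mu^{x_{1:i-1}}_i$ and use independence.

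Consequently
\[
\int c_2\,d\pi=\int\!\!\int c_2\,d\pi^{x_{1:m},y_{1:m}}\,d\pi_{1:m}\ge AW_p(\mu_2,\nu_2)^p .
\]
Adding this to the bound for the first block and taking the infimum over $\pi$ gives $\ge$. Measurability of the disintegration is standard (Polish spaces), so no selection argument is needed in this direction.
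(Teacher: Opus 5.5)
Your argument is correct, but it takes a different route from the paper.

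\textbf{The paper's route.} The paper argues through nested distributions. Because the two blocks are independent under $\mu$, the $(m+1)$-st nested component $\tilde X_{m+1}$ is deterministic and equals the nested distribution of $\mu_2$. So the nested distribution of $\mu$ coincides with that of the $m$-step process $(X_1,\dots,X_{m-1},(X_m,\tilde X_{m+1}))$. Identifying $AW_p$ with the Wasserstein distance between nested distributions, the constant extra coordinate then contributes exactly $d(\tilde X_{m+1},\tilde Y_{m+1})^p=AW_p(\mu_2,\nu_2)^p$.

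\textbf{Your route.} You work directly at the level of couplings:
\begin{itemize}
\item The product of near-optimal bicausal plans gives the upper bound.
\item For the lower bound, you split an arbitrary bicausal $\pi$ into its first-block marginal, which is bicausal between $\mu_1$ and $\nu_1$.
\item You then show that the conditional tail laws $\pi^{x_{1:m},y_{1:m}}$ are a.s.\ bicausal between $\mu_2$ and $\nu_2$, via the kernel characterisation in Proposition \ref{equivalent_characterisations}.
\end{itemize}

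\textbf{What each buys.} The paper's proof is shorter, but it leans on the isometry between $AW_p$ and the nested-distribution metric, which is essentially the dynamic programming principle for $AW_p$. Yours is elementary and self-contained, and needs only the disintegration characterisation of bicausality. It also gives the splitting for any cost of the form $c_1(x_{1:m},y_{1:m})+c_2(x_{m+1:n},y_{m+1:n})$, not only for $d^p$ with the $\ell^p$-sum metric.

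\textbf{One inaccuracy.} Your remark that the tail projection of $\pi$ need not be bicausal is false. For the linear graph, once the marginals are fixed, the causality constraints are linear in the coupling. Hence $\bicouplings{\mu_2}{\nu_2}$ is convex and stable under measurable mixtures. The tail projection $\int \pi^{x_{1:m},y_{1:m}}\,d\pi_{1:m}$ is therefore itself bicausal, by exactly the conditional statement you prove. This does not affect your proof, which never relies on the remark.
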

\begin{proof}
    Let $X \sim \mu$, $Y \sim \nu$. The adapted Wasserstein distance between $\mu$ and $\nu$ is the distance between their associated nested distributions \cite{Pflug2009NestedDistributions}. 
    
    Let $\tilde{X}_n \coloneqq \condlaw{X_n}{X_{1:n-1}}$ and for $n > k \geq 1$, $\tilde{X}_k \coloneqq \condlaw{X_k, \tilde{X}_{k+1}}{X_{1:k-1}}$. Then $\tilde{X}_1$ is the nested distribution of $\mu$. As $X_{1:m} \indep X_{m+1:n}$, we have for $k \geq m +1$ that $\tilde{X}_k = \condlaw{X_k, \tilde{X}_{k+1}}{X_{m+1:k-1}}$. In particular, $\tilde{X}_{m+1} = \law{X_{m+1}, \tilde{X}_{m+2}}$ is a constant random variable, equal to the corresponding nested distribution of $\mu_2$. Hence, now $\tilde{X}_1$ is equal to the nested distribution corresponding to the law of the vector below, viewed as an $m$-step stochastic process
    $$\hat{X} \coloneqq (X_1, \ldots, X_{m-1}, (X_m, \tilde{X}_{m+1})).$$
    Symmetrical statements hold for $\nu$. Thus
    \begin{align*}
        AW_p(\mu, \nu)^p &= d(\tilde{X}_1, \tilde{Y}_1)^p = AW_p\brackets{\law{\hat{X}}, \law{\hat{Y}}}^p \\
        &= AW_p\brackets{\law{X_{1:m}}, \law{Y_{1:m}}}^p + d(\tilde{X}_{m+1}, \tilde{Y}_{m+1})^p \\ &= AW_p(\mu_1, \nu_1)^p + AW_p(\mu_2, \nu_2)^p,
    \end{align*}
    as the constant random variables $\tilde{X}_m, \tilde{Y}_m$ simply add a constant term to the cost function.
\end{proof}

\section{Weak Closedness of $G$-compatible Measures}
\label{section_closedness}

This section records some complementary results for the proof of Proposition \ref{prop:P_G_closed}. 

\begin{example}
\label{closedness_counterexamples}
    For completeness, we repeat the example of \cite[Proposition 3.3(iv)]{Eckstein2023CausalGraphs} for $\Gmarkov$: $$\specificGprobabilities{\Gmarkov}{\mbb{R}^3} \ni \frac{1}{2}\brackets{\delta_{(1, \frac{1}{k}, 1)} + \delta_{(-1, \frac{-1}{k}, -1)}} \xRightarrow{k \to \infty} \frac{1}{2}\brackets{\delta_{(1, 0, 1)} + \delta_{(-1, 0, -1)}} \notin \specificGprobabilities{\Gmarkov}{\mbb{R}^3}.$$

    This can easily be adapted to an example for $\Gsplit$ by changing the order of the coordinates:
    $$\specificGprobabilities{\Gsplit}{\mbb{R}^3} \ni \frac{1}{2}\brackets{\delta_{(\frac{1}{k}, 1, 1)} + \delta_{(\frac{-1}{k}, -1, -1)}} \xRightarrow{k\to \infty} \frac{1}{2}\brackets{\delta_{(0, 1,1)} + \delta_{(0, -1, -1)}} \notin \specificGprobabilities{\Gsplit}{\mbb{R}^3}.$$

    Both of the examples above can be extended to graphs $G$ which have $\Gmarkov$ as a proper subgraph and thus lack the transitive closure property, or which have $\Gsplit$ as proper subgraphs and thus contain a split. This can be done by fixing all other coordinates to be equal to zero in both the measures in the sequence and in the limit.
\end{example}

\begin{lemma}
    \label{closedness_independent}
    Suppose $G = G_A \oplus G_B$ and that both $\specificGprobabilities{G_A}{\mc{X}_A}$ and $\specificGprobabilities{G_B}{\mc{X}_B}$ are weakly closed. Then $\Gprobabilities{\mc{X}_A \times \mc{X}_B}$ is weakly closed.
\end{lemma}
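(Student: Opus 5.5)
The key observation is that $G = G_A \oplus G_B$ has no edges between $A$ and $B$, so $G$-compatibility of $\mu \in \probabilities{\mc{X}_A \times \mc{X}_B}$ should amount to three conditions: the marginal $\mu_A$ is $G_A$-compatible, the marginal $\mu_B$ is $G_B$-compatible, and $\mu = \mu_A \otimes \mu_B$. The plan is to prove this characterisation first and then check that each of the three conditions is stable under weak limits.

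\textbf{Step 1: the characterisation.} Relabel so that $G$ is sorted with $A = \curlybrackets{1,\ldots,n_1}$ and $B = \curlybrackets{n_1+1,\ldots,n}$. Then each $G_A$, $G_B$ is sorted.
\begin{itemize}
\item[\emph{($\Rightarrow$)}] By Proposition \ref{prop:Gcomp_characterisation} 3), $\mu$ has a disintegration $\bigotimes_{i} \mu_i(dx_i \mid \xparents{i})$. Since $\parents{i} \subseteq A$ for $i \in A$ and $\parents{i} \subseteq B$ for $i \in B$, the factors over $A$ integrate to a $G_A$-compatible measure and the factors over $B$ to a $G_B$-compatible one. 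Hence $\mu$ is their product.
\item[\emph{($\Leftarrow$)}] Take the product of the two disintegrations. This is a disintegration of $\mu$ of the form in Proposition \ref{prop:Gcomp_characterisation} 3) for $G$, so $\mu \in \Gprobabilities{\mc{X}}$.
\end{itemize}
Alternatively, one can use characterisation 2) with independent noises $(U_i)_{i \in A}$, $(U_i)_{i\in B}$ in both directions.

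\textbf{Step 2: closedness.} Let $\mu^{(k)} \in \Gprobabilities{\mc{X}}$ converge weakly to $\mu$.
\begin{itemize}
\item The projections $\proj{\mc{X}_A}$ and $\proj{\mc{X}_B}$ are continuous, so the marginals satisfy $\mu^{(k)}_A \Rightarrow \mu_A$ and $\mu^{(k)}_B \Rightarrow \mu_B$. By assumption the sets $\specificGprobabilities{G_A}{\mc{X}_A}$ and $\specificGprobabilities{G_B}{\mc{X}_B}$ are weakly closed, so $\mu_A$ and $\mu_B$ are $G_A$- and $G_B$-compatible respectively.
\item Weak convergence of marginals implies weak convergence of the products, $\mu^{(k)}_A \otimes \mu^{(k)}_B \Rightarrow \mu_A \otimes \mu_B$. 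This is standard on Polish spaces: it suffices to test against products $f(x_A)g(x_B)$ of bounded continuous functions, which form a convergence-determining class. Alternatively, use tightness of the marginals, which gives tightness of the products, and identify the limit through such product test functions.
\item By Step 1, $\mu^{(k)} = \mu^{(k)}_A \otimes \mu^{(k)}_B$, so uniqueness of weak limits gives $\mu = \mu_A \otimes \mu_B$.
\end{itemize}
Step 1 then yields $\mu \in \Gprobabilities{\mc{X}_A \times \mc{X}_B}$.

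\textbf{Main obstacle.} The only point that needs care is the measure-theoretic bookkeeping in Step 1. Specifically, one must check that integrating out the $B$-kernels leaves exactly the $A$-disintegration; this holds because no kernel indexed by $A$ depends on a coordinate in $B$. Everything else is routine.
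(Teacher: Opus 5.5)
Your proof is correct and follows the paper's argument: the paper uses the same characterisation $\mu = \mu_A \otimes \mu_B$ with $G_A$- and $G_B$-compatible marginals (it calls this clear, and you verify it via Proposition \ref{prop:Gcomp_characterisation}), passes to the limit in the marginals using the closedness assumptions, and identifies the limit as a product by testing against functions $f(x_A)g(x_B)$. The only cosmetic difference is that the paper directly computes $\int f(x_A)g(x_B)\,d\mu = \int f\,d\mu_A \int g\,d\mu_B$. You instead first prove $\mu^{(k)}_A \otimes \mu^{(k)}_B \Rightarrow \mu_A \otimes \mu_B$ and then invoke uniqueness of weak limits, so the paper's route does not need your convergence-determining (or tightness) remark.
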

\begin{proof}
    Clearly, a measure $\mu \in \Gprobabilities{\mc{X}_A \times \mc{X}_B}$ if and only if it can be written as $\mu = \mu_A \otimes \mu_B$ with its marginals $\mu_A, \mu_B$ having the property that $\mu_A \in \specificGprobabilities{G_A}{\mc{X}_A}$ and $\mu_B \in \specificGprobabilities{G_B}{\mc{X}_B}$. 

    Let then $\brackets{\mu_A^{(k)} \otimes \mu_B^{(k)}}_{k \geq 1}$ be a sequence in $\Gprobabilities{\mc{X}_A \times \mc{X}_B}$ converging weakly to some $\mu \in \probabilities{\mc{X}_A \times \mc{X}_B}$. As weak convergence implies weak convergence of marginals as well, we have $\mu_A^{(k)} \Rightarrow \mu_A$ and $\mu_B^{(k)} \Rightarrow \mu_B$. Since $\specificGprobabilities{G_A}{\mc{X}_A}$ is weakly closed, we see $\mu_A \in \specificGprobabilities{G_A}{\mc{X}_A}$, and similarly $\mu_B \in \specificGprobabilities{G_B}{\mc{X}_B}$.
    
    Then, for all $f \in \Cb{\mc{X}_A}$, $g \in \Cb{\mc{X}_B}$:
    \begin{align*}
    \int f(x_A)g(x_B)\, d\mu &= \lim_{k\to\infty} \int f(x_A)g(x_B)\, d\mu_A^{(k)} \otimes \mu_B^{(k)} \\
    &= \lim_{k\to\infty} \int f(x_A) \, d\mu_A^{(k)} \int g(x_B) \, d\mu_B^{(k)} = \int f(x_A) \, d\mu_A \int g(x_B) \, d\mu_B,
    \end{align*}
    and thus $\mu = \mu_A \otimes \mu_B$, and in particular $\mu$ is $G$-compatible.
\end{proof}

\begin{lemma}
    \label{closedness_add}
    Suppose $\Gprobabilities{\mc{X}_{1:n}}$ is weakly closed. Then $\specificGprobabilities{G^+}{\mc{X}_{1:n+1}}$ is weakly closed.
\end{lemma}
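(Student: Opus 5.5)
The plan is to note first that for $G^+$, with the sorted labelling $1,\ldots,n,n+1$, node $n+1$ has parent set $\curlybrackets{1,\ldots,n}$. The conditional independence constraint at node $n+1$ in characterisation 4) of Proposition \ref{prop:Gcomp_characterisation} is therefore vacuous. The constraints at nodes $2 \leq i \leq n$ are $\condindep{X_i}{X_{\parents{i}}}{X_{1:i-1}}$, with $\parents{i}$ the same as in $G$, and they involve only the first $n$ coordinates. Hence $\mu^+ \in \specificGprobabilities{G^+}{\mc{X}_{1:n+1}}$ if and only if $\pushforward{\proj{\mc{X}_{1:n}}}{\mu^+} \in \Gprobabilities{\mc{X}_{1:n}}$. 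I would state this equivalence as the first step. It follows directly from Proposition \ref{prop:Gcomp_characterisation} 4), or equivalently from the disintegration characterisation 3), since any kernel $\mu_{n+1}(dx_{n+1} \mid x_{1:n})$ is allowed.

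With that in hand, I would take a sequence $\mu^{(k)} \in \specificGprobabilities{G^+}{\mc{X}_{1:n+1}}$ converging weakly to some $\mu \in \probabilities{\mc{X}_{1:n+1}}$. The projection $\proj{\mc{X}_{1:n}}$ is continuous, so the marginals satisfy $\pushforward{\proj{\mc{X}_{1:n}}}{\mu^{(k)}} \Rightarrow \pushforward{\proj{\mc{X}_{1:n}}}{\mu}$. By the first step these marginals lie in $\Gprobabilities{\mc{X}_{1:n}}$, which is weakly closed by hypothesis, so the limiting marginal is $G$-compatible. Applying the equivalence from the first step again shows $\mu \in \specificGprobabilities{G^+}{\mc{X}_{1:n+1}}$.

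The only point needing any care is the equivalence in the first step. This means checking that no condition is imposed on $X_{n+1}$ and that the conditions for $i \leq n$ are unchanged when passing from $G$ to $G^+$. Both hold because $n+1$ is a sink joined to every other vertex and adds no edges among $1,\ldots,n$. I do not expect a genuine obstacle; the lemma is essentially the observation that adding a fully connected sink imposes no new constraint.
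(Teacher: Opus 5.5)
Your proposal is correct and follows essentially the same route as the paper: both rest on the observation that $\mu^+$ is $G^+$-compatible if and only if its $\mc{X}_{1:n}$ marginal is $G$-compatible, since the constraint at the fully connected sink $n+1$ is vacuous, and then use continuity of the projection to pass weak convergence to the marginals. Your explicit check of this equivalence via characterisation 4) of Proposition \ref{prop:Gcomp_characterisation} simply fills in what the paper leaves as immediate.
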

\begin{proof}
    This follows trivially by the fact that weak convergence implies weak convergence of marginals, and noting that a measure $\mu \in \probabilities{\mc{X}_{1:n+1}}$ is $G^+$-compatible if and only if its $\mc{X}_{1:n}$ marginal $\mu_{1:n}$ is $G$-compatible.
\end{proof}
\section{DAGs Without Transitive Closure Property}
\label{section_nonHP}

We provide here some technical results used in the proof of Proposition \ref{prop:nonHP_metric}.

\begin{lemma}
        \label{lem:nonHP_inclusion}
    For a DAG $G$, and $\mu \in \Gprobabilities{\mc{X}}$, $\nu \in \Gprobabilities{\mc{Y}}$, we have that $\specificGbicouplings{\Gbar}{\mu}{\nu} \subseteq \tildeGbicouplings{\mu}{\nu}$.
\end{lemma}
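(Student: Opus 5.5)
Given $\pi \in \specificGbicouplings{\Gbar}{\mu}{\nu}$, the plan is to produce a coupling $\Hat{\pi} \in \Gbicouplings{\Hat{\mu}}{\Hat{\nu}}$ on the lifted space whose projection onto $\mc{X}\times\mc{Y}$ is $\pi$. The lifted coordinates will carry the extra ancestral information that $\Gbar$ permits but $G$ forbids. Note first that $\Gbar$ is transitively closed and that $\mu,\nu \in \specificGprobabilities{\Gbar}{\cdot}$, since $G$-compatibility implies $\Gbar$-compatibility (adding edges from ancestors only weakens the local Markov property). Theorem \ref{thm:lifted} applied to $\Gbar$ therefore gives a $\Gbar$-biadapted map $T:\Hat{\mc{X}}\to\Hat{\mc{Y}}$ with $\pushforward{T}{\Hat{\mu}}=\Hat{\nu}$ and $\pushforward{\proj{\mc{X}\times\mc{Y}}}{\pushforward{(\Id{},T)}{\Hat{\mu}}}=\pi$. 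This alone is not enough, because $T_i$ may depend on $(x_j,u_j)$ for ancestors $j$ that are not parents of $i$ in $G$.

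The key step is to re-encode the lifted spaces so that each node carries its whole ancestral history. I would use Borel isomorphisms to replace $\Hat{\mu}$ by the law of $(X_i, W_i)_{i}$, where $W_i$ is a uniform variable that also encodes $U_i$. The aim is then a map in which the $i$-th output depends only on $(x_i,w_i)$ and $(x_j,w_j)_{j\in\parents{i}}$, with the ancestral information reaching node $i$ through its parents' lifted coordinates.

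A cleaner route, which I would try first, avoids Monge maps and works directly with the generalised lifted class: it suffices to exhibit some $G$-bicausal $\Hat{\pi}$, not necessarily Monge. Concretely:
\begin{enumerate}
\item Take $(X,Y)\sim\pi$ and represent it through noise: by Proposition \ref{prop:equivalent_Gcausal} and Definition \ref{G_causal_definition_2} applied for $\Gbar$ in both directions, realise $X_i=f_i(X_{\hist{i}},P_i)$ and $Y_i=g_i(X_i,X_{\hist{i}},Y_{\hist{i}},R_i)$ with independent noises.
\item Define the lifted variables $U_i$ and $V_i$ to encode, via a Borel isomorphism of $[0,1]$ with $[0,1]^{\mathbb N}$ together with a measurable code of the relevant data, the triple $(X_{\hist{i}},Y_{\hist{i}},R_i)$ where this is permitted.
\end{enumerate}

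The difficulty in the second step is that $U_i$ must be uniform and independent of $X$. The fix is to let $U_i$ carry only "fresh'' randomness, for instance copies of the $R_j$ and $\Gbar$-coupling noise. Under $G$-compatibility $X_i$ depends on its $G$-parents only, so $X_i$ itself need not see remote ancestors; the only remote dependence is in the coupling kernels $\pi_i(\cdot\mid x_{\hist i},y_{\hist i})$. One can then show that for each $i$, the pair $(X_i,Y_i)$ given the lifted parent variables is a coupling of $\mu_i^{x_{\parents{i}}}$ and $\nu_i^{y_{\parents{i}}}$, with remote-ancestor dependence transmitted through $U_{\parents{i}}$ and $V_{\parents{i}}$.

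Finally, I would verify $G$-bicausality of $\Hat{\pi}$ via Proposition \ref{equivalent_characterisations} 3): it must disintegrate as $\bigotimes_i \Hat{\pi}_i(\cdot\mid \hat x_{\parents{i}},\hat y_{\parents{i}})$ with marginals $\widehat{\mu_i}$ and $\widehat{\nu_i}$. The ancestral information is available at node $i$ because every ancestor of $i$ is an ancestor of some parent of $i$, or is a parent itself, so recursive encoding along $G$-edges propagates it. The main obstacle is keeping the $U_i$ exactly uniform and independent of $X$ while they carry $Y$-dependent ancestral data. I expect to resolve this by encoding not $Y$ itself but the noise $R$, together with a conditional-quantile transform that turns $X$-dependent information into independent uniforms, relative to $\mu$'s disintegration as in the proof of Theorem \ref{thm:lifted}, Step 1.
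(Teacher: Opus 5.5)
There is a genuine gap. It is exactly the obstacle you name in your last paragraph and then leave unresolved.

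\textbf{Why the obstacle is real.} At node $i$, the lifted $G$-causal condition conditions only on $(X_i,U_i,X_{\parents{i}},U_{\parents{i}},Y_{\parents{i}},V_{\parents{i}})$. The $\Gbar$-kernel of $Y_i$, however, depends on the remote values $X_k,Y_k$ with $k\in\hist{i}\setminus\parents{i}$. So that information must be stored in the lifts, yet $\Hat{\mu}$ and $\Hat{\nu}$ force $U\indep X$ and $V\indep Y$.

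\textbf{Why your suggested fixes do not resolve it.}
\begin{itemize}
\item Fresh noise, or copies of the $R_j$, is independent of $X$ and so carries no information about the remote $X_k$. Together with $(X_{\parents{i}},Y_{\parents{i}})$ it does not determine them in general.
\item A conditional-quantile transform of $X$-data is independent of the conditioning variables, but it is still a function of $X$. It therefore cannot serve as a $U_j$.
\item Your Monge route is ruled out outright. Example \ref{example:generalised_different} gives $\pi\in\specificGbicouplings{\closure{\Gmarkov}}{\mu}{\mu}$ with $\pi\notin\newspecificGbicouplings{\Gmarkov}{\mu}{\mu}$. The argument there uses only $G$-adaptedness and $U\indep X$, so no $G$-adapted map applied to $(X,U)\sim\Hat{\mu}$ can produce $\pi$, however the lift is re-encoded.
\end{itemize}
Your preliminary observations are correct and are used: $G$-compatibility implies $\Gbar$-compatibility, and ancestral information reaches $i$ through its parents. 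The noise representation in your step 1 is not needed.

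\textbf{The missing idea.} Split the ancestral information between the two lifts with a one-time pad, so that each lift alone is pure noise and only the pair carries the history. This is the paper's Lemma \ref{independent_construction}. Proceed in topological order:
\begin{itemize}
\item Code $\Xi_i:=(X_{\parents{i}},Y_{\parents{i}},U_{\parents{i}},V_{\parents{i}})$ as a binary sequence.
\item Let $U_i$ be a fresh uniform, independent of $(X,Y)$ and of all lifts built so far.
\item Let the binary digits of $V_i$ be the digitwise sum modulo $2$ of those of $U_i$ and $\Xi_i$.
\end{itemize}
Conditionally on everything built before it, $V_i$ is uniform. Hence $(X,Y),U_1,\dots,U_n$ are independent and $(X,Y),V_1,\dots,V_n$ are independent, so $(X,U)\sim\Hat{\mu}$ and $(Y,V)\sim\Hat{\nu}$. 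At the same time $\sigma(U_i,V_i)=\sigma(U_i,\Xi_i)=\sigma(V_i,\Xi_i)$.

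\textbf{Verifying bicausality.} Inducting along ancestors, which is exactly your propagation remark, gives
$\sigma(X_i,X_{\parents{i}},Y_{\parents{i}},U_i,U_{\parents{i}},V_{\parents{i}})=\sigma(X_i,X_{\hist{i}},Y_{\hist{i}},U_i,U_{\hist{i}})$.
Moreover, $V_i$ is measurable with respect to this $\sigma$-algebra, and $V_{1:i-1}$ is measurable with respect to $(X,Y_{1:i-1},U)$. The lifted causal condition at node $i$ therefore reduces to $\condindep{Y_i}{X_i,X_{\hist{i}},Y_{\hist{i}}}{(X,Y_{1:i-1},U)}$. This follows from $\Gbar$-causality of $\pi$ and $U\indep(X,Y)$ by the chain rule. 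The anticausal condition is symmetric, using $\sigma(U_i,V_i)=\sigma(V_i,\Xi_i)$. The resulting coupling is $G$-bicausal on the lifted space but in general not Monge, which is why the statement is about $\tildeGbicouplings{\mu}{\nu}$ rather than the Monge-lifted class.
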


\begin{proof}
    Note that since $G$ is acyclic, the set of parents of node $i$ in $\Gbar$ is $\hist{i}$, the set of ancestors of node $i$ in $G$. 

    Let $\pi \in \specificGbicouplings{\Gbar}{\mu}{\nu}$, and $(X,Y) \sim \pi$. By repeated applications of Lemma \ref{independent_construction} one can construct $\mathrm{Unif}[0,1]$ random variables $U_1, \ldots, U_n, V_1, \ldots, V_n$ such that $(X,Y), U_1, \ldots, U_n$ are independent, $(X,Y), V_1, \ldots, V_n$ are independent, and $\sigma(U_i, V_i) = \sigma(U_i, U_{\parents{i}}, V_{\parents{i}}, X_{\parents{i}}, Y_{\parents{i}}) = \sigma(V_i, U_{\parents{i}}, V_{\parents{i}}, X_{\parents{i}}, Y_{\parents{i}})$ for all $i \in V$. By induction, the last statement implies that $\sigma(U_i, V_i) = \sigma(U_i, U_{\hist{i}}, X_{\hist{i}}, Y_{\hist{i}}) = \sigma(V_i, V_{\hist{i}}, X_{\hist{i}}, Y_{\hist{i}})$ for all $i \in V$.

    By construction, $\Hat{X} \coloneq (X, U) \sim \Hat{\mu}$, $\Hat{Y} \coloneq (Y, V) \sim \Hat{\nu}$. Thus, it suffices to show that $\law{\Hat{X}, \Hat{Y}} \in \Gbicouplings{\Hat{\mu}}{\Hat{\nu}}$. We focus on the causal direction. The anticausal one will then follow by symmetry. We need to show that $\condindep{\Hat{Y}_i}{\Hat{X}_i, \Hat{X}_{\parents{i}}, \Hat{Y}_{\parents{i}}}{(\Hat{X}, \Hat{Y}_{1:i-1})}$, i.e., by expanding out that
    $$\condindep{(Y_i, V_i)}{X_i, X_{\parents{i}}, Y_{\parents{i}}, U_i, U_{\parents{i}}, V_{\parents{i}}}{(X, Y_{1:i-1}, U, V_{1:{i-1}})}.$$

    But, by the properties above, $\sigma({X_i, X_{\parents{i}}, Y_{\parents{i}}, U_i, U_{\parents{i}}, V_{\parents{i}}}) = \sigma(X_i, X_{\hist{i}}, Y_{\hist{i}}, U_i, U_{\hist{i}})$, and $\sigma({X, Y_{1:i-1}, U, V_{1:{i-1}}}) = \sigma(X, Y_{1:i-1}, U)$. Thus, conditional independence relation above becomes
    $$\condindep{(Y_i, V_i)}{X_i, X_{\hist{i}}, Y_{\hist{i}}, U_i, U_{\hist{i}}}{(X, Y_{1:i-1}, U)}.$$

    Since $V_i$ is $\sigma(U_i, U_{\hist{i}}, X_{\hist{i}}, Y_{\hist{i}})$-measurable, this is equivalent to proving 
    \begin{equation}
        \label{non_history_equation}
    \condindep{Y_i}{X_i, X_{\hist{i}}, Y_{\hist{i}}, U_i, U_{\hist{i}}}{(X, Y_{1:i-1}, U)}.
    \end{equation}

    As $\pi \in \specificGbicouplings{\Gbar}{\mu}{\nu}$, we know that 
    $\condindep{Y_i}{X_i, X_{\hist{i}}, Y_{\hist{i}}}{(X, Y_{1:i-1})}.$ By construction, we also know that $U \indep (X,Y)$, and thus $\condindep{Y_i}{X, Y_{1:i-1}}{U}$. Combining these using the chain rule of conditional independence \cite[Theorem 8.12]{Kallenberg2021FoundationsThird}, we see that 
    $$\condindep{Y_i}{X_i, X_{\hist{i}}, Y_{\hist{i}}}{(X, Y_{1:i-1}, U)},$$
    which is a stronger statement than (\ref{non_history_equation}).
\end{proof}

\begin{lemma}
    \label{independent_construction}
    Given random variables $\Xi, \Upsilon$ defined on the same probability space and taking values in standard Borel spaces, we may define (after possibly enlarging the sample space) $\mathrm{Unif}[0,1]$ random variables $U, V$ such that $U$ is independent of $(\Xi,\Upsilon)$, $V$ is independent of $(\Xi,\Upsilon)$ but $\sigma(U, V) = \sigma(U, \Xi) = \sigma(V, \Xi)$.
\end{lemma}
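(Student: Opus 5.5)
We construct $U$ and $V$ using the randomisation (transfer) lemma together with a measurable bijection trick. Since $\Xi$ and $\Upsilon$ take values in standard Borel spaces, we may assume (after enlarging the sample space) that there is a $\mathrm{Unif}[0,1]$ random variable $W$ independent of $(\Xi,\Upsilon)$. The goal is an explicit construction of the form $U \coloneqq W$ and $V \coloneqq h(\Xi, W)$, where $h(\xi,\cdot)\colon[0,1]\to[0,1]$ is a measure-preserving Borel bijection for every $\xi$, chosen jointly measurable in $(\xi,w)$ and with jointly measurable inverse $(\xi,v)\mapsto h(\xi,\cdot)^{-1}(v)$. We then check the three required properties in turn.

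\emph{Independence of $U$.} $U=W$ is independent of $(\Xi,\Upsilon)$ by construction.

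\emph{Independence of $V$.} For any bounded $f,g$,
\[
\mathbb{E}[f(\Xi,\Upsilon)g(V)]=\mathbb{E}\Bigl[f(\Xi,\Upsilon)\int g(h(\Xi,w))\,dw\Bigr]=\mathbb{E}[f(\Xi,\Upsilon)]\int g\,d\lambda,
\]
where the first equality uses independence of $W$ from $(\Xi,\Upsilon)$ and the second uses that $h(\xi,\cdot)$ preserves $\lambda$. This gives both $V\sim\mathrm{Unif}[0,1]$ and $V\indep(\Xi,\Upsilon)$.

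\emph{The $\sigma$-algebra identities.} We have $\sigma(V,\Xi)=\sigma(W,\Xi)=\sigma(U,\Xi)$, because $V$ is a measurable function of $(\Xi,W)$ and conversely $W=h(\Xi,\cdot)^{-1}(V)$ is a measurable function of $(\Xi,V)$. It remains to show $\sigma(U,V)=\sigma(U,\Xi)$. The inclusion $\supseteq$ requires $\Xi$ to be recoverable from $(W,h(\Xi,W))$. This dictates the choice of $h$: the map $\xi\mapsto h(\xi,\cdot)$ must be injective in a way that can be decoded from a single sample pair $(w,h(\xi,w))$.

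\emph{Choice of $h$ (the main obstacle).} Since $\Xi$ lives in a standard Borel space, we may embed it Borel-injectively into $[0,1]$, so take $\Xi\in[0,1]$. Let $\phi\colon[0,1]\to[0,1]^2$ be a measure-preserving Borel isomorphism, $\phi(w)=(w_1,w_2)$ with $w_1,w_2$ i.i.d.\ uniform (e.g.\ \cite[Theorem 17.41]{Kechris1995DescriptiveSetTheory}). Define
\[
h(\xi,w)\coloneqq\phi^{-1}\bigl(w_1,\ \mathrm{frac}(w_2+\xi)\bigr).
\]
For fixed $\xi$ this is a measure-preserving bijection, namely a rotation in the second coordinate, and it is jointly Borel with inverse $\phi^{-1}(v_1,\mathrm{frac}(v_2-\xi))$. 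Given $(U,V)$, set $\phi(V)=(v_1,v_2)$; then $\mathrm{frac}(v_2-w_2)=\Xi$ exactly, with $w_2$ read off from $\phi(U)$. This recovers $\Xi$ pointwise, not just almost surely, so $\sigma(U,\Xi)\subseteq\sigma(U,V)$.

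The delicate step is verifying that this decoding is genuinely measurable and exact. If the fractional-part rotation caused trouble at the endpoint $1$, we would instead use a group structure on $[0,1)$ identified with the circle, in which translation by $\xi$ is measure-preserving and exactly invertible. Everything else in the argument is routine.
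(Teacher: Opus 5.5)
Your argument is correct and rests on the same mechanism as the paper's proof. In both, an independent uniform variable is translated by $\Xi$ in a compact group whose uniform measure is translation-invariant. The translate is again uniform and independent of $(\Xi,\Upsilon)$, and $\Xi$ is recovered as the difference of the two. The paper works in the Cantor group: it embeds $\Xi$ into $\{0,1\}^{\mathbb{N}}$, sets $V_i=\Xi_i(1-U_i)+(1-\Xi_i)U_i$ (bitwise XOR), and then sums binary digits. You instead rotate on the circle.

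One point should be made definite rather than left as a fallback. The endpoint problem is not only a null-set effect in $w_2$. If your Borel embedding sends two values of $\Xi$ to $0$ and $1$, then $h(0,\cdot)=h(1,\cdot)$ and $\Xi$ cannot be recovered at all, so your claim that $\mathrm{frac}(v_2-w_2)=\Xi$ holds exactly would fail. You must therefore embed $\Xi$ into $[0,1)$ and let $W$ take values in $[0,1)$. Once you do, $\phi$ is superfluous: $U=W$ and $V=\mathrm{frac}(W+\Xi)$ already give $\Xi=\mathrm{frac}(V-U)$ and $W=\mathrm{frac}(V-\Xi)$ everywhere.

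With that fix, your route gives the $\sigma$-algebra identities exactly. In the paper's version, the step $\sigma(U)=\sigma((U_i)_{i\geq1})$ holds only up to the null event of eventually constant digit sequences, because dyadic rationals have two binary expansions. That is harmless for its use in Lemma~\ref{lem:nonHP_inclusion}, but it means the paper's identities hold only modulo null sets. In exchange, the paper's construction is a purely discrete computation.
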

\begin{proof}
By the classification of standard Borel spaces \citep[Corollary 7.16.1]{BerstekasShreve1978}, we may assume without loss of generality that $\Xi$ takes values in the Cantor space $\curlybrackets{0,1}^{\mbb{N}}$. We write $\Xi = (\Xi_i)_{i \geq 1}$. 

Define i.i.d. random variables $U_i \sim \frac{1}{2}(\delta_0 + \delta_1)$ independent of $(\Xi, \Upsilon)$. Defining for $i \geq 1$ $V_i = \Xi_i (1-U_i) + (1 - \Xi_i)U_i$, we see that the $V_i$ are also i.i.d. random variables with law $\frac{1}{2}(\delta_0 + \delta_1)$ and which are independent of $(\Xi, \Upsilon)$. Moreover, it is easy to see that $\sigma((U_i)_{i \geq 1}, (V_i)_{i \geq 1}) = \sigma((U_i)_{i \geq 1}, \Xi) = \sigma((V_i)_{i \geq 1}, \Xi)$.

Finally, let $U = \sum_{i=1}^{\infty}U_i 2^{-i}$, $V = \sum_{i=1}^{\infty}V_i 2^{-i}$. Then $U, V \sim \mathrm{Unif}[0,1]$, $\sigma(U) = \sigma((U_i)_{i \geq 1})$, and $\sigma(V) = \sigma((V_i)_{i \geq 1})$. The conclusion then follows from the properties of the $U_i, V_i$ given above.
\end{proof}

The example below shows that generally $\specificGbicouplings{\Gbar}{\mu}{\nu} \nsubseteq \newGbicouplings{\mu}{\nu}$, justifying the need to relax $\newGbicouplings{\mu}{\nu}$ further to the full generality of $\tildeGbicouplings{\mu}{\nu}$ for Proposition \ref{prop:nonHP_metric}.

\begin{example}
\label{example:generalised_different}
    Consider $\Gmarkov = (\curlybrackets{1,2,3}, \curlybrackets{(1,2), (2,3)})$, and $\mc{X} = \mbb{R}^3$. Let $\mu \coloneq (\frac{1}{2}\delta_0 + \frac{1}{2}\delta_1) \otimes \delta_0 \otimes (\frac{1}{2}\delta_0 + \frac{1}{2}\delta_1) \in \specificGprobabilities{\Gmarkov}{\mc{X}}$. Clearly, $\closure{\Gmarkov} = (\curlybrackets{1,2,3}, \curlybrackets{(1,2), (2,3), (1,3)})$.

    Let $X \sim \mu$ and $Y_1 \sim \mu_1$ be independent, $Y_2 = 0$, and $Y_3 = X_3 \mathbbm{1}_{X_1=Y_1} + (1-X_3)\mathbbm{1}_{X_1 \neq Y_1}$. Then $\pi \coloneq \law{(X,Y)} \in \specificGbicouplings{\closure{\Gmarkov}}{\mu}{\mu}$. 
    
    Assuming for a contradiction that $\pi \in \newspecificGbicouplings{\Gmarkov}{\mu}{\mu}$, then we can find $U_1, U_2, U_3$ i.i.d. $\mathrm{Unif}[0,1]$ random variables independent of $X$, and measurable functions $T_1 : \mbb{R}^2 \to \mbb{R}$, $T_3 : \mbb{R}^4 \to \mbb{R}$ such that $Y_1 = T_1(X_1, U_1)$, and $Y_3 = T_3(X_3, U_3, X_2, U_2)$. Since, by construction $(X_3, U_3, X_2, U_2) \indep (X_1, U_1)$, we see that in this case $(X_3, Y_3) \indep (X_1, Y_1)$. But this is clearly not true under $\pi$. Thus, $\pi \notin \newspecificGbicouplings{\Gmarkov}{\mu}{\mu}$.
\end{example}

\end{document}